\newtheorem{thm}{Theorem}[section]
\newtheorem{theorem}[thm]{Theorem}
\newtheorem{cor}[thm]{Corollary}  
\newtheorem{corollary}[thm]{Corollary}  
\newtheorem{lem}[thm]{Lemma}
\newtheorem{lemma}[thm]{Lemma}
\newtheorem{prop}[thm]{Proposition}
\newtheorem{proposition}[thm]{Proposition}
\theoremstyle{definition}
\newtheorem{definition}[thm]{Definition}
\newtheorem{rem}[thm]{Remark}
\newtheorem{remark}[thm]{Remark}
\newcommand{\Z}{{\mathbb{Z}}}
\newcommand{\x}{{\mathbf{x}}}
\newcommand{\y}{{\mathbf{y}}}
\newcommand{\s}{{\mathbf{S}}}
\newcommand{\T}{{\mathcal{T}}}
\newcommand{\G}{{\mathbb{G}}}
\newcommand\Pent{\mathrm{Pent}}
\newcommand\Hex{\mathrm{Hex}}
\newcommand\Rect{\mathrm{Rect}}
\def\cm{\cdot}
\newcommand\Os{\mathbb{O}}
\def\Sym{\mathrm{Sym}}
\def\Interior{\mathrm{Int}}
\def\Os{\mathbb O}
\def\fin\qedhere
\def\pr {{\text{pr}}}
\def\x{\mathbf x}
\def\y{\mathbf y}
\def\z{\mathbf z}
\def\S{\mathbf S}
\def\alphas{\boldsymbol{\alpha}}
\def\betas{\boldsymbol{\beta}}
\newcommand{\Hf}{H_1(E(f))}
\newcommand{\HZ}{(H_1(E(f)),\Z)}
\begin{document}
	
\dedicatory{In memory of Tim Cochran}

\title{Heegaard Floer homology of spatial graphs}         
\author{Shelly Harvey$^{\dag}$}
\address{Department of Mathematics \\Rice University}        
\author{Danielle O'Donnol$^{\dag\dag}$}
\address{Department of Mathematics \\ Indiana University}

\thanks{$^{\dag}$ This work was partially supported National Science Foundation grants DMS-0748458 and DMS-1309070 and by a grant from the Simons Foundation (\#304538 to Shelly Harvey). $^{\dag\dag}$ This work was partially supported the National Science Foundation grant DMS-1406481 and by an AMS-Simons travel grant.}

\begin{abstract}We extend the theory of combinatorial link Floer homology to a class of oriented spatial graphs called transverse spatial graphs.
	To do this, we define the notion of a grid diagram representing a transverse spatial graph, which we call a graph grid diagram.  
	We prove that two graph grid diagrams representing the same transverse spatial graph are related by a sequence of graph grid moves, generalizing the work of Cromwell for links.  
	For a graph grid diagram representing a transverse spatial graph $f:G \rightarrow S^3$, we define a relatively bigraded chain complex (which is a module over a multivariable polynomial ring) and show that its homology is preserved under the graph grid moves; hence it is an invariant of the transverse spatial graph.  In fact, we define both a minus and hat version.  Taking the graded Euler characteristic of the homology of the hat version gives an Alexander type polynomial for the transverse spatial graph.  Specifically, for each transverse spatial graph $f$, we define a balanced sutured manifold $(S^3\smallsetminus f(G), \gamma(f))$.  We show that the graded Euler characteristic is the same as the torsion of $(S^3\smallsetminus f(G), \gamma(f))$ defined by S. Friedl, A. Juh\'{a}sz, and J. Rasmussen. 
\end{abstract}
\maketitle

\section{Introduction}
 
Knot Floer homology, introduced by P. Ozsv\'{a}th and Z. Szab\'{o} \cite{OS3}, and independently by 
 J. Rasmussen \cite{Ras}, is an invariant of knots in $S^3$ that categorifies the Alexander polynomial.  
 Knot Floer homology is widely studied because of its many applications in low-dimensional 
 topology. For example, it detects the unknot \cite{OS4}, detects whether a knot is fibered \cite{Gh,Ni}, and detect the genus of a knot \cite{OS4}.    The theory was generalized to links in \cite{OS5}.
The primary goal of this paper is to extend link Floer homology to a class of oriented spatial graphs in $S^3$, called transverse spatial graphs. 
   
 Originally, knot Floer homology was defined as the homology of a chain complex obtained by counting certain holomorphic disks in a $2g$-dimensional symplectic manifold with some boundary conditions that arose from a (doubly pointed) Heegaard diagram for $S^3$ compatible with the knot.  As such, the chain \emph{groups} were combinatorial but one could not, a priori, compute the boundary map.  However, Sucharit Sarkar discovered a criterion that would ensure that the count of certain holomorphic disks is combinatorial.  This crucial idea was used by C. Manolescu, P. Ozsv\'{a}th, and S. Sarker in \cite{MOS} to give a combinatorial description of link Floer homology using grid diagrams.  Using this description, in \cite{MOST}, C. Manolescu, P. Ozsv\'{a}th, Z. Sz\'{a}bo, and D. Thurston gave a combinatorial proof that link Floer homology is an invariant.  In this paper, we generalize the combinatorial description of Heegaard Floer homology and proof in \cite{MOS, MOST} to transverse spatial graphs.  Specifically, we define a relatively bigraded chain complex (a combinatorial minus version) which is a module over $\mathbb{F}[U_1,\dots, U_V]$ where $V$ is the number of vertices of the transverse spatial graph and $\mathbb{F}=\mathbb{Z}/2\mathbb{Z}$ is the field with two elements. We then show that it is well defined up to quasi-isomorphism.  We note that, independently, in \cite{Bao14}, Y. Bao defined a Heegaard Floer homology (a non-combinatorial hat version), which is an $\mathbb{F}$ vector space, for balanced bipartite spatial graphs.  

Informally, a transverse spatial graph is an oriented spatial graph where the incoming (respectively outgoing) edges are grouped at each vertex and any ambient isotopy must preserve this grouping.  See Figure~\ref{tgraph_diagram_intro} for an example.  Details can be found in Section~\ref{balanced_sp_gr}. 
\begin{figure}[h]
\begin{center}
\includegraphics{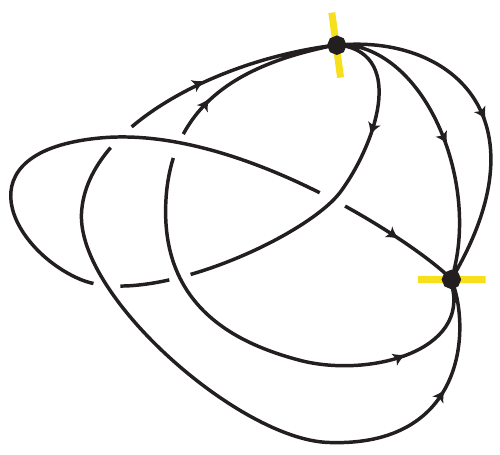}
\caption{An example of a diagram of a transverse spatial graph.  The lines at each vertex indicate the grouping.  }\label{tgraph_diagram_intro}
\end{center}
\end{figure} 
To define the chain complex, we first introduce the notion of a graph grid diagram representing a transverse spatial graph in Section~\ref{sec:grid}.  Roughly, a graph grid diagram is an $n \times n$ grid of squares each of which is decorated with an $X$, an $O$, or is empty, where some of the $O$'s are decorated with an $\ast$, and satisfies the following conditions.  Like for links, there is precisely one $O$ per row and column.  There are no restrictions on the $X$'s but if an $O$ shares a row or column with multiple (or no) $X$'s then it must be decorated with a $\ast$.  Moreover, each \emph{connected component} must contain an $O$ decorated with an $\ast$.  See Subsection~\ref{subsec:graphgrid} for a precise definition and Figure~\ref{grid_ex} for an example.  
  
  \begin{figure}[h]
\begin{center}
\begin{picture}(120,120)
\put(0,0){\includegraphics{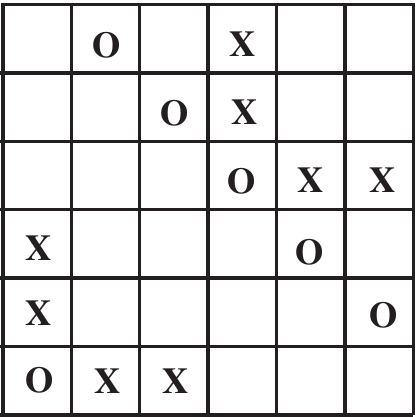}}
\put(13,8){\large{*}}  
\put(71,68){\large{*}}  
\end{picture}
\end{center}
\caption{Example of a graph grid diagram.}\label{grid_ex}
\end{figure}
  
\noindent To each graph grid diagram we associate a transverse spatial graph by connecting the $X$'s to the $O$'s vertically and the $O$'s to the $X$'s horizontally with the convention that the vertical strands go over the horizontal strands.  See Subsection~\ref{subsec:gridtograph} for more details and the figure below for an example. 

  \begin{figure}[h]
\begin{center}
	\begin{picture}(282,120)
\put(0,0){\includegraphics{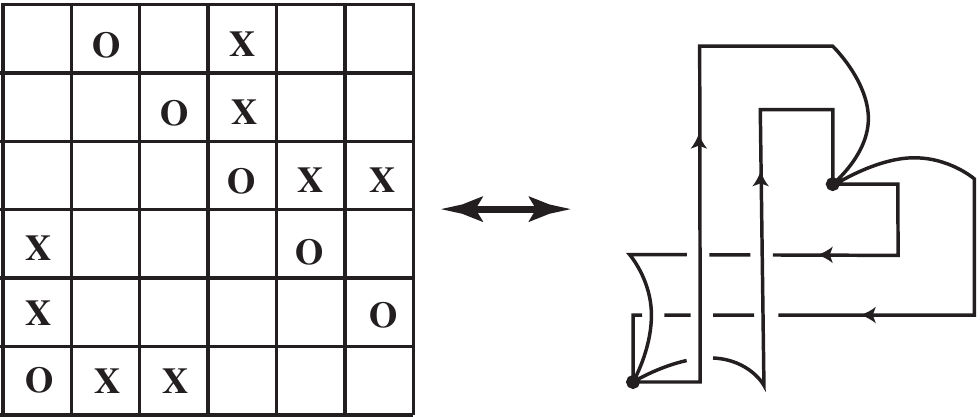}}
\put(13,8){\large{*}}  
\put(71,68){\large{*}}
\end{picture}
\caption{Associating a transverse spatial graph to a graph grid diagram.}\label{grid_ex2}
\end{center}
\end{figure}

We prove that every transverse spatial graph can be represented by a graph grid diagram.  

\newtheorem*{prop_grid_repr_sg_intro}{Proposition~\ref{prop:grid_repr_sg}}
\begin{prop_grid_repr_sg_intro}
	Let $f:G \rightarrow S^3$ be a transverse spatial graph.  Then there is a graph grid diagram $g$ representing $f$.  \end{prop_grid_repr_sg_intro}

\noindent However, this representative is not unique.  We define a set of moves on graph grid diagrams (cyclic permutation, commutation$'$, and stabilization$'$), called graph grid moves, generalizing the grid moves for links.  See Section~\ref{subsec:gridtograph} for their definitions. We prove that any two representatives for the same transverse spatial graph are related by a sequence of graph grid moves.

\newtheorem*{theorem_main}{Theorem~\ref{thm:main_grid}}
  \begin{theorem_main} 	
If $g$ and $g'$ are two graph grid diagrams representing the same transverse spatial graph, then $g$ and $g'$ are related by a finite sequence of graph grid moves.  
 \end{theorem_main}
 
In Section~\ref{sec:def}, to each (saturated) graph grid diagram $g$, we assign a chain complex $(C^-(g),\partial^-)$.  Saturated means that there is at least one $X$ per row and column, and these graph grid diagrams correspond to transverse spatial graphs where the graphs have no sinks nor sources.  We need this technical condition to show that   $\partial^- \circ \partial^-=0$.
The chain groups consists of bigraded free $\mathbb{F}[U_1,\dots, U_V]$-modules  where $V$ is the number of $O$'s decorated with an $\ast$.  
Like in link Floer homology,  the generators of the chain groups are unordered tuples of intersections points between the horizontal and vertical line segments in the grid, with exactly one point on each horizontal and vertical line segment.  
The Maslov grading is defined exactly as in~\cite{MOST}.  Note that this is possible since it only depends on the set of $O$'s on the grid.  For links, the Alexander grading lives in $\mathbb{Z}^m$.  For transverse spatial graphs, we define an Alexander grading that has values in $H_1(S^3\smallsetminus f(G))$ where $f: G \rightarrow S^3$ is the transverse spatial graph associated to $g$.  To compute this, for each point in the lattice of the grid, we define an element of $H_1(S^3 \smallsetminus f(G))$, called the generalized winding number.  It is defined so that if you can get from one point to another by passing an edge of the projection of $f(G)$ coming from $g,$ then the difference between their values is (plus or minus) the homology class of the meridian of that edge.  The Alexander grading of a generator is obtained by taking the sum of the generalized winding numbers of the points of the generator.  Each $U_i$ is associated with an $O$ and we define the Alexander grading so that multiplication by $U_i$ correspond to lowering the Alexander grading by the element of $H_1(S^3 \smallsetminus f(G))$ represented by ``meridian of $O$.'' See Subsection~\ref{subsec:ch_complex} for more details.  The $\partial^-$ map is defined by counting empty rectangles in the (toroidal) grid that do not contain $X$'s.  We show in Section~\ref{sec:def} that $\partial^- \circ \partial^- =0$ and so the homology of $(C^-(g),\partial^-)$ gives a well-defined invariant, for each saturated graph grid diagram.  

For a given transverse spatial graph, there are infinitely many graph grid diagrams representing it.  In Section~\ref{sec:invariance}, we show that the homology of the bigraded chain complex is independent of the choice of saturated graph grid diagram. To prove this, we show that the quasi-isomorphism type of the chain complex is preserved under the three graph grid moves.  

\newtheorem*{theorem_invariance}{Theorem~\ref{thm:invariance}}
\begin{theorem_invariance} If $g_1$ and $g_2$ are saturated graph grid diagrams representing the same transverse spatial graph $f:G \rightarrow S^3$ then $(C^-(g_1),\partial^-)$ is quasi-isomorphic to $(C^-(g_2),\partial^-)$ as relatively-absolutely $(H_1(E(f)),\Z)$ bigraded $\mathbb{F}[U_1, \dots, U_{V}]$-modules.  In particular, $HFG^-(g_1)$ is isomorphic to $HFG^-(g_2)$ as relatively-absolutely $(H_1(E(f)),\Z)$ bigraded $\mathbb{F}[U_1, \dots, U_{V}]$-modules. 
\end{theorem_invariance}

\noindent Thus, we can define the \textbf{graph Floer homology} of the sinkless and sourceless transverse spatial graph $f$, denoted $HFG^-(f)$, to be $HFG^-(g)$ for any saturated grid diagram $g$ representing $f$.  We also define a hat variant, $\widehat{HFG}(f)$ by taking the homology of the chain complex obtained by setting $U_1, \dots, U_V$ to zero.  See Subsection~\ref{subsec:ch_complex} for more details. 

As far as the authors are aware, there have been no papers in the past that have defined an Alexander polynomial for an arbitrary spatial graph that do not just depend on the fundamental group of the exterior. In \cite{Ki58}, Kinoshita defines the Alexander polynomial of an oriented spatial graph as the Alexander polynomial of its exterior.  In contrast, Litherland \cite{Li89} defined the Alexander polynomial of a spatial graph where the underlying graph is a theta graph and his definition does not depend solely on the exterior of the embedding.  At the same time as this paper was being written, Bao, in \cite{Bao14}, independently defined an Alexander polynomial of a balanced bipartite spatial graph and produces a state sum formula for the polynomial.  Her definition is similar to the one we define.  

In Section~\ref{sec:Alex}, we define an Alexander polynomial for any transverse spatial graph $f: G\rightarrow S^3$.  To do this, we associate a balanced sutured manifold $(S^3 \smallsetminus f(G),\gamma(f))$ to $f$.  We then define the Alexander polynomial of $f$, $\Delta_f \in \mathbb{Z}[H_1(S^3 \smallsetminus f(G))]$, to be the torsion invariant associated to balanced sutured manifold $(S^3 \smallsetminus f(G),\gamma(f))$ defined by S. Friedl, A. Juh{\'a}sz, and J. Rasmussen in \cite{FJR11}.  We show that the graded Euler characteristic of $\widehat{HFG}(f)$ is essentially $\Delta_f$.  Note that $\overline{\Delta}_f$ means to send each element in $H_1(S^3 \smallsetminus f(G))$ of $\Delta_f$ to its inverse. 

\newtheorem*{cor_Alex}{Corollary~\ref{cor:Alex}}
\begin{cor_Alex}If $f: G \rightarrow S^3$ is a sinkless and sourceless transverse spatial graph then
	$$ \chi(\widehat{HFG}(f)) \doteq \overline{\Delta}_f. $$ 
	That is, they are the same up to multiplication by units in $\mathbb{Z}[H_1(S^3 \smallsetminus f(G))]$.
\end{cor_Alex}

To prove this, we first prove the stronger result, that the hat version of our graph Floer homology is essentially the same as the sutured Floer homology of $(S^3 \smallsetminus f(G),\gamma(f))$.  Here $rSFH(E(f),\gamma(f))$ means to consider $SFH(E(f),\gamma(f))$ as a bigraded $(H_1(E(f)),\mathbb{Z}_2)$ but where the $H_1(S^3 \smallsetminus f(G))$ Alexander grading is changed by a negative sign. 

\newtheorem*{thm_sfh}{Theorem~\ref{thm:same}}
\begin{thm_sfh}Let  $f: G \rightarrow S^3$ be a sinkless and sourceless transverse spatial graph.  Then 
$$\widehat{HFG}(f) \cong rSFH(E(f),\gamma(f))$$
as relatively $(H_1(E(f)),\mathbb{Z}_2)$ bigraded $\mathbb{F}$-vector spaces.
\end{thm_sfh}

\noindent To complete the proof of Corollary~\ref{cor:Alex}, we use the theorem of S. Friedl, A. Juh{\'a}sz, and J. Rasmussen in \cite{FJR11} that the decategorification of sutured Floer homology is their torsion invariant.   

\vspace{10pt}

\noindent \textbf{Acknowledgements}. We would like to thank Sucharit Sarkar, Andr\'{a}s Juh\'{a}sz, Eamonn Tweedy, Tim Cochran, Katherine Poulsen, Adam Levine, Ciprian Manolescu, Matt Hedden, Robert Lipschitz, and Dylan Thurston for helpful conversations.

\section{Transverse Disk Spatial Graphs}\label{balanced_sp_gr}
Graph Floer homology is a version of Heegaard Floer homology defined for \emph{transverse spatial graphs}.  In this section we will define the term transverse spatial graph and define the notion of  equivalence of transverse spatial graphs.  We will also discuss their diagrams and Reidemeister moves.  

We will work in the PL-category.   A \textbf{graph}, $Y$, is a 1-dimensional complex, consisting of a finite set of vertices ($0$-simplices) and edges ($1$-simplices) between them.  
A \textbf{spatial graph} is an embedding $f$ of a graph $Y$ in $S^3$, $f:Y\rightarrow S^3$.  
A \textbf{diagram} of a spatial graph is a projection of $f(Y)$ to $S^2$ with only transverse double points away from vertices, where the over and under crossings are indicated.  
Two spatial graphs $f_1$ and $f_2$ are \textbf{equivalent} if there is an ambient isotopy between them.  
Notice that the ambient isotopy gives a map $h:S^3\rightarrow S^3$ which sends $f_1(Y)$ to $f_2(Y)$ sending edges to edges and vertices to vertices.  
In \cite{Kauf}, Kauffman showed that: 

\begin{thm}Let $f_1$ and $f_2$ be spatial graphs. 
Then $f_1$ is ambient isotopic to $f_2$ if and only if any diagram of $f_2$ can be obtained from any diagram of $f_1$ by a finite number of graph Reidemeister moves (shown in Figure \ref{Rmoves}) and planar isotopy.
\end{thm}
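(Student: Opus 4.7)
The plan is to prove the two directions separately, with the easy direction first. For the ``if'' direction, I would observe that each graph Reidemeister move is supported in a small PL ball inside $S^3$ and can be realized by an ambient PL isotopy of $S^3$ (the ball can be expanded to a neighborhood of the relevant portion of the graph, and the change to the diagram is produced by a local isotopy). Planar isotopy of a diagram likewise lifts to an ambient isotopy of the embedded graph. Therefore finitely many such moves compose to give an ambient isotopy from $f_1$ to $f_2$.

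For the nontrivial ``only if'' direction, I would work entirely in the PL category. After a small perturbation I may assume both $f_1(Y)$ and $f_2(Y)$ are in general position with respect to the projection $\pi:S^3 \to S^2$, so each has a diagram. The key structural fact I would invoke is that any PL ambient isotopy between $f_1$ and $f_2$ can be broken into a finite sequence of elementary $\Delta$-moves: each one replaces a PL arc lying in the boundary of some triangle $T\subset S^3$ (whose interior is disjoint from the rest of the graph) by the union of the other two edges of $T$, or vice versa. Thus it suffices to show that each $\Delta$-move on $f(Y)$ projects to a change of diagram realizable by a finite sequence of graph Reidemeister moves and planar isotopies.

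I would then split into cases according to how $T$ meets the graph. First, when $T$ is disjoint from all vertices, its boundary contains a single edge-arc of the graph, and this is exactly the setting of the classical Reidemeister theorem for links: the argument of Reidemeister (or Alexander--Briggs) shows that after putting $T$ in general position with respect to $\pi$, the induced change on the projection is a composition of planar isotopies and the classical R1, R2, R3 moves. Second, when $T$ contains a vertex $v$ in its interior or on its boundary, the local picture near $v$ consists of finitely many edge-germs meeting at a point, and the change produced by the $\Delta$-move is a local modification of this configuration. Enumerating the possibilities by the number of edge-germs involved and their over/under pattern with the transverse edge bounding $T$ yields precisely the vertex-type Reidemeister moves of Figure~\ref{Rmoves} (moves sliding a strand over or under a vertex, and moves sliding edges around a vertex).

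The main obstacle I expect is the case analysis at vertices: one must verify that every possible local modification of the diagram near a vertex produced by a single $\Delta$-move can be reduced to the specific finite list of moves in Figure~\ref{Rmoves}, and that no additional vertex moves are required. The standard technique, which I would follow, is to shrink $T$ by subdivision so that at most one vertex and at most one crossing lie in $\pi(T)$; then each such minimal triangle move is, by inspection, one of the listed graph Reidemeister moves (possibly preceded or followed by planar isotopy to bring the diagram into the standard local form). Composing the reductions for all triangles in the decomposition of the ambient isotopy yields the required finite sequence of graph Reidemeister moves from any diagram of $f_1$ to any diagram of $f_2$.
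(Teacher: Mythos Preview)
The paper does not give its own proof of this theorem: it is stated as a result of Kauffman and simply cited to \cite{Kauf}. Your proposal is essentially the standard argument (and indeed the one Kauffman uses): factor the PL ambient isotopy into elementary $\Delta$-moves and then check, after subdividing so that the projected triangle contains at most one crossing or one vertex, that each elementary move projects to one of the graph Reidemeister moves or a planar isotopy. The paper alludes to exactly this mechanism later when proving the transverse version, writing that ``any elementary move for a spatial graph can be obtained by a sequence of the Reidemeister moves shown in Figure~\ref{Rmoves}.''

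One small imprecision worth tightening: in your second case you speak of $T$ containing a vertex ``in its interior or on its boundary,'' but by the definition of an elementary move the triangle meets the graph only in the edge segment being replaced, so a graph vertex can occur only as an \emph{endpoint} of that segment, never in the interior of $T$. With that corrected, the case analysis you outline (subdivide so that $\pi(T)$ meets at most one crossing or the image of one vertex, then identify the resulting local change with RI--RV) is the right one.
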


\begin{figure}[h]
\begin{center}
\includegraphics{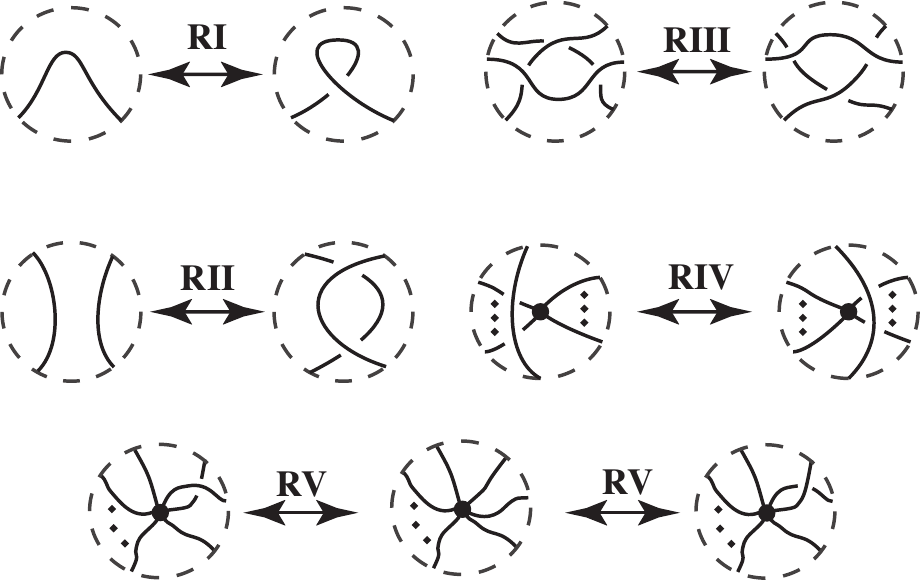}
\caption{\textbf{Graph Reidemeister moves:} Reidemeister moves one, two, and three are the same as those for knots and links.  Reidemeister move RIV, is where an edge can move passed a vertex, either over or under (over is pictured here). Reidemeister move RV is where two of the edges next to the vertex change places.  }\label{Rmoves}
\end{center}
\end{figure}

An \textbf{oriented graph} is a graph together with orientations given on each of the edges.  
Let $\mathcal{D}$ be the 2-complex obtained by gluing $3$ copies of the 2-simplex $[e_0,e_1,e_2]$ together so that their union is a disk and with all three of the $e_0$s identified to a single point in $\mathcal{D}$. Note that $e_0$ is the unique vertex in the interior of $\mathcal{D}$. We say that $\mathcal{D}$ is a \textbf{standard disk} and $e_0$ is the \textbf{vertex associated to $\mathcal{D}$}.
 An \textbf{oriented disk graph, $G$,} is a $2$-complex constructed as follows.  Start with an oriented graph $Y$.  Then, for each vertex $v$ of $Y$, glue a standard disk $\mathcal{D}$ to $Y$ by identifying the vertex associated to $\mathcal{D}$ with $v$. We note that $Y$ is a subset of the oriented disk graph which we call the \textbf{underlying oriented graph} of the oriented disk graph (or the \textbf{underlying graph of $G$} if we do not want to consider the orientations).  
We say that a vertex of an oriented disk graph is a \textbf{graph vertex} (respectively \textbf{graph edge}) if it is a vertex (respectively edge) of its underlying oriented graph.  When it is clear, we will just refer to them as vertices and edges of the oriented disk graph (and will not refer to the other $0$ and $1$-simplices of the oriented disk graph as vertices or edges).  For an oriented disk graph $G$, and a given graph vertex $v$ of $G$, the set of graph edges of $G$ with orientation going towards $v$ are called the \textbf{incoming edges of $v$} and the set of edges with the orientation going away from $v$ are called the \textbf{outgoing edges of $v$}.
\begin{definition}A \textbf{transverse spatial graph} is an embedding $f: G \rightarrow S^3$ of an oriented disk graph $G$ into $S^3$ where each vertex of the graph locally looks like Figure~\ref{tdiskv_fig} and where each standard disk of $G$ lies in a plane.  We call the image of each of the standard disks of $G$, a \textbf{disk} of $f$ and the embedding of the underlying graph of $G$, the \textbf{underlying spatial graph of $f$}.  Two transverse spatial graphs are \textbf{equivalent} if there is an ambient isotopy between them.  

\end{definition}
Note that in a transverse spatial graph the incoming and outgoing edges are each grouped together. 
In the ambient isotopy, at each vertex, both the set of incoming and the set of outgoing edges can move freely.  However, in an ambient isotopy, the incoming and outgoing sets cannot intermingle, because of the disk separates the edges.

\begin{figure}[h]
\begin{center}
\includegraphics[scale=0.5]{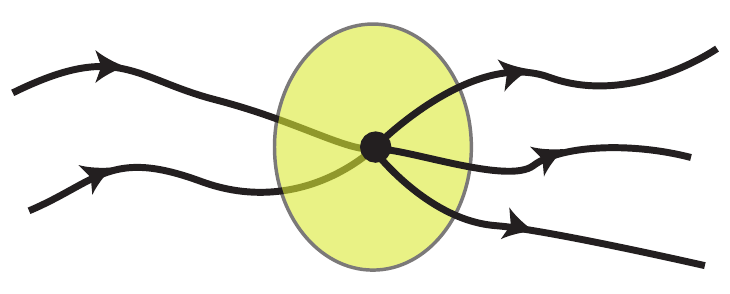}
\caption{A vertex of a transverse spatial graph shown with the disk. }\label{tdiskv_fig}
\end{center}
\end{figure}

\begin{definition}A \textbf{regular projection of a transverse spatial graph $f: G \rightarrow S^3$} is a projection that satisfies the following two conditions.  (1) For each point, $pt$, in the image of the underlying graph of $G$, $f^{-1}(pt)$ contains no more than two points and if $f^{-1}(pt)$ contains two points then neither is a graph vertex. (2) All of the standard disks of $f$ are perpendicular to the plane of projection.  	
	A \textbf{diagram for a transverse spatial graph} $f: G \rightarrow S^3$ is a regular projection of $f$ where all the over and under crossings are indicated.  
\end{definition}	
	
In Figure~\ref{tgraph_diagram_fig}, a diagram of a transverse spatial graph is shown, where the disks are also shown in the projection.  Notice that the incoming edges and outgoing edges are grouped in the projection.    
We will from here forward not indicate disks in diagrams, because the position of the disk is already clear from the diagram.

\begin{figure}[h]
\begin{center}
\includegraphics{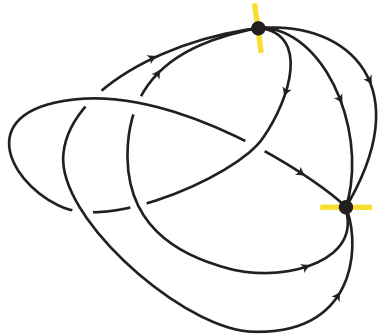}
\caption{A diagram of a transverse spatial graph, where the projection of the disks, transverse to the plane of projection, are shown in gray (yellow).  }\label{tgraph_diagram_fig}
\end{center}
\end{figure}

The Reidemeister moves for transverse spatial graphs are the same as the Reidemeister moves for graphs shown in Figure \ref{Rmoves}, with the restriction that RV may \emph{only} be made between pairs of incoming edges or pairs of outgoing edges.  
For clarity we will call this restriction of  RV, \textbf{R$\overline{\textrm{V}}$}.

\begin{figure}[h]
\begin{center}
\begin{picture}(250, 210)
\put(0,0){\includegraphics[scale=0.5]{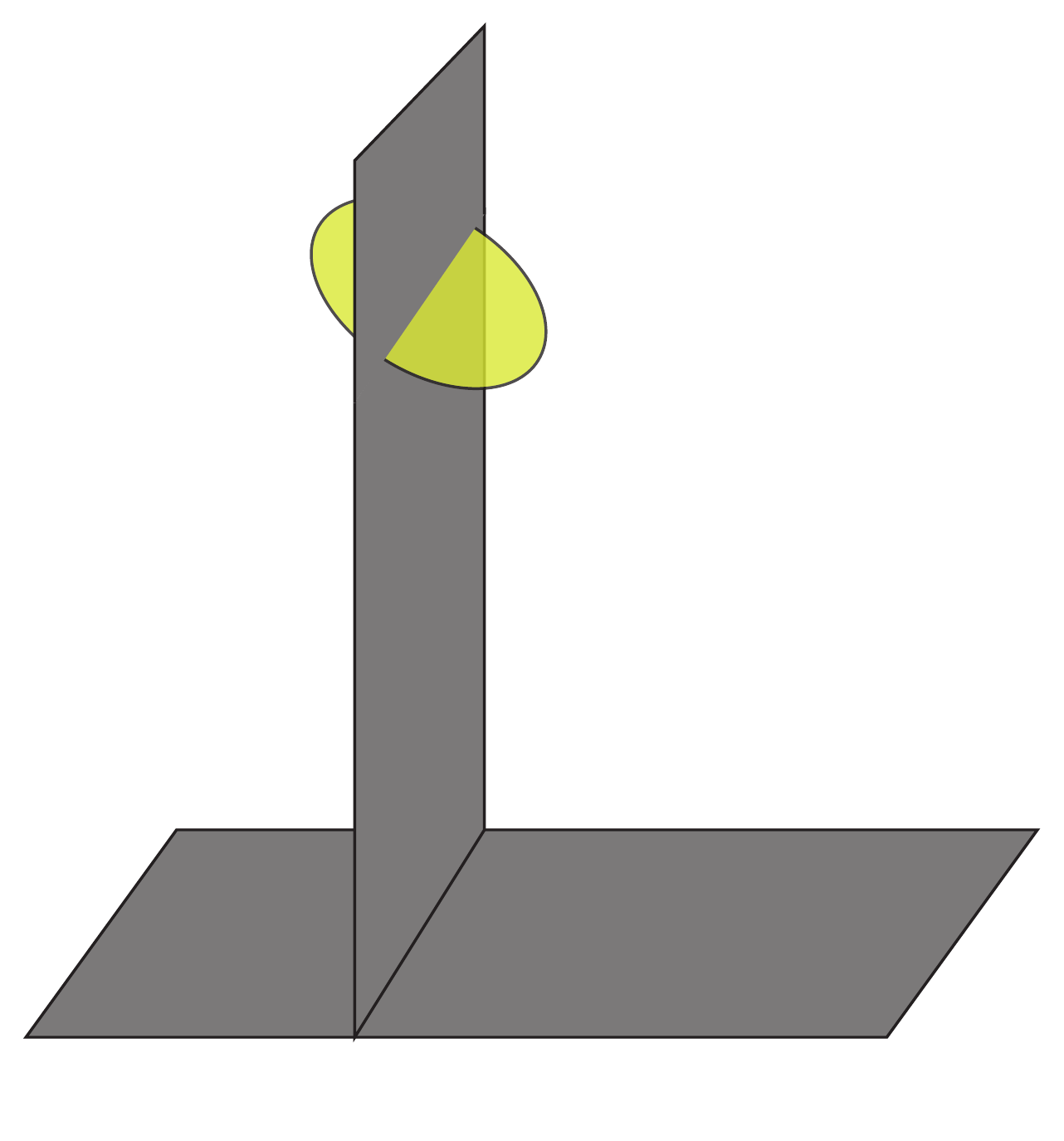}}
\put(90,190){$P_v$}
\put(100,135){$D_v$}
\put(180,60){$P_{proj}$}
\end{picture}
\caption{This shows the plane of projection $P_{proj}$ together with a disk $D_v$ of the vertex $v$ and $P_v$.  
The plane $P_v$ is the plane perpendicular to $P_{proj}$ and meeting $D_v$ in the line parallel to $P_{proj}.$}\label{fig_PvDv}
\end{center}
\end{figure}

\begin{thm}  Every transverse spatial graph has a diagram.  If two transverse spatial graphs are ambient isotopic, then any two diagrams of them are related by a finite sequence of the Reidemeister moves RI-RIV, R$\overline{\textrm{V}}$, and planar isotopy.  
\end{thm}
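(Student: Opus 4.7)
The first claim is a standard general-position argument, while the second requires a careful analysis of how the disk structure at each vertex constrains the allowable local moves.

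For the existence of a diagram, I would start with the projection of $f(G)$ onto a generic plane $P_{proj} \subset \R^3 \subset S^3$. By a small ambient isotopy (which may be chosen so as to preserve the transverse structure) one can arrange that (i) no double point of the projection involves a graph vertex, (ii) all self-intersections of the image of the underlying graph are transverse double points, and (iii) each standard disk at a vertex is perpendicular to $P_{proj}$. Conditions (i) and (ii) are generic; (iii) can be arranged (after (i) and (ii) are achieved) by a small rotation of each disk about the line through its vertex $v$ parallel to $P_{proj}$, as in Figure~\ref{fig_PvDv}. Because each disk is small and planar, this rotation does not introduce new singularities, and recording over/undercrossings then gives a diagram.

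For the second claim the plan is to discretize the ambient isotopy of transverse spatial graphs directly, rather than start from Kauffman's theorem (which would hand us an unwanted RV move). Given an ambient isotopy $h_t$ between transverse spatial graphs, I would subdivide $h_t$ finely enough (perturbing so that singular events of the composition $\pi \circ h_t$ occur at distinct times and distinct points of $P_{proj}$) that each elementary step modifies the diagram in a small ball and involves only one singular event. Away from vertices the classical analysis gives planar isotopy or RI, RII, RIII. Near a graph vertex $v$, the disk $D_{v,t}$ varies continuously with $t$ and, by the very definition of a transverse spatial graph, separates the incoming edges from the outgoing edges at every time. Hence the only possible local events at $v$ are a single edge strand passing over or under $v$ (giving RIV) or the exchange in the projection of two edge strands that lie on the same side of $D_{v,t}$ (giving R$\overline{\textrm{V}}$). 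An exchange of an incoming with an outgoing edge is forbidden, because it would force $D_{v,t}$ to sweep through one of the edges meeting $v$, violating transversality.

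The main obstacle is the local analysis near the vertices: one must verify carefully that transverse general position can be achieved throughout the isotopy (except at finitely many times at which a single elementary event happens), and that the continuous family of planar disks $D_{v,t}$ genuinely precludes an opposite-side swap. Once these two facts are in hand the enumeration of local moves is routine and the theorem follows by concatenating the elementary moves.
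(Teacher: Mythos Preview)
Your approach is genuinely different from the paper's. The paper works in the PL category: it reduces ambient isotopy to a sequence of elementary moves of the underlying $1$-complex (with the extra constraint that the spanning $2$-simplex must miss the transverse disks), invokes Kauffman's result that such moves are generated by RI--RV, and then argues that the disk at a vertex obstructs exactly those instances of RV that swap an incoming with an outgoing edge, leaving R$\overline{\textrm{V}}$. Your plan instead analyzes the singular events of the one-parameter family $\pi\circ h_t$ directly, which is closer in spirit to the smooth singularity-theory proofs of the classical Reidemeister theorem. Both routes are viable; yours avoids citing Kauffman but requires a more careful transversality setup.

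There is, however, a genuine gap in your enumeration of local events at a vertex. You list only RIV (a strand passing over/under $v$) and R$\overline{\textrm{V}}$ (a same-side swap), but you have omitted the \emph{disk flip}: the moment at which the disk $D_{v,t}$ becomes parallel to $P_{proj}$. This is a codimension-one event in the family and is unavoidable if the isotopy genuinely rotates the disk. Just before and just after this moment, the canonical rotation that makes $D_{v,t}$ perpendicular to $P_{proj}$ (as in Figure~\ref{fig_PvDv}) goes in opposite senses, so the diagram changes by a move of the type shown in Figure~\ref{fig_DiskFlip}. This is neither RIV nor a single R$\overline{\textrm{V}}$. The paper handles this by passing through the theory of flat vertex graphs: the flip is an RV$^*$ move, and it shows (Figures~\ref{fig_RV*InMany} and \ref{fig_RV*WeHave}) that any RV$^*$ can be realized as a sequence of R$\overline{\textrm{V}}$ moves together with RI--RIV. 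You will need an analogous argument; your claim that the disk ``genuinely precludes an opposite-side swap'' is correct, but it does not by itself dispose of the flip.
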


\begin{proof}
We first show that every transverse spatial graph has a diagram.  Let $f:G \rightarrow S^3$ be a transverse spatial graph.  A regular projection for the transverse spatial graph is a projection of $f(G)$ which is a regular projection of the underlying spatial graph of $f$.  That is, the projection only has transverse double points and these are away from the vertices.  
We will first obtain a regular projection for the transverse spatial graph, and next find a representative in the ambient isotopy class where the disks are transverse to $P_{proj}$ where $P_{proj}$ denotes the plane of projection for the regular projection.  
A regular projection for a spatial graph is obtained in the usual way; a point projection of a representative of the ambient isotopy class is obtained via $\epsilon$-perturbations of the graph.  
To have the disks perpendicular to $P_{proj}$ a similar process is used. 
If the disk is transverse to $P_{proj}$ we will see that there is a unique way to move it via an ambient isotopy of $f$ to a position where it is perpendicular to $P_{proj}$.  
So we need only have all of the disks transverse to $P_{proj}$.  
For an arbitrary vertex $v$ with disk $D_v$, let $\bf{x}$ be the vector that is perpendicular to $D$ and pointing in the direction of the outgoing edges.  
If $v$ remains in the same place and the neighborhood around it is allowed to rotate, there is a full sphere of directions in which $\bf{x}$ can be pointing.  
Only two of these directions will result in $D_v$ being parallel to $P_{proj}$.  
By dimensionality arguments having the disks transverse to $P_{proj}$ is generic.  If any of the disks are not transverse, an $\epsilon$-perturbation is done.  
For each vertex $v$,  let $P_v$ be the plane that is perpendicular to $P_{proj}$ and meets $D_v$ in the line parallel to $P_{proj}$.  
For each disk transverse to $P_{proj}$ there is a unique map via rotation through the acute angle between $D_v$ and $P_v$ so that it is perpendicular to $P_{proj}$ (see Figure \ref{fig_PvDv}.)  

\begin{figure}[h]
\begin{center}
\begin{picture}(250, 120)
\put(0,10){\includegraphics[scale=0.5]{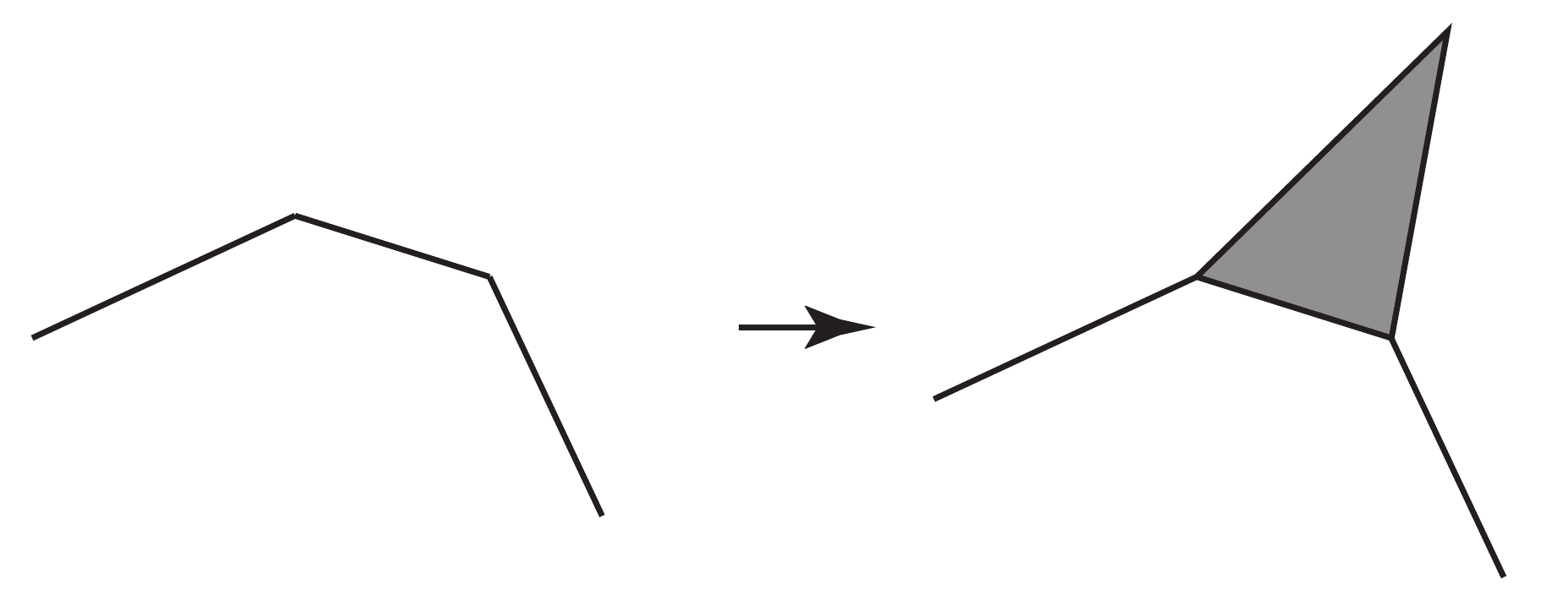}}
\put(50,85){$e_i$}
\put(87,70){$e_j$}
\put(200,75){$e_i$}
\put(245, 55){$e_j$}
\put(250,115){$e_k$}
\end{picture}
\caption{An elementary move.  }\label{fig_ElemMove}
\end{center}
\end{figure}
For (topological) spatial graphs, any ambient isotopy is made up of elementary moves.  
Recall that an elementary move of a spatial graph is where a linear segment of an edge $[e_i,e_j]$ is replaced with two new linear segments $[e_i, e_k]$ and $[e_k, e_j]$ if the three segments together bound a $2$-simplex which intersects the original spatial graph only in $[e_i,e_j]$, or the reverse of this move (see Figure \ref{fig_ElemMove}).  
In \cite{Kauf}, Kauffman showed that any elementary move for a spatial graph can be obtained by a sequence of the Reidemeister moves shown in Figure \ref{Rmoves}.  
Now we consider transverse spatial graphs.  An elementary move of a transverse spatial graph $f:G \rightarrow S^3$ is where a linear segment of an edge of the underlying graph $[e_i,e_j]$ is replaced with two new linear segments $[e_i, e_k]$ and $[e_k, e_j]$ if the three segments together bound a $2$-simplex $T$ which intersects $f(G)$ in $[e_i,e_j]$, or the reverse of this move (see Figure \ref{fig_ElemMove}).  We note that $T$ must miss all the transverse disks.  We will show that any ambient isotopy of transverse spatial graphs is made of elementary moves of a transverse spatial graph. 
First note that Reidemeister moves RI -- RIV preserve the isotopy class of a transverse spatial graph.  
In addition, one can still interchange a pair of neighboring incoming and incoming edges or a pair of neighboring outgoing and outgoing edges in RV.  
However if one tried to interchange a neighboring incoming and outgoing edge at the vertex $v$, the disk from the elementary move that would result in RV would intersect the transverse disk $D_v$.  
So Reidemeister move RV is restricted to move R$\overline{\textrm{V}}$. Recall that R$\overline{\textrm{V}}$ is the move RV where only neighboring incoming (respectively outgoing) edges are interchanged.

\begin{figure}[h]
\begin{center}
\includegraphics[scale=0.5]{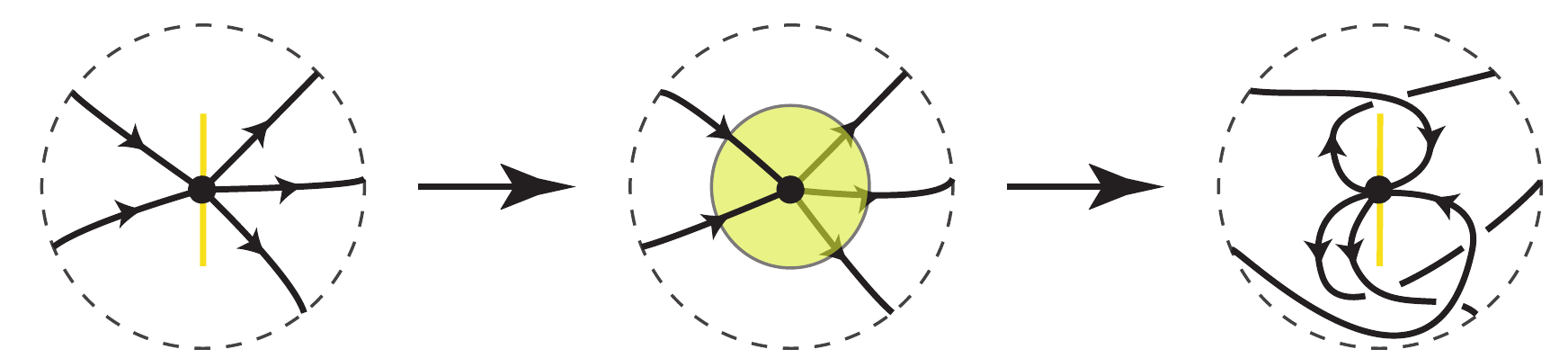}
\caption{An example of what can happen when a disk flips over, moving through a position parallel to $P_{proj}$.  }\label{fig_DiskFlip}
\end{center}
\end{figure}

We claim that one needs only Reidemeister moves RI -- RIV, and R$\overline{\textrm{V}}$ to get all ambient isotopies.  
One might be concerned that this is incomplete because of the danger of a vertex flipping over (i.e. moving through a position where the disk is parallel to the plane of projection) resulting in a change in the diagram like that shown in Figure \ref{fig_DiskFlip}.  
However, this move and any move like it can be obtained with the set of Reidemeister moves RI, RII, RIII, RIV, and R$\overline{\textrm{V}}$.  
To discuss this we will introduce another type of graph.  
A \textbf{flat vertex graph} or \textbf{rigid vertex graph} is a spatial graph where the vertices are flat disks or polygons with edges attached along the boundary of the vertex at fixed places.  
The set of Reidemeister moves for flat vertex graphs is RI -- RIV as before, and the move RV* \cite{Kauf}.  
Reidemeister move RV* is where the flat vertex is flipped over $180^{\circ}$ (see Figure \ref{fig_RV*}).  
\begin{figure}[h]
\begin{center}
\includegraphics{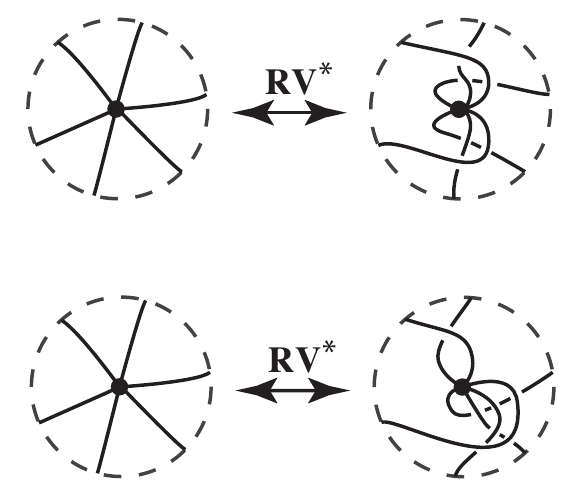}
\caption{Two different RV* moves for flat vertex graphs.  }\label{fig_RV*}
\end{center}
\end{figure}
For the move RV* a choice is made of how many edges are on each side when the vertex is flipped, but only one of these moves is need together with Reidemeister moves RI -- RIV to do any of the other ones \cite{Kauf} (see Figure \ref{fig_betweenRV*}).
In the case of transverse spatial graphs repeated use of R$\overline{\textrm{V}}$ will result in what looks like a RV* move, shown in Figures \ref{fig_RV*InMany} and \ref{fig_RV*WeHave}.  
Thus the  vertex flipping over can be accomplished by Reidemeister moves RI -- RIV, and R$\overline{\textrm{V}}$.
\end{proof}

\begin{figure}[h]
\begin{center}
\includegraphics[scale=0.5]{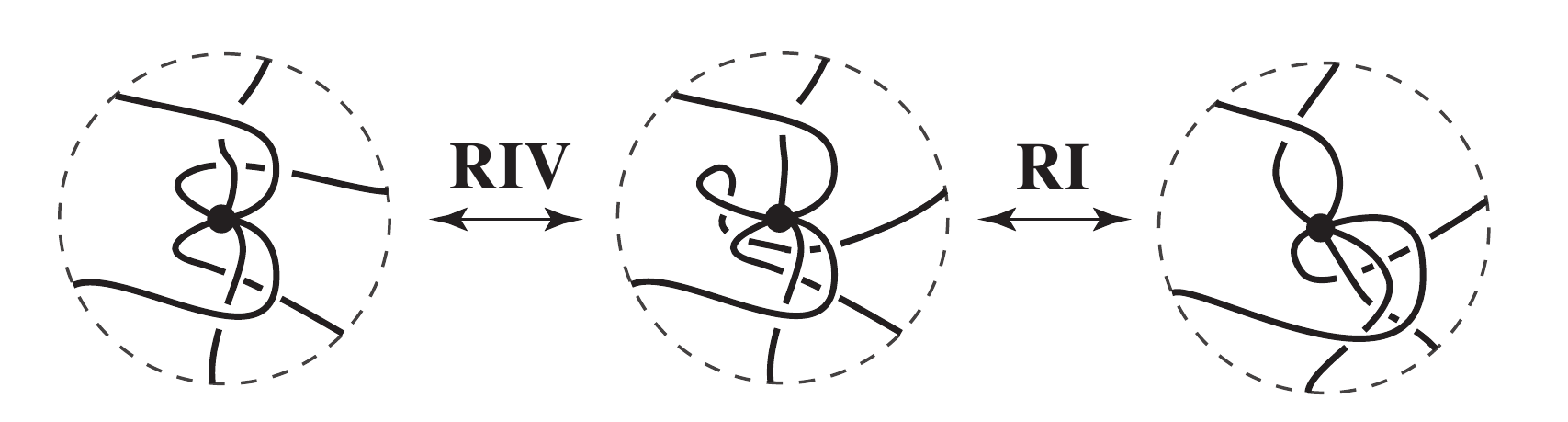}
\caption{This shows how to move between the two different RV* moves shown in Figure \ref{fig_RV*}.  
So only one RV* move is needed for each valence of vertices.  }\label{fig_betweenRV*}
\end{center}
\end{figure}

\begin{figure}[h]
\begin{center}
\includegraphics[scale=0.5]{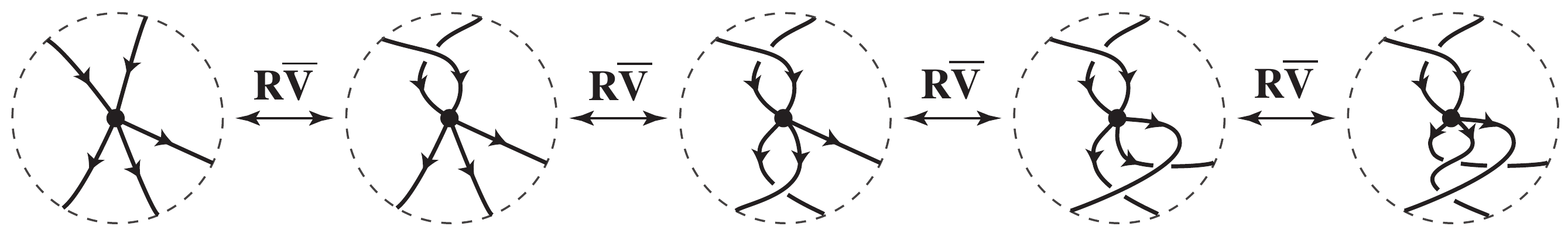}
\caption{This shows how many R$\overline{\textrm{V}}$ moves will give a RV* move.  }\label{fig_RV*InMany}
\end{center}
\end{figure}

\begin{figure}[h]
\begin{center}
\includegraphics[scale=0.5]{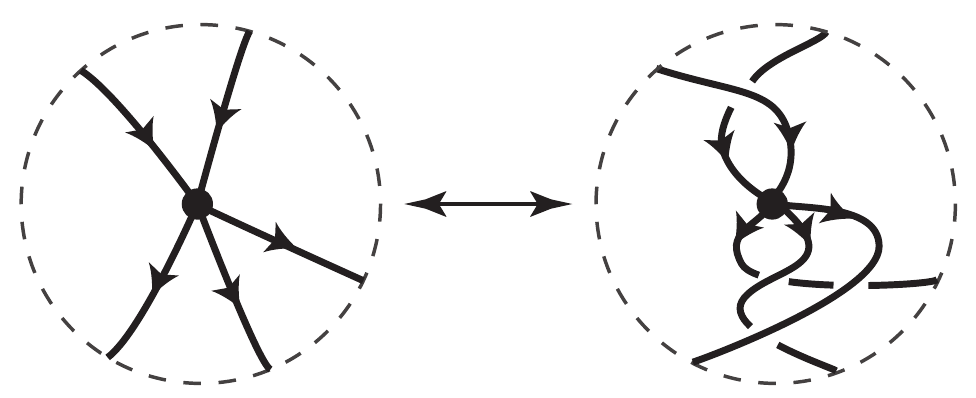}
\caption{The RV* move that is a result of many R$\overline{\textrm{V}}$ moves.  }\label{fig_RV*WeHave}
\end{center}
\end{figure}

\section{Graph Grid Diagrams}\label{sec:grid}

In this section, we define the notion of a graph grid diagrams and explain their relationship to transverse spatial graphs.  To each graph grid diagram we associate a unique transverse spatial graph.  On the other hand, we show that every transverse spatial graph can be represented by a non-unique graph grid diagram.  As with grid diagrams for knots and links, we define a set of moves on graph grid diagrams (cyclic permutation, commutation$'$, and stabilization$'$) that we call graph grid moves.  
Finally, we prove that any two graph grid diagrams representing the same transverse spatial graph are related by a sequence of graph grid moves.

\subsection{Graph Grid Diagrams}\label{subsec:graphgrid}

We will assume that the reader is familiar with grid diagrams for knots and links, see \cite{MOS, MOST}.  
Recall that a (planar) grid diagram for a link is an $n \times n$ grid of squares in the plane where each square is decorated with an $X$, and $O$, or nothing and such that every row (respectively column) contains exactly one $X$ and exactly one $O$.  
Here, we are using the notation in \cite{MOST}.  
To each grid diagram, one can associate a planar link diagram by drawing horizontal line segments from the $O$'s to the $X$'s in each row, and vertical line segments from the $X$'s to the $O$'s in each column with the convention at the crossings that a vertical segment always goes over a horizontal segment.  
We will define a more general class of grid diagrams that will represent transverse spatial graphs.  
Before defining a grid diagram we need a technical definition.

\begin{definition} Suppose $D$ is a $n$ by $n$ grid where each square is decorated with an $X$, $O$ or is empty.  We let $\mathbb{X}$ be the set of $X$'s and $\mathbb{O}$ be the set of $O$'s. We say that two elements $p, q \in \mathbb{X} \cup \mathbb{O}$ are related if $p$ and $q$ share a row or column.  Let $\sim$ be the equivalence relation generated by this relation.  We define the \textbf{connected components} of $D$ to be the equivalence classes of $\sim$.
\end{definition}

\begin{definition} A \textbf{graph grid diagram}, $g$, is an $n$ by $n$ grid where each square is decorated with an $X$, $O$, or is empty, a subset of the $O$'s are decorated with an $\ast$, and that satisfies the following conditions.   
There is exactly one $O$ in each row and column.
Each connected component contains at least one $O$ decorated with an $\ast$.
If a row (respectively column) does not contain exactly one $X$ then the $O$ in that row (respectively column) must be decorated with an $\ast$.  
The total number of rows (or the total number of columns), $n$, is called the \textbf{grid number} of $g$.  
The $O$'s decorated with an $\ast$ are called \textbf{vertex $O$'s}. We will say that an $O$ is \textbf{standard}, if the $O$ has exactly one $X$ in its row and exactly one $X$ in its column; otherwise we say it is \textbf{nonstandard}. Often, it will be convenient to number the $O$'s and $X$'s by $\{O_i\}_{i=1}^n$ and $\{X_i\}_{i=1}^m$.  When numbering, we always assume that $O_1, \dots, O_V$ correspond to vertex $O$'s.  
\end{definition}

For convenience, we may sometimes omit the $\ast$ from a figure when it is clear which $O$'s should have an $\ast$ (the nonstandard ones).  It will also be convenient to think of the grid as the set $[0,n] \times [0,n]$ in the plane, with vertical and horizontal grid lines of the form $\{i\} \times [0,n]$ and $[0,n] \times \{i\}$ where $i$ is an integer from $0$ to $n$, and the $X$'s and $O$'s are at half-integer coordinates. 

As in \cite{MOS,MOST}, our chain complex is obtained from a graph grid diagram, with the main difference being the definition of the Alexander grading.   To define this, it is sometimes necessary to consider toroidal graph grid diagrams instead of (planar) graph grid diagrams.  A \textbf{toroidal graph grid diagram} is a graph grid diagram that is considered as being on a torus by identifying the top and bottom edges of the grid and identifying the left and right edges of the grid.  We denote the toroidal graph grid diagram by $\T$.   We view the torus as being oriented and the orientation being inherited from the plane.  When the context is clear, we will just call it a graph grid diagram.   In a toroidal graph grid diagram, the horizontal and vertical gridlines, 
become circles.  We denote the horizontal circles by $\alpha_1, \dots, \alpha_n$, the vertical circles $\beta_1, \dots, \beta_n$ and we let $\alphas=\{\alpha_1, \dots, \alpha_n\}$ and $\betas=\{\beta_1, \dots, \beta_n\}$. When the grid is drawn on a plane, by convention, we will order the horizontal (respectively vertical) circles from bottom to top (respectively left to right) so that the leftmost circle is $\beta_1$ and the bottommost circle is $\alpha_1$.  
Note, to get a (planar) diagram from a toroidal diagram, one takes a \textbf{fundamental domain} for the torus and cuts along a horizontal and vertical grid circle
and identifies it with $[0,n)\times [0,n)$.

\subsection{Graph Grid Diagrams to Transverse Spatial Graphs and Their Diagrams}\label{subsec:gridtograph} Let $g$ be a graph grid diagram.  We can associate a transverse spatial $f$ to $g$ as follows.  First put a vertex at each of the $O$'s that are decorated with an $\ast$.  Let $O_i$ be an $O$ in $g$ lying in row $r_i$ and column $c_i$.  For each $X_j$ in row $r_i$, connect $O_i$ to $X_j$ with an arc inside of the row (oriented from $O_i$ to $X_j$) so that it is disjoint from all the $X$'s and $O$'s and so that all the arcs in row $r_i$ are disjoint from one another.  We will call these horizontal arcs.  Now push the interior of the arcs in row $r_i$ slightly upwards, above the plane.  For each $X_j$ in column $c_i$, connect $X_j$ to $O_i$ with an arc inside of the column (oriented from $X_j$ to $O_i$) so that it is disjoint from all the $X$'s and $O$'s and so that all the arcs in row $c_i$ are disjoint from one another.  We will call these vertical arcs. Now push the interior of the arcs in column $c_i$ slightly downward, below the plane.  Put a disk in the square containing $O_i$ with $O_i$ at its center. In this case we say that the \textbf{graph grid diagram $g$ represents the spatial graph $f$}.  Any choice of arcs gives the same transverse spatial graph.  

Note that the aforementioned procedure will actually give us a (non-unique) projection of the transverse spatial graph.  However, this will not be a diagram of $f$ since the transverse disks will be parallel to the plane of projection.  It will be convenient for us to define a class of grid diagram that give a well define diagram of a transverse spatial graph when following this procedure.  

Consider a graph grid diagram. For a nonstandard $O$, let the set of $X$'s that appear in a row or column with this $O$ be  called its \textbf{flock}.  
If the $X$'s in the flock of an $O$ are all adjacent to the $O$ or adjacent to other $X$'s that are adjacent to the $O$, then the flock is said to be \textbf{clustered}.  
A flock is in \textbf{L-formation}, if the $X$'s are all to the right and above the $O$.  
It should be noted that the choice of having the $X$'s above and to the right of the $O$ is arbitrary, any pair of below and to the right, above and to the left, or below and to the left will work similarly.
A \textbf{preferred graph grid diagram} is a graph grid diagram where all nonstandard $O$'s (with more than one $X$ in it's flock) have their flocks in L-formation.  

Now suppose that $g$ is a preferred graph grid diagram.  We follow the procedure in the first paragraph of this subsection except that now we put the transverse disks perpendicular to the plane so that they divide the horizontal and vertical arcs.  Moreover, to get a unique diagram $D$, the ordering of the edges around the vertex for nonstandard $O$'s is given by the convention illustrated in Figure \ref{gridvertex}.  In this case we say that the \textbf{graph grid diagram $g$ represents the diagram of the transverse spatial graph, $D$}.
We note that the transverse spatial graph associated to this diagram is equivalent to the transverse spatial graph obtained by following the procedure in the first paragraph of this subsection. 
\begin{figure}[h]
\begin{center}
\begin{picture}(250, 90)
\put(0,10){\includegraphics[scale=0.5]{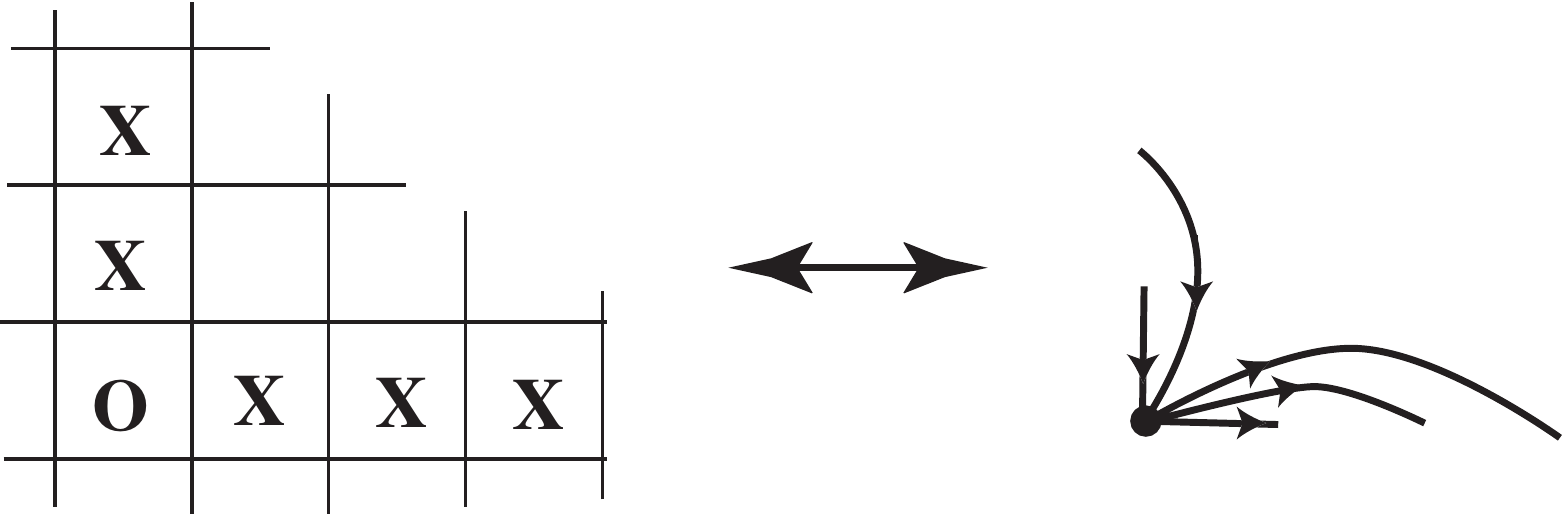}}
\put(20,25){\large{*}}
\end{picture}
\caption{On the left there is a graph grid diagram, of a nonstandard $O$ with the flock in L-formation.  On the right is the diagram of the associated vertex for this portion of the graph grid diagram.  This shows the order in which the edges appear around the vertex. }\label{gridvertex}
\end{center}
\end{figure}

\begin{figure}[htpb!]
\begin{center}
\begin{picture}(250, 110)
\put(0,10){\includegraphics[scale=0.5]{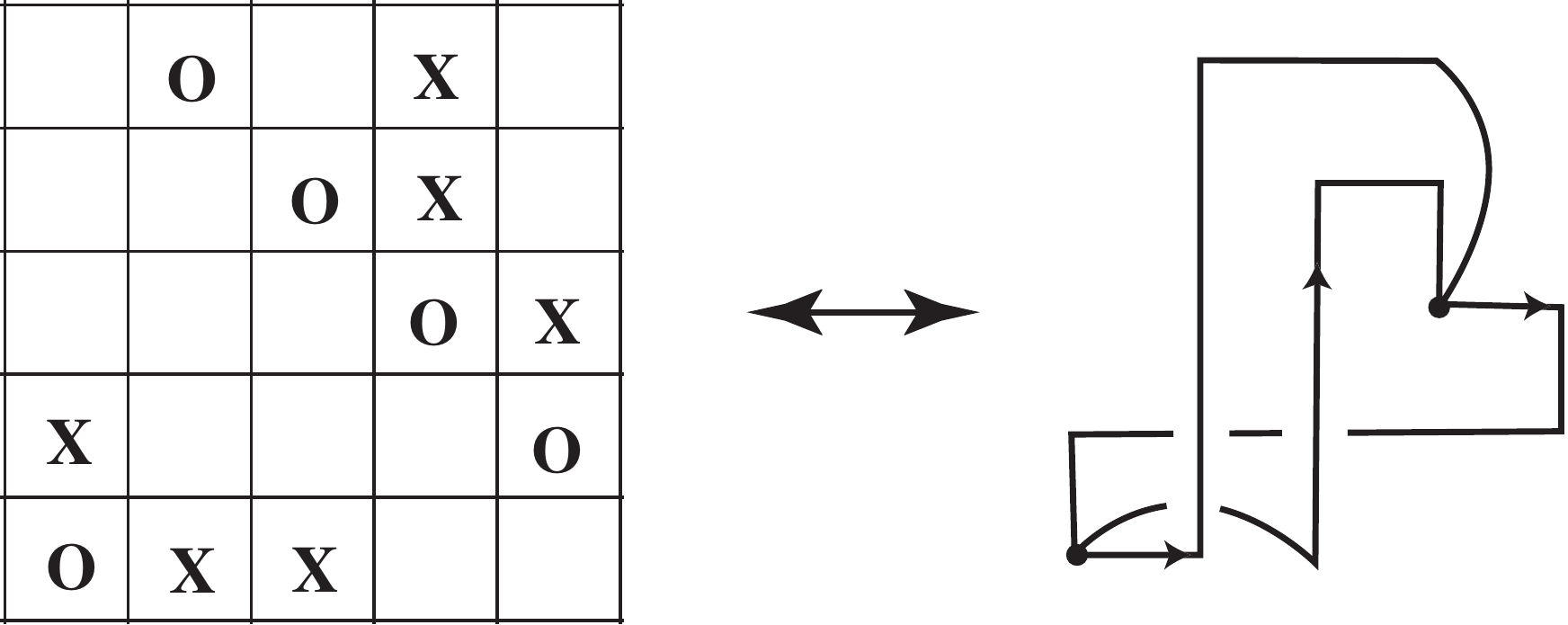}}
\put(14,19){*}
\put(72,58){*}
\end{picture}
\caption{On the left there is a preferred grid diagram.  
On the right is the diagram of the transverse spatial graph associated with this graph grid diagram.   }\label{fig:grid}
\end{center}
\end{figure}

\subsection{Graph Embedding to Preferred Graph Grid Diagram.}\label{ssec:Gtog}

We have shown that to each graph grid diagram, we can associate a transverse spatial graph.  We now show that for each transverse spatial graph, there is a (preferred) graph grid diagram representing it.  

\begin{proposition}\label{prop:grid_repr_sg}Let $f:G \rightarrow S^3$ be a transverse spatial graph.  Then there is a preferred graph grid diagram $g$ representing $f$.  Moreover, for each diagram of a transverse spatial graph, $D$, there is a preferred graph grid diagram $g$ representing $D$.   \end{proposition}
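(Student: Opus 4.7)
The plan is to prove the stronger second assertion directly: starting from any diagram $D$ of $f$, I will modify $D$ by a sequence of planar isotopies and the allowed graph Reidemeister moves into a ``rectilinear'' form from which a preferred graph grid diagram can be read off by inspection. Applying this to any chosen diagram of $f$ then gives the first assertion.

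The first step is to put the diagram into a canonical local form at each vertex. At a vertex $v$ with $k$ incoming and $\ell$ outgoing edges, the transverse disk $D_v$ already separates the incoming group from the outgoing group. Using planar isotopy in a small neighborhood of $v$, together with ${\rm R}\overline{\textrm{V}}$ moves (which permute edges within each group but not across the disk), I bring the local picture into the upper--right L-configuration: the $\ell$ outgoing edges leave $v$ as a horizontal fan directed rightward, and the $k$ incoming edges arrive at $v$ as a vertical fan coming from above. When $v$ has valence two this reduces to one outgoing arc exiting horizontally to the right and one incoming arc entering vertically from above, which will become a standard vertex $O$; when the valence exceeds two this is precisely the L-formation required at a nonstandard vertex $O$.

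The second step is to rectilinearize the edges outside small, disjoint neighborhoods of the vertices. Since we work in the PL category, every edge is a finite union of linear segments. By a small planar isotopy I replace each such segment with a staircase of short alternating horizontal and vertical arcs; choosing the mesh of the staircase smaller than the minimum distance between features of $D$ introduces no new crossings and does not disturb the vertex neighborhoods arranged in step one. At each crossing, if the over-strand is horizontal and the under-strand vertical, a local planar isotopy (or an RII followed by RIII) interchanges the two strands of the staircase near the crossing, so that after this step every crossing has a vertical over-strand and a horizontal under-strand, matching the convention used to recover a diagram from a graph grid diagram. Finally, superimpose an $n \times n$ grid on the rectilinearized picture, with $n$ large enough that each vertex and each staircase corner lies in its own row and its own column. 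Place an $O$ decorated with $\ast$ on each vertex; place an $X$ at each corner where a horizontal segment meets a vertical one; and place additional ``pass-through'' $O$'s (with a compensating trivial detour of an edge through their row and column, via planar isotopy) wherever a row or column would otherwise fail to contain exactly one $O$. By step one, every nonstandard vertex $O$ has its flock in the upper right, i.e.\ in L-formation; by step two the crossing convention matches; and by construction each row and column contains exactly one $O$. The result is a preferred graph grid diagram representing $D$, as required.

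The main obstacle is coordinating the local vertex arrangement of step one with the global rectilinearization of step two and with the one-$O$-per-row-and-column requirement. The first tension is handled by carrying out step one inside small disjoint disks around the vertices and then performing step two only outside those disks, so that the two constructions do not interact. The second is handled by first fixing the vertex rows and columns, choosing the grid fine enough to separate all corners and vertices, and then inserting trivial jog detours along edges to supply an $O$ to any row or column that is missing one. Neither of these difficulties is genuinely new compared to Cromwell's classical argument for links; the only substantively new ingredient is the vertex-local L-configuration argument in step one, which relies precisely on the fact that ${\rm R}\overline{\textrm{V}}$ moves permute within the incoming and outgoing groups that the disks $D_v$ separate.
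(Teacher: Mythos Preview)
Your overall plan---put each vertex neighborhood into the upper--right L-configuration, rectilinearize the rest of the diagram, normalize the crossings, and read off a grid---is exactly what the paper does. However, your step two contains a genuine gap in how the grid is populated with decorations.

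You place an $X$ at \emph{every} corner of the rectilinearized picture and an $O^\ast$ only at each vertex, and then propose to enforce the one-$O$-per-row/column condition afterward by inserting ``pass-through'' $O$'s via detours. This does not work. Along an oriented edge the corners must \emph{alternate} in type: a corner where the edge turns from horizontal to vertical is an $X$ (horizontal arcs run from $O$ to $X$), while a corner where it turns from vertical to horizontal is an $O$ (vertical arcs run from $X$ to $O$). If every corner is marked $X$, then a vertical segment on the interior of an edge joins two $X$'s in the same column; no $O$ placed elsewhere in that column will make the grid's associated diagram trace that segment, since in a graph grid diagram the vertical arcs run from the $X$'s to the unique $O$, never between $X$'s. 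Moreover, your detour mechanism only creates more corners, which under your rule would again be labeled $X$. The paper instead labels corners by orientation (``$X$'s and $O$'s are placed along the edges at the corners consistent with the orientation''), after which each vertical segment contributes exactly one $O$ to its column and the one-$O$ condition holds automatically---no pass-through mechanism is needed.

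A smaller point: in step one you invoke ${\rm R}\overline{\textrm{V}}$ moves in addition to planar isotopy. They are unnecessary; planar isotopy in a neighborhood of the vertex already suffices to rotate the disk and fan the incoming and outgoing groups into the L-configuration, and that is all the paper uses.
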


\begin{proof}Chose a diagram $D$ of the transverse spatial graph $f$.  We construct a graph grid diagram $g$, representing $D$, by the following procedure.
At the vertices the edges are partitioned into two sets, incoming edges and outgoing edges, as $D$ is a diagram of a transverse spatial graph.  
Move the edges around each vertex (and perhaps the disk) by planar isotopy so that all outgoing edges are to the right of the vertex and all incoming edges are above the vertex, as shown in Figure \ref{gridvertex}.  
Away from vertices the process is the same as that for knots or links.  
The arcs of the edges are made ``square."  
All crossings are made so that the horizontal arc goes under the vertical arc (see Figure \ref{fig:sqcrossing}).  
Then the diagram is moved via a planar isotopy so that no vertical arcs or  vertices with their incoming edge arcs are in the same vertical line, and similarly for horizontal arcs and vertices with their outgoing edge arcs.  
A vertex along with its incoming edge arcs are associated with a single column, and the vertex together with its outgoing edge arcs are associated with a single row (see Figure \ref{gridvertex}).   
Each of the vertical and horizontal arcs are also associated with a column and  row of the grid, respectively.  
This will result in an equal number of rows and columns.  
Each vertex will add a row and a column.  
Each vertical arc that are not next to a vertex will add a column.  
Each vertical arc can be paired with the following horizontal arc that is not next to a vertex, which will add a row.  
A graph grid representation is then given by placing $X$'s and $O$'s on the $n$ by $n$ grid.  
At vertices a single $O^\ast$ is placed at the vertex, then an $X$ is placed in the same row at the corner of each of the outgoing edges and an $X$ is placed in the same column at the corner for each of the incoming edge, this is done as shown in Figure \ref{gridvertex}.  
Next $X$'s and $O$'s are placed along the edges at the corners consistent with the orientation, arcs go from $X$'s to $O$'s in columns and for $O$'s to $X$'s in rows.  

\end{proof}

\begin{figure}[h]
\begin{center}
\includegraphics[scale=0.4]{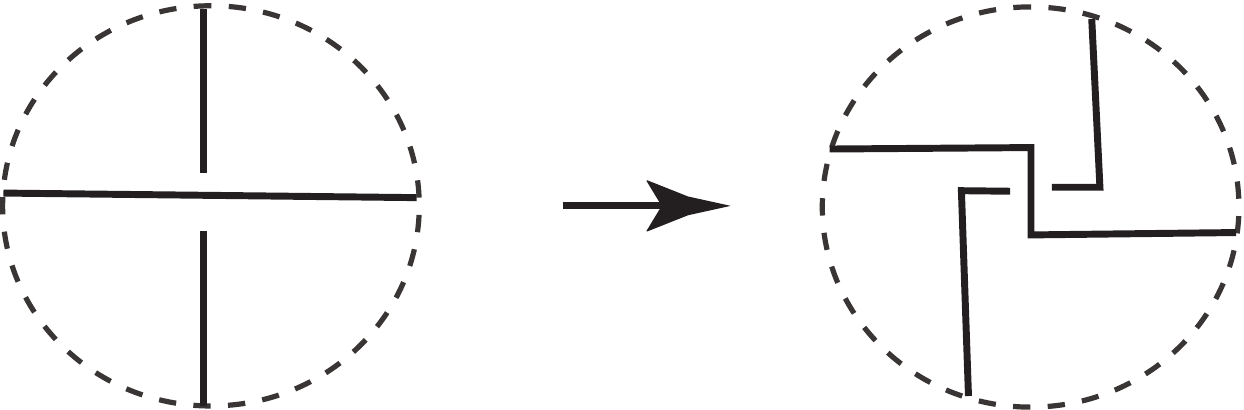}
\caption{This shows how to change to a diagram with vertical over-crossings without changing the embedded graph.  }\label{fig:sqcrossing}
\end{center}
\end{figure}

\subsection{Grid Moves}
Following Cromwell \cite{Crom} and Dynikov \cite{Dy}, any two grid diagrams of the same link are related by a finite sequence of grid moves: 
\begin{description}
\item[Cyclic permutation] The rows and columns can be cyclically permuted (see Figure \ref{permutation}).
\begin{figure}[htpb!]
\begin{center}
\includegraphics{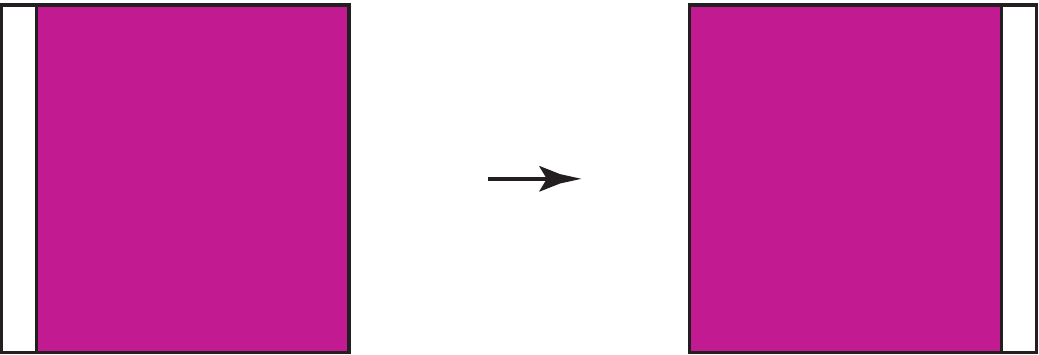}
\caption{An example of cyclic permutation of columns.  }\label{permutation}
\end{center}
\end{figure}

\item[Commutation] Pairs of adjacent columns (respectively rows) may be exchanged when the following conditions are satisfied.  For columns, the four X’s and O’s in the adjacent columns must lie in distinct rows, and the vertical line segments connecting O and X in each column must be either disjoint or nested (one contained in the other) when projected to a single vertical line.  There is an obvious analogous condition for rows (see Figure \ref{comm}).  

\begin{figure}[htpb!]
\begin{center}
\includegraphics{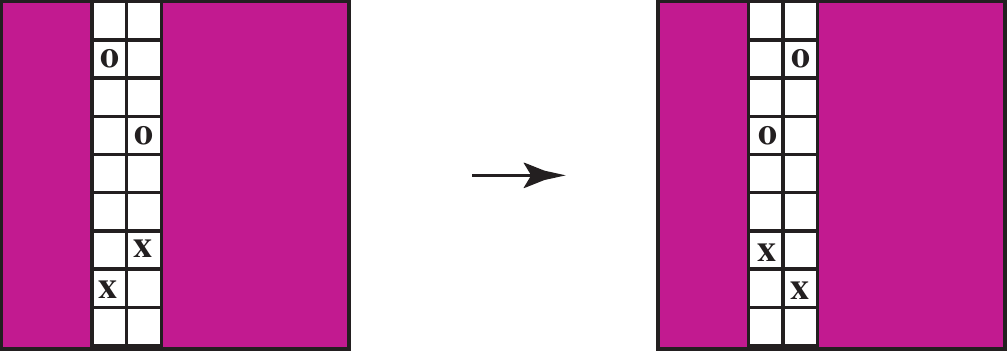}
\caption{An example of commutation of columns.  }\label{comm}
\end{center}
\end{figure}
\item[Stabilization/Destabilization]Let $g$ be an $(n-1)\times (n-1)$ graph grid diagram with decorations $\{O_i\}_{i=1}^{n-1}$ and $\{X_j\}_{j=1}^{n-1}$.  Then $\bar{g}$, an $n \times n$ graph grid diagram, is a stabilization of $g$ if it is obtained from $g$ as follows. Suppose there is a row of $g$ that contains $O_i$ and $X_j$.  In $\bar{g}$, we replace this one row with two new rows and add one new column.  We place $O_i$ into one of the new rows (and in the same column as before) and $X_j$ into the other new row (and in the same column as before).  We place decorations $O_{n}$ and $X_{n}$ into the new column so that $O_{n}$ occupies the same row as $X_j$ and $X_{n}$ occupies the same row as $O_i$. See Figure \ref{stab} for an example.  There is a similar move where the roles of columns and rows are interchanged. 
A destabilization is the reverse of a stabilization.
\end{description}

\begin{figure}[htpb!]
\begin{center}
\includegraphics{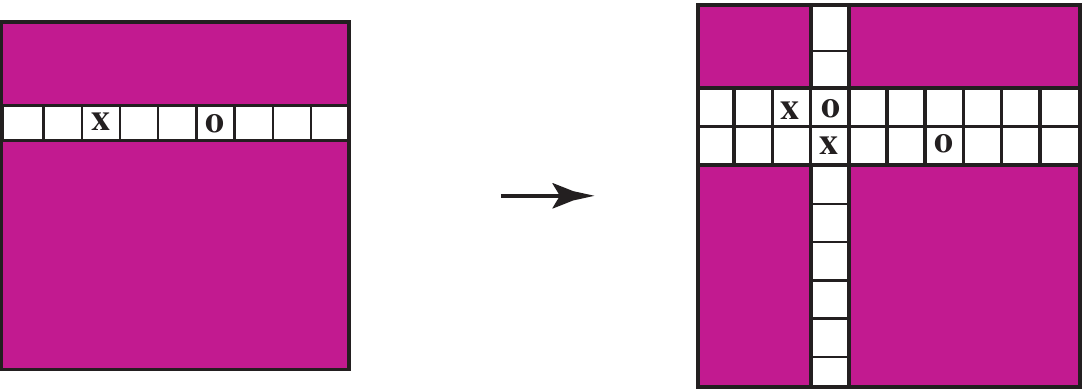}
\caption{An example of stabilization.  }\label{stab}
\end{center}
\end{figure}

For the graph grid moves there are two differences.  
We will replace the usual commutation with a slightly more general commutation$'$ to include exchanging neighboring columns which have entries in the same row (or rows with entries in the same columns) and to include exchanges of rows and columns that have more than a single $X$ in them (or no $X$'s).  
We will also restrict the stabilization/destabilization move to only occur along edges (which we explain below).

\vspace{12pt}
\noindent{\bf The Graph Grid Moves:}\\
Cyclic permutation is unchanged.
\begin{description}
\item[Cyclic permutation] The rows and columns can be cyclically permuted (see Figure \ref{permutation}).
\end{description}
Commutation will be replaced with with the more general commutation$'$.
\begin{description}
\item[Commutation$'$]  
Pairs of adjacent columns may be exchanged when the following conditions are satisfied.  There are vertical line segments $LS_1$ and $LS_2$ on the torus such that (1) $LS_1 \cup LS_2$ contain all the $X$'s and $O$'s in the two adjacent columns, (2) the projection of $LS_1 \cup LS_2$ to a single vertical circle $\beta_i$ is $\beta_i$, and (3) the projection of their endpoints, $\partial(LS_1) \cup \partial(LS_2)$, to a single $\beta_i$ is precisely two points.  Here we are thinking of $\mathbb{X}$ and $\mathbb{O}$ as a collection of points in the grid with half-integer coordinates.  There is an obvious analogous condition for rows (see Figure \ref{comm'}).
\end{description}
\begin{figure}[htpb!]
\begin{center}
\begin{picture}(400, 220)
\put(0,0){\includegraphics{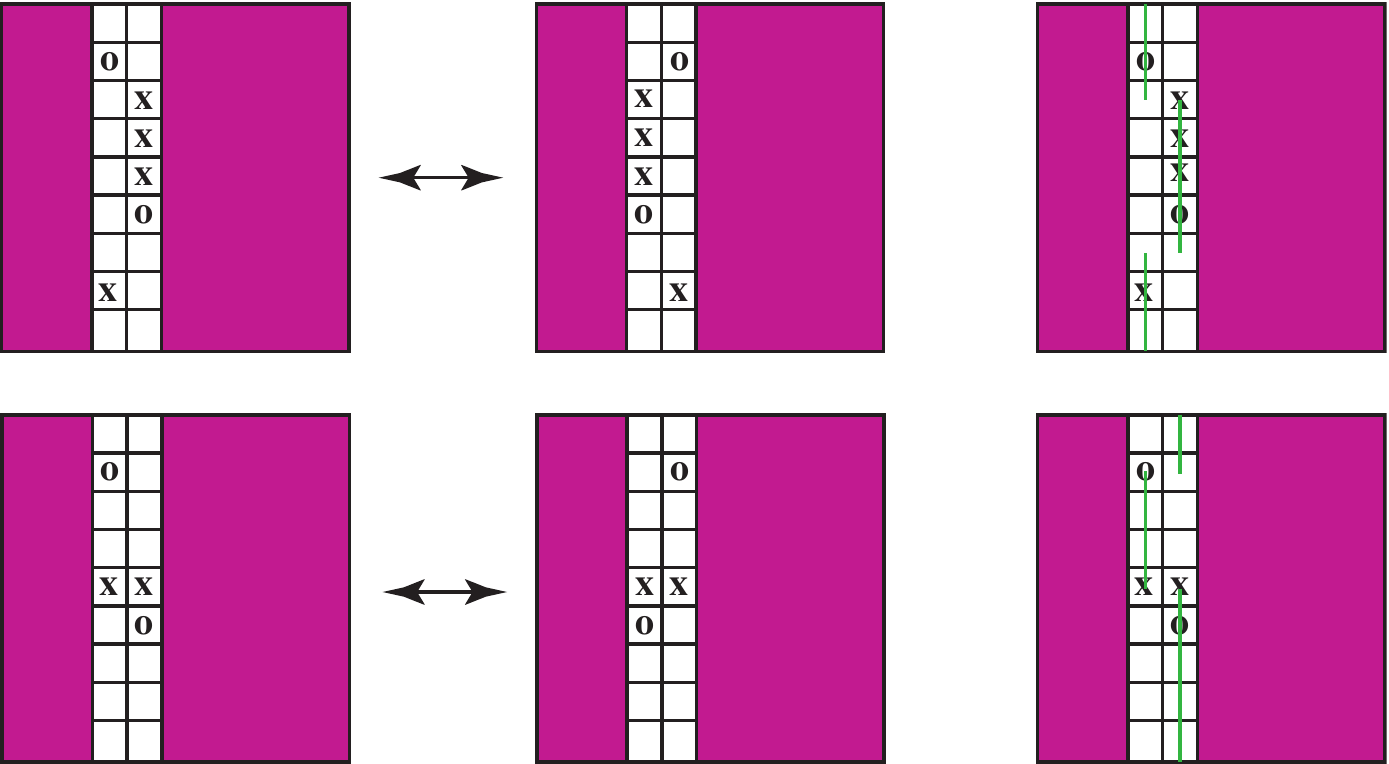}}
\put(43,158){\tiny{*}}
\put(187,158){\tiny{*}}
\end{picture}
\caption{Two examples of commutation$'$ moves of columns.  }\label{comm'}
\end{center}
\end{figure}
We define a generalization of stabilization, called stabilization$'$.  This move will add a jog or a nugatory crossing to the edge of the projection of the associated transverse spatial graph.  
\begin{description}
\item[Stabilization$'$/Destabilization$'$] 	Let $g$ be an $(n-1)\times (n-1)$ graph grid diagram with decorations $\{O_i\}_{i=1}^{n-1}$ and $\{X_j\}_{j=1}^{m-1}$.  Then $\bar{g}$, an $n\times n$ graph grid diagram, is a row stabilization$'$ of $g$ if it is obtained from $g$ as follows. Suppose there is a row of $g$ that contains the decorations $O_k, X_{j_1}, \dots X_{j_l}$ with $l\geq 1$.  In $\bar{g}$, we replace this one row with two new rows and add one new column.  We place $O_k, X_{j_2}, \dots, X_{j_l}$ into one of the new rows (and in the same column as before) and $X_{j_1}$ into the other new row (and in the same column as before).  We place decorations $O_{n}$ and $X_{m}$ into the new column so that $O_{n}$ occupies the same row as $X_{j_1}$ and $X_{m}$ occupies the same row as $O_k$. See Figure~\ref{stabnew} for an example.  
	A column stabilization$'$ is a row stabilization$'$ where one reverses the roles of rows and columns.  We say that $\bar{g}$ is obtained from $g$ by a \textbf{stabilization$'$} if it is obtained by a row or column stabilization$'$.   A destabilization$'$ is the reverse of a stabilization$'$.
\end{description}

Note that $O_{n}$ will not be associated to a vertex so will not be decorated with an $\ast$.  Also, if any $O_i$ is  decorated with an $\ast$ (including $O_k$) in $g$ then it will also be decorated with an $\ast$ in $\bar{g}$.  We do not allow stabilization$'$ of rows with no $X$'s in them.

\begin{rem}\label{rem:stab_simple}If $\bar{g}$ is obtained as row stabilization$'$ on the graph grid diagram $g$ then one can use multiple commutation$'$ moves to change $\bar{g}$ into a row stabilization$'$ obtained from $g$ where $X_{j_1}, X_{m}, O_{n}$ share a corner, $X_{j_1}$ is directly to the left of $O_{n}$, and $O_n$ is directly above $X_m$ (as in Figure~\ref{stab_prime}).  Note that by using only commutation, like in \cite{MOST}, one can only assume that $X_{j_1}, X_{m}, O_{n}$ share a corner which leaves you with four cases instead of one.  This will allow us to simplify the proof of stabilization$'$.   There is a similar statement for column stabilization$'$.   
\end{rem}

\begin{figure}[htpb!]
\begin{center}
\begin{picture}(391,167)
\put(0,0){\includegraphics{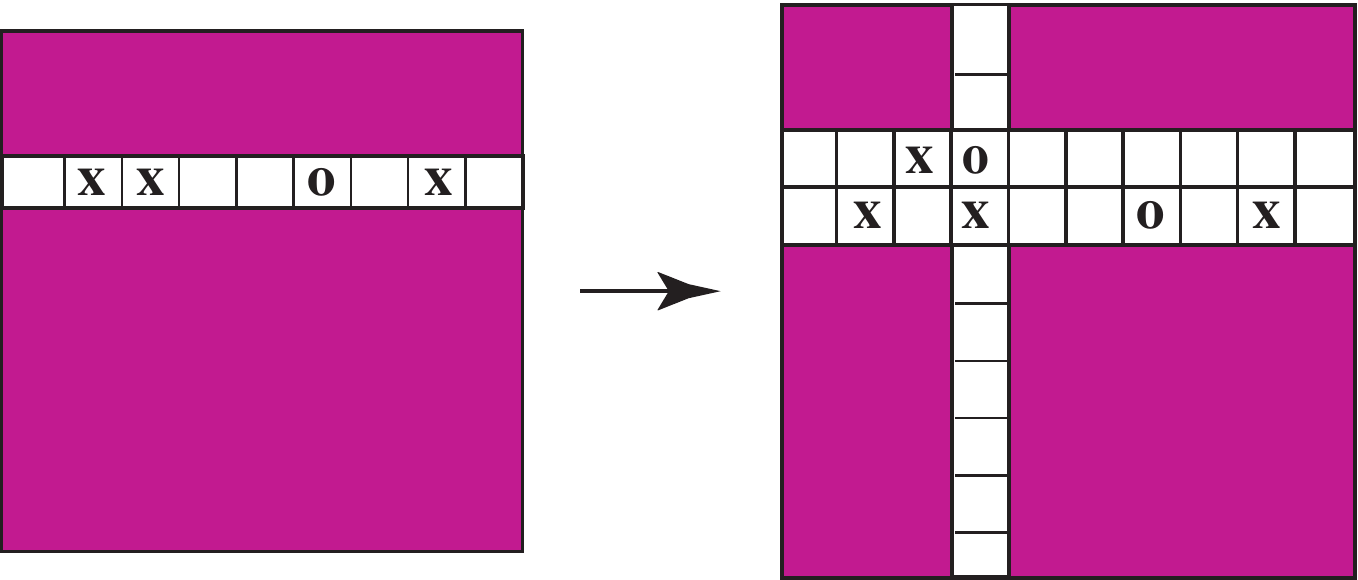}}
\put(30,110){\tiny $1$}
\put(47,110){\tiny $2$}
\put(96,110){\tiny $k$}
\put(96,118){\small $\ast$}
\put(130,110){\tiny $3$}
\put(254,100){\tiny $1$}
\put(270,115){\tiny $2$}
\put(285,115){\tiny $n$}
\put(285,100){\tiny $m$}
\put(336,100){\tiny $k$}
\put(336,108){\small $\ast$}
\put(370,100){\tiny $3$}
\end{picture}
\caption{An example of stabilization$'$.}\label{stab_prime}
\label{stabnew}
\end{center}
\end{figure}

\vspace{12pt}

\subsection{The Graph Grid Theorem}
Before the main theorem of this section we need a lemma.  
To each diagram of a transverse spatial graph $f$ there are an infinite number of different graph grid diagrams representing $f$ that can be constructed using the procedure described in the proof of Proposition~\ref{prop:grid_repr_sg}.  
This procedure produces a preferred grid diagram. However, doing a graph grid move on a preferred graph grid diagram will result in diagrams that are not necessarily in preferred form.  Moreover, if one chooses a random graph grid diagram representing a transverse spatial, it will not necessarily be in preferred form.  Indeed, in practice, one can often reduce the size of the grid number by making it not preferred.  

\begin{figure}[htpb!]
\begin{center}
\begin{picture}(500,140)
\put(0,0){\includegraphics[scale=0.45]{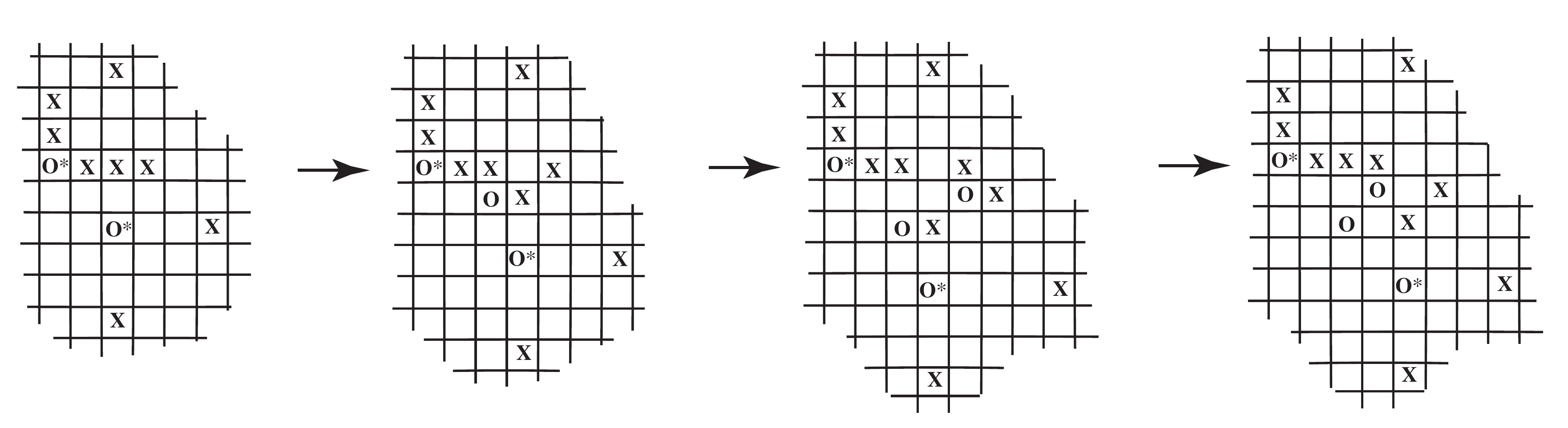}}
\put(85,82){\tiny {\bf STAB$'$}}
\put(204,82){\tiny {\bf STAB$'$}}
\put(330,82){\tiny {\bf COMM$'$}}
\end{picture}
\caption{This shows an example of the moves needed to separate the flocks of two $O^\ast$ and move the upper most flock back into L-formation.  }
\label{fig:sharedX}
\end{center}
\end{figure}

\begin{figure}[htpb!]
\begin{center}
\begin{picture}(425,140)
\put(0,0){\includegraphics[scale=0.55]{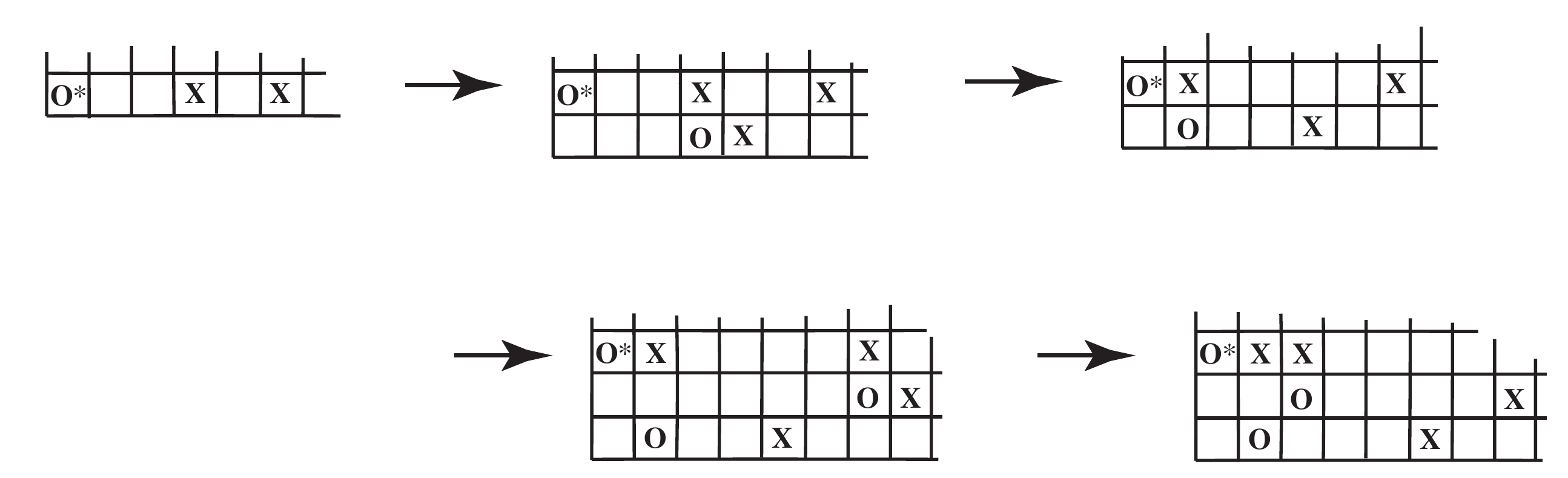}}
\put(105,110){\tiny {\bf STAB$'$}}
\put(245,110){\tiny {\bf $2\cdot$COMM$'$}}
\put(119,38){\tiny {\bf STAB$'$}}
\put(265,38){\tiny {\bf $4\cdot$COMM$'$}}
\end{picture}
\caption{This shows an example of the moves needed to move the $X$ is the row into L-formation.  }
\label{fig:cluster}
\end{center}
\end{figure}

\begin{lem} Every graph grid diagram $g$ representing a transverse spatial graph, $f$, is related to a preferred graph grid diagram for representing $f$ by a finite sequence of graph grid moves.  
\end{lem}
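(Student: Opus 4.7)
The plan is to process the nonstandard $O^\ast$'s one at a time, using stabilization$'$ and commutation$'$ moves to bring the flock of each into L-formation, while not disturbing the L-formation that has already been achieved at previously processed $O^\ast$'s. The key ingredients come from the two illustrative examples (Figures~\ref{fig:sharedX} and \ref{fig:cluster}), which tell us how the two atomic obstructions to being preferred are resolved.

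First, I would handle the case of \emph{shared} flocks: an $X$ that lies in the row of one nonstandard $O^\ast_i$ and simultaneously in the column of another nonstandard $O^\ast_j$ belongs to both flocks at once. In that situation, since $X$ is on an edge connecting the two vertices, one can apply a stabilization$'$ move at this shared $X$ to split it into two $X$'s, so that afterwards one $X$ belongs only to the flock of $O^\ast_i$ and the other only to the flock of $O^\ast_j$, exactly as depicted in Figure~\ref{fig:sharedX}. Applying this to every shared $X$ yields a graph grid diagram representing the same $f$ in which the flocks of the nonstandard $O^\ast$'s are pairwise disjoint; the remaining work can then be done flock by flock without cross-interference.

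Next, fix a nonstandard $O^\ast$ with its now-disjoint flock $\{X_{j_1},\dots,X_{j_l}\}$. For each $X_{j_s}$ in the flock that does not already lie above and to the right of $O^\ast$ (i.e.\ that is in the wrong quadrant or that is in the right quadrant but not adjacent to the cluster), I would use a stabilization$'$ move to introduce an extra row (or column) on the edge out of $X_{j_s}$ and then apply a short sequence of commutation$'$ moves to slide the resulting $X$ into the correct position relative to $O^\ast$, exactly in the pattern shown in Figure~\ref{fig:cluster}. Because the flock is disjoint from all other flocks, commutation$'$ moves on the columns and rows near $O^\ast$ may freely push segments of other edges out of the way without spoiling previously achieved L-formations. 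After all $X_{j_s}$ have been brought into the correct quadrant, a final sequence of commutation$'$ moves pushes them together into the adjacent-L arrangement required by the convention of Figure~\ref{gridvertex}.

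Iterating this procedure over all nonstandard $O^\ast$'s produces a preferred graph grid diagram representing $f$, and uses only finitely many graph grid moves since each step is local and the number of nonstandard $O^\ast$'s is finite. The main obstacle I anticipate is combinatorial bookkeeping, namely verifying that the stabilization$'$ and commutation$'$ steps used to fix one flock never destroy the L-formation of a flock already put in order; this is exactly where the generalization from commutation to commutation$'$ (and from stabilization to stabilization$'$, which only requires $l\ge1$) is essential, because it lets us move columns and rows carrying several $X$'s past $O^\ast$'s without needing those columns or rows to be standard. A careful case analysis on the position of $X_{j_s}$ relative to $O^\ast$ (above-left, below-right, below-left) suffices to show that the recipe suggested by Figures~\ref{fig:sharedX} and \ref{fig:cluster} always applies.
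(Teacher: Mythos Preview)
Your proposal is essentially the same approach as the paper's proof: both use stabilization$'$ to separate shared $X$'s (Figure~\ref{fig:sharedX}) and then stabilization$'$ plus commutation$'$ to cluster each nonstandard flock into L-formation (Figure~\ref{fig:cluster}), processing the nonstandard $O^\ast$'s one at a time. The only organizational differences are that the paper first uses cyclic permutation to move the current $O^\ast$ to the lower-left corner (which simplifies the ``careful case analysis'' you allude to), and the paper handles the shared-$X$ situation locally---only when the other flock involved is \emph{already} in L-formation---rather than globally separating all shared $X$'s up front as you do; both orderings work for the same reasons.
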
 \label{lemma_topref}

\begin{proof}
Recall that a preferred grid diagram is one where all of the nonstandard $O$'s have their flocks L-formation.  
Given a graph grid diagram, choose a nonstandard $O$ that is not in L-formation.  Use cyclic permutation to put the $O$ at the lower left corner of the grid.
We will explain an algorithm move this $O$ into L-formation.
If there are any $X$'s in the flock with this $O$ that are also in a flock of another $O$ that is in L-formation, then the other L-formation flock will need to be moved.  
If our nonstandard $O$ of interest is in a column with an $X$ that is in L-formation with another nonstandard $O$, we use the following procedure to move the flock out of the way.  
An example of this is shown in Figure \ref{fig:sharedX}.  
We do a row stabilization$'$ at said $X$, the new row is placed below the L-formation flock.  
Now the nonstandard $O$'s no longer share an $X$, but if there were any $X$'s to the right of the previously shared $X$, the flock was split by the stabilization$'$ move and so it is no longer in L-formation.  
To move the flock back into L-formation a stabilization$'$ move is done at each of the $X$'s to the right of the split (going from left to right), each time adding a row below the flock.
After the stabilization$'$ moves are done, 
the columns containing an $X$ in the flock can be moved by commutation$'$ moves to be next to the other $X$'s in the flock.  
This is done with all of the $X$'s so the flock is in L-formation again.  
If the $X$ were to share a row with our $O$ and a column with a different nonstandard $O$ that is in L-formation, a similar procedure is done with the roles of the rows and columns switched.  

Suppose there is no $X$ in the flock that is already in L-formation with a different $O$. 
Then, a row stabilization$'$ is done on the right most $X$ that is in the row with the $O$, adding a row to the bottom of the diagram and adding an $X$ and $O$ next to each other in the new column.  
Commutation$'$ can be used to move the new column next to the nonstandard $O$, or next to $X$'s that are next to the $O$ (see Figure \ref{fig:cluster}).  
This is repeated until all of the $X$'s in the row are adjacent to the $O$.  
A similar process is done with the $X$'s in the column of the $O$, bring the $O$ into L-formation.  
This process can be repeated until all of the $O$'s are in L-formation.  

This will increase the number of nonstandard $O$'s in L-formation, because no other flock is moved out of L-formation.    We continue this until all flocks are in L-formation. 
\end{proof}

For the following proof, we need a few more definitions.  
If an $O^\ast$ is associated with a vertex $v$, and is in L-formation, then all of the columns that contain an $X$ in the flock are called \textbf{v-columns}, similarly those rows containing the flock are called \textbf{v-rows}.  
We will give a name to certain sequences of the graph grid moves, they will be called \textbf{(column or row) vertex stabilization (and destabilization)}.  
A \textbf{column (or row) vertex stabilization} introduces a stabilization to the left of (or below) all of the $X$'s in the column (or in the row) with a nonstandard $O$, as shown in Figure \ref{vstab}.  
The row vertex stabilization is a combination of a number of stabilization$'$s and commutation$'$s.  
For a nonstandard $O,$ first a row stabilization$'$ is done, where the right most $X$ is placed into the lower new row by itself, the new column is placed between the set of $O$'s and $X$'s in the  upper row, and the $X$ in the lower row.  
Next, the second from the right $X$ is moved by commutation$'$ so that it is in the right most position.  
A stabilization$'$ move is done in the same way.  
Then commutation$'$ moves are done on the rows, moving the newest row directly below the flock, below the rows created in the stabilization$'$s that happened before.  
Finally, the $X$ in the flock is moved back to the original place in the flock via commutation$'$.  
Follow the same procedure for all of the $X$'s in the row. 

\begin{figure}[h]
\begin{center}
\begin{picture}(200, 210)
\put(0,0){\includegraphics[scale=0.5]{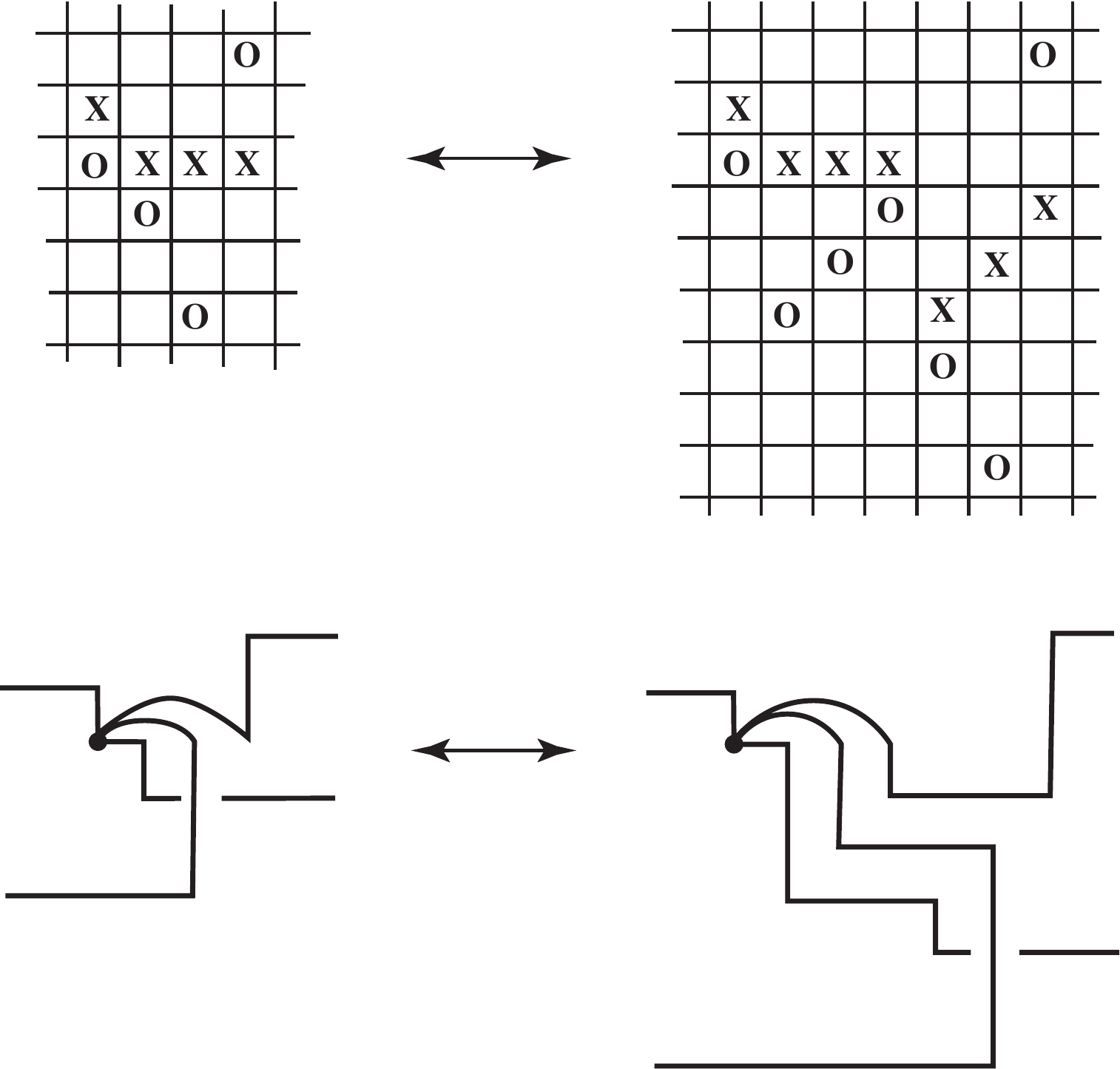}}
\put(19,176){\tiny{*}}
\put(145,176){\tiny{*}}
\end{picture}
\caption{This gives an example of a row vertex stabilization, showing what happens on a grid and the change in the associated graph.  }\label{vstab}
\end{center}
\end{figure}

\begin{thm} \label{thm:main_grid} If $g$ and $g'$ are two graph grid diagrams representing the same transverse spatial graph, then $g$ and $g'$ are related by a finite sequence of graph grid moves.  
\end{thm}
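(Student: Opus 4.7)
The plan is to combine the Reidemeister theorem for transverse spatial graphs (Section~\ref{balanced_sp_gr}) with a Cromwell-type argument for preferred graph grid diagrams. By Lemma~\ref{lemma_topref}, I can first apply graph grid moves to reduce both $g$ and $g'$ to preferred form; so assume henceforth that $g$ and $g'$ are preferred. Each preferred graph grid diagram $g$ determines a well-defined diagram $D(g)$ of $f$ via the procedure of Section~\ref{subsec:gridtograph}. Since $D(g)$ and $D(g')$ represent the ambient-isotopic transverse spatial graph $f$, the Reidemeister theorem of Section~\ref{balanced_sp_gr} says they are related by a finite sequence of planar isotopies and the moves RI--RIV and R$\overline{\textrm{V}}$. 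Thus it suffices to show (i) if $D(g)$ and $D(g')$ are planar-isotopic then $g$ and $g'$ are related by graph grid moves, and (ii) each application of a Reidemeister move RI--RIV or R$\overline{\textrm{V}}$ on the underlying diagram can be realized by a finite sequence of graph grid moves on the grid.

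For (i), the argument parallels Cromwell's theorem~\cite{Crom} for ordinary link grids: adjusting the horizontal and vertical positions of arcs in the planar picture corresponds to finitely many commutations$'$ and cyclic permutations, while the grid number can be freely enlarged or reduced via stabilization$'$/destabilization$'$. The nonstandard $O$'s occupy local ``blocks'' (their L-formation flocks) that move together, so the classical arguments go through with extra bookkeeping to keep flocks intact.

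For (ii), the moves RI, RII, and RIII occur away from vertices and are realized by the standard sequences of commutations and stabilizations of~\cite{Crom, Dy}, which are special cases of our commutation$'$ and stabilization$'$. The move R$\overline{\textrm{V}}$, which permutes two adjacent incoming (or outgoing) edges at a vertex, is realized by swapping two adjacent $X$'s within the flock of the corresponding nonstandard $O$; because the flock is in L-formation and the vertex sits in a single row and column, the swap decomposes into a short sequence of commutations$'$ (together with cyclic permutations to move the relevant columns adjacent). The move RIV (passing an arc past a vertex, over or under) is realized using the vertex stabilization described just before the theorem: it inserts a jog into the flock so that the traversing arc can be routed on the opposite side of the vertex, after which commutations$'$ and a vertex destabilization complete the maneuver.

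The main obstacle will be carrying out RIV in full generality. The cases split according to whether the traversing arc is an incoming or outgoing edge, whether it crosses over or under the flock, and the side from which it approaches the vertex; each case requires its own explicit sequence of vertex (de)stabilizations and commutations$'$, with the further caveat that intermediate diagrams must remain valid graph grid diagrams (in particular the vertex stabilization must put the jog on the correct side of the L-formation). A careful case analysis, with pictures analogous to Figures~\ref{fig:sharedX}--\ref{fig:cluster} and Figure~\ref{vstab}, will form the technical heart of the proof.
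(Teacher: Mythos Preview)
Your proposal is correct and follows essentially the same approach as the paper: reduce to preferred form via Lemma~\ref{lemma_topref}, invoke the Reidemeister theorem for transverse spatial graphs, handle RI--RIII away from vertices by the classical Cromwell--Dynnikov argument, and then treat the vertex-involving moves using vertex stabilizations and commutation$'$. Two small points of calibration: the paper does not leave the planar-isotopy step as ``extra bookkeeping'' but instead isolates exactly three vertex-involving planar isotopies (moving a valence-two $O^\ast$ along an edge, two vertices passing each other, and an arc passing a vertex) and treats each explicitly; and contrary to your expectation, RIV is handled rather quickly, while R$\overline{\textrm{V}}$ (which splits into an over case and an under case requiring different stabilization$'$ sequences) and the valence-two vertex move absorb more of the detailed work.
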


\begin{proof}  
First, using Lemma \ref{lemma_topref} we move both $g$ and $g'$ to preferred graph grid diagrams.  
We know that the diagrams of two isotopic transverse spatial graphs are related by a finite sequence of the graph Reidemeister moves, RI, RII, RIII, RIV, and R$\overline{\text{V}}$, shown in Figure \ref{Rmoves}, together with planar isotopy.  
So we need only show that preferred graph grid diagrams that result from embeddings that differ by a single Reidemeister move (or planar isotopy) can be related by a finite sequence of graph grid moves.

Due to the work of Cromwell \cite{Crom} and Dynikov \cite{Dy}, it is known that  any two grid diagrams of the same link are related by a finite sequence of grid moves, cyclic permutation, commutation, and stabilization/destabilization.  
The Reidemeister moves are local moves. In the grid diagram there is a set of columns and or rows that will be moved to accomplish any one of RI, RII or RIII.  
Because the first three Reidemeister moves do not involve vertices and we are working with preferred diagrams, the rows and columns that are moved will not contain an $O^\ast$.  
It could however contain rows or columns that that contain $X$'s that are in a flock with an $O^\ast$.  
In this case, first a vertex stabilization is done, so that the flock is not disrupted and the graph grid stays in preferred formation.  
Thus we need only show that any two preferred graph grid diagrams that come from the same embedding and differ as a results of a single Reidemeister move or planar isotopy which involves vertices can be related by a finite sequence of graph grid moves.  

\begin{figure}[h]
\begin{center}
\includegraphics[scale=0.4]{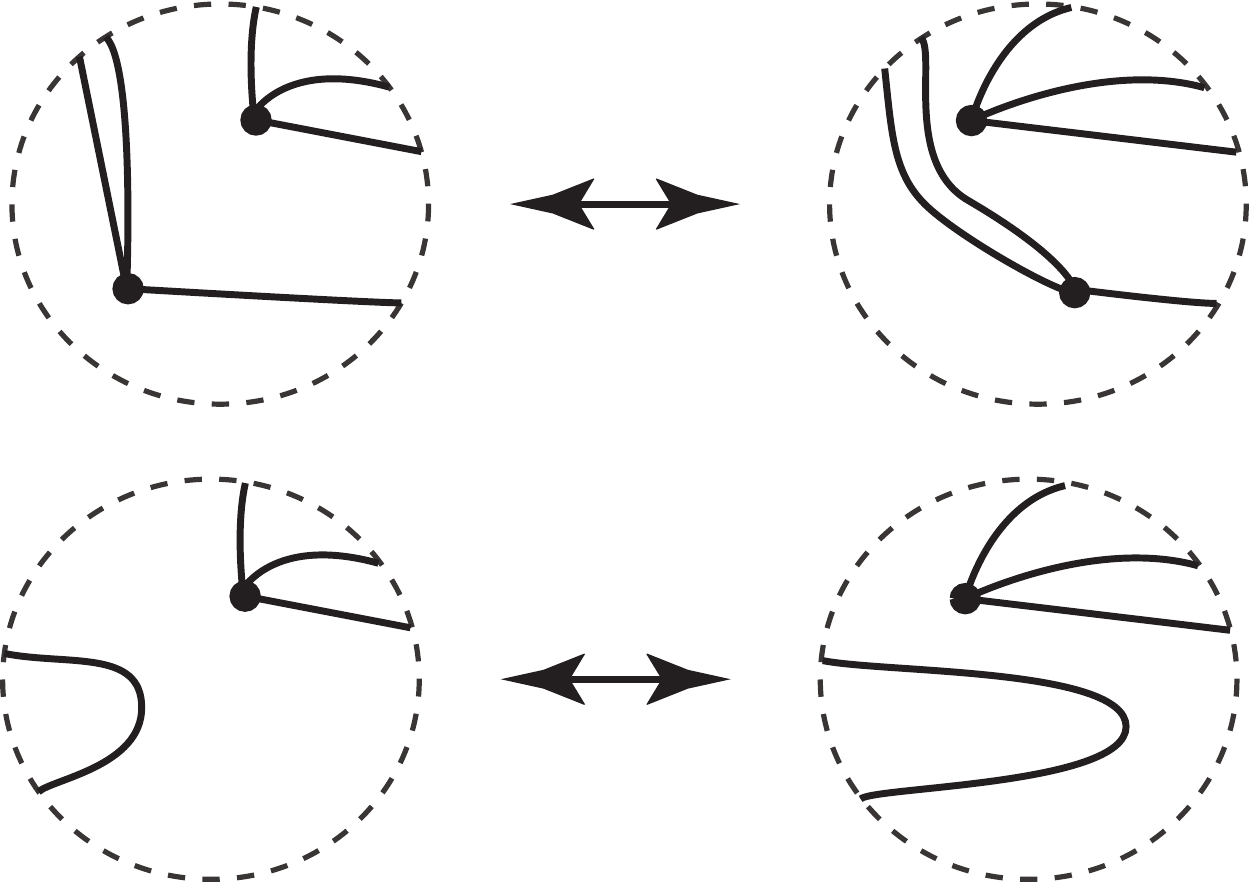}
\caption{This shows examples of the two planar isotopies that can occur which contain vertices.  The top is where two vertices are moved parallel to each other and the bottom one is where an arc and a vertex move parallel to each other.  }\label{PVVPAV}
\end{center}
\end{figure}

There are two moves and three planar isotopies with vertices to be considered: RIV, R$\overline{\text{V}}$, a planar isotopy where a valence two vertices is moved along the arc of the edges, a planar isotopy where two vertices are moved parallel to each other, and a planar isotopy where an arc and a vertex move parallel to each other (shown in Figure \ref{PVVPAV}).  
For the Figures of the graph grid diagrams in this proof we will only place an $*$ on an $O$ if it is not obvious from the grid that it is an $O^*$.  

\begin{figure}[h]
\begin{center}
\includegraphics[scale=0.5]{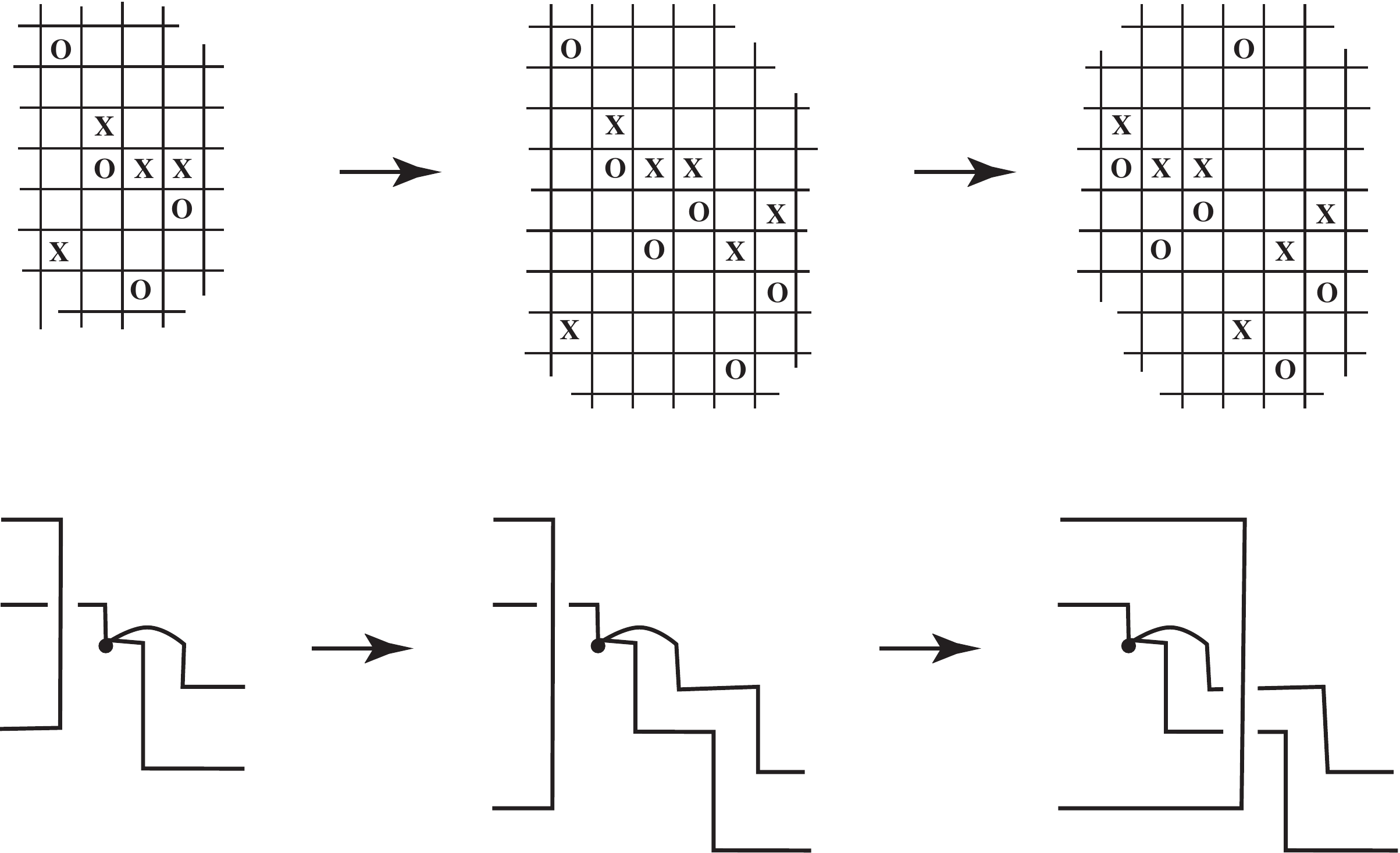}
\caption{This is an example where a vertex stabilization is first done then a commutation move to have a RIV move in the associated graph. }\label{RIVmoves}
\end{center}
\end{figure}

\textbf{RIV move:}
The RIV move is where an arc is moved from one side of a vertex to the other side by going either over or under the vertex.  
Up to planar isotopy we can assume that the edge is next to the vertex that it will pass over (or under).  
An example of RIV is shown in Figure \ref{RIVmoves}, here a row vertex stabilization is done followed by three column commutation$'$ moves.  
In general, RIV can be obtained via: first a vertex stabilization move, if needed, then a number of commutation$'$ moves between the $v$-columns (resp. $v$-rows) and other column (resp. row) that is associated with an appropriate arc.    

\textbf{R$\overline{\text{V}}$ move:}
The R$\overline{\text{V}}$ move corresponds to switching the order of the edges in the projection next to the vertex, which introduces a crossing between these edges.  See the left most move in Figure~\ref{fig_RV*InMany} for reference. 
Since we are working with transverse spatial graphs such a move can only occur between pairs of incoming edges and outgoing edges.  
We will look at the graph grid moves needed for an R$\overline{\text{V}}$ move between two outgoing edges.  The proof is similar for two incoming edges. 

In general, a commutation$'$ move between columns or rows that contain $X$'s in the same flock will result in a R$\overline{\text{V}}$ move between the two associated edges involved.   
In order to be able to iterate such moves, we present the follow processes.  In an R$\overline{\text{V}}$ move, two edges are switched next to a vertex.  
Let's call one of them the left edge and one the right edge.  
There are two possibilities with a R$\overline{\text{V}}$ move; either the right edge goes under the left or it goes over the left edge.    

In Figure \ref{RVmoves_a}, we show an example of R$\overline{\text{V}}$ where the right edge goes under the left between the two left most outgoing edges.  
In general, to have the right edge go under the left edge between two outgoing edges, first a row vertex stabilization move is done, followed by a commutation$'$ move between the columns containing the $X$'s associated with the edges involved.  

In Figure \ref{RVmoves_b}, we show an example of R$\overline{\text{V}}$ where the right edge goes over the left between the two left most outgoing edges.   
Let $X_1$ and $X_2$, from left to right, be the $X$'s in the flock that are associated with the edges that will be interchanged next to the vertex.  
In general to have the right edge go over the left edge, a row vertex stabilization move is done, if needed.
Next a row stabilization$'$ move is done on the row that contains the standard $O$ that is in the same column as $X_2$.  Call this $O$, $O_i$.    
The column that is added in the stabilization$'$ is placed immediately to the right of the flock.  
Then a commutation$'$ move is done to move the row containing $O_i$, below the row containing the standard $O$ that is in the same column as $X_1$.  
Finally a commutation$'$ move is done between the columns containing $X_1$ and $X_2$.
To do R$\overline{\text{V}}$ for the incoming edges, one needs only switch the role of the row and column.  

\begin{figure}[h]
\begin{center}
\includegraphics[scale=0.4]{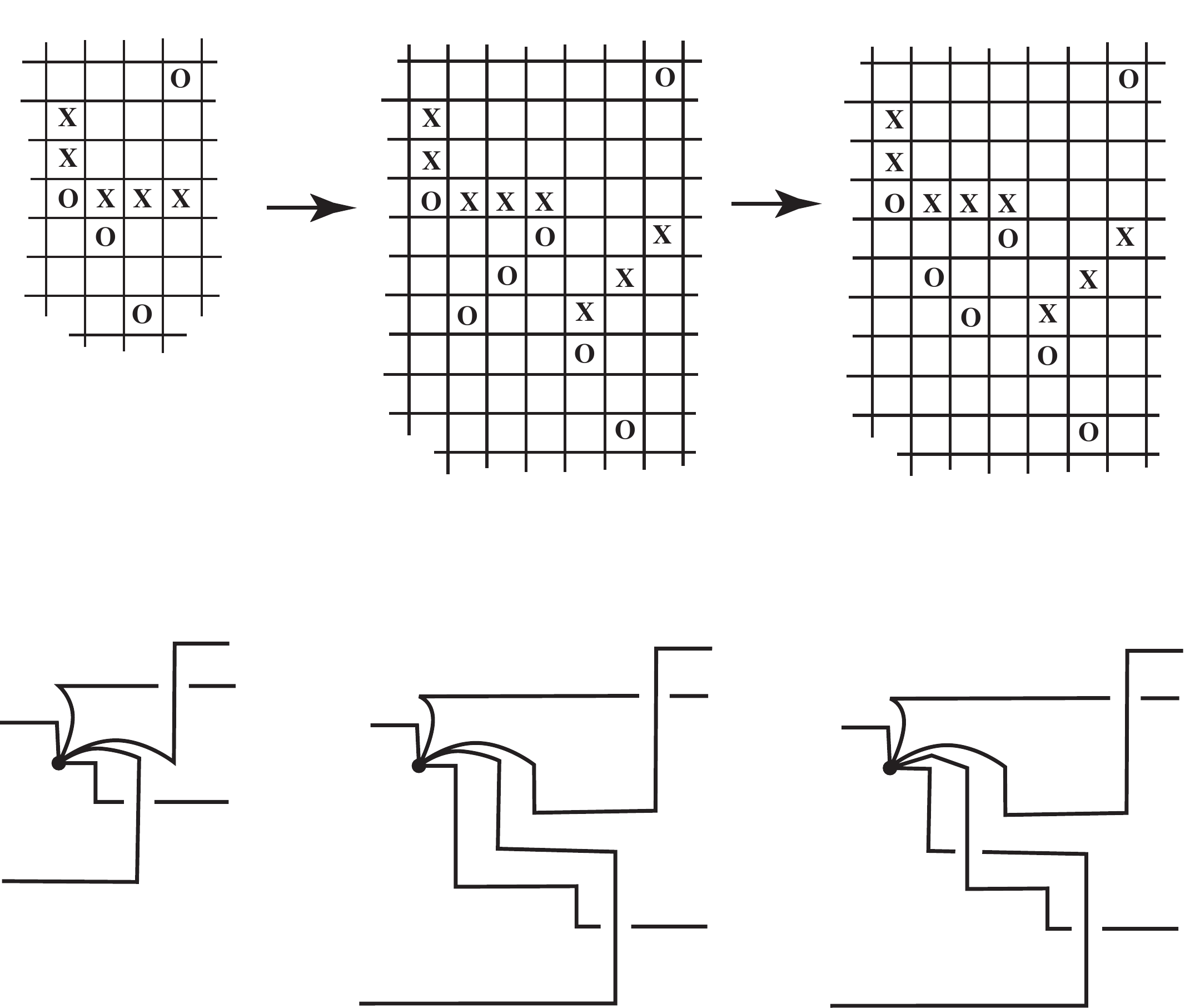}
\caption{This shows a row vertex stabilization move, followed by a commutation$'$ move in the grids, producing a R$\overline{\text{V}}$ move in the associated spatial graph.}\label{RVmoves_a}
\end{center}
\end{figure}

\begin{figure}[h]
\begin{center}
\includegraphics[scale=0.35]{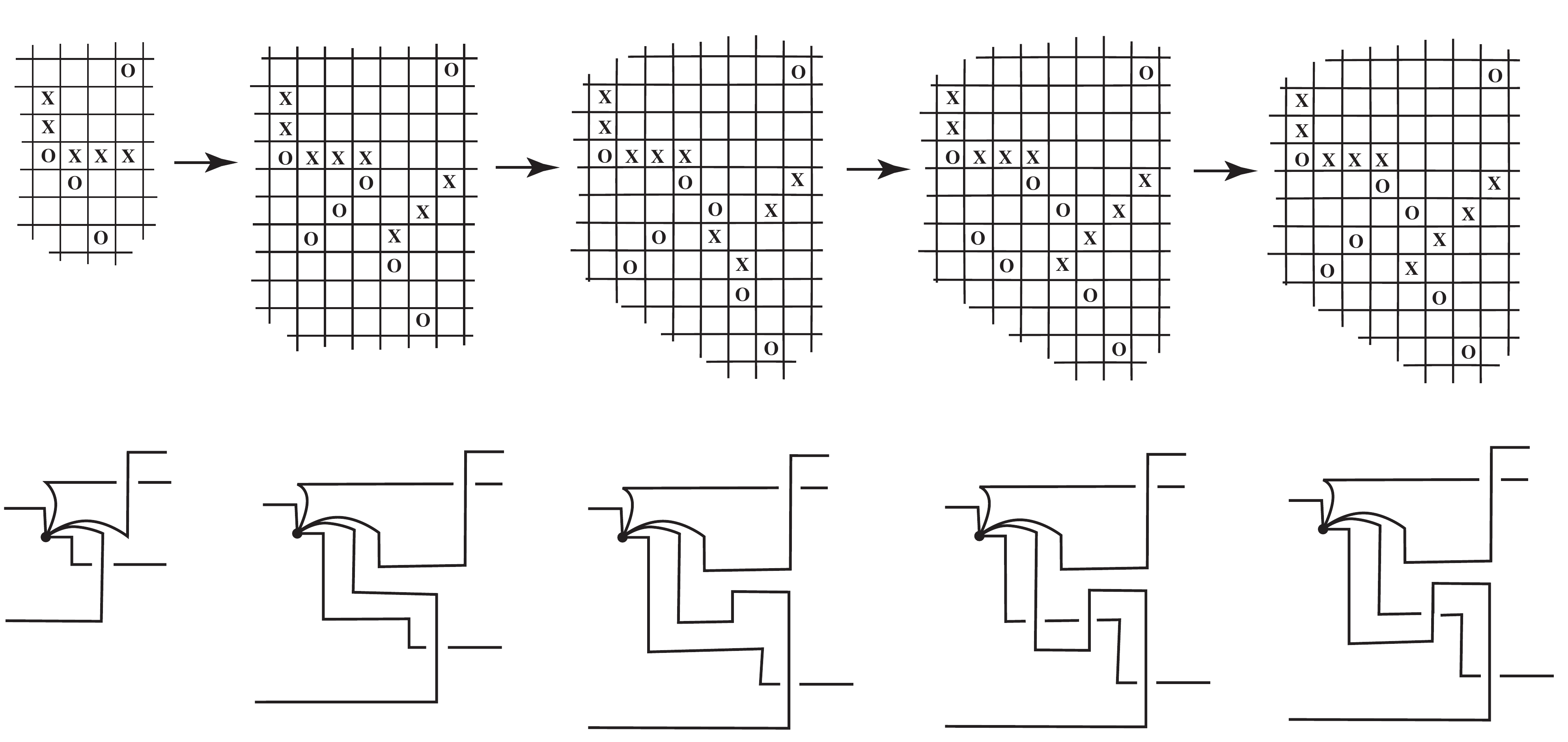}
\caption{This shows a row vertex stabilization move, followed by a row stabilization$'$move, a row commutation$'$ move and a column commutation$'$ move in the grids, producing a R$\overline{\text{V}}$ move in the associated spatial graph.  }\label{RVmoves_b}
\end{center}
\end{figure}

\textbf{Movement of a valence two vertex:}
The movement of a valence two vertex is equivalent to moving a $O^\ast$ with a single $X$ in both its row and column to the position of a standard $O$ that is on one of the incident edges.  
This could be thought of as choosing a different $O$ on the edge to be special, and was first addressed for grid diagrams in Lemma 2.12 of \cite{MOST}.  
A proof of the independence of which $O$ is special is given in Lemma 4.1 of \cite{Sa11}.  
Since this is a local change the same diagrammatic proof works in the graph case.  We outline the proof here.     

We will describe in words the moves needed to do this.  However, the reader may just choose to look at the moves done in Figure~\ref{move_vertex_fig}. To move a valance two vertex along an edge, we move the associated $O^\ast$ to the position of a standard $O$ on an adjacent edge.  
First a row stabilization$'$ is done at one of the neighboring $X$'s, between the $X$ and the $O^\ast$.  
The new column containing the new $X$ and $O$ are moved by commutation$'$ next to $O^\ast$, shown in the second image in Figure~\ref{move_vertex_fig}.
Then the row containing $O^\ast$ can be moved by commutation$'$ moves to the $X$ in the column with the $O^\ast$.  
Then the column containing $O^\ast$ can be moved by commutation$'$ to the $O$ in the column with the $X$ that is next to $O^\ast$.
Now $O^\ast$ is left and the $O$ and $X$ can be moved in their row by commutation$'$ to the $X$ that is in the $O$'s column.  
These $X$ and $O$ can then be removed by a column destabilization$'$.    

\begin{figure}[h]
\begin{center}
	\begin{picture}(456,264)
\put(0,0){\includegraphics[scale=0.5]{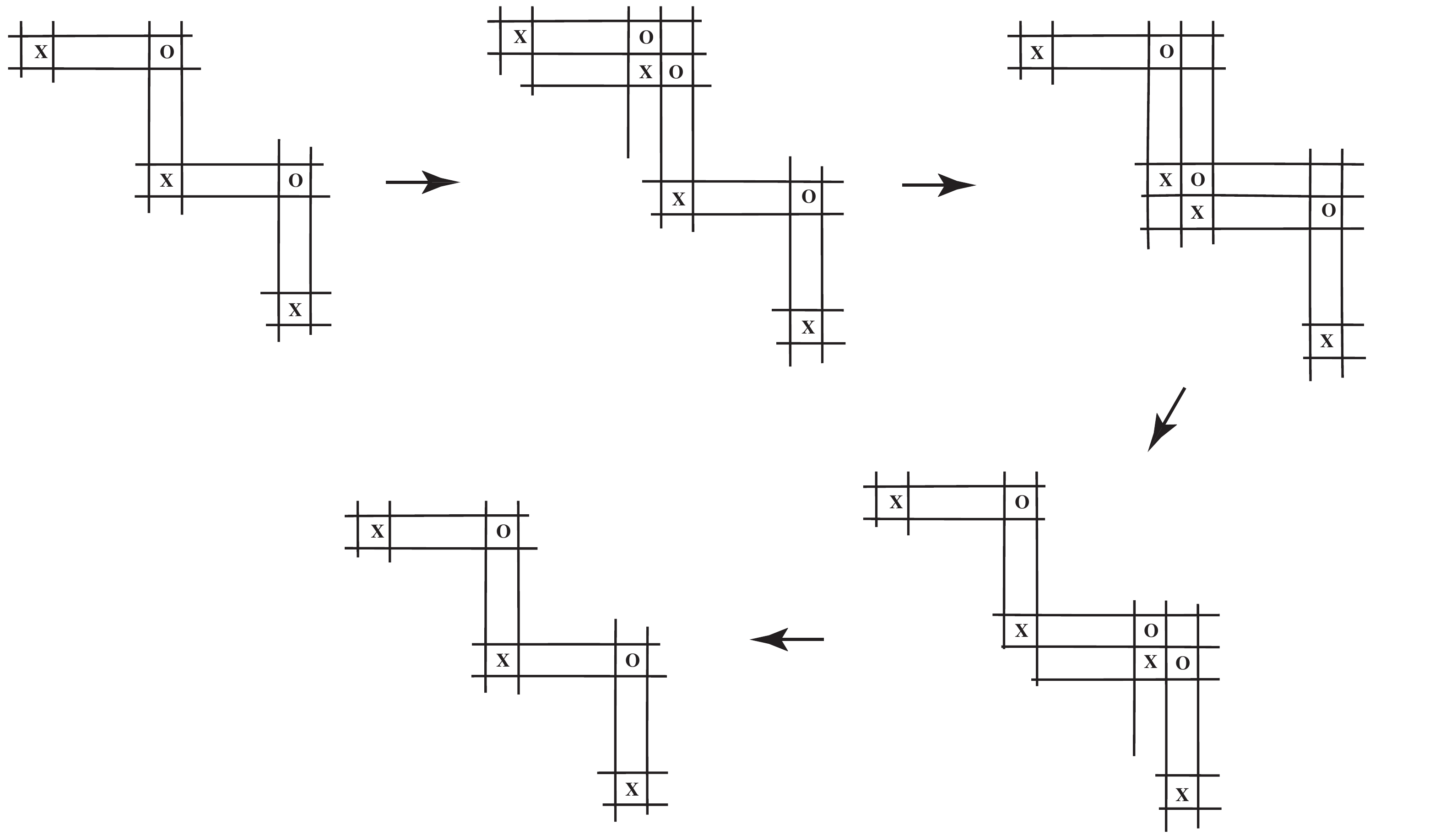}}
\put(54,250){\tiny{$\ast$}}
\put(214,244){\tiny{$\ast$}}
\put(379,209){\tiny{$\ast$}}
\put(365,67){\tiny{$\ast$}}
\put(200,57){\tiny{$\ast$}}
\end{picture}
\caption{Shows the graph grid moves needed to move a standard $O^\ast$ to the position of an $O$.  }\label{move_vertex_fig}
\end{center}
\end{figure}

\textbf{Two vertices pass each other:}
The planar isotopy where one vertex $v$ passes another vertex $w$ can be obtained via first vertex stabilization moves if needed, and then a number of commutation$'$ moves between the $v$-columns and the $w$-columns.  
In Figure \ref{VVmoves}, we show an example where only a single stabilization$'$ move is needed before the commutation$'$ moves, switching the order of the $v$-columns and the $w$-columns.  
To have the vertices move passed each other vertically rather than horizontally the roles of the rows and columns are interchanged.  

\begin{figure}[h]
\begin{center}
\includegraphics[scale=0.4]{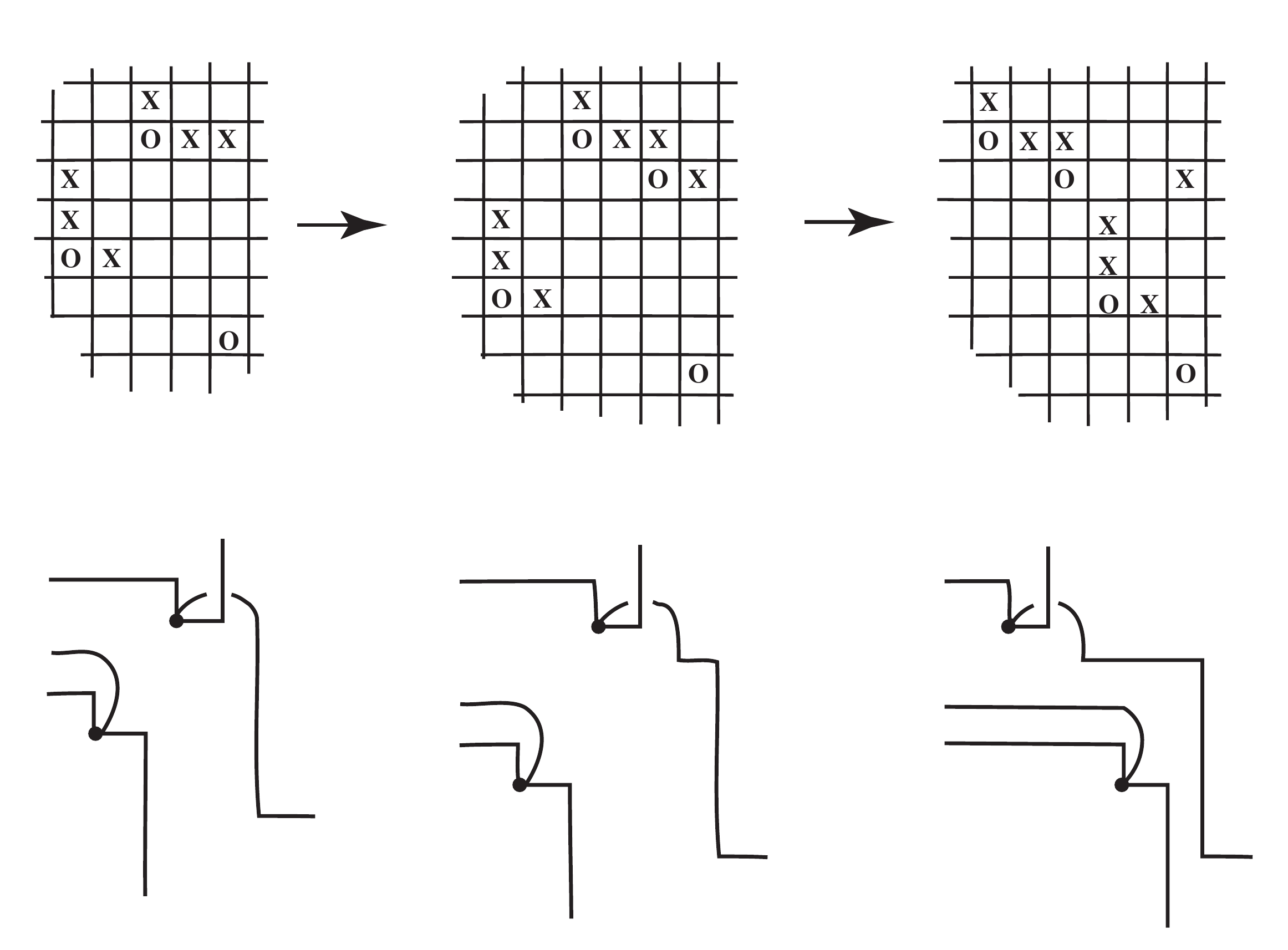}
\caption{This figure is an example illustrating the two steps to do a planar isotopy where one vertex passes another vertex in the diagram of the associated transverse spatial graph.  First a stabilization move was done then a number of the commutation$'$ moves were done.  }\label{VVmoves}
\end{center}
\end{figure}

\textbf{A vertex and arc pass each other:}
The planar isotopy where an arc and a vertex move passed each other can be obtained via first vertex stabilization moves if needed and then commutation$'$ moves between a set of $v$-columns (resp. $v$-rows) and another column (resp. row) that is associated with an appropriate arc (see Figure \ref{VEmoves}).   

\begin{figure}[h]
\begin{center}
\includegraphics[scale=0.4]{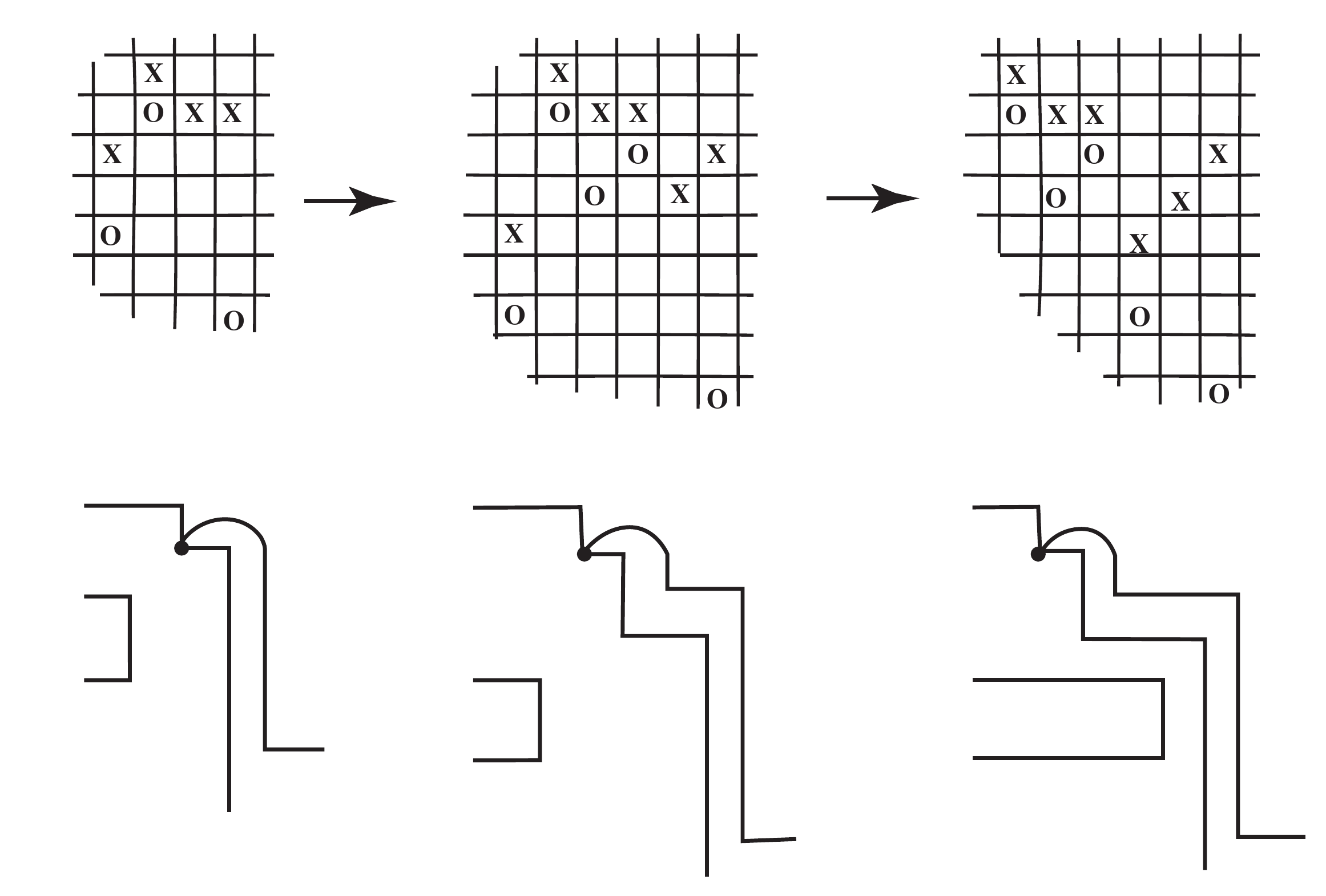}
\caption{This figure is an example illustrating the two steps to do a planar isotopy where a vertex passes an arc in the diagram of the associated transverse spatial graph.  First a row vertex stabilization move was done then a number of the commutation$'$ moves were done.  }\label{VEmoves}
\end{center}
\end{figure}

\end{proof}

\noindent This shows that even though there are numerous different graph grid diagrams that will represent the same transverse spatial graph, all such grids are related by a sequence of the graph grid moves.


\section{Graph Floer homology}\label{sec:def}

In this section, we will define the main invariant of this paper, which we call the graph Floer homology of a spatial graph.  This will take the form of the homology of a bigraded chain complex that is a module over a polynomial ring (or more generally, the quasi-isomorphism type of the chain complex).  One of the gradings is the homological grading (also called the Maslov grading) and the other grading is called the Alexander grading and will take values in the first homology of the exterior of the transverse spatial graph.  Our definitions will generalize those given in \cite{MOS} and \cite{MOST} except that we only get a (relatively) bigraded object instead of a filtered object.  In particular, when the spatial graph is a knot or link, we recover the associated graded objects from \cite{MOST} (but with a relative Alexander grading). In this section and throughout the rest of the paper, we assume that the reader is familiar with the material in Sections 1--3 of \cite{MOST}.   

\subsection{Algebraic Terminology}

We start with some algebraic preliminaries. The reader can skip this subsection upon first reading and refer back to it as needed.  Many of these definitions are similar to those in Section~2.1 of \cite{MOST}.

Let $C$ be a vector space over $\mathbb{F}$ where $\mathbb{F}$ is the field with $2$ elements and let $\G$, $\G_1$, and $\G_2$ be abelian groups.  Recall that a \textbf{$\G$ grading on $C$} (also called an \textbf{absolute $\G$ grading}) is a decomposition $C = \oplus_{g \in \G} C_g$ where $C_g \subseteq C$ is a vector subspace of $C$ for each $g$.  In this case we say that $C$ is \textbf{graded over $\G$} or is \textbf{graded}.  A linear map $\phi: C \rightarrow C^\prime$ between two graded vector spaces is  \textbf{a graded map of degree $h$} if $\phi(C_{g}) \subseteq C^\prime_{g+h}$ for all $g \in \G$.  A \textbf{relative $\G$ grading on $C$} is a $\G$ grading that is well defined up to a shift in $\G$.  That is, $C = \oplus_{g \in \G} C_g$ and $C = \oplus_{g \in \G} C'_g$ give the same relative $\G$ gradings if there exists an $a \in \G$ such that $C_g = C'_{g+a}$ for all $g\in \G$.  Thus if $C = \oplus_{g \in \G} C_g$ has a well-defined relative grading and $x \in C_{g_1}$ and $y \in C_{g_2}$ then the difference between their gradings, $g_1 - g_2$ is well-defined and independent of the choice of direct sum decomposition.   In this case, we say that $C$ is \textbf{relatively graded over $\G$} or is \textbf{relatively graded}.  A linear map $\phi: C \rightarrow C^\prime$ between two relatively graded vector spaces is  a \textbf{graded map} if there exists an $h\in \G$ such that $\phi(C_{g}) \subseteq C^\prime_{g+h}$ for all $g \in \G$. Note that it does not make sense to talk about the degree of this map since we can shift the subgroups and get a different value for $h$.  In this paper, we will be interested in relatively \emph{bigraded} vector spaces over $H_1(E(f))$ and $\mathbb{Z}$ where $E(f)$ is the complement of a transverse spatial graph in $S^3$. 

\begin{definition} A \textbf{$(\G_1,\G_2)$ bigrading on $C$} is an $\G_1 \oplus \G_2$ grading on $C$.  In this case we say that $C$ is \textbf{bigraded over $\G_1$ and $\G_2$} or is \textbf{graded}.   We may also refer to a bigrading as an \textbf{absolute bigrading} when convenient.  A linear map $\phi: C \rightarrow C^\prime$ between two bigraded vector spaces is a \textbf{bigraded map of degree} $(h_1,h_2)$ if $\phi(C_{(g_1,g_2)}) \subseteq C^\prime_{(g_1+h_1,g_2+h_2)}$ for all $(g_1,g_2) \in \G_1 \oplus \G_2$.  A \textbf{relative bigrading} on $C$ over $\G_1$ and $\G_2$ is a relative $\G_1 \oplus \G_2$ grading on $C$.   In this case we say that $C$ is \textbf{relatively bigraded over $\G_1$ and $\G_2$} or is \textbf{relatively graded}.  A linear map $\phi: C \rightarrow C^\prime$ between two relatively graded vector spaces is a \textbf{bigraded map} if there exists an $(h_1,h_2)\in \G_1 \oplus \G_2$ such that $\phi(C_{(g_1,g_2)}) \subseteq C^\prime_{(g_1+h_1,g_2+h_2)}$ for all $(g_1,g_2) \in \G_1 \oplus \G_2$.  
\end{definition}

Note that a (relative) bigrading of $C$ over $\G_1$ and $\G_2$ gives a well-defined (relative) grading over $\G_i$ for $i=1,2$ in the obvious way: $C= \oplus_{g_1 \in \G_1} (\oplus_{g_2 \in \G_2} C_{(g_1,g_2)})$ and $C= \oplus_{g_2 \in \G_2} (\oplus_{g_1 \in \G_1} C_{(g_1,g_2)})$.
Our main invariant will turn out to be graded over two groups, one of which will be relatively graded and one which will be (absolutely) graded. In the following definition, RA stands for relative-absolute or relatively-absolultely. 

\begin{definition}Let $\G_1$ and $\G_2$ be abelian groups and $C$ be a vector space. An \textbf{RA $(\G_1,\G_2)$ bigrading on $C$} is a $\G_1 \oplus \G_2$ grading that is well defined up to a shift in $\G_1 \oplus \{0\}$.  That is, $C = \oplus_{g \in \G} C_g$ and $C = \oplus_{g \in \G} C'_g$ give the same RA $(\G_1,\G_2)$ bigradings if there exists an $(g_1,0) \in \G$ such that $C_g = C'_{g+(g_1,0)}$ for all $g\in \G$.  In this case, we say that $C$ is \textbf{RA bigraded over $\G_1$ and $\G_2$} or is \textbf{RA bigraded}. A linear map $\phi: C \rightarrow C^\prime$ between two RA bigraded vector spaces is a \textbf{bigraded map of degree $(*,h_2)$} if there exists an $h_1 \in \G_1$ such that  $\phi(C_{(g_1,g_2)}) \subseteq C^\prime_{(g_1+h_1,g_2 + h_2)}$ for all $(g_1,g_2) \in \G_1 \oplus \G_2$.  A linear map $\phi: C \rightarrow C^\prime$ between two RA bigraded vector spaces is a \textbf{bigraded map} if it is a bigraded map of some degree. 
\end{definition}
Note that an RA $(\G_1,\G_2)$ bigrading of $C$ gives a well-defined relative grading over $\G_1$ and a well-defined (absolute) grading over $\G_2$.
We will also need to define bigraded chain complexes and their equivalences.  

\begin{definition} A \textbf{$(\G,\Z)$ bigraded chain complex} is a $(\G,\Z)$ bigraded vector space $C$ and bigraded map $\partial: C \rightarrow C$ of degree $(0,-1)$ such that $\partial^2=0$. For $g\in \G$ and $i\in \mathbb{Z}$, a linear map $\phi: C \rightarrow C'$ between $\G \oplus \Z$ bigraded chain complexes is a \textbf{bigraded chain map of degree $(g,i)$} if it is a chain map ($\partial \circ \phi = \phi \circ \partial$) and it a bigraded map of degree $(g,i)$.  We say that $\phi$ is a \textbf{bigraded chain map} if it is a bigraded chain map of some degree. 
\end{definition}

Note that if $C =  \oplus_{g\in \G} C_g$ has a relative $\G$ grading then it makes sense to talk about a \textbf{graded map $\phi:C \rightarrow C$ of degree $h$} as one that satisfies $\phi(C_g) \subset C_{g+h}$ for all $g\in \G$.  For, suppose $a \in \G$ and $C = \oplus_{g\in \G} C'_g$ such for all $g\in \G$, $C_g = C'_{g+a}$.   Then $\phi(C'_g) = \phi(C_{g-a}) \subset C_{g-a+h} = C'_{g+h}$ for all $g\in \G$.  Similarly, we can define a bigraded map of degree $(g_1,g_2)$ between a relatively (or RA) bigraded vector space and itself.   We say a linear map (between absolutely, relatively, or RA bigraded vector spaces) is a  \textbf{bigraded} map if it is a bigraded map of some degree.

\begin{definition} A \textbf{relative (respectively RA) $(\G,\Z)$ bigraded chain complex} is a relative (respectively RA) $(\G,\Z)$ bigraded vector space $C$ and bigraded map $\partial: C \rightarrow C$ of degree $(0,-1)$ such that $\partial^2=0$. A linear map $\phi: C \rightarrow C'$ between relative (respectively RA) $\G \oplus \Z$ bigraded chain complexes is a \textbf{bigraded chain map} if (it is a chain map) $\partial \circ \phi = \phi \circ \partial$ and it a bigraded map.  
\end{definition}

Similarly, we can define a \textbf{$(\G,\Z_2)$ bigraded, relatively bigraded, and RA bigraded chain complex}.  This will be used in the last section of the paper when we compare our invariant to sutured Floer homology. 

The primary invariant will be a chain complex that also takes the form of a module over a multivariable polynomial ring.  
Recall, a \textbf{$\G$ graded ring} is a commutative ring $R$
with a direct sum decomposition as abelian groups $R = \oplus_{g \in \G} R_g$ where $R_g$ is a subgroup of $R$ and $R_{g_1} R_{g_2} \subset R_{g_1+g_2}$ for all $g_i \in \G$.  If $R$ is a $\G$ graded ring, a \textbf{$\G$ graded (left) $R$-module} is a (left) $R$-module with a direct sum decomposition as abelian groups $M = \oplus_{g \in \G} M_g$ where $M_g$ is a subgroup of $M$ and $R_{g_1} M_{g_2} \subset M_{g_1+g_2}$ for all $g_i \in \G$.  A \textbf{graded $R$-module homomorphism (of degree $h$)} of is a graded map $\phi: C \rightarrow C^\prime$ (of degree $h$) between $\G$ graded $R$-modules that is also an $R$-module homomorphism.  We can similarly define \textbf{relatively graded, bigraded, relatively bigraded, and relatively-absolutely bigraded $R$-modules} and graded module homomorphisms in these cases.

\begin{definition} A \textbf{$(\G,\Z)$ bigraded (left) $R$-module chain complex} is a $(\G,\Z)$ bigraded (left) $R$-module $C$ and bigraded $R$-module homomorphism $\partial: C \rightarrow C$ of degree $(0,-1)$ such that $\partial^2=0$.  For $g\in \G$ and $i\in \mathbb{Z}$, an $R$-module homomorphism $\phi: C \rightarrow C'$ between $\G \oplus \Z$ bigraded $R$-module chain complexes is a \textbf{bigraded $R$-module chain map of degree $(g,i)$} if $\partial \circ \phi = \phi \circ \partial$ and it a bigraded map of degree $(g,i)$.  We say that $\phi: C \rightarrow C'$ is a  \textbf{bigraded $R$-module chain map} if it is a bigraded $R$-module map of some degree. We can similarly define a relative (respectively RA) $(\G,\mathbb{Z})$ bigraded $R$-module chain complex and a relative bigraded $R$-module chain map \end{definition}
	
	We remark that if $C$ is a $(\G,\Z)$ bigraded chain complex of (left) $R$-modules then it not necessarily the case that any of $C_{(g,m)}$, $\oplus_{h\in \G} C_{(g,m)}$ or $\oplus_{m \in \mathbb{Z}} C_{(g,m)}$ is an $R$-module. 
	
The primary invariant of this paper associates to each graph grid diagram, a bigraded $R$-module chain complex.  However, choosing different graph grid diagrams representing the same transverse spatial graph will lead to different chain complexes.  We will show that they are all quasi-isomorphic. 

\begin{definition} A chain map $\phi: C \rightarrow C'$ of chain complexes is a \textbf{quasi-isomorphism} if it induces an isomorphism on homology.  We say that two chain complexes $C$ and $D$ are \textbf{quasi-isomorphic} if there is a sequence of chain complexes $C_0, \dots, C_r$ and quasi-isomorphisms 
	
\begin{diagram}
	C_0 &&&& C_2 &&&&&& C_{r-2} &&&& C_r\\
	    & \rdTo^{\phi_1} && \ldTo^{\phi_2} && \rdTo^{\phi_3} &&\cdots && \ldTo^{\phi_{r-2}} && \rdTo^{\phi_{r-1}} && \ldTo^{\phi_r} & \\
	    &&   C_1 &&&& C_3 && C_{r-3} &&&& C_{r-1} &&&	
\end{diagram}
\noindent 	such that $C_0=C$, $C_r=D$. Suppose $C$ and $D$ are $(\G,\mathbb{Z})$ bigraded $R$-module chain complexes.  We say that $\phi: C \rightarrow C'$ is a bigraded $R$-module quasi-isomorphism if $\phi$ is a quasi-isomorphism and a bigraded $R$-module homomorphism.   
We say that $C$ and $D$ are \textbf{quasi-isomorphic (as $(\G,\mathbb{Z})$ bigraded $R$-module chain complexes)} if they are quasi-isomorphic and where all the quasi-isomorphisms are bigraded $R$-module chain maps.  
We can similarly define when two relative (respectively RA) $(\G,\mathbb{Z})$ bigraded $R$-module chain complexes are quasi-isomorphic.
\end{definition}

\subsection{The Chain Complex}\label{subsec:ch_complex}

For technical reasons, we need to restrict our definition to those graphs that are both sinkless and sourceless.  The graph grid diagrams representing these transverse spatial graphs have at least one $X$ per column and row.  We will need this condition to ensure that $\partial^2 =0.$

\begin{definition} A graph grid diagram is \textbf{saturated} if there is at least one $X$ in each row and each column.  A transverse spatial graph $f: G \rightarrow S^3$ is called \textbf{sinkless and sourceless} if its underlying graph $G$ is sinkless and sourceless (i.e. has no vertices with only incoming edges or only outgoing edges).
\end{definition}

\begin{remark}
(1) Suppose that $g$ is a graph grid diagram representing the transverse spatial graph $f$.  Then $g$ is saturated if and only if $f$ is sinkless and sourceless.   (2) If one performs a graph grid move on a saturated graph grid diagram, then the resulting graph grid diagram is saturated.  
\end{remark}

\noindent  \textbf{\emph{For the rest of this paper, unless otherwise mentioned, we will assume that all transverse spatial graphs are sinkless and sourceless and all graph grid diagrams are saturated}}. 

\vspace{10pt}
Let $f: G \rightarrow S^3$ be a sinkless and sourceless transverse spatial graph, $E(f) = S^3 \smallsetminus N(f(G))$ where $N(f(G))$ is a regular neighborhood of $f(G)$ in $S^3$, $g$ be an $n\times n$ saturated graph grid diagram representing $f$, and $\T$ its corresponding toroidal diagram.  Now, let 
$$\mathbf{S} = \{ \{x_i\}_{i=1}^n | x_i \in \alpha_i \cap \beta_{\sigma(i)}, \sigma \in S_n\},$$ where $S_n$ is the symmetric group on $n$ element, and define $C^-(g)$ to be the free (left) $R_n $-module generated by $\mathbf{S}$ where $$R_n = \mathbb{F}[U_1, \dots U_n] \text{ and }\mathbb{F}=\mathbb{Z}/2\mathbb{Z}$$ denotes the field with two elements.   When working with generators on a planar grid diagram, we use the convention that places the intersection point on the bottom gridline (not the top gridline) and the leftmost gridline (not the rightmost gridline).  When we want to specify the grid, we may write $\S(g)$ instead of $\S$.

Using the toroidal grid diagram, we can view the torus $\T$ as a two-dimensional CW complex with $n^2$ $0$-cells (intersections of $\alpha_i$ and $\beta_j$), 
$2n^2$ $1$-cells (consisting of line segments on $\alpha_i$ and $\beta_j$), and $n^2$ $2$-cells (squares cut out by $\alpha_i$ and $\beta_j$).   
Note that a generator $\x \in \mathbf{S}$ can be viewed as a $0$-chain.  
Let $U_{\alphas}$ be the $1$-dimensional subcomplex of $\T$ consisting of the union of the horizontal circles.  
We define paths, domains, and rectangles in the same way as \cite{MOST}.  
Given two generators $\x$ and $\y$ in $\mathbf{S}$, a \textbf{path} from $\x$ to $\y$ is a $1$-cycle $\gamma$ such that the boundary of the $1$-chain obtained by intersecting $\gamma$ with $U_{\alphas}$ is $\y - \x$.  A \textbf{domain} $D$ from $\x$ to $\y$ is an $2$-chain in $\T$ whose boundary $\partial D$ is a path from $\x$ to $\y$.  The support of $D$ is the union of the closures of the $2$-cells appearing in $D$ (with non-zero multiplicity).  We denote the set of domains from $\x$ to $\y$ by $\pi (\x, \y)$ and note that there is a composition of domains $\ast: \pi (\x, \y) \times \pi (\y, \z) \rightarrow \pi (\x, \z)$.  A domain from $\x$ to $\y$ that  is an embedded rectangle  $r$ is called a \textbf{rectangle} that connects $\x$ to $\y$.  Let $\mathrm{Rect}(\x,\y)$ be the set of rectangles that connect $\x$ to $\y$.  Notice if $\x$ and $\y$ agree in all but two intersection points then there are exactly two rectangles in $\mathrm{Rect}(\x,\y)$, otherwise $\mathrm{Rect}(\x,\y)=\emptyset$.  A rectangle $r\in\mathrm{Rect}(\x,\y)$ is \textbf{empty} if $Int(r)\cap\x=\emptyset$ where $Int(r)$ is the interior of the rectangle in $\T$.  Let $\mathrm{Rect}^o(\x,\y)$ be the set of empty rectangles that connect $\x$ to $\y$.  

We now make $C^-(g)$ into a chain complex $(C^-(g),\partial^-)$ in the usual way, by counting empty rectangles.  Note that in \cite{MOST,MOS}, the authors consider rectangles that contain both $X$'s and $O$'s.  However, because there is no natural filtration of $H_1(E(f))$, we must restrict to rectangles without any $X$'s and thus we get a graded object instead of a filtered object.  Let $\mathbb{X}$ and $\mathbb{O}$ be the set of $X$'s and $O$'s in the grid.  Put an ordering on each of these sets, $\mathbb{O} = \{O_i\}_{i=1}^n$ and $\mathbb{X}=\{X_i\}_{i=1}^m$ so that $O_1, \dots, O_V$ are associated to the $V$ vertices of the graph.  For a domain $D\in \pi(\x,\y)$, let $O_i(D)$ (respectively $X_i(D)$) denote the multiplicity with which $O_i$ (respectively $X_i$) appear in $D$.  More precisely, $D$ is a domain so $D=\sum a_j r_j$ where $r_j$ is a rectangle in $\mathcal{T}$.  Thus $O_i(D) = \sum a_j O_i(r_j)$ where $O_i(r_j)$ is $1$ if $O_i \in r_j$ and $0$ otherwise (similarly for $X_i(D)$).  We note that if $r$ is a rectangle then $O_i(r) \geq 0$.  
Define $\partial^-:C^-(g)\longrightarrow C^-(g)$ as follows.  For $\x \in \S$, let
$$\partial^-(\x)=\sum_{\y\in\s}\, \sum_{\substack{r\in\mathrm{Rect}^o(\x,\y)\\Int(r)\cap\mathbb{X}=\emptyset}}U_1^{O_1(r)}\cdots U_m^{O_m(r)}\cdot \y.$$  Extend $\partial^-$ to all of $C^-(g)$ so that it is an $R_n$-module homomorphism. When we want to specify the grid, we will write $\partial_g^-$ instead of $\partial^-$.

\begin{prop}  If $g$ is a saturated graph grid diagram then $\partial_g^-\circ\partial_g^-=0$.
\end{prop}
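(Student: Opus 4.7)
The plan is to adapt the argument used in \cite{MOST} to establish $(\partial^-)^2 = 0$ in the link Floer setting. First I would expand
$$(\partial_g^- \circ \partial_g^-)(\x) = \sum_{\z \in \S}\ \sum_{\y \in \S}\ \sum_{\substack{r_1 \in \Rect^o(\x, \y),\ r_2 \in \Rect^o(\y, \z) \\ \Interior(r_i) \cap \Xs = \emptyset}} \prod_{i=1}^n U_i^{O_i(r_1) + O_i(r_2)}\cdot \z$$
and regroup the terms by the composite domain $D = r_1 * r_2$, observing that the $U$-coefficient depends only on $D$ since $O_i(r_1) + O_i(r_2) = O_i(D)$. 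Working over $\F$, it then suffices to show that for each target $\z$ and each relevant $D$, the set of ordered decompositions $D = r_1 * r_2$ into a pair of empty $X$-free rectangles has even cardinality.

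The next step is to invoke the local classification from \cite{MOST}: any such $D$ falls into one of three shapes: (a) a disjoint union of two rectangles; (b) an L-shaped or hexagonal region in which $r_1$ and $r_2$ share exactly one corner; or (c) a thin horizontal or vertical annulus wrapping around $\T$, which forces $\x = \z$. For shapes (a) and (b), the standard picture provides a canonical involution pairing the two ordered decompositions of $D$; since both decompositions use precisely the interior squares of $D$ (in particular, both sets of rectangle interiors avoid $\Xs$ as soon as one does), both are valid, and their contributions cancel modulo $2$.

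The main obstacle, and the only place the argument truly departs from the link case, is shape (c), the thin annulus. Here I would invoke the saturated hypothesis in an essential way. Suppose $D$ is a thin horizontal annulus supported in a row $\rho$; by saturation, $\rho$ contains at least one $X$. Since $X$'s live at half-integer coordinates strictly inside unit squares, and since the interiors of $r_1$ and $r_2$ partition the interior of the annulus along two vertical grid segments, any such $X$ lies in the interior of exactly one of $r_1, r_2$, violating $\Interior(r_i) \cap \Xs = \emptyset$. Thus no thin horizontal annulus admits a valid decomposition, and by the symmetric argument on columns the same holds for thin vertical annuli. Combined with the mod~$2$ cancellation from (a) and (b), this yields $\partial_g^- \circ \partial_g^- = 0$.
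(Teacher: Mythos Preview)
Your proposal is correct and follows essentially the same approach as the paper: both expand $(\partial^-)^2(\x)$ as a sum over composite domains, pair up the two decompositions in the non-annular cases (disjoint, overlapping, sharing a corner) so they cancel mod~$2$, and then invoke saturation to rule out the width-one annuli. The paper's write-up is terser but the underlying argument is identical.
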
\label{Prop_BoundaryMap}

\begin{proof}  This proof follows the proof of Proposition 2.10, page 2349 of \cite{MOST} almost verbatim.  The only change is that we only consider regions that do not contain any $X$'s.  Briefly, let $\x\in\s$, 
then $$\partial^-\circ\partial^-(\x)=\sum_{\mathbf{z}\in\s}\sum_{\substack{D\in\pi(\x,\z)}} N(D)\cdot U_1^{O_1(D)}\cdots U_n^{O_n(D)}\cdot\mathbf{z},$$ 
where $N(D)$ is the number of ways one can decompose $D$ as $D=r_1*r_2$ where $r_1\in\mathrm{Rect}^o(\x,\y)$, 
$r_2\in\mathrm{Rect}^o(\y,\mathbf{z})$ and $Int(r_i) \cap \mathbb{X} = \emptyset$.  When $\x\neq\mathbf{z}$ there are three general cases.  The rectangles are either disjoint, overlapping, or share a common edge.  In each of these cases there are two ways that the region can be decomposed as empty rectangles.  Thus the resulting $\mathbf{z}$ occurs in the sum twice.   
The case where $\x=\mathbf{z}$  is the result of a domains $D\in\pi(\x,\x),$ which are width one annuli.  Such domains do not occur in the image of $\partial^-\circ\partial^-(\x)$ because $\partial^-$ only counts rectangles that do not contain $X$'s and we have assumed that our graph grid diagram is saturated.  Thus we see that $\partial^-\circ\partial^-(\x)$ vanishes.  
\end{proof}

\subsection{Gradings} 
We put two gradings on the $(C^-(g),\partial^-)$.  The first is the homological grading, also called the Maslov grading.   This will be defined exactly the same as in \cite{MOST}.  We quickly review the definition for completeness. 

Given two finite sets of points $A$ and $B$ in the plane and $q=(q_1,q_2)$ a point in the plane, define $\mathcal{I}(q,B) =\# \{(b_1,b_2)\in B|b_1>q_1, b_2>q_2\}$.  That is, $\mathcal{I}(q,B)$ is the number of points in $B$ above and to the right of $q$.  Let 
  $\mathcal{I}(A,B)=\sum_{q\in A} \mathcal{I}(q,B)$ and $\mathcal{J}(A,B)=(\mathcal{I}(A,B)+\mathcal{I}(B,A))/2$. 
It will be useful to note that an equivalent definition of $\mathcal{I}(A,q)$ is the number of points in the set $\{a\in A|q_1>a_1, q_2>a_2\},$ i.e.~the number of points in $A$ to the left and below $q$.  So $\mathcal{J}(q,A)$ counts with weight one half all the points in $A$ up and to the right of $q$ and down and to the left of $q$.  Slightly abusing notation, we view $\mathbb{O}$ as a set of points in the grid with half-integer coordinates (the points where the $O$'s occur).  Similarly, we also view $\mathbb{X}$ as a set of points in the grid with half-integer coordinates. We extend $\mathcal{J}$ bilinearly over formal sums and differences of subsets in the plane and define the Maslov grading of $\x \in \S$ to be
$$M(\x)=\mathcal{J}(\x-\mathbb{O}, \x-\mathbb{O})+1.$$
This is consistent with the definition given in \cite{MOST} and the definition only depends on the set $\mathbb{O}$.  Since, like in \cite{MOST}, we also have exactly one $O$ per column and row, the Lemmas 2.4 and 2.5 of \cite{MOST} also hold for grid diagrams of transverse spatial graphs.  Thus, it follows that $M$ is a well-defined function on the toroidal grid diagram (Lemma 2.4 of \cite{MOST}).  In addition, for $x,y \in \S$, we can define the relative Maslov grading by  $M(\x,\y):=M(\x)-M(\y)$.  By Lemma 2.5 of \cite{MOST}, 
\begin{equation}M(\x,\y)=M(\x)-M(\y)=1-2n_{\mathbb{O}}(r),\label{maslov_eq}\end{equation}  for any \emph{empty}\footnote{If the rectangle $r$ contains $m$ points of $\x$ in its interior then $M(\x) = M(\y) + 1 + 2(m - n_{\mathbb{O}(r)})$} rectangle $r$ connecting $\x$  to $\y$, where 
where $n_{\mathbb{O}}(r)$ is the number of $O$'s in $r$.

\begin{figure}[h]
\begin{center}
\begin{picture}(99,35)
\put(0,0){\includegraphics{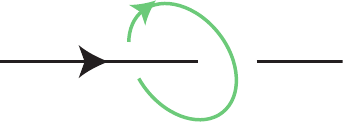}}
\put(50,40){$w(e)$}
\put(90, 22){$e$}
\end{picture}
\caption{This figure shows the weight $w(e)$ assigned to an edge $e \in E(G)$.  }\label{weight}
\end{center}
\end{figure}

Before defining the Alexander grading we first need to establish a weight system on the edges of $G$.  Let $E(G)$ be the set of edges of the graph $G$.  We define  
$w:E(G)\longrightarrow\ H_1(E(f))$ by sending each edge to the meridian of the edge, with orientation given by the right hand rule, seen as an element of $H_1(E(f))$ (see Figure~\ref{weight}). 
We say that $w(e)$ is the weight of the edge $e$.
We will denote the weight of $X$, $w(X)$ and the weight of $O$, $w(O)$, and define them based on the weight of the associated edge.  If $X$ or $O$ appear on the interior of an edge $e\in E(G)$ in the associated transverse spatial graph then $w(X):=w(e)$ and $w(O):=w(e)$.  If the $O$ is associated the vertex $v$ in the associated transverse spatial graph, then $$w(O):=\sum_{e\in In(v)} w(e)=\sum_{e\in Out(v)}w(e)$$
where $In(v)$ is the set of incoming edges of $v$ and $Out(v)$ is the set of outgoing edges of $v$.

Let the function $\epsilon:{\mathbb{O}\cup\mathbb{X}}\to \{1,-1\}$ defined by: 
\[\epsilon(p) = \left\{ 
\begin{array}{r l}
  1 & \quad \mbox{if $p\in\mathbb{X}$}\\
  -1 & \quad \mbox{if $p\in\mathbb{O}$.}\\ \end{array} \right. \]

For a point $q$ in the grid, define $$A^g(q) = \sum_{p\in\mathbb{O}\cup\mathbb{X}}\mathcal{J}(q,p)w(p)\epsilon(p).$$
We define the \textbf{Alexander grading of $\x$} with respect to the grid $g$ to be
$$A^g(\x)=\sum_{p\in\mathbb{O}\cup\mathbb{X}}\mathcal{J}(\x,p)w(p)\epsilon(p) = \sum_{x_i \in \x} A^g(x_i).$$  This value a priori lives in $\frac{1}{2}H_1(E(f))$ however, by Lemma~\ref{lem:h}, $A^g(\x) \in H_1(E(f))$. 
Note that this definition depends on the choice of \emph{planar} grid $g$ and is not a well-defined function on the toroidal grid.\footnote{One can slightly change this definition to make it well-defined on the toroidal grid diagram.  Our invariant will still only be relatively graded in the end however.}  However, the relative Alexander grading is a well-defined function on the toroidal grid diagram. 

\begin{definition}For $\x,\y \in S$, let $A^{rel}(\x,\y) :=A^g(\x)-A^g(\y)$ be the \textbf{relative Alexander grading} of $\x$ and $\y$.
\end{definition}

When it is clear, we may drop the ``rel'' or $g$ on $A$.  By the following Lemma, $A^{rel}$ does not depend on how you cut open the toroidal diagram $\T$ to give a planar grid diagram $g$.  We first need to define some notation. For a rectangle $r$, we define $\mathbf{n}_\mathbb{O}(r) = \sum_{q \in \mathbb{O}\cap r} w(q).$ Similarly, we define $\mathbf{n}_\mathbb{X}(r) = \sum_{q \in \mathbb{X}\cap r} w(q)$.  If $D$ is a domain then $D = \sum a_i D_i$ where $D_i$ is a rectangle.  We extend $\mathbf{n}_\mathbb{O}$ and $\mathbf{n}_\mathbb{X}$ linearly to domains, so that $\mathbf{n}_\mathbb{O}(D) = \sum a_i \mathbf{n}_\mathbb{O}(D_i)$ (similarly for $\mathbb{X}$).  

Note that a path from $\x$ to $\y$, on the toroidal grid, gives a $1$-cycle in $E(f)$.  To see this, recall that the transverse spatial graph associated to the graph grid diagram is constructed from 
vertical arcs going from an $X$ to an $O$ outside the torus (above the plane) and horizontal arc going from $O$ to $X$ inside the torus (below the plane).  Thus, the intersection of the transverse spatial graph and the torus is $\mathbb{X} \cup \mathbb{O}$.  Since a path is a $1$-cycle on the torus missing $\mathbb{X} \cup \mathbb{O}$, we get an element of $H_1(E(f))$.  Since the $\alpha_i$ and $\beta_i$ bound disks in $E(f)$, this is a well-defined element of $H_1(E(f))$, independent of the choice of path.

\begin{lem} \label{lem:alex_count}
Let $\mathbf{x,y}\in \mathbf{S}$.  If $D \in \pi(\x,\y)$ is a domain connecting $\x$ to $\y$ then 
\begin{equation}\label{eq:Ano} A^g(\x)-A^g(\y) = \mathbf{n}_\mathbb{X}(D)-\mathbf{n}_\mathbb{O}(D).\end{equation}
If $\gamma$ is a path connecting $\x$ to $\y$ then \begin{equation}\label{eq:Ahom}A^g(\x)-A^g(\y) = [\gamma]\end{equation} where $[\gamma] \in H_1(E(f))$ is the homology class of $\gamma$.  
 \end{lem}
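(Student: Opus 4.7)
The plan is to prove \eqref{eq:Ano} first by reducing to rectangles, then to obtain \eqref{eq:Ahom} by interpreting a domain as a relative $2$-chain in $E(f)$. Throughout, I would exploit the bilinearity of $\mathcal{J}$, extending the formal difference $\x-\y$ additively so that $A^g(\x)-A^g(\y)=\sum_{p\in\mathbb{O}\cup\mathbb{X}}\mathcal{J}(\x-\y,p)\,w(p)\,\epsilon(p)$.

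First, I would verify \eqref{eq:Ano} when $D=r$ is a rectangle from $\x$ to $\y$. Writing $r=[x_L,x_R]\times[y_B,y_T]$, the generators agree outside the corners of $r$; the two points in $\x\setminus\y$ are the SW and NE corners and the two points in $\y\setminus\x$ are the SE and NW corners. For a generic point $p$ (including any $X$ or $O$ in the grid, which sits at half-integer coordinates), a direct case check using $\mathcal{J}(\{q\},\{p\})=\tfrac12$ precisely when $p$ is strictly NE or strictly SW of $q$ shows that $\mathcal{J}(\x-\y,p)=1$ if $p$ lies in the interior of $r$ and $0$ otherwise. Summing with the weights $w(p)\epsilon(p)$ collapses the Alexander-grading difference to $\mathbf{n}_\mathbb{X}(r)-\mathbf{n}_\mathbb{O}(r)$, establishing \eqref{eq:Ano} for rectangles.

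Second, I would extend to arbitrary domains. Both sides of \eqref{eq:Ano} are additive under the composition $\pi(\x,\y)\times\pi(\y,\z)\to\pi(\x,\z)$ (the left side telescopes, the right side is linear in $D$), so it suffices to check that any two domains $D,D'\in\pi(\x,\y)$ give the same value for the right side. Their difference $P=D-D'$ is a periodic domain, hence a $\Z$-linear combination of the horizontal annuli (rows between consecutive $\alpha_i$) and vertical annuli (columns between consecutive $\beta_j$). For each such annulus, the key computation is that $\mathbf{n}_\mathbb{X}=\mathbf{n}_\mathbb{O}$ in $H_1(E(f))$: in a row, the single $O$ is either standard, in which case the unique $X$ lies on the same edge and $w(X)=w(O)$, or it is a vertex $O^*$ associated with a vertex $v$, in which case the $X$'s in that row correspond exactly to the outgoing edges at $v$, and by definition $w(O^*)=\sum_{e\in Out(v)} w(e)$. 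Columns are analogous with incoming edges. Thus periodic domains contribute $0$, and the formula propagates from any one rectangle connecting two transposed generators to all of $\mathbf{S}$ by chaining rectangles.

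Finally, I would derive \eqref{eq:Ahom}. A path $\gamma$ from $\x$ to $\y$ lies entirely on $\T$ minus the $X$'s and $O$'s, and since $f(G)\cap\T=\mathbb{X}\cup\mathbb{O}$ (the vertical arcs puncture $\T$ above at the $X$'s and the horizontal arcs puncture $\T$ below at the $O$'s), $\gamma$ becomes a $1$-cycle in $E(f)$. Any two paths between $\x$ and $\y$ differ by a linear combination of the $\alpha_i$ and $\beta_j$, which bound compressing disks in the two handlebodies of $S^3\setminus\T$ (disjoint from $f(G)$), so $[\gamma]\in H_1(E(f))$ depends only on $\x,\y$. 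For a domain $D\in\pi(\x,\y)$, view $D$ as a $2$-chain in $\T$; puncturing $\T$ at each $p\in\mathbb{X}\cup\mathbb{O}$ with multiplicity in $D$ and gluing in meridional disks of $f(G)$ shows that the boundary of the resulting relative $2$-chain in $E(f)$ is $\partial D$ minus the sum of meridians (with multiplicity), giving $[\partial D]=\mathbf{n}_\mathbb{X}(D)-\mathbf{n}_\mathbb{O}(D)$ in $H_1(E(f))$; combined with \eqref{eq:Ano} this yields \eqref{eq:Ahom} for $\gamma=\partial D$, and hence for all paths by path-independence. The main obstacle I expect is bookkeeping the sign conventions so that the meridian contributions match $\epsilon$ exactly, which hinges on the consistency of the vertex weight rule $w(O^*)=\sum_{In(v)}w(e)=\sum_{Out(v)}w(e)$.
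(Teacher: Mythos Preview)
Your argument is correct, and the ingredients are the same as the paper's; the difference is in the order of the two deductions and in how you pass from rectangles to arbitrary domains.

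The paper first establishes \eqref{eq:Ano} for a single rectangle (exactly your corner computation), then proves \eqref{eq:Ahom} directly by chaining rectangles and using, for each rectangle $r$, the identification $[\partial r]=\mathbf{n}_\mathbb{X}(r)-\mathbf{n}_\mathbb{O}(r)$ in $H_1(E(f))$; only afterwards does it deduce \eqref{eq:Ano} for general domains by writing $[\partial D]=\sum a_i[\partial D_i]$. In particular, the paper never needs to analyze periodic domains explicitly: independence of the domain is absorbed into the well-definedness of $[\gamma]$.

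Your route instead proves the general \eqref{eq:Ano} first, via the explicit check that every row and column annulus satisfies $\mathbf{n}_\mathbb{X}=\mathbf{n}_\mathbb{O}$ (which is exactly the vertex weight identity $w(O^*)=\sum_{Out(v)}w(e)=\sum_{In(v)}w(e)$), and only then reads off \eqref{eq:Ahom}. This has the virtue of making \eqref{eq:Ano} stand on its own, independent of any homological interpretation, and it makes transparent where the definition of $w(O^*)$ is actually used. The paper's order is a bit slicker because the meridional identity $[\partial r]=\mathbf{n}_\mathbb{X}(r)-\mathbf{n}_\mathbb{O}(r)$ simultaneously handles both equations, but it leaves that identity largely implicit; your third paragraph spells it out.
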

We note that the domain (or rectangle) in this Lemma does not have to be empty. 

\begin{proof} Let $r$ be a rectangle connecting $\x$ to $\y$.  We will first show that Equation~(\ref{eq:Ano}) holds for $r$.  Consider 
 $$A^g(\x)-A^g(\y)= \sum_{q\in\mathbb{O}\cup\mathbb{X}}\mathcal{J}(\x,q)w(q)\epsilon(q) - \sum_{q\in\mathbb{O}\cup\mathbb{X}}\mathcal{J}(\y,q)w(q)\epsilon(q).$$

\begin{figure}[htpb!]
\begin{center}

\begin{picture}(100, 100)
\put(0,0){\includegraphics[scale=0.9]{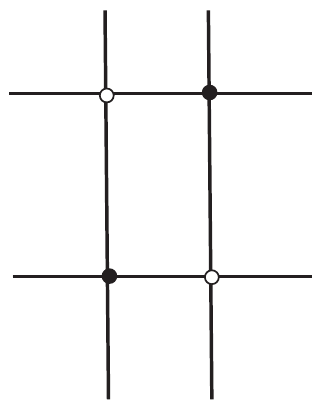}}
\put(40, 55){\small $r$} 
\put(56, 75){\tiny $x_1$}
\put(30, 27){\tiny $x_2$}
\put(30, 75){\tiny $y_1$}
\put(56, 27){\tiny $y_2$}

\end{picture}
\end{center}
\caption {\small Shows the rectangle $r$ with the intersections labeled.}
\label{square}
\end{figure}

Let $x_1, x_2, y_1,$ and $y_2$ be the intersection points at the corners of $r,$ as shown in Figure \ref{square}.  Since the intersection points of $\x$ and $\y$ only differ at the corners of the rectangle $r$ this difference reduces to
$$\sum_{q\in\mathbb{O}\cup\mathbb{X}}[\mathcal{J}(x_1,q)+\mathcal{J}(x_2,q)-\mathcal{J}(y_1,q)-\mathcal{J}(y_2,q)]w(q)\epsilon(q).$$
By definition $\mathcal{J}(x_1,\mathbb{O}\cup\mathbb{X})$, counts with weight a half all those $X$'s and $O$'s up and to the right of $x_1$ and down and to the left of $x_1$.  In Figure \ref{diffsq} (a), we show which regions will have points counted in $\mathcal{J}(x_1, -)$ and $\mathcal{J}(x_2, -).$  The shading indicates whether if it will be counted with a weight of a half or one, this depends on whether it is counted in one or both of $\mathcal{J}(x_1, -)$ and $\mathcal{J}(x_2, -).$  Similarly,  Figure \ref{diffsq} (b), we show which regions will have points counted in $\mathcal{J}(y_1, -)$ and $\mathcal{J}(y_2, -).$ These counts differ by the points in $r$ counted with weight one.  

\begin{figure}[htpb!]
\begin{center}

\begin{picture}(175, 100)
\put(0,10){\includegraphics[scale=0.8]{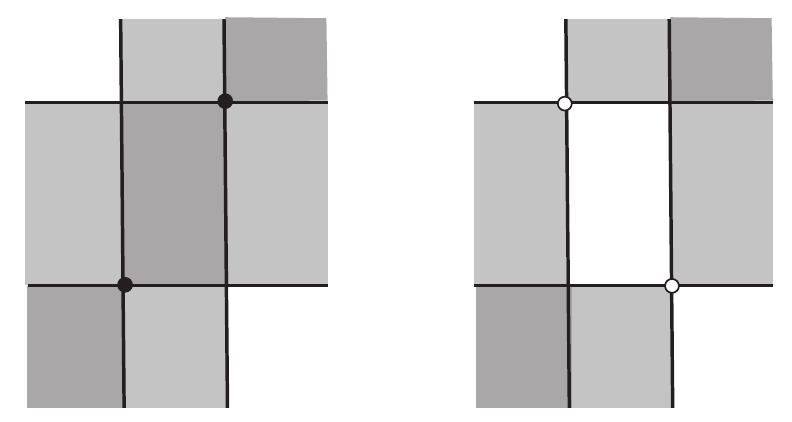}}
\put(35,0){(a)}
\put(140,0){(b)}
\put(40, 60){\small $r$} 
\put(53, 80){\tiny $x_1$}
\put(30, 37){\tiny $x_2$}
\put(143, 60){\small $r$} 
\put(133, 80){\tiny $y_1$}
\put(156, 37){\tiny $y_2$}

\end{picture}
\end{center}
\caption {\small (a) Shows with 50\% shading those regions that will be counted with weight one half in $\mathcal{J}(x_1, -)$ and $\mathcal{J}(x_2, -).$  (b) Shows with 50\% shading those regions that will be counted with weight one half in $\mathcal{J}(y_1, -)$ and $\mathcal{J}(y_2, -).$}
\label{diffsq}
\end{figure}
\noindent So we see that this difference is exactly $$\sum_{q\in r\cap[\mathbb{O}\cup\mathbb{X}]}w(q)\epsilon(q)=\sum_{q \in \mathbb{X}\cap r}w(q)-\sum_{q \in \mathbb{O}\cap r} w(q).$$  Thus, Equation~(\ref{eq:Ano}) holds for rectangles. 

Now we show that Equation~(\ref{eq:Ahom}) holds.  Let $\gamma$ a path connecting $\x$ to $\y$.  Using the fact that $S_n$ is generated by transposition, it follows that $\x,\y \in \S$, are related by a finite sequences of rectangles in $\mathcal{T}$.  That is, there is a sequence $\x = \x_1, \dots, \x_l =\y$ of points in $S$ and rectangles $r_j$ connecting $\x_j$ to $\x_{j+1}$ for $1 \leq j \leq l-1$.  Thus, 
$$ A^g(\x) - A^g(\y) = \sum_j (A^g (\x_j) - A^g(\x_{j+1})) = \sum_j (\mathbf{n}_\mathbb{X}(r_j)-\mathbf{n}_\mathbb{O}(r_j))  = \sum_j [\partial r_j] = [\sum_j \partial r_j].$$  Since $\sum_j \partial r_j$ is also a path connecting $\x$ to $\y$, $[\sum_j \partial r_j] =[\gamma]$.  Thus, we have proved, Equation~(\ref{eq:Ahom}).

Finally, we prove Equation~(\ref{eq:Ano}) for a general domain.  Suppose $D$ is a domain connecting $\x$ to $\y$.  Then $D = \sum a_i D_i$ for some rectangles $D_i$ and $\partial D$ is a path connecting $\x$ to $\y$.  Thus $A^g(\x) - A^g(\y) = [\partial D] = \sum_i a_i [\partial D_i] = \sum_i a_i (\mathbf{n}_\mathbb{X}(D_i)-\mathbf{n}_\mathbb{O}(D_i)) = \mathbf{n}_\mathbb{X}(D)-\mathbf{n}_\mathbb{O}(D)$.
\end{proof}

\begin{cor}\label{cor:torus_grading}The relative grading $A^{rel}: \S \times \S \rightarrow H_1(E(f))$ is a well-defined function on the toroidal graph grid diagram. 
\end{cor}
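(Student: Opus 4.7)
The plan is to deduce the corollary as an essentially immediate consequence of Equation~(\ref{eq:Ahom}) of Lemma~\ref{lem:alex_count}, which already identifies $A^g(\x) - A^g(\y)$ with the homology class $[\gamma] \in H_1(E(f))$ of \emph{any} path $\gamma \subset \T \setminus (\mathbb{X} \cup \mathbb{O})$ from $\x$ to $\y$. The strategy is to observe that the right-hand side of that equation is manifestly intrinsic to the toroidal diagram $\T$, so the left-hand side must be independent of how $\T$ is cut open to form a planar grid.

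First I would fix the toroidal diagram $\T$ and let $g$, $g'$ denote two planar grid diagrams obtained from $\T$ by choosing two different fundamental domains. The decoration data $\mathbb{X}, \mathbb{O}$, the $\alpha$- and $\beta$-circles, and the set of generators $\S$ all live on $\T$ and are independent of the cut; only the functions $A^g$ and $A^{g'}$ a priori differ. Given $\x,\y \in \S$, choose any path $\gamma$ on $\T$ from $\x$ to $\y$ missing $\mathbb{X} \cup \mathbb{O}$, which is well-defined since any two generators are joined by a sequence of rectangles in $\T$. Applying Equation~(\ref{eq:Ahom}) for the two cuts yields
$$A^g(\x) - A^g(\y) \;=\; [\gamma] \;=\; A^{g'}(\x) - A^{g'}(\y),$$
so $A^{rel}(\x,\y)$ is unchanged under passage from $g$ to $g'$.

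The only remaining point is to confirm that $[\gamma]$ genuinely depends on nothing but $\x$ and $\y$, and this is precisely the observation made in the paragraph preceding Lemma~\ref{lem:alex_count}: any two such paths differ by a $1$-cycle on $\T$, necessarily a $\Z$-linear combination of the $\alpha_i$ and $\beta_j$, each of which bounds a meridional disk in $E(f)$. Hence $[\gamma] \in H_1(E(f))$ is well-defined, and the common value gives a well-defined function $A^{rel} : \S \times \S \to H_1(E(f))$ on the toroidal graph grid diagram.

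I do not expect a substantial obstacle here, since all of the work is already packaged in Lemma~\ref{lem:alex_count}; the corollary is a matter of reading Equation~(\ref{eq:Ahom}) in the intended direction, namely that the planar quantity $A^g(\x) - A^g(\y)$ equals the toroidal quantity $[\gamma]$. The only mild care needed is to note that Equation~(\ref{eq:Ahom}) was established using rectangles in $\T$ rather than in the planar domain, so the identification is uniformly valid for every cut.
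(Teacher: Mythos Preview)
Your proposal is correct and follows essentially the same approach as the paper's own proof: both argue that a path $\gamma$ from $\x$ to $\y$ always exists (via a sequence of rectangles), that $[\gamma]\in H_1(E(f))$ is independent of the chosen path, and hence that Equation~(\ref{eq:Ahom}) forces $A^g(\x)-A^g(\y)=[\gamma]=A^{g'}(\x)-A^{g'}(\y)$ for any two planar cuts $g,g'$ of $\T$. Your write-up is somewhat more explicit about the two-cut comparison, but the content is identical.
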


\begin{proof}Any two $\x,\y \in \S$ are related by a sequence of rectangles and hence there is always a path $\gamma$ connecting $\x$ to $\y$.  Since the homology class of the path is independent of the choice of path and $A^{rel}(\x,\y) =[\gamma]$, we see that $A^{rel}$ is independent of how you cut open the toroidal graph grid diagram to get a planar graph grid diagram. 
\end{proof}

We now provide an easy way to compute $A^g$ for a planar graph grid diagram $g$.  Let $\mathcal{L}$ be the lattice points in the grid, that is the $n^2$ intersections between the horizontal and vertical gridlines (i.e. the set of points that on the torus become $\alpha_i \cap \beta_j$).  Define $h:\mathcal{L}\rightarrow H_1(E(f))$, the \textbf{generalized winding number} of a point $q \in \mathcal{L}$ as follows.  Place the planar graph grid diagram $g$ on the Euclidean plane with the lower left corner at the origin (and the upper right corner is at the point $(n,n)$).  Now consider the following projection of the associated transverse spatial graph $pr(f)$.  Like in the last section, this is obtained by connecting 
the $X$'s to $O$'s by arcs in the columns and $O$'s to the $X$'s in the rows.  However, we now require that the arcs do not leave the $n\times n$ planar grid (they cannot go around the torus).  We also project this to the plane and ignore crossings.  For $q \in \mathcal{L}$, let $c$ be any path along the horizontal and vertical gridlines starting at the origin and ending at $q$.  We also require that $c$ meets $pr(g)$ transversely.  Then $c$ intersects $pr(g)$ in a finite number of points $b_1,\dots, b_{k}$ where $b_i$ lives on the interior  some edge of the spatial graph.  Suppose $b_i$ lies on edge $e$.  Define $\sigma(b_i)$ to be $\pm w(e)$ where the sign is given by the sign of the intersection of $e$ with $c(q)$ with the usual orientation of the plane (see Figure~\ref{fig:crossings}).  Using this, we
set 
$$h(q) = \sum_{i=1}^k \sigma(b_i).$$ 
When it is useful, we may also write $h^g$ to specify that we are computing $h$ in the graph grid diagram $g$.

\begin{figure}[htpb]
\begin{center}

\begin{picture}(122,60)
\put(0,10){\includegraphics{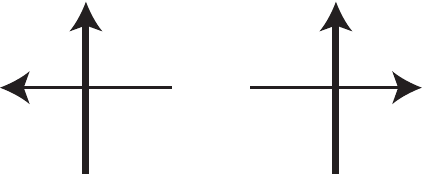}}
\put(12,0){$+w(e)$}
\put(85,0){$-w(e)$}
\put(33,52){$e$}
\put(87,52){$e$}
\put(68,24){$c(q)$}
\put(36,24){$c(q)$}
\end{picture}
\end{center}
\caption{$\sigma(b_i) = \pm w(e)$ where the sign is given by the sign of $e \cdot c(q)$ }
\label{fig:crossings}
\end{figure}

\begin{lem}The map $h$ is well-defined.
\end{lem}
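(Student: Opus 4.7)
First, I would try to show that if $c_1$ and $c_2$ are two admissible paths from the origin to $q$, then they yield the same value of $h(q)$. The key observation is that $\gamma := c_1 - c_2$ is a $1$-cycle on the $1$-skeleton of the grid, and since the grid lives in the simply-connected square $[0,n]\times[0,n]$, this cycle bounds a $2$-chain $D = \sum_s a_s \cdot s$, where $s$ ranges over the unit squares of the grid. By linearity of the signed count that defines $h$, my task reduces to showing that for each individual square $s$, the boundary $\partial s$ (with its induced orientation) contributes $0 \in H_1(E(f))$ to the signed sum.

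The heart of the argument is a case analysis on what $s$ contains. When $s$ is empty, any projected arc meeting $s$ enters $\partial s$ on one side and exits through another, and the two crossings carry equal weights $w(e)$ with opposite signs, cancelling exactly. When $s$ contains an $X$, the horizontal arc terminating at $X$ and the vertical arc emanating from $X$ are portions of the same graph edge, so they contribute $\pm w(e)$ with opposite signs and cancel; this is a direct check using the sign convention of Figure~\ref{fig:crossings}. The case of a standard (non-vertex) $O$ is entirely analogous. In all of these cases the cancellation already holds in the free abelian group on edges of $f(G)$, not just in $H_1(E(f))$.

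The hard part will be handling a square $s$ containing a vertex $O^*$, because there the arc contributions do not cancel pointwise. Working in L-formation (the other conventions being symmetric), the outgoing horizontal arcs from $O^*$ each exit $\partial s$ through its right edge and the incoming vertical arcs each enter through the top edge, giving total contribution $\sum_j w(f_j) - \sum_i w(e_i)$, where $\{f_j\}$ and $\{e_i\}$ are the outgoing and incoming edges at the vertex $v$ associated to $O^*$. This expression vanishes in $H_1(E(f))$ precisely by the meridian relation $\sum_{e \in In(v)} w(e) = \sum_{e \in Out(v)} w(e)$, which is exactly the identity already used in the excerpt to define $w(O^*)$. Assembling the cases, $\partial s$ contributes $0 \in H_1(E(f))$ for every square, hence so does $\gamma$, which proves that $h(q)$ is independent of the chosen path.
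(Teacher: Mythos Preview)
Your argument is correct but takes a different route from the paper's. The paper gives a two-line topological proof: it interprets $h(q)$ as the homology class in $H_1(E(f))$ of the loop obtained by going from a base point at infinity to the origin, then along the grid path $c$ to $q$, then back to infinity (with the spatial graph pushed slightly off the plane so that $c$ lies in $E(f)$). Two choices of $c$ differ by a loop in the plane, which bounds a disk disjoint from the graph, so the resulting homology class is independent of $c$.

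Your approach is combinatorial rather than topological: you verify directly, square by square, that the $H_1(E(f))$-weighted intersection of $\partial s$ with $pr(g)$ vanishes. This is essentially the observation that $pr(g)$, regarded as a $1$-chain with coefficients in $H_1(E(f))$ via the weights $w(e)$, is a \emph{cycle}: at an $X$ or a standard $O$ the incoming and outgoing arc belong to the same edge, while at a vertex $O^*$ closedness is exactly the meridian relation $\sum_{e\in In(v)} w(e) = \sum_{e\in Out(v)} w(e)$. Once $pr(g)$ is a cycle, pairing it against any boundary $\partial D$ gives zero. Your argument has the virtue of making the role of the vertex relation completely explicit, and it is in the same spirit as the paper's proof of the next lemma (Lemma~\ref{lem:h}).

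One small point: you write ``working in L-formation (the other conventions being symmetric)'', but the lemma concerns an arbitrary graph grid diagram, not a preferred one, so you cannot assume the flock is clustered in any particular way. Fortunately your computation does not actually need this: an outgoing horizontal arc contributes the same sign to $\partial s$ whether it exits through the left or the right edge (and dually for incoming vertical arcs), so the total is $\sum_{f_j\in Out(v)} w(f_j) - \sum_{e_i\in In(v)} w(e_i)$ regardless of configuration. You should also note that in the $X$, $O$, and $O^*$ cases there may be additional arcs merely passing through the square; these cancel by the same mechanism as in your empty-square case.
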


\begin{proof}Consider the transverse spatial graph associated to $g$ whose projection is $pr(g)$ but which is pushed slightly above the plane.  Fix a base point at infinity  in $S^3$.  It is easy to see that $h(q)$ is the homology of a loop made up of the path from infinity to the origin, then a path in the plane, from the origin to $q$, and finally the path from $q$ to the point at infinity.  This is independent of the choice of path from the origin to $q$.  Therefore $h$ is well-defined.  
\end{proof}

\begin{lem}\label{lem:h} For any point $q$ on the lattice
\begin{equation}\label{eq:h} A^g(q) = -h(q).\end{equation}
\end{lem}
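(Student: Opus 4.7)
My plan is to establish $A^g(q) = -h(q)$ by induction on lattice points, showing that the two sides agree at a base point and change by matching amounts when one passes to an adjacent lattice point.

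For the base case, I take $q=(0,0)$. Then $h((0,0)) = 0$ trivially, and since every $p \in \mathbb{O} \cup \mathbb{X}$ lies strictly up and to the right of the origin, $\mathcal{J}((0,0),p) = 1/2$ for every $p$, giving $A^g((0,0)) = \tfrac{1}{2}\bigl(\sum_{p \in \mathbb{X}} w(p) - \sum_{p \in \mathbb{O}} w(p)\bigr)$. I show this vanishes by grouping the decorations by the part of $f$ they label: on each edge $e$ the non-vertex $X$'s and $O$'s alternate, starting and ending with an $X$, so they contribute a net $+w(e)$; summing over edges gives $\sum_e w(e)$, which is cancelled by the vertex contributions $-\sum_v w(O^*_v) = -\sum_v \sum_{e \in In(v)} w(e) = -\sum_e w(e)$.

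For the inductive step, I fix horizontally adjacent lattice points $q$ and $q' = q + (1,0)$; the vertically adjacent case is symmetric. On the geometric side, the segment $qq'$ meets $pr(g)$ only in the vertical column arcs in the column at $x$-coordinate $q_1 + 1/2$; such an arc runs between an $X$ and the (unique) $O$ in this column and crosses the segment exactly when $q_2$ is strictly between their heights, contributing $\sigma(b) = \pm w(e)$ to $h(q')-h(q)$ depending on whether the arc is oriented downward or upward across the segment. On the algebraic side, the only $p$ with $\mathcal{J}(q,p) \ne \mathcal{J}(q',p)$ satisfy $p_1 = q_1 + 1/2$, and a direct unpacking of the definitions gives $\mathcal{J}(q,p) - \mathcal{J}(q',p) = +1/2$ if $p_2 > q_2$ and $-1/2$ if $p_2 < q_2$. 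A short case analysis then shows that for each crossing arc the contributions of its $X$ endpoint and $O$ endpoint combine to $\pm w(e)$ in $A^g(q) - A^g(q')$, with sign matching $\sigma(b)$; summing over crossings yields $A^g(q) - A^g(q') = h(q') - h(q)$, so $A^g + h$ is constant on the lattice and hence identically zero.

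The main obstacle will be the sign bookkeeping in this inductive step: lining up the planar-intersection sign defining $\sigma(b)$ (southward versus northward arcs crossed by an eastward segment) with the sign obtained from the product of $\epsilon(p) \in \{\pm 1\}$ and $\mathcal{J}(q,p) - \mathcal{J}(q',p) \in \{\pm 1/2\}$ at the two endpoints of the arc. Once the first case is verified in detail (e.g.\ a southward arc crossed by an eastward segment, where both sides give $+w(e)$), the remaining cases and the vertically-adjacent analogue follow by symmetry, completing the induction.
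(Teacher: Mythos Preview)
Your proof is correct and follows essentially the same strategy as the paper: establish the identity at a base point and then show the increments of $A^g$ and $-h$ agree when moving between horizontally adjacent lattice points. The only cosmetic differences are that the paper takes the whole boundary as its base case (using the observation that $\sum_{p\in col_i} w(p)\epsilon(p)=0$, which is the same fact underlying your arc-by-arc pairing) and organizes the increment computation via the regions above/below rather than decomposing $w(O)$ edge-by-edge as you do.
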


\begin{proof} Let $g$ be a graph grid diagram.  First we see that $h(q)=0$ for any point on the boundary of $g$ by definition.  
Notice that the sum $$\sum_{p\in col_i}w(p)\epsilon(p)=0,$$ for each $i$ where $col_i$ is the set of $O$'s and $X$'s in the $i$th column.  Similarly, $$\sum_{p\in row_i}w(p)\epsilon(p)=0,$$ for each $i$ where $row_i$ is the set of $O$'s and $X$'s in the $i$th row.  Given these observations, it is immediate that $A^g(q) = \sum_{p\in\mathbb{O}\cup\mathbb{X}}\mathcal{J}(q,p)w(p)\epsilon(p)$ is also zero for any point on the boundary of $g$.  

We will proceed by induction on the vertical grid line on which our lattice point occurs.  Suppose the equality in (\ref{eq:h}) holds for $q_i$ a lattice point on the $i$th vertical arc and consider $q_{i+1}$ the point immediately to the right of $q_i$ on the grid.  Let
$$RH:=\sum_{p\in\mathbb{O}\cup\mathbb{X}}\mathcal{J}(q_{i+1},p)w(p)\epsilon(p)-\sum_{p\in\mathbb{O}\cup\mathbb{X}}\mathcal{J}(q_i,p)w(p)\epsilon(p),$$
and 
$$LH:=-h(q_{i+1})-(-h(q_i)).$$

\begin{figure}[htpb!]
\begin{center}

\begin{picture}(175, 100)
\put(0,10){\includegraphics[scale=0.9]{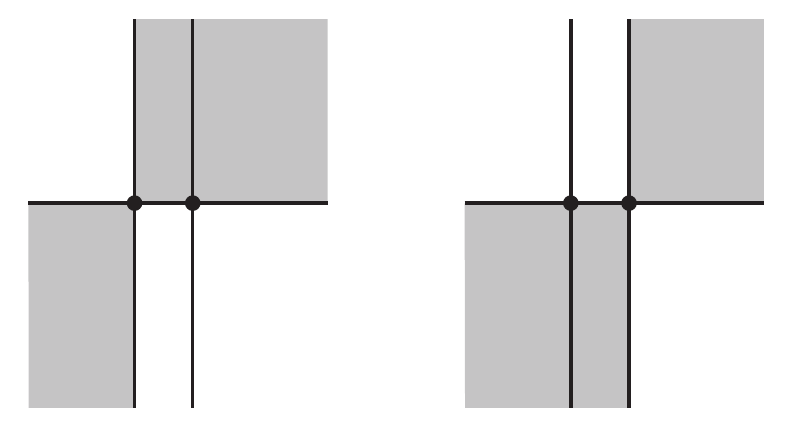}}
\put(35,0){(a)}
\put(140,0){(b)}
\put(25, 60){\tiny $q_i$} 
\put(53, 60){\tiny $q_{i+1}$}
\put(39, 85){\small $A$}
\put(39, 37){\small $B$}
\put(140, 60){\tiny $q_i$} 
\put(166, 60){\tiny $q_{i+1}$}
\put(152, 85){\small $A$}
\put(152, 37){\small $B$}

\end{picture}
\end{center}
\caption {\small (a) Shows those regions shaded that will be counted with weight one half in $\mathcal{J}(q_i, -)$.  (b) Shows those regions shaded that will be counted with weight one half in $\mathcal{J}(q_{i+1}, -)$.  }
\label{h_grid_step}
\end{figure}

Figure \ref{h_grid_step} shows the regions in which the points of $\mathbb{O}\cup\mathbb{X}$ will counted with weight a half in $\mathcal{J}(q_i,\mathbb{O}\cup\mathbb{X})$ and $\mathcal{J}(q_{i+1},\mathbb{O}\cup\mathbb{X}).$  These differ only in what is counted in the $i$th column.  So we see that
$$RH=\frac{1}{2}[\sum_{p\in B\cap(\mathbb{O}\cup\mathbb{X})}w(p)\epsilon(p)-\sum_{p\in A\cap(\mathbb{O}\cup\mathbb{X})}w(p)\epsilon(p)].$$
Using the fact  $\sum_{p\in col_i}w(p)\epsilon(p)=0$ again, we can simplify this to,
$$RH=-\sum_{p\in A\cap(\mathbb{O}\cup\mathbb{X})}w(p)\epsilon(p) = \sum_{p\in B\cap(\mathbb{O}\cup\mathbb{X})}w(p)\epsilon(p).$$
We now consider $LH=-h(q_i)-[-h(q_{i+1})] = h(q_{i+1}) - h(q_i)$.
Suppose the $O$ in the $i$th column is in $A$.  Then all the vertical arcs of $pr(g)$ in the $i$th column that intersect the arc from $q_i$ to $q_{i+1}$ are oriented upwards.  Thus $LH = \sum_{p \in \mathbb{X} \cap B}w(p) = \sum_{p\in B\cap(\mathbb{O}\cup\mathbb{X})}w(p)\epsilon(p)$.  If the $O$ in the $i$th column is in $B$, the all the vertical arcs of $pr(g)$ in the $i$th column that intersect the arc from $q_i$ to $q_{i+1}$ are oriented downwards.  So $LH = \sum_{p \in \mathbb{X} \cap A}-w(p) = -\sum_{p\in A\cap(\mathbb{O}\cup\mathbb{X})}w(p)\epsilon(p)$.
\end{proof}

\begin{cor}For all $\x \in \S(g)$, $$A^g(\x) = - \sum_{x_i \in \x} h(x_i) \in H_1(E(f)). $$
\end{cor}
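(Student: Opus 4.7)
The plan is very short: this corollary is essentially an immediate consequence of Lemma~\ref{lem:h} together with the definition of the Alexander grading on generators. Recall that $A^g$ on a generator $\x \in \S(g)$ was defined by the formula
$$A^g(\x) = \sum_{p\in\mathbb{O}\cup\mathbb{X}}\mathcal{J}(\x,p)w(p)\epsilon(p) = \sum_{x_i \in \x} A^g(x_i),$$
so the sum over the lattice points $x_i$ comprising $\x$ is already baked into the definition. Since each $x_i$ is a lattice point in $\mathcal{L}$, Lemma~\ref{lem:h} applies pointwise and gives $A^g(x_i) = -h(x_i) \in H_1(E(f))$.

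First I would write
$$A^g(\x) \;=\; \sum_{x_i \in \x} A^g(x_i) \;=\; \sum_{x_i \in \x} \bigl(-h(x_i)\bigr) \;=\; -\sum_{x_i \in \x} h(x_i),$$
invoking the definition in the first equality and Lemma~\ref{lem:h} term-by-term in the second. No further computation is required.

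As a bonus, this also justifies the parenthetical claim made earlier that $A^g(\x) \in H_1(E(f))$ rather than merely in $\tfrac{1}{2}H_1(E(f))$: since each $h(x_i)$ lies in $H_1(E(f))$ by construction (it is defined as the signed count of transverse intersections of a gridline path with $pr(f)$, weighted by meridian classes), so does the sum. There is no real obstacle; the only thing to be careful about is that the summation convention in the definition of $A^g(\x)$ matches the pointwise statement of Lemma~\ref{lem:h}, which it does because both sides of that lemma are additive over the $n$ lattice points making up a generator.
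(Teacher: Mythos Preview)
Your proof is correct and matches the paper's approach exactly: the paper states this corollary without proof, treating it as immediate from Lemma~\ref{lem:h} and the definition $A^g(\x) = \sum_{x_i \in \x} A^g(x_i)$, which is precisely what you do.
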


For a given saturated graph grid diagram $g$, the functions $A^g$ and $M$ make $C^-(g)$ into a well-defined $(H_1(E(f)), \mathbb{Z})$ bigraded $R_n$-module chain complex once we say how the grading changes when we multiply a generator by $U_i$.  We set \begin{equation}A^g(U_i) = -w(O_i), M(U_i)=-2\label{eq:ring_grading}\end{equation}
 and define
$$A^g(U_1^{a_1} \cdots U_n^{a_n} \x) = A^g(\x) + \sum_{i=1}^n a_i A^g(U_i)$$
and 
$$M(U_1^{a_1} \cdots U_n^{a_n} \x) = M(\x) + \sum_{i=1}^n a_i M(U_i).$$
\noindent We remark that since $g$ is saturated, $A^g(U_i) \neq 0$.  

For each $a \in H_1(E(f))$ and $m \in \mathbb{Z}$, let $C^-(g)_{(a,m)}$ be the subspace (of the vector space) of $C^-(g)$ with basis $\{U_1^{a_1} \cdots U_n^{a_n} \x\, | \, A^g(U_1^{a_1} \cdots U_n^{a_n} \x) = a, \, M(U_1^{a_1} \cdots U_n^{a_n} \x)=m \}$.  This gives a bigrading on $C^-(g)=\sum_{(a,m)} C^-(g)_{(a,m)}$.


\begin{prop}  The differential $\partial^-$ drops the Maslov grading by one and respects the Alexander grading.  That is, $\partial^-: C^-(g)_{(a,m)} \rightarrow C^-(g)_{(a,m-1)}$ for all $a \in H_1(E(f))$ and $m\in \mathbb{Z}$.
\end{prop}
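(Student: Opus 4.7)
The plan is to prove both grading statements by a direct computation applied to each empty rectangle contributing to $\partial^-$. Fix $\x \in \S$, and suppose $r \in \mathrm{Rect}^o(\x,\y)$ with $\mathrm{Int}(r)\cap\mathbb{X}=\emptyset$, so that $U_1^{O_1(r)}\cdots U_m^{O_m(r)}\cdot\y$ is one of the summands of $\partial^-(\x)$. I need to show that this summand lies in the same Alexander grading as $\x$ and one lower Maslov grading.

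For the Maslov grading, I would apply Equation~(\ref{maslov_eq}), which was cited (via Lemma 2.5 of \cite{MOST}) and gives $M(\x)-M(\y) = 1 - 2n_{\mathbb{O}}(r)$ for any empty rectangle from $\x$ to $\y$. Combined with the rule $M(U_i) = -2$ from (\ref{eq:ring_grading}) and the multiplicative extension, the Maslov grading of the summand is $M(\y) - 2\sum_i O_i(r) = M(\y) - 2n_{\mathbb{O}}(r) = M(\x) - 1$, as desired. This part is essentially the same as in \cite{MOST} and requires no new input beyond what has already been set up.

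For the Alexander grading, I would apply Lemma~\ref{lem:alex_count}, specifically Equation~(\ref{eq:Ano}): since $r$ is a domain (in fact a rectangle) from $\x$ to $\y$, $A^g(\x) - A^g(\y) = \mathbf{n}_\mathbb{X}(r) - \mathbf{n}_\mathbb{O}(r)$. The hypothesis $\mathrm{Int}(r)\cap\mathbb{X}=\emptyset$ is the key point where the restriction in the definition of $\partial^-$ (to rectangles without $X$'s) enters: it forces $\mathbf{n}_\mathbb{X}(r) = 0$, so $A^g(\x) = A^g(\y) - \mathbf{n}_\mathbb{O}(r) = A^g(\y) - \sum_i O_i(r)\, w(O_i)$. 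On the other hand, using $A^g(U_i) = -w(O_i)$ from (\ref{eq:ring_grading}) and extending multiplicatively, the summand satisfies $A^g(U_1^{O_1(r)}\cdots U_m^{O_m(r)}\y) = A^g(\y) - \sum_i O_i(r) w(O_i)$. These agree, so the summand has the same Alexander grading as $\x$.

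Since every summand of $\partial^-(\x)$ lies in $C^-(g)_{(A^g(\x),\, M(\x)-1)}$, extending $R_n$-linearly shows that $\partial^-$ carries $C^-(g)_{(a,m)}$ into $C^-(g)_{(a,m-1)}$. There is no real obstacle here: once Lemma~\ref{lem:alex_count} and the cited Maslov identity are in hand, the proposition is a one-line consequence of the definitions, modulo carefully tracking how the formal variables $U_i$ shift both gradings. The only subtle point worth mentioning explicitly is that the restriction $\mathrm{Int}(r)\cap\mathbb{X}=\emptyset$ in the definition of $\partial^-$ is precisely what makes the Alexander grading strictly preserved rather than merely filtered, which is the reason one obtains a bigraded (rather than filtered) object in the spatial graph setting.
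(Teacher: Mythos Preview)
Your proposal is correct and follows essentially the same approach as the paper: both apply Equation~(\ref{maslov_eq}) and the grading shifts of the $U_i$ for the Maslov part, and Lemma~\ref{lem:alex_count} together with $\mathrm{Int}(r)\cap\mathbb{X}=\emptyset$ for the Alexander part. Your added remark about why this yields a bigraded rather than filtered object is accurate and matches the paper's perspective elsewhere.
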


\begin{proof} Consider $U_1^{O_1(r)}\cdots U_m^{O_m(r)}\cdot \y$ appearing in the boundary of $\x$.  Since $\x$ and $\y$ are connected by an empty rectangle $r$, we see that $M(\x)=M(\y)+1-2n_{\mathbb{O}(r)}$ by Equation~(\ref{maslov_eq}).  Since each $U_i$ drops the Maslov grading by two, we have $M(\x)=M(U_1^{O_1(r)}\cdots U_m^{O_m(r)}\cdot \y)+1.$  Thus the differential $\partial^-$ drops the Maslov grading by one.

Next $A^g(\x) = A^g(\y) + \sum_{\mathbb{X}\cap r}w(X)-\sum_{\mathbb{O}\cap r} w(O),$ but by definition of the differential $\mathbb{X}\cap r=\emptyset.$  So $A^g(\x) = A^g(\y) -\sum_{\mathbb{O}\cap r} w(O)=A^g(U_1^{O_1(r)}\cdots U_m^{O_m(r)}\cdot \y).$  Thus the differential $\partial^-$ respects the Alexander grading.
\end{proof}

We are interested in viewing $(C^-(g),\partial^-)$ as a module instead of just a vector space.   Using the definition in (\ref{eq:ring_grading}), $\mathbb{F}[U_1, \dots, U_n]$ becomes an $(H_1(E(f)),\mathbb{Z})$ bigraded ring (i.e. $H_1(E(f)) \oplus \mathbb{Z}$ graded) and with this grading, $(C^-(g),\partial^-)$ is a $(H_1(E(f)),\mathbb{Z})$ bigraded $R_n$-module chain complex.   We would like to define an invariant of the graph grid diagram that is unchanged under any graph grid moves, giving an invariant of the transverse spatial graph.  Since $\mathbb{F}[U_1, \dots, U_n]$ depends on the size of the grid, we need to view $C^-(g)$ as a module over a smaller ring.  One choice would be to view $C^-(g)$ as a module over $\mathbb{F}[U_1, \dots, U_{V+E}]$, where $U_1, \dots, U_V$ correspond to the vertices of the graph and $U_{V+1}, \dots U_{V+E}$ each correspond to a choice of $O$ on a distinct edge of the graph.  However, by Proposition~\ref{prop:no_edges} below, multiplication by $U_i$ which is corresponds to an edge is chain homotopic to $0$ or multiplication by $U_j$ where $U_j$ corresponds to a vertex ($1 \leq j \leq V$).  Thus it makes sense to view $C^-(g)$ as a module over $\mathbb{F}[U_1, \dots, U_V]$ (recall that we ordered $\mathbb{O}$ such that $O_1,\dots,O_V$ are vertex $Os$).

Let $I :\mathbb{F}[U_1, \dots, U_V] \rightarrow \mathbb{F}[U_1, \dots, U_n]$ be the natural inclusion of rings defined by setting  $I(U_i)=U_i$ for $1\leq i \leq V$.  Using $I$, any module over $\mathbb{F}[U_1, \dots, U_n]$ naturally becomes an $\mathbb{F}[U_1, \dots, U_V]$-module.  Thus, we will view $C^-(g)$ as a $R_V$-module where $R_V=\mathbb{F}[U_1, \dots, U_V]$.  Note that $\partial^-$ preserves the homology and hence the homology of $(C^-(g),\partial^-)$ inherits the structure of a $(H_1(E(f)), \Z)$ bigraded $R_V$-module.

\begin{definition} Let $g$ be a saturated graph grid diagram representing the transverse spatial graph $f:G \rightarrow S^3$.  The \textbf{graph Floer chain complex} of $g$ is the RA $(H_1(E(f)), \Z)$ bigraded $R_V$-module chain complex $(C^-(g),\partial^-)$.  The \textbf{graph Floer homology of $g$}, denoted $HFG^-(g)$, is the homology of $(C^-(g),\partial^-)$ viewed as a RA $(H_1(E(f)), \Z)$ bigraded $R_V$-module.
\end{definition}

Before stating Proposition~\ref{prop:no_edges}, we need some terminology. 

\begin{definition} We say an edge of a graph is \textbf{singular} if at each of its endpoints it is the only outgoing edge or the only incoming edge. See Figure~\ref{fig:sing_edge} for an example. \end{definition}
	
We note that if a component of the graph is a simple closed curve, then every edge in that component is singular. 
	
\begin{center}
\begin{figure}[h]
	\begin{picture}(135,65)
	\put(0,0){\includegraphics{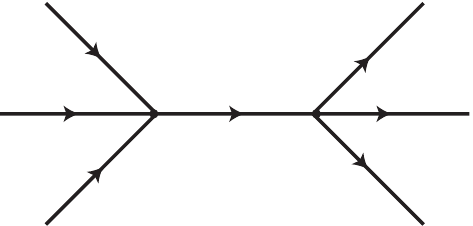}}
	\put(65,40){$e$}
	\end{picture}
	\label{fig:sing_edge}
	\caption{A singular edge $e$.}
\end{figure}
\end{center}

\begin{prop}\label{prop:no_edges}(1) If $O_i$ and $O_j$ are on the interior of the same edge then multiplication by $U_i$ is chain homotopic to multiplication by $U_j$.   (2) If $O_i$ is associated to a vertex with a single outgoing or incoming edge $e$ and $O_j$ is on the interior of $e$ then multiplication by $U_i$ is chain homotopic to multiplication by $U_j$. (3) If $O_i$ is on the interior of an edge that is not singular then multiplication by $U_i$ is null homotopic.  Moreover, each of the chain homotopies is bigraded $R_n$-module homomorphism of degree $(-w(O_i),-1)$.
\end{prop}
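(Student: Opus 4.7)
The plan is to adapt the chain-homotopy technique of \cite{MOST} to the graph grid setting. For each $X \in \mathbb{X}$, define an $R_n$-module map $H_X : C^-(g) \to C^-(g)$ on generators by
$$H_X(\x) = \sum_{\y \in \S} \sum_{\substack{r \in \mathrm{Rect}^o(\x,\y) \\ X \in \mathrm{Int}(r),\ \mathrm{Int}(r) \cap (\mathbb{X} \setminus \{X\}) = \emptyset}} U_1^{O_1(r)} \cdots U_n^{O_n(r)} \cdot \y.$$
A direct computation using Lemma~\ref{lem:alex_count}, together with the fact that $r$ contains exactly one $X$, shows that $H_X$ is a bigraded $R_n$-module homomorphism of degree $(-w(X), -1)$.

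The key technical step is to establish the identity
$$\partial^- H_X + H_X \partial^- \ =\ \varepsilon_r(X)\, U_{O(\mathrm{row}(X))} \ +\ \varepsilon_c(X)\, U_{O(\mathrm{col}(X))},$$
where $\varepsilon_r(X) = 1$ if the row containing $X$ carries $X$ as its only $X$-marking and $\varepsilon_r(X) = 0$ otherwise, and $\varepsilon_c(X)$ is defined analogously for the column of $X$. This follows the template of Proposition~\ref{Prop_BoundaryMap}: juxtapositions $r_1 * r_2$ in which exactly one rectangle contains $X$ (and neither contains any other $X$) admit a canceling reindexing in the disjoint, overlapping, and corner-sharing configurations. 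The remaining contributions come from thin horizontal and vertical annuli through $X$. A thin horizontal annulus sweeps the full row of $X$; if that row carries additional $X$'s they cannot fit into a valid decomposition and the annulus contributes zero, whereas if the row carries only $X$ the unique valid decomposition contributes $U_{O(\mathrm{row}(X))}\cdot \x$. The vertical case is symmetric.

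Given this identity, parts (1)--(3) follow quickly. For (1), the unique $X$ between two adjacent $O_a, O_b$ along the interior of $e$ lies in rows and columns of standard $O$'s, so $\varepsilon_r = \varepsilon_c = 1$ and $H_X$ realizes the chain homotopy $U_a \sim U_b$; telescoping the sum of these $H_X$'s along $e$ proves the full statement. For (2), the $X$ in the row (respectively column) of the vertex $O_i$ corresponding to its unique outgoing (respectively incoming) edge $e$ again satisfies $\varepsilon_r = \varepsilon_c = 1$ and gives $U_i \sim U_{j_1}$ for $O_{j_1}$ the adjacent $O$ on $e$; part (1) extends this to $U_i \sim U_j$. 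For (3), non-singularity of $e$ provides an endpoint $v$ of $e$ carrying more than one outgoing (or incoming) edge; the $X$ of the flock of $O_v$ lying on $e$ then satisfies $\varepsilon_r = 0$ (respectively $\varepsilon_c = 0$) while its transverse circle is standard, so $\partial^- H_X + H_X \partial^- = U_{j_1}$ for some $O_{j_1}$ on the interior of $e$, which is therefore null-homotopic. Part (1) again transports this to $U_i$. The identity $w(O_i) = w(e)$ in all three cases ensures that each chain homotopy constructed has the claimed bidegree $(-w(O_i), -1)$.

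The main obstacle I anticipate is the careful case analysis for the $\varepsilon$-identity: one must verify that the nonstandard flocks around vertex $O$'s introduce no new obstruction to the pairing argument beyond the thin-annulus contribution, and that the emptiness condition $\mathrm{Int}(r) \cap (\mathbb{X} \setminus \{X\}) = \emptyset$ interacts correctly with the annulus decompositions when the row or column of $X$ carries several $X$'s.
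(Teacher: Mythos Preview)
Your proposal is correct and follows essentially the same approach as the paper's proof: both define the homotopy $H_X$ by counting empty rectangles containing $X$ and no other element of $\mathbb{X}$, and both identify $\partial^- H_X + H_X\partial^-$ via the thin-annulus contributions. Your $\varepsilon_r,\varepsilon_c$ formulation is simply a unified restatement of the paper's three cases (i)--(iii), and your derivations of (1)--(3) from this identity match the paper's (somewhat terser) argument.
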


\noindent Note, this implies that the chain homotopies are also bigraded $R_V$-module homomorphisms.  

\begin{proof} Let $X_k \in \mathbb{X}$.  We define $H_k:C^-(g) \rightarrow C^-(g)$ by counting rectangles that contain $X_k$ but do not contain any other $X_s$. Specifically, 
$$H_k(\x):=\sum_{\y\in\s}\,\sum_{\substack{r\in \mathrm{Rect}^o(\x,\y)\\X_k\in r \\ X_s \notin r \, \forall \, X_s \in \mathbb{X}\smallsetminus \{X_k\}}} U_1^{O_1(r)}\cdots U_n^{O_n(r)}\cdot\y.$$  Note that $H_k:C^-(g)_{(a,m)} \rightarrow C^-(g)_{(a-w(O_i), m-1)}$. Suppose that $X_k$ shares a row with $O_i$ and shares a column with $O_j$.  There are three cases we need to consider.  (i) If $X_k$ is the only element of $\mathbb{X}$ in its row and column then, like in the proof of Lemma~2.8 of \cite{MOST}, we have that 
$$ \partial^- \circ H_k + H_k \circ \partial^- = U_i + U_j.$$ (ii) If $X_k$ is the only element of $\mathbb{X}$ in its row but it shares its column with other elements of $\mathbb{X}$, then $$ \partial^- \circ H_k + H_k \circ \partial^- = U_i.$$  The difference here is that the vertical annulus containing $X_k$ also includes another $X_s$ for $s \neq k$.  Thus it does not contribute to $\partial^- \circ H_k + H_k \circ \partial^-$.  (iii) Similarly, if $X_k$ is the only element of $\mathbb{X}$ in its column but it shares its row with other elements of $\mathbb{X}$, then $$ \partial^- \circ H_k + H_k \circ \partial^- = U_j.$$  Note that we need not consider the case when $X_k$ shares both its row and column with other elements of $\mathbb{X}$ since in this case $H_k$ is the zero map.  We use the fact that $w(O_i)=w(O_j)$ in parts (1) and (2) and the fact that you can add two chain homotopies to get another chain homotopy to complete the proof.  
\end{proof}

In Section~\ref{sec:invariance}, we will show that $(C^-(g),\partial^-)$ viewed as a relatively-absolutely $(H_1(E(f)),\Z)$ bigraded $R_V$-module chain complex changes by a quasi-isomorphism under graph grid moves.  Thus, its homology is an invariant of the spatial graph and not just the grid representative.

\begin{theorem}\label{thm:invariance} If $g_1$ and $g_2$ are saturated graph grid diagrams representing the same transverse spatial graph $f:G \rightarrow S^3$ then $(C^-(g_1),\partial^-)$ is quasi-isomorphic to $(C^-(g_2),\partial^-)$ as RA $(H_1(E(f)),\Z)$ bigraded $R_V$-modules.  In particular, 	$HFG^-(g_1)$ is isomorphic to $HFG^-(g_2)$ as RA $(H_1(E(f)),\Z)$ bigraded $R_V$-modules. 
\end{theorem}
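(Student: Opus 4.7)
The plan is to invoke Theorem~\ref{thm:main_grid}: since any two saturated graph grid diagrams for the same transverse spatial graph $f$ are related by a finite sequence of cyclic permutations, commutation$'$ moves, and stabilization$'$/destabilization$'$ moves, it suffices to exhibit a quasi-isomorphism of RA $(H_1(E(f)),\Z)$ bigraded $R_V$-module chain complexes for each individual move. The general philosophy follows \cite{MOST}: one defines chain maps by counting certain polygons (rectangles, pentagons, or hexagons) that avoid all $X$'s, checks that they are $R_V$-linear and respect the bigradings, then verifies they are quasi-isomorphisms either directly or by constructing an explicit homotopy inverse.

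Cyclic permutation is essentially tautological. The chain groups, the differential $\partial^-$, and the Maslov grading are all defined on the toroidal grid diagram $\Torus$, so they do not see the choice of fundamental domain. The Alexander grading is only defined on a planar grid, but by Corollary~\ref{cor:torus_grading} the \emph{relative} Alexander grading is a well-defined function on $\Torus$. Hence a cyclic permutation induces a tautological identity map on generators that is a bigraded $R_V$-module chain isomorphism, and in particular a quasi-isomorphism of RA bigraded complexes.

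For commutation$'$, the natural candidate for the chain map is the pentagon-counting map of \cite{MOST}, modified so that we only count empty pentagons whose interior contains no $X$. Because our commutation$'$ is more permissive (the columns being exchanged may each contain several $X$'s or share a row), I would verify using the vertical line segments $LS_1,LS_2$ in the definition of commutation$'$ that the standard local analysis of pentagon degenerations still goes through: the only regions contributing to $\partial^- \circ P + P \circ \partial^-$ are hexagons, annuli, and degenerate pentagons, and the annuli contributions vanish because every vertical/horizontal annulus in a saturated diagram contains an $X$. A homotopy inverse is constructed by counting pentagons in the reverse direction, and the composite is shown to be chain homotopic to the identity by a hexagon count, again restricted to hexagons avoiding $\mathbb X$. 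The $R_V$-linearity is clear since pentagons carry the same $O$-monomial factors as rectangles, and the pentagon count shifts the Maslov grading by $0$ and preserves the relative Alexander grading by a direct computation analogous to Lemma~\ref{lem:alex_count} (one checks the number of $O$'s minus the number of $X$'s in a pentagon matches the shift).

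Stabilization$'$ is the main obstacle, because our move is more general than the one in \cite{MOST}: a row containing $O_k$ together with several $X$'s may be broken across a single $X_{j_1}$. The reduction in Remark~\ref{rem:stab_simple} is crucial here; using commutation$'$ I first normalize $\bar g$ so that $X_{j_1}$, $X_m$, and $O_n$ share a single $2\times 2$ corner with $X_{j_1}$ directly left of $O_n$ and $O_n$ directly above $X_m$. This puts us in the combinatorial situation of \cite{MOST}, so I can then decompose $C^-(\bar g)$ according to which of the two intersection points in that corner $2\times 2$ region appears in the generator, write $C^-(\bar g)$ as a mapping cone of a chain map $D: \mathbf N \to \mathbf I$ between the two summands, and identify the acyclic subcomplex obtained by inverting the variable corresponding to $O_n$. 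The key step is then to produce an explicit quasi-isomorphism between this mapping cone (after killing the acyclic piece via a homological perturbation using the homotopy $H_k$ of Proposition~\ref{prop:no_edges}) and $C^-(g)\otimes_{R_{n-1}} R_n$, identifying $U_n$ with multiplication by a suitable $U_i$ via the chain homotopies of Proposition~\ref{prop:no_edges}. The main subtleties to check are: (i) that the generalized flocks with multiple $X$'s still allow the polygon-count analysis to close up (only annuli meeting no $X$ contribute to $\partial^2$, and saturation prevents such annuli); (ii) that $U_n$ acts compatibly after passing through the inclusion $I:R_V\hookrightarrow R_n$, which is where part~(2) of Proposition~\ref{prop:no_edges} is used to identify $U_n$ with the vertex variable corresponding to $O_k$; and (iii) that the whole construction shifts the Maslov grading and the relative Alexander grading trivially, which follows from a direct computation with the new column and the new $O_n,X_m$ decorations. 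Once stabilization$'$ is handled, patching the three moves together proves the theorem, and passing to homology yields the isomorphism $HFG^-(g_1)\cong HFG^-(g_2)$ as RA bigraded $R_V$-modules.
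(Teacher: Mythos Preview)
Your overall architecture---reduce to a single graph grid move via Theorem~\ref{thm:main_grid} and exhibit a bigraded $R_V$-module quasi-isomorphism for each move---is exactly what the paper does, and your treatment of cyclic permutation and commutation$'$ is essentially the paper's: the identity map for permutation (using Corollary~\ref{cor:torus_grading} and the Maslov computation from \cite{MOST}), and the pentagon/hexagon maps $\Phi_{\beta\gamma}$, $H_{\beta\gamma\beta}$ restricted to polygons missing $\mathbb X$ for commutation$'$.

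Your stabilization$'$ sketch, however, drifts away from the paper and contains a couple of missteps. The paper does \emph{not} decompose $C^-(\bar g)$ as a mapping cone of a map $\mathbf N\to\mathbf I$, nor does it invert any variable or invoke Proposition~\ref{prop:no_edges} to identify $U_n$ with a vertex variable. Instead it builds the target complex first: set $B=C^-(g)$, form $B[U_n]$, and let $\zeta:B[U_n]\to B[U_n]$ be multiplication by $U_n$ if the stabilized row had $l\geq 2$ $X$'s, or by $U_n+U_k$ if $l=1$. The mapping cone $C'=\operatorname{cone}(\zeta)$ is quasi-isomorphic to $B$ by an elementary short-exact-sequence argument. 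The actual work is an explicit $R_n$-module chain map $F=(F_L,F_R):C^-(\bar g)\to C'$, where $F_L$ records the trivial domain at $\star$ and $F_R$ counts the single type of $R$-domain (the rectangle with $\star$ at its upper-left corner, allowed to cross $X_m$ only). That $F$ is a quasi-isomorphism is proved by passing to $\widetilde C$ and $\widetilde C'$, introducing an auxiliary $Q$-filtration (one dot per square outside the new row and column), and checking that the induced map on associated graded pieces is an isomorphism exactly as in \cite{MOST}. Your proposed use of Proposition~\ref{prop:no_edges}(2) is not apt here: $O_n$ is a standard (edge) $O$, not a vertex $O$, and that proposition gives chain homotopies between multiplication operators, not an identification of ring variables; the dichotomy $\zeta=U_n$ versus $\zeta=U_n+U_k$ already encodes the relevant information, and no homological perturbation or localization is needed.
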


\begin{proof}Suppose $g_1$ and $g_2$ are saturated graph grid diagrams that are related by a cyclic permutation move.  Then they have the same toroidal grid $\mathcal{T}$.  Note that there is an natural identification of $\S$ for both graph grid diagrams so that $C(g_1)=C(g_2)$ as abelian groups.  Let $\x,\y \in \S$.  By Corollary~\ref{cor:torus_grading}, $A^{g_1}(\x) - A^{g_1}(\y) = A^{g_2}(\x) - A^{g_2}(\y)$.  Hence $A^{g_1}(\x) - A^{g_2}(\x) = A^{g_1}(\y) - A^{g_2}(\y)$ is a constant $a$ that is independent of element of $\S$. By Lemma~2.4 of \cite{MOST}, $M(\x)$ gives the same value for both $g_1$ and $g_2$.  
	Thus the identity map $id: C^-(g_1) \rightarrow C^-(g_2)$ is an $(H_1(E(f)),\Z)$ bigraded $R_V$-module chain map of degree $(a,0)$.  Thus $(C^-(g_1),\partial^-)$ is quasi-isomorphic to $(C^-(g_1),\partial^-)$ as RA $(H_1(E(f)),\Z)$ bigraded $R_V$-modules.  If $g_1$ and $g_2$ are saturated graph grid diagrams that are related by a commutation$'$ or stabilization$'$ moves then by Propositions~\ref{prop:commprime} and \ref{prop:stab}, $(C^-(g_1),\partial^-)$ is quasi-isomorphic to $(C^-(g_1),\partial^-)$ as RA $(H_1(E(f)),\Z)$ bigraded $R_V$-modules.  By Theorem~\ref{thm:main_grid}, $g_1$ and $g_2$ are related by a finite sequence of graph grid moves which completes the proof. 
\end{proof}

By Theorem~\ref{thm:invariance}, the following definitions of $QI^-(f)$ and $HFG^-(f)$ are well-defined and independent of of choice of grid diagram.

\begin{definition} Let $f: G \rightarrow S^3$ be a sinkless and sourceless transverse spatial graph.  We define $QI^-(f)$ to be the quasi-isomorphism class of the RA $(H_1(E(f)), \Z)$ bigraded $R_V$-module chain complex $(C^-(g),\partial^-)$, for any saturated graph grid diagram $g$ representing $f$.  The \textbf{graph Floer homology of $f$}, denoted $HFG^-(f)$, is the homology of $(C^-(g),\partial^-)$ viewed as a RA $(H_1(E(f)), \Z)$ bigraded $R_V$-module, for any saturated graph grid diagram $g$ representing $f$.  
\end{definition}

We note that $C^-(g)$ is a finitely generated $R_n$-module.  However, as a $R_V$-module, it is not finitely generated.  Using Proposition~\ref{prop:no_edges}, we can show that $HFG^-(f)$ is a finitely generated $R_V$-module. 

\begin{proposition}\label{prop:fg}$HFG^-(f)$ is a finitely generated $R_V$-module for any sinkless and sourceless transverse spatial graph $f: G \rightarrow S^3$.
\end{proposition}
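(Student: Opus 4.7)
Pick any saturated graph grid diagram $g$ representing $f$, of grid number $n$. The plan is to show that $HFG^-(f)$ is already finitely generated as an $R_n$-module and then use Proposition~\ref{prop:no_edges} to exhibit the $R_n$-action as factoring through an $R_V$-action on homology.

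The first step is to observe that $C^-(g)$ is free of finite rank as an $R_n$-module with basis $\S$ (of size $n!$). Since $R_n = \mathbb{F}[U_1, \dots, U_n]$ is Noetherian by Hilbert's basis theorem, every submodule of $C^-(g)$ is finitely generated; in particular $\ker \partial^-$ is finitely generated, so the quotient $HFG^-(g) = \ker\partial^-/\operatorname{im}\partial^-$ is a finitely generated $R_n$-module. Fix homology classes $[y_1], \dots, [y_k]$ that generate it.

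The second step is to explain how each $U_j$ with $V+1 \leq j \leq n$ acts on homology. Such an $O_j$ lies in the interior of some edge $e$ of $f(G)$. If $e$ is singular, then at each endpoint of $e$ it is either the unique outgoing or the unique incoming edge, so pick any endpoint $v$ of $e$ and let $O_{\phi(j)}$ (with $\phi(j) \leq V$) be the vertex $O$ at $v$; by part (2) of Proposition~\ref{prop:no_edges}, multiplication by $U_j$ is chain homotopic to multiplication by $U_{\phi(j)}$, so these act identically on $HFG^-(f)$. If $e$ is not singular, then by part (3) of Proposition~\ref{prop:no_edges} multiplication by $U_j$ is null homotopic, so $U_j$ acts as $0$ on $HFG^-(f)$. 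In either case, the action of $U_j$ on homology equals the action of some element $\widetilde U_j \in R_V$ (namely $U_{\phi(j)}$ or $0$).

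The third step assembles these observations. Define a ring homomorphism $\pi: R_n \to R_V$ by $\pi(U_i)=U_i$ for $1 \leq i \leq V$ and $\pi(U_j)=\widetilde U_j$ for $V+1 \leq j \leq n$, and extend multiplicatively. Since each $U_j$ acts on $HFG^-(f)$ the same way $\pi(U_j)$ does, and since this is an identity of $\mathbb{F}$-module endomorphisms of $HFG^-(f)$, the full $R_n$-action on $HFG^-(f)$ coincides with the $R_V$-action obtained by restriction along $\pi$ composed with inclusion $R_V \hookrightarrow R_n$. Therefore any element $\sum_i p_i(U_1, \dots, U_n)\,[y_i]$ equals $\sum_i \pi(p_i)(U_1, \dots, U_V)\,[y_i]$ in $HFG^-(f)$, so $[y_1], \dots, [y_k]$ generate $HFG^-(f)$ over $R_V$. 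This gives finite generation as an $R_V$-module.

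The only mildly subtle point — and the main thing to verify carefully in writing out the argument — is the claim that for every non-vertex $O_j$ on a singular edge there is a vertex $O$ at an endpoint of that edge satisfying the hypothesis of part (2) of Proposition~\ref{prop:no_edges}. This is immediate from the definition of singular edge (at each endpoint, the edge is the unique incoming or the unique outgoing one) together with the fact that each vertex of $G$ carries a vertex $O$ in any graph grid diagram representing $f$. Once this is noted, the rest of the argument is formal.
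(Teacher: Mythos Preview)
Your proposal is correct and follows essentially the same approach as the paper's proof: first observe that $HFG^-(g)$ is finitely generated over $R_n$ (since $C^-(g)$ is a free $R_n$-module of finite rank and $R_n$ is Noetherian), then use Proposition~\ref{prop:no_edges} to replace each $U_j$ with $j>V$ acting on homology by an element of $R_V$. Your version is somewhat more explicit in packaging the second step as a ring homomorphism $\pi:R_n\to R_V$ through which the action on homology factors, and in spelling out the singular/non-singular edge dichotomy, but the underlying argument is the same.
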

\begin{proof}The proof is similar to Lemma~2.13 in \cite{MOST}. Let $g$ be a saturated graph grid diagram representing $f$. We first note that $C^-(g)$ is a finitely generated $R_n$-module so $H_\ast(C^-(g))$ is finitely generated as a $R_n$-module.  Let $[z_1], \dots, [z_r]$ be the generators of $H_\ast(C^-(g))$ as a  finitely generated $R_n$-module.  Let $[c] \in H_\ast(C^-(g))$. Then we can write $[c] = \sum p_i [z_i]$ for some $p_i \in R_n$.  Let $V+1 \leq j \leq n$ and $[b] \in HFG^-(f)$, then by Proposition~\ref{prop:no_edges},  $U_j [b]$ is either $0$ or equal to $U_i [b]$ for some $1\leq i \leq V$.   Using this repeatedly, it follows that $p_i [z_i] = q_i [z_i]$ for some $q_i \in R_V$.  
	\end{proof}

\subsection{Tilde and Hat Variants} \label{sec:tilde_hat} For a saturated graph grid diagram $g$, we can define two other variants of $(C^-(g),\partial^-)$.
First we define the hat theory.  Let $\mathcal{U}_V$ be the $\mathbb{F}$-vector subspace of $C^-(g)$ spanned by $U_1 C^-(g) \cup \dots \cup U_V C^-(g)$.  Define $\widehat{C}(g)$ to be the quotient $C^-(g)/\mathcal{U}_V$.  Since $\partial^-(\mathcal{U}_V) \subset \mathcal{U}_V$, it follows that $\partial^-$ descends to an $R_V$-module homomorphism $$\widehat{\partial}: \widehat{C}(g) \rightarrow \widehat{C}(g).$$  Since $C^-(g)$ has a basis of homogeneous elements $\{b_i\}_{i\in I}$ as an $\mathbb{F}$ vector space, with respect to the $(H_1(E(f)),\mathbb{Z})$ grading on $C^-(g)$, and $\mathcal{U}_V$ has a basis that is a subbasis of $\{b_i\}_{i\in I}$, the $(H_1(E(f)),\mathbb{Z})$ bigrading on $C^-(g)$ descends to a well-defined $(H_1(E(f)),\mathbb{Z})$ bigrading on $\widehat{C}(g)$.

		\begin{definition} Let $g$ be saturated graph grid diagram representing the sinkless and sourceless transverse spatial graph $f:G \rightarrow S^3$.  The \textbf{graph Floer hat chain complex} of $g$ is the $(H_1(E(f)), \Z)$ bigraded chain complex $(\widehat{C}(g),\widehat{\partial})$.  The \textbf{graph Floer hat homology of $g$}, denoted $\widehat{HFG}(g)$, is the homology of $(\widehat{C}(g),\widehat{\partial})$ viewed as an $(H_1(E(f)), \Z)$ bigraded vector space over $\mathbb{F}$.
		\end{definition}

For a given sinkless and sourceless transverse spatial graph $f$, we can use Theorem~\ref{thm:invariance} to show that the quasi-isomorphism class (and hence homology) of $(\widehat{C}(g),\widehat{\partial})$ does not depend on the choice of graph grid diagram representing $f$. The following lemma is well-known but we include it for completeness.

\begin{lemma}Let $C$ and $D$ be $\mathbb{F}[U_1,\dots,U_V]$-module chain complexes and $\phi:C \rightarrow D$, an $\mathbb{F}[U_1,\dots,U_V]$-module quasi-isomorphism.  Then $\phi$ descends to a quasi-isomorphism $\widehat{\phi}: \widehat{C} \rightarrow \widehat{D}$ of $\mathbb{F}$-vector spaces where $\widehat{C}=C/\mathcal{U}_V$ and $\widehat{D}=D/\mathcal{U}_V$.  
\end{lemma}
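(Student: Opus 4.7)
The plan is to reduce the assertion to the one-variable case and then proceed by induction on the number of $U_i$'s that are killed. First I would handle the case $V=1$ using the Five Lemma, and then iterate.

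For $V=1$, write $U = U_1$. In the intended application the complexes $C$ and $D$ are free $R_V$-modules on their generators (the sets $\mathbf{S}$), so multiplication by $U$ is injective on both. This produces a short exact sequence of chain complexes
$$0 \longrightarrow C \xrightarrow{\,U\,} C \longrightarrow \widehat{C} \longrightarrow 0$$
(and an analogous one for $D$) which fit into a ladder of long exact sequences in homology linked by the maps induced by $\phi$. Because $\phi_*$ is already an isomorphism $H_*(C) \to H_*(D)$ in every degree, a Five Lemma argument forces $\widehat{\phi}_* : H_*(\widehat{C}) \to H_*(\widehat{D})$ to be an isomorphism as well.

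For general $V$, I would iterate this argument. Set $C^{(k)} = C/(U_1, \ldots, U_k)C$ and $D^{(k)} = D/(U_1, \ldots, U_k)D$, with $\phi^{(k)}: C^{(k)} \to D^{(k)}$ the induced map. Then $C^{(0)} = C$ and $C^{(V)} = \widehat{C}$, and similarly for $D$. Assuming inductively that $\phi^{(k)}$ is a quasi-isomorphism of $\mathbb{F}[U_{k+1}, \ldots, U_V]$-module complexes, the single-variable argument applied with $U_{k+1}$ in place of $U$ shows that $\phi^{(k+1)}$ is a quasi-isomorphism of $\mathbb{F}[U_{k+2}, \ldots, U_V]$-module complexes. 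After $V$ such steps the conclusion follows.

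The main obstacle, and the reason the freeness hypothesis is essential, is that an $R_V$-module quasi-isomorphism need not descend to a quasi-isomorphism after killing an ideal in general. The short exact sequence at every stage of the induction requires that $U_{k+1}$ act injectively on $C^{(k)}$ and $D^{(k)}$. This is automatic once $C$ is free over $R_V$ on some basis $S$: then $C^{(k)}$ is free over $\mathbb{F}[U_{k+1}, \ldots, U_V]$ on the same basis $S$, so $U_{k+1}$ is injective on it. In our application $C = C^-(g)$ is free on $\mathbf{S}$ over $R_n$, and since $R_n$ is free over $R_V$, $C^-(g)$ is free over $R_V$ as required.
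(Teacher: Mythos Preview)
Your approach is essentially identical to the paper's: induct on $V$, use the short exact sequence $0 \to C \xrightarrow{U} C \to \widehat{C} \to 0$ together with the Five Lemma for the base case, and iterate. The paper organizes the induction slightly differently (it passes from $V$ to $V+1$ by first quotienting by $U_1,\dots,U_V$ and then by $U_{V+1}$), but this is cosmetic.

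You are right to flag the injectivity issue: the short exact sequence requires that multiplication by $U_{k+1}$ be injective on $C^{(k)}$ and $D^{(k)}$, which follows from freeness over $R_V$ but not from the bare hypotheses of the lemma as stated. The paper writes the sequence without comment, so your observation actually tightens the argument; in the application $C^-(g)$ is free over $R_n$ (hence over $R_V$), so the needed injectivity holds at every stage.
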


\begin{proof}We prove this by induction on $V$.  Suppose $V=1$.  Then $\phi$ is an $\mathbb{F}[U_1]$-module homomorphism, and $\phi$ descends to a well-defined chain map $\widehat{\phi}: \widehat{C} \rightarrow \widehat{D}$.  The following diagram commutes and the horizontal sequences are exact.  
	\begin{diagram} 0 & \rTo & C & \rTo^{U_1} & C & \rTo^{q} & \widehat{C} & \rTo & 0 \\
		  &      & \dTo^{\phi} &  & \dTo^{\phi} & & \dTo^{\widehat{\phi}} \\
		0 & \rTo & D & \rTo^{U_1} & D & \rTo^{q} & \widehat{D} & \rTo & 0 
		\end{diagram} 
		Here $U_1$ indicates that map that is multiplication by $U_1$ and $q$ is the quotient map.   Thus on homology, we get the following commutative diagram with horizontal long exact sequences.  
		\begin{diagram}  H_\ast(C) & \rTo^{(U_1)_\ast} & H_\ast(C) & \rTo^{q_\ast} & H_\ast(\widehat{C}) & \rTo & H_\ast(C) & \rTo^{(U_1)_\ast} & H_\ast(C) \\
			     \dTo^{\phi_\ast} &  & \dTo^{\phi_\ast} & & \dTo^{\widehat{\phi}_\ast} & &\dTo^{\phi_\ast} &  & \dTo^{\phi_\ast} \\
			  H_\ast(D) & \rTo^{(U_1)_\ast} & H_\ast(D) & \rTo^{q_\ast} & H_\ast(\widehat{D}) & \rTo & H_\ast(D) & \rTo^{(U_1)_\ast} & H_\ast(D)  
			\end{diagram}
			Since $\phi_\ast$ is an isomorphism, by the $5$-lemma, so is $\widehat{\phi}_\ast$.

			Now suppose the lemma is true for $V$ and let $C$ and $D$ be $\mathbb{F}[U_1,\dots,U_{V+1}]$-module chain complexes and $\phi:C \rightarrow D$, an $\mathbb{F}[U_1,\dots,U_{V+1}]$-module quasi-isomorphism.  Consider $C$ and $D$ as $\mathbb{F}[U_1,\dots,U_{V}]$-modules.  Then by the inductive hypothesis, $\phi': C/\mathcal{U}_V \rightarrow D/\mathcal{U}_V$ is a quasi-isomorphism where $\phi'$ is induced from $\phi$ (which we called $\widehat{\phi}$ before).  Moreover, note that $\phi'$ is an $\mathbb{F}[U_{V+1}]$-module homomorphism.  Let $C' = C/\mathcal{U}_V$ and $D' = D/\mathcal{U}_V$. Using the proof from the case when $V=1$, we can show that the induced map $\widehat{\phi'}: C'/U_{V+1} C' \rightarrow D'/U_{V+1} D'$ is a quasi-isomorphism.  It is straightforward to show that the natural map $C/\mathcal{U}_{V+1} \rightarrow C'/U_{V+1} C'$ is a chain isomorphism (similarly for $D$), which completes the proof. \end{proof}

	\begin{corollary}\label{cor:hatinvariance} If $g_1$ and $g_2$ are saturated graph grid diagrams representing the same transverse spatial graph $f:G \rightarrow S^3$ then $(\widehat{C}(g_1),\widehat{\partial})$ is quasi-isomorphic to $(\widehat{C}(g_2),\widehat{\partial})$ as RA $(H_1(E(f)),\Z)$ bigraded vector spaces.  In particular, 	$\widehat{HFG}(g_1)$ is isomorphic to $\widehat{HFG}(g_2)$ as RA $(H_1(E(f)),\Z)$ bigraded vector spaces. 
		\end{corollary}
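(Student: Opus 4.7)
The plan is to derive Corollary~\ref{cor:hatinvariance} by combining Theorem~\ref{thm:invariance} with the preceding lemma on descent of quasi-isomorphisms to hat complexes. First I would invoke Theorem~\ref{thm:invariance} to obtain a zig-zag of bigraded $R_V$-module quasi-isomorphisms
\begin{diagram}
C^-(g_1) = C_0 &&&& C_2 &&&& C_{r-2} &&&& C_r = C^-(g_2)\\
& \rdTo^{\phi_1} && \ldTo^{\phi_2} && \rdTo^{\phi_3} & \cdots & \ldTo^{\phi_{r-2}} && \rdTo^{\phi_{r-1}} && \ldTo^{\phi_r} & \\
&& C_1 &&&& C_3 & & C_{r-3} &&&& C_{r-1} &
\end{diagram}
connecting $C^-(g_1)$ to $C^-(g_2)$, where each $C_i$ is some intermediate $(H_1(E(f)),\Z)$ RA bigraded $R_V$-module chain complex and each $\phi_i$ is a bigraded $R_V$-module chain map inducing an isomorphism on homology.

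Next I would apply the preceding lemma to each arrow $\phi_i$. Since each $\phi_i$ is in particular an $\mathbb{F}[U_1,\dots,U_V]$-module quasi-isomorphism, the lemma produces an $\mathbb{F}$-linear quasi-isomorphism $\widehat{\phi}_i \colon \widehat{C}_i \to \widehat{C}_{i\pm 1}$ between the corresponding hat complexes (where each $\widehat{C}_i = C_i / \mathcal{U}_V$). Concatenating these yields a zig-zag of quasi-isomorphisms connecting $\widehat{C}(g_1)$ to $\widehat{C}(g_2)$ as $\mathbb{F}$-vector spaces, which is exactly what is needed for the quasi-isomorphism statement.

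It remains to verify that each descended map $\widehat{\phi}_i$ respects the RA $(H_1(E(f)),\Z)$ bigrading. Because $\phi_i$ is a bigraded map of some degree $(a_i, 0)$ (the $\Z$-degree is $0$ since these are chain maps between RA bigraded complexes) and $\mathcal{U}_V \subset C_i$ is a bigraded submodule (its basis is a subset of a homogeneous $\mathbb{F}$-basis of $C_i$, as was noted in defining the bigrading on $\widehat{C}(g)$), the quotient inherits a well-defined bigrading and $\widehat{\phi}_i$ sends $\widehat{C}_i{}_{(g, m)}$ into $\widehat{C}_{i\pm 1}{}_{(g + a_i, m)}$. Thus $\widehat{\phi}_i$ is an RA bigraded map, and passing to homology gives the claimed isomorphism $\widehat{HFG}(g_1) \cong \widehat{HFG}(g_2)$ as RA $(H_1(E(f)),\Z)$ bigraded $\mathbb{F}$-vector spaces.

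There is essentially no hard step here: the substantive work has already been done in Theorem~\ref{thm:invariance} and in the descent lemma. The only point requiring mild care is tracking that the zero $\Z$-degree of chain maps is preserved under the quotient and that the $H_1(E(f))$-shift remains well defined up to the overall shift allowed by an RA bigrading; both follow immediately from the fact that multiplication by each $U_i$ with $1\le i \le V$ is bigraded (of degree $(-w(O_i),-2)$) so the submodule $\mathcal{U}_V$ is homogeneous.
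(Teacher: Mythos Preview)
Your proof is correct and follows exactly the route the paper intends: the corollary is stated immediately after the descent lemma precisely because it follows from combining that lemma with Theorem~\ref{thm:invariance}, and the paper gives no further argument. One small remark: your parenthetical justification that the $\Z$-degree of each $\phi_i$ is $0$ ``since these are chain maps between RA bigraded complexes'' is not quite right---being a chain map does not by itself force the $\Z$-shift to vanish---but this is harmless, since (i) the corollary as stated only requires the $\widehat{\phi}_i$ to be bigraded maps of \emph{some} degree, and (ii) the explicit constructions in Propositions~\ref{prop:commprime} and~\ref{prop:stab} (and the cyclic permutation case) do in fact produce maps of degree $(\delta,0)$.
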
	
		
		As a result, the following definitions of $\widehat{QI}(f)$ and $\widehat{HFG}(f)$ are well-defined and independent of of choice of graph grid diagram.

		\begin{definition} Let $f: G \rightarrow S^3$ be a sinkless and sourceless transverse spatial graph.  We define $\widehat{QI}(f)$ to be the quasi-isomorphism class of the RA $(H_1(E(f)), \Z)$ bigraded chain complex $(\widehat{C}(g),\widehat{\partial})$, for any saturated graph grid diagram $g$ representing $f$.  The \textbf{graph Floer hat homology of $f$}, denoted $\widehat{HFG}(f)$, is the homology of $(\widehat{C}(g),\widehat{\partial})$ viewed as a RA $(H_1(E(f)), \Z)$ bigraded vector space over $\mathbb{F}$, for any saturated graph grid diagram $g$ representing $f$.  
		\end{definition}
		
		Note that $(\widehat{C}(g),\widehat{\partial})$ is an infinitely generated vector space but, but in the same way as Proposition~\ref{prop:fg}, one can show that its homology is finitely generated. 
		
		\begin{proposition}\label{prop:fghat}$\widehat{HFG}(f)$ is a finitely generated vector space over $\mathbb{F}$ for any sinkless and sourceless transverse spatial graph $f: G \rightarrow S^3$.
		\end{proposition}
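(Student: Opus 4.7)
The plan is to adapt the strategy of Proposition~\ref{prop:fg} to the hat setting, replacing ``finitely generated as an $R_V$-module'' with ``finitely generated as an $\mathbb{F}$-vector space.'' Fix a saturated graph grid diagram $g$ representing $f$, and write $R' = \mathbb{F}[U_{V+1},\dots,U_n]$. Because $C^-(g)$ is a free $R_n$-module on the finite set $\mathbf{S}$, the quotient $\widehat{C}(g) = C^-(g)/\mathcal{U}_V$ is finitely generated as an $R'$-module (again with generating set $\mathbf{S}$). Since $R'$ is Noetherian, every subquotient of a finitely generated $R'$-module is finitely generated; in particular $\widehat{HFG}(g) = \ker(\widehat{\partial})/\mathrm{im}(\widehat{\partial})$ is a finitely generated $R'$-module.

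Next I would argue that every $U_j$ with $V+1 \leq j \leq n$ acts as zero on $\widehat{HFG}(g)$. By Proposition~\ref{prop:no_edges}, on $C^-(g)$ multiplication by $U_j$ is either null-homotopic (if $O_j$ lies on an edge that is not singular) or chain-homotopic to multiplication by some vertex variable $U_i$ with $1\leq i \leq V$ (if $O_j$ lies on a singular edge). The homotopy $H_k$ appearing in the proof of Proposition~\ref{prop:no_edges} is an $R_n$-module homomorphism, so in particular it sends $\mathcal{U}_V$ into $\mathcal{U}_V$ and descends to a map $\widehat{H_k}:\widehat{C}(g)\to\widehat{C}(g)$. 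The descended identity reads $\widehat{\partial}\circ \widehat{H_k} + \widehat{H_k}\circ \widehat{\partial} = \widehat{U_j}$ (or $0$), because $\widehat{U_i} = 0$ on $\widehat{C}(g)$ for every $1\leq i \leq V$. Therefore $\widehat{U_j}$ is null-homotopic on $\widehat{C}(g)$, and so induces the zero map on $\widehat{HFG}(g)$.

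Finally, since every $U_j$ with $V+1 \leq j \leq n$ annihilates $\widehat{HFG}(g)$, the $R'$-module structure factors through $R'/(U_{V+1},\dots,U_n)\cong \mathbb{F}$. A finitely generated module over $\mathbb{F}$ is exactly a finite-dimensional vector space, which yields the claim. I do not foresee any serious obstacle: the only point requiring verification is that the homotopy from Proposition~\ref{prop:no_edges} descends through the quotient by $\mathcal{U}_V$, and this is immediate from its $R_n$-linearity.
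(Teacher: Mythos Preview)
Your proof is correct and follows essentially the same approach as the paper. The paper also argues that the chain homotopy from Proposition~\ref{prop:no_edges} descends through the quotient by $\mathcal{U}_V$ (they note it is an $R_V$-module map, you note the stronger $R_n$-linearity), deduces that each $U_j$ with $V+1\le j\le n$ acts as zero on $\widehat{HFG}(g)$, and then refers back to the argument of Proposition~\ref{prop:fg}; your version simply makes that last step explicit by working over $R'=\mathbb{F}[U_{V+1},\dots,U_n]$ and invoking Noetherianity.
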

		\begin{proof} Choose a saturated graph grid diagram $g$ representing $f$.  Let $j$ be such that $V+1 \leq j \leq n$.  Then by Proposition~\ref{prop:no_edges}, there is a chain homotopy $H :C^-(g) \rightarrow C^-(g)$ ($H$ is $H_k$ for some $k$) that is an $R_V$-module homomorphism that satisfies one of the following conditions: (i)  $\partial^- \circ H + H \circ \partial^- = U_j$ or (ii) there exists an $1 \leq i \leq V$ such that $\partial^- \circ H + H \circ \partial^- = U_i + U_j$.   Since $H$ is an $R_V$-module homomorphism, $H$ descends to a well-defined $\widehat{H}:\widehat{C}(g) \rightarrow \widehat{C}(g)$ satisfying $$ \widehat{\partial} \circ \widehat{H} + \widehat{H} \circ \widehat{\partial} = U_j.$$  So $U_j[b]=0$ for all $[b] \in \widehat{HFG}(g)$.  The rest of the proof is similar to the proof of Proposition~\ref{prop:fg}.
			\end{proof}

			We now define the tilde theory.  The tilde theory will be the easiest theory to compute.  However, it narrowly fails to be an invariant of the spatial graph since it will depend on the grid size.  On the other hand, one can recover the hat theory from it which makes it quite useful.  It will also be easier to compute the bigraded Euler characteristic (Alexander polynomial) of the hat theory using tilde theory; for more details, see Section~\ref{sec:Alex}.  
			
			Let $\mathcal{U}_n$ be the $\mathbb{F}$-vector subspace of $C^-(g)$ spanned by $U_1 C^-(g) \cup \dots \cup U_n C^-(g)$.  Define $\widetilde{C}(g)$ to be the quotient $C^-(g)/\mathcal{U}_n$.  Since $\partial^-(\mathcal{U}_n) \subset \mathcal{U}_n$, it follows that $\partial^-$ descends to an linear map $$\widetilde{\partial}: \widetilde{C}(g) \rightarrow \widetilde{C}(g)$$ of vector spaces over $\mathbb{F}$.  
			Since $C^-(g)$ has a basis of homogeneous elements $\{b_i\}_{i\in I}$ as an $\mathbb{F}$ vector space, with respect to the $(H_1(E(f)),\mathbb{Z})$ grading on $C^-(g)$, and $\mathcal{U}_n$ has a basis that is a subbasis of $\{b_i\}_{i\in I}$, the $(H_1(E(f)),\mathbb{Z})$ bigrading on $C^-(g)$ descends to a well-defined $(H_1(E(f)),\mathbb{Z})$ bigrading on $\widetilde{C}(g)$. 
			Thus $(\widetilde{C}(g), \widetilde{\partial})$ is a $(H_1(E(f)),\mathbb{Z})$ bigraded chain complex. 
			
			\begin{definition} Let $g$ be a saturated graph grid diagram representing the sinkless and sourceless transverse spatial graph $f:G \rightarrow S^3$.  The \textbf{graph Floer tilde chain complex} of $g$ is the $(H_1(E(f)), \Z)$ bigraded chain complex $(\widetilde{C}(g),\widetilde{\partial})$.  The \textbf{graph Floer tilde homology of $g$}, denoted $\widetilde{HFG}(g)$, is the homology of $(\widetilde{C}(g),\widetilde{\partial})$ viewed as an $(H_1(E(f)), \Z)$ bigraded vector space over $\mathbb{F}$.
			\end{definition}	  	
			
			We will relate $\widetilde{HFG}(g)$ and $\widehat{HFG}(g)$ for a given graph grid diagram $g$.  First, we recall the (bigraded) mapping cone which we will use in the next lemma.  
				
				Let $(A,\partial^A)$ and $(B,\partial^B)$ be $(\G,\mathbb{Z})$ bigraded chain complexes and let $\phi: A \rightarrow B$ be a bigraded chain map of degree $(g,m)$ for some $g\in \G$ and $m\in\mathbb{Z}$.  \label{def:mc} Define the \textbf{(bigraded)  mapping cone complex of $\phi$}, denoted $(cone(\phi),\partial)$, as follows: $$cone(\phi) = \bigoplus_{(h,n)\in \G \oplus \Z} cone(\phi)_{(h,n)}$$ where 
				$$cone(\phi)_{(h,n)} = A_{(h-g,n-m-1)} \oplus B_{(h,n)}$$ 
				and the boundary map is defined as 
				$$\partial(a,b) = (-\partial^A(a), -\phi(a) + \partial^B(b) )$$for all $a \in A$ and $b\in B$.  Checking the definitions, we see that $\partial$ is a bigraded map of degree $(0,-1)$.  We also note that if $A$ and $B$ are $(\G,\mathbb{Z})$ bigraded $R$-module chain complexes and $\phi$ is a bigraded $R$-module chain map then $(cone(\phi),\partial)$ is an $(\G,\mathbb{Z})$ bigraded $R$-module chain complex.
			
The following Lemma is similar to Lemma 2.14 of \cite{MOST} except now we have bigraded chain complexes instead of filtered, graded chain complexes.  For $g\in \G$ and $m \in \mathbb{Z}$, let $W{(-g,-m+1)}$ be the two dimensional $(\G,\Z)$ bigraded vector space over $\mathbb{F}$ spanned by one generator in degree $(0,0)$ and the other in degree $(-g,-m+1)$.  If $(C,\partial)$ is any bigraded $(\G, \Z)$ chain complex over $\mathbb{F}$, then $C \otimes W{(-g,-m+1)}$ becomes a bigraded chain complex with boundary $\partial \otimes id$ in the usual way.  That is, 
$$(C \otimes W{(-g,-m+1)})_{(h,l)} = \bigoplus_{(h,l) = (h_1+h_2,l_1+l_2)} C_{(h_1,l_1)} \otimes W{(-g,-m+1)}_{(h_2,l_2)}$$

\begin{lemma}\label{lem:homotopic} Let $(C,\partial)$ be a $(\G,\mathbb{Z})$ bigraded $\mathbb{F}[U_1, \dots, U_s]$-module chain complex and $g\in \G$ and $m\in \Z$ be fixed group elements.   Suppose that for each $i \geq 2$, multiplication by $U_i$ (which we denote by $U_i$) is a bigraded $\mathbb{F}[U_1, \dots, U_s]$-module chain map of degree $(-g,-m)$ and that 
\begin{enumerate}
\item $U_i$ is chain homotopic to $U_1$ or 
\item $U_i$ is null-homotopic (where the chain homotopy is an $\mathbb{F}[U_1, \dots, U_s]$-module homomorphism).
\end{enumerate}
Then $(C/\mathcal{U}_s,\partial)$ is quasi-isomorphic to $(C/\mathcal{U}_1 \otimes {W{(-g,-m+1)}}^{\otimes s-1}, \partial \otimes id)$ and hence
		$$H_\ast(C/\mathcal{U}_s) \cong H_\ast(C/\mathcal{U}_1) \otimes {W{(-g,-m+1)}}^{\otimes s-1}.$$ 
\end{lemma}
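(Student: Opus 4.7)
The plan is to proceed by induction on $s$. The base case $s=1$ is immediate, since $W(-g,-m+1)^{\otimes 0} = \mathbb{F}$ is concentrated in degree $(0,0)$ and both sides equal $C/\mathcal{U}_1$.

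For the inductive step, assume the lemma for $s-1$ and set $D := C/\mathcal{U}_{s-1}$. Applying the inductive hypothesis to the first $s-1$ variables (the hypotheses for $i=2,\dots,s-1$ are a subset of those for $i=2,\dots,s$) yields a quasi-isomorphism $D \simeq C/\mathcal{U}_1 \otimes W(-g,-m+1)^{\otimes s-2}$. It therefore suffices to prove $D/U_sD \simeq D \otimes W(-g,-m+1)$. First I verify that $U_s$ is null-homotopic on $D$: in case (2) of the hypothesis, the $\mathbb{F}[U_1,\dots,U_s]$-linear null-homotopy $K$ of $U_s$ on $C$ descends directly to $D$; in case (1), $K$ satisfies $\partial K + K \partial = U_s + U_1$ on $C$, and after descending to $D$ (where $U_1$ acts as zero) yields $\partial K + K \partial = U_s$ on $D$. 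Call the descended homotopy $H$; it has bidegree $(-g,-m+1)$.

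The core of the proof is then the algebraic claim that if $U_s$ is null-homotopic on $D$ via $H$ and $U_s$ acts injectively on $D$, then $D/U_sD \simeq D \otimes W(-g,-m+1)$. The injectivity holds in the intended application because $C = C^-(g)$ is free over $\mathbb{F}[U_1,\dots,U_n]$, so $D$ is free as an $\mathbb{F}[U_s]$-module. I factor the quasi-isomorphism through the mapping cone of $U_s : D \to D$ defined on page~\pageref{def:mc}. Let $e_0, e_1$ be the generators of $W(-g,-m+1)$ in bidegrees $(0,0)$ and $(-g,-m+1)$, and define $\Phi: D \otimes W(-g,-m+1) \to cone(U_s)$ by $d \otimes e_0 \mapsto (0,d)$ and $d \otimes e_1 \mapsto (d, H(d))$. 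A direct computation using $\partial H + H \partial = U_s$ (over $\mathbb{F}_2$) shows $\Phi$ is a bigraded chain isomorphism: its matrix in the natural bases of $D \otimes W$ and $D \oplus D$ is unit upper triangular. Next, the projection $\pi: cone(U_s) \to D/U_sD$, $(a,b) \mapsto [b]$, is a chain map whose kernel $\{(a, U_sc) : a,c \in D\}$ is identified, via injectivity of $U_s$, with $D \oplus D$ carrying the differential $(a,c) \mapsto (\partial a, a + \partial c)$; this is the mapping cone of $\mathrm{id}: D \to D$, which is acyclic. Hence $\pi$ is a quasi-isomorphism, and $\pi \circ \Phi$ is the desired quasi-isomorphism $D \otimes W(-g,-m+1) \to D/U_sD$, completing the induction.

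The main obstacle is the algebraic sub-claim: one needs the null-homotopy of $U_s$ to trivialize the cone into the direct sum $D \otimes W(-g,-m+1)$, and the injectivity of $U_s$ to identify the cone with the quotient $D/U_sD$ up to acyclic noise. Tracking the bidegree shifts through the cone and the tensor product is the main bookkeeping burden; over $\mathbb{F}_2$ the signs vanish, which simplifies the change-of-variable calculation establishing that $\Phi$ intertwines the differentials.
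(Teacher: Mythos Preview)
Your proof is correct and follows essentially the same approach as the paper: both iterate (you by induction on $s$, the paper one variable at a time starting from $D=C/\mathcal{U}_1$) by passing through the mapping cone of multiplication by the next $U_i$, using the null-homotopy to identify $cone(U_i)$ with $D\otimes W(-g,-m+1)$, and then identifying the cone with the quotient $D/U_iD$ (you via the explicit kernel computation $\ker\pi\cong cone(\mathrm{id})$, the paper via the short exact sequence and Weibel~1.5.8). You are in fact more careful in flagging that injectivity of $U_s$ on $D$ is needed for this last step---the paper uses it implicitly when writing the short exact sequence $0\to D\xrightarrow{\widehat{U}_i} D\to D/\widehat{U}_iD\to 0$, and as you note it holds in the application because $C^-(g)$ is free over the polynomial ring.
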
		

Note that by $H_\ast(C/\mathcal{U}_1)$ (respectively $H_\ast(C/\mathcal{U}_s)$), we mean the homology of the chain complex whose chain group is $C/\mathcal{U}_1$ (respectively $C/\mathcal{U}_s$) and whose boundary map is induced by $\partial$.\\

\begin{proof}Let $D = C/\mathcal{U}_1 = C/U_1 C$ and $\partial^D: D \rightarrow D$ be induced by $\partial$.  Consider multiplication by $U_2$ on $D$, $\widehat{U}_2: D \rightarrow D$.  Since $U_1$ and $U_2$ commute, this is a well-defined bigraded map. There is a long exact sequence $$0 \rightarrow  D \xrightarrow{\widehat{U}_2} D \xrightarrow{pr} D/\widehat{U}_2 D \rightarrow 0$$ where $pr$ is the natural projection to the quotient.  Let $F: cone(\widehat{U}_2) \rightarrow D/\widehat{U}_2 D$ be defined by $F(c_1,c_2)= pr(c_2)$. By 1.5.8 of \cite{Weibel}, the map $F$ is a quasi-isomorphism.  Moreover, $F$ is a bigraded map of degree $(0,0)$. 
	
	If $U_2$ is chain homotopic to $U_1$ via a chain homotopy $H$ that is an $\mathbb{F}[U_1, \dots, U_s]$-module homomorphism then $H$ induces a well-defined map on $D$, $\widehat{H}:D \rightarrow D$ such that 
	$$\partial^D \circ \widehat{H} + \widehat{H} \circ \partial^D = \widehat{U}_2.$$  This also holds if $U_2$ is null-homotopic.  Since $\widehat{U}_2$ is nullhomotopic, there is a bigraded chain isomorphism from $cone(\widehat{U}_2)$ to $cone(0:D \rightarrow D)$ of degree $(0,0)$.  Hence $cone(\widehat{U}_2)$ is isomorphic to $D \oplus D[g,m-1]$ as bigraded chain complexes where $D[g,m-1]$ is the bigraded vector space defined by $D[g,m-1]_{(h,n)} = D_{h+g,n+m-1}$ and the boundary map on $D \oplus D[g,m-1]$ is $\partial^D \oplus \partial^D$.  Moreover, this is isomorphic, as bigraded chain complexes, to $D \otimes W{(-g,-m+1)}$.  The proof for $n=2$ is complete after noting that the obvious map $C/\mathcal{U}_2$ to $D/\widehat{U}_2 D$ is a bigraded chain isomorphism. 	To complete this proof, continue this type of argument, one by one for each $U_i$.
	 \end{proof}

We can use this to relate the tilde and hat chain complexes. 
 
\begin{proposition} \label{prop:hfg_tensor}
	Let $g$ be a saturated graph grid diagram representing the sinkless and sourceless transverse spatial graph $f:G \rightarrow S^3$.  Then  $$\widetilde{HFG}(g) \cong \widehat{HFG}(g) \otimes \bigotimes_{e \in E(G)} {W{(-w(e),-1)}}^{\otimes n_e} $$
as $(H_1(E(f)),\Z)$	bigraded $\mathbb{F}$ vector spaces, where $n_e$ is the number of $O$'s in $g$ associated to the interior of $e$ (not including the vertices).	
\end{proposition}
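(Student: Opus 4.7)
The plan is to view $\widetilde{C}(g)$ as $\widehat{C}(g)$ successively quotiented by $U_{V+1}, \ldots, U_n$, and to apply the mapping cone machinery underlying Lemma~\ref{lem:homotopic} one variable at a time, picking up a $W{(-w(e),-1)}$ tensor factor for each $O$ lying in the interior of $e$. Formally, since $\mathcal{U}_n$ is generated over $\mathcal{U}_V$ by the additional classes $U_{V+1}\widehat{C}(g) \cup \cdots \cup U_n\widehat{C}(g)$, one has a bigraded chain isomorphism $\widetilde{C}(g) \cong \widehat{C}(g)/(U_{V+1}\widehat{C}(g) + \cdots + U_n\widehat{C}(g))$, so the whole question is about iteratively quotienting $\widehat{C}(g)$ by the remaining $U_i$'s.

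The key preparatory step is to show that multiplication by each $U_i$ with $V+1 \leq i \leq n$ is null-homotopic on $\widehat{C}(g)$ via an $R_n$-module chain homotopy of bigraded degree $(-w(O_i),-1)$. I would deduce this from Proposition~\ref{prop:no_edges}. If the edge $e_i$ containing $O_i$ is not singular, then Proposition~\ref{prop:no_edges}(3) already produces a null-homotopy of $U_i$ on $C^-(g)$, which descends to $\widehat{C}(g)$. If $e_i$ is singular, then both of its endpoints are vertices at which $e_i$ is the unique incoming or unique outgoing edge, so Proposition~\ref{prop:no_edges}(2) yields $U_i$ chain homotopic to $U_k$ on $C^-(g)$ for some vertex index $1 \leq k \leq V$; since $U_k$ acts as zero on $\widehat{C}(g) = C^-(g)/\mathcal{U}_V$, the induced homotopy on $\widehat{C}(g)$ exhibits $U_i$ as null-homotopic there.

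With null-homotopy in hand, I would iterate the mapping cone argument from the proof of Lemma~\ref{lem:homotopic}. Order the non-vertex $O$'s arbitrarily as $O_{i_1}, \ldots, O_{i_{n-V}}$ and set $D_0 = \widehat{C}(g)$, $D_{k+1} = D_k/U_{i_{k+1}} D_k$. The $R_n$-equivariance of all the homotopies ensures they descend through each successive quotient, so multiplication by $U_{i_{k+1}}$ remains null-homotopic on $D_k$. The short exact sequence $0 \to D_k \xrightarrow{U_{i_{k+1}}} D_k \to D_{k+1} \to 0$ gives a quasi-isomorphism $D_{k+1} \simeq \mathrm{cone}(U_{i_{k+1}})$, and null-homotopy of $U_{i_{k+1}}$ produces a bigraded chain isomorphism $\mathrm{cone}(U_{i_{k+1}}) \cong D_k \otimes W{(-w(e_{i_{k+1}}),-1)}$. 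Iterating and grouping the resulting factors by edges (each edge $e$ contributing $W{(-w(e),-1)}^{\otimes n_e}$) yields the claimed quasi-isomorphism, and taking homology gives the desired isomorphism of bigraded $\mathbb{F}$-vector spaces. The main subtlety is matching the bigraded degree shifts: since $U_i$ has degree $(-w(e_i),-2)$, the mapping cone formula yields precisely $W{(-w(e_i), -1)}$, which is why the hypotheses of Lemma~\ref{lem:homotopic} do not apply in one step (the $U_i$'s have different degrees across edges) but work perfectly well when handled separately.
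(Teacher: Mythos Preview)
Your proof is correct and follows essentially the same approach as the paper's. The paper's proof simply notes that each non-vertex $U_i$ is either null-homotopic or homotopic to some vertex $U_j$ (via Proposition~\ref{prop:no_edges}) and then says to ``use Lemma~\ref{lem:homotopic} repeatedly''; you have unpacked this by passing first to $\widehat{C}(g)$ (where the homotopic-to-$U_j$ case becomes a null-homotopy since $U_j$ acts as zero), and then running the mapping cone argument one variable at a time, correctly noting that the degree hypothesis of Lemma~\ref{lem:homotopic} prevents a one-shot application across edges of different weights.
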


\begin{proof} We note that if $O_i$ is on the interior of edge $e$ then multiplication by $U_i$ is a graded map of degree $(-w(e),-2)$.  In addition, $U_i$ is either null-homotopic or homotopic to some $U_j$ with $j\leq V$ (where $O_j$ is a vertex).  Use the Lemma~\ref{lem:homotopic} repeatedly to complete the proof. 
\end{proof}

\section{Invariance of $HFG^-(f)$}\label{sec:invariance}

In this section we will complete the necessary steps to prove that $HFG^-(f)$ is an invariant of a sinkless and sourceless transverse spatial graph (see Theorem~\ref{thm:invariance} and its proof for more details).  That is, we will show that  $HFG^-(g)$ is invariant under each of the graph grid moves: commutation$'$ and stabilization$'$.   The proofs will be similar to the proofs found in \cite{MOST}.  There are two major differences though.  The first is that we are working with more general commutation and  stabilization moves, commutation$'$ and stabilization$'$.  
The second difference is the Alexander grading.  

\vspace{12pt}

\subsection{Commutation$'$ invariance}
\label{subsec:Commutation}

\begin{proposition}\label{prop:commprime}Suppose $g$ and $\bar{g}$ are saturated graph grid diagrams that differ by a commutation$'$ move.  Let $f:G \rightarrow S^3$ be the transverse graph associated to $g$ and $\bar{g}$ and $V$ be the number of vertices of $G$.  Then there an $(H_1(E(f)),\Z)$ bigraded $R_V$-module quasi-isomorphism $(C(g),\partial^-_g) \rightarrow (C(\bar{g}),\partial^-_{\bar{g}})$ of degree $(\delta(g,\bar{g}),0)$ for some $\delta(g,\bar{g}) \in H_1(E(f))$.
\end{proposition}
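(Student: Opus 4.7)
The plan is to adapt the pentagon and hexagon counting argument from Section~3 of \cite{MOST} to the present setting, modifying it in two ways: only pentagons and hexagons whose interiors avoid $\mathbb{X}$ are counted, and the Alexander grading must be tracked in $H_1(E(f))$ rather than $\Z^m$. Realize the commutation$'$ move on the torus $\T$ by replacing a single vertical circle $\beta_i$ of $g$ by a curve $\gamma$ (in $\bar g$) chosen so that the two curves meet in exactly two points $\{a,b\}$, which are the two points on $\beta_i$ to which the endpoints of $LS_1 \cup LS_2$ project. All other $\alpha$- and $\beta$-circles are common to both grids, and a generator of $C^-(g)$ differs from one of $C^-(\bar g)$ only at the coordinate lying on $\beta_i$ versus $\gamma$.

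Given $\x \in \S(g)$ and $\y \in \S(\bar g)$ that agree on the $n-1$ shared coordinates, call an embedded closed disk $p \subset \T$ a \emph{pentagon} from $\x$ to $\y$ if its boundary consists of five arcs lying alternately on horizontal circles, on $\beta_i$, and on $\gamma$, with four corners from $(\x \cup \y) \smallsetminus (\x \cap \y)$ and a fifth corner at $a$ or $b$. Let $\Pent^o(\x,\y)$ denote the set of such $p$ with $\mathrm{Int}(p) \cap \x = \emptyset$, and define
\[
\Phi_{\beta\gamma}(\x) \;=\; \sum_{\y \in \S(\bar g)} \;\; \sum_{\substack{p \in \Pent^o(\x,\y) \\ \mathrm{Int}(p) \cap \mathbb{X} = \emptyset}} U_1^{O_1(p)} \cdots U_n^{O_n(p)} \cdot \y,
\]
extended $R_V$-linearly. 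That $\Phi_{\beta\gamma}$ is a chain map follows from the standard decomposition argument: any juxtaposition of an empty rectangle with an empty pentagon (both avoiding $\mathbb X$) admits exactly two decompositions into such pieces, so the two contributions cancel modulo $2$. Saturation of $g$ and $\bar g$ forbids width-one annulus contributions just as in the proof of Proposition~\ref{Prop_BoundaryMap}, and the Maslov grading is preserved since $M$ depends only on $\mathbb O$, which is unchanged between $g$ and $\bar g$.

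For the Alexander grading, observe that any $p \in \Pent^o(\x,\y)$ becomes a genuine domain (in the enlarged grid in which both $\beta_i$ and $\gamma$ appear as circles) after attaching a small triangular correction near its fifth corner. Applying the natural extension of Lemma~\ref{lem:alex_count} gives
\[
A^g(\x) - A^{\bar g}(\y) \;=\; \mathbf n_{\mathbb X}(p) - \mathbf n_{\mathbb O}(p) + c,
\]
where $c \in H_1(E(f))$ depends only on which of $a, b$ is the fifth corner; a direct check using the generalized winding number of Lemma~\ref{lem:h} shows the two possible values of $c$ coincide. Since $\mathrm{Int}(p) \cap \mathbb X = \emptyset$ and each $U_i$ shifts the Alexander grading by $-w(O_i)$, every term $U_1^{O_1(p)} \cdots U_n^{O_n(p)} \cdot \y$ in $\Phi_{\beta\gamma}(\x)$ satisfies $A^{\bar g}(U^{O(p)} \y) = A^g(\x) - c$. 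Hence $\Phi_{\beta\gamma}$ is a bigraded $R_V$-module chain map of degree $(\delta(g,\bar g), 0)$ with $\delta(g,\bar g) := -c$.

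Finally, to show $\Phi_{\beta\gamma}$ is a quasi-isomorphism, define $\Phi_{\gamma\beta}: C^-(\bar g) \to C^-(g)$ by the analogous empty-pentagon count in the opposite direction, and construct chain homotopies from $\Phi_{\gamma\beta} \circ \Phi_{\beta\gamma}$ and $\Phi_{\beta\gamma} \circ \Phi_{\gamma\beta}$ to the respective identities by counting empty hexagons whose interiors avoid $\mathbb X$; this mirrors the corresponding lemmas of \cite{MOST} once the $X$-avoidance condition is imposed throughout. The principal obstacle is the Alexander-grading bookkeeping: one must verify that the constant $c$ above is independent of the pentagon chosen, so that $\delta(g,\bar g)$ is well-defined on all of $C^-(g)$, and that each hexagon contributes to a chain homotopy of the correct bidegree. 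Both reduce to a careful application of Lemma~\ref{lem:alex_count} to the completed domains obtained by attaching appropriate small triangles at the corners lying in $\beta_i \cap \gamma$.
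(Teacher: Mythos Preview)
Your overall architecture---define $\Phi_{\beta\gamma}$ by empty pentagons avoiding $\mathbb X$, prove it is a chain map, and exhibit a chain-homotopy inverse via empty hexagons---is exactly the paper's approach, and your remarks on the Maslov grading and the hexagon homotopy are adequate. Two points, however, need correction.

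First, in the chain-map verification you invoke saturation to rule out ``width-one annulus contributions.'' That is the mechanism in the proof that $\partial^-\circ\partial^-=0$, but it is not what happens here. The exceptional domains in $\partial^-\circ\Phi_{\beta\gamma}+\Phi_{\beta\gamma}\circ\partial^-$ are those connecting $\x$ to its canonical closest generator $c(\x)\in\S(\bar g)$; such a domain is the thin region lying to the left of both $\beta$ and $\gamma$ (together with a sliver between them), and it is paired with the complementary thin region to the right. These regions need contain no $X$'s or $O$'s at all, so saturation is irrelevant; the cancellation is purely combinatorial.

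Second, and more seriously, your Alexander-grading argument is not yet a proof. You write $A^g(\x)-A^{\bar g}(\y)=\mathbf n_{\mathbb X}(p)-\mathbf n_{\mathbb O}(p)+c$ and assert that $c$ depends only on ``which of $a,b$ is the fifth corner.'' But every pentagon in $\Pent_{\beta\gamma}$ has its fifth corner at $a$ by definition, so that cannot be the source of variation. What actually varies is the $\alpha$-circle on which the $\gamma$-coordinate of $\y$ sits, and your ``small triangular correction'' does not obviously absorb this: if you complete $p$ to a rectangle in $g$ connecting $\x$ to the closest generator $\y'\in\S(g)$, Lemma~\ref{lem:alex_count} gives $A^g(\x)-A^g(\y')$, and you still owe an argument that $A^g(\y')-A^{\bar g}(\y)$ plus the triangle's contribution is constant in $\y$. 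The paper carries this out by decomposing $p$ into a rectangle and a narrow pentagon and computing directly with the generalized winding number $h$; the constant $\delta(g,\bar g)$ then emerges as (minus) the total weight of the vertical strands in the narrow strip between $\beta_{n-1}$ and $\gamma$ crossing a fixed horizontal. Your sketch gestures at this but does not supply it, and the phrase ``a direct check using Lemma~\ref{lem:h}'' is hiding precisely the case analysis that constitutes the new content of the proof.
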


\noindent We remark that the quasi-isomorphism above will be a bigraded map for some degree (that depends on $g$ and $\bar{g}$) but will not necessarily be of degree $(0,0)$.
 
\vspace{10pt}
The proof of Proposition~\ref{prop:commprime} will take up the rest of this subsection.  We will prove the case when $\bar{g}$ is obtained from $g$ by a commutation$'$ move of columns.  The case where you exchange rows is similar. 

As in \cite{MOST}, we draw both graph grid diagrams on a single $n \times n$ grid (respectively torus when the sides are identified), which we will call the combined grid diagram, as follows.  Let the vertical line segment (respectively circle) between the columns that are exchanged be labeled $\beta$ in $g$ and  $\gamma$ in $\bar{g}$ and call the other vertical circles $\beta_1,\dots,\beta_{n-1}$ (where $n$ is the size of the grid for $g$). Let $\gamma$ be a simple closed curve on the graph grid diagram $g$ such that the following conditions are held:  (1) $\gamma$ is homotopic to $\beta$, (2) $\gamma$ hits each of the horizontal curves, $\alpha_i$, precisely once, (3) $\gamma$ does not intersect $\beta_i$ for $i\leq n-1,$ (4) after removing the $\beta$ curve, one obtains $\bar{g}$,  (5) $\gamma$ and $\beta$ intersect transversely exactly twice, and (6) the intersections of $\gamma$ and $\beta$ do not lie on the horizontal curves.  It is easy to use the line segments $LS_1$ and $LS_2$ in the definition of commutation$'$ to see that such a curve exists.  First, note that we can assume the endpoints of the line segments do not lie on $\alpha$ curves by slightly changing them.  Now, take pushoffs of the line segments $LS_1$ and $LS_2$ to the left or right as needed, and connect them up so that they satisfy the requirements above.   Let $a$ and $b$ be the intersections of $\beta$ and $\gamma$.  
See Figures ~\ref{Comm3} and \ref{samerow} for examples.  We still let $\mathcal{T}$ be the torus of the combined grid diagram obtained by gluing the top/bottom and sides.

\begin{figure}[htpb!]
\begin{center}
\begin{picture}(257, 215)
\put(0,0){\includegraphics{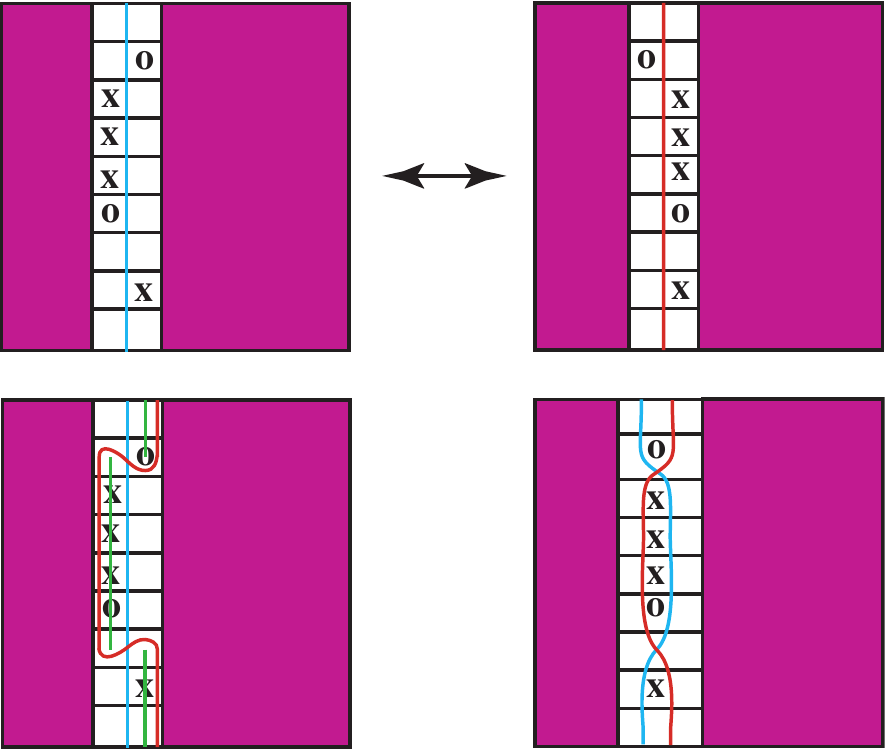}}
\put(-13,161){$g$}
\put(264,161){$\bar{g}$}
\put(33,106){\small $\beta$}
\put(42,106){\small $\gamma$}
\put(183,106){\small $\beta$}
\put(191,106){\small $\gamma$}
\put(123,48){$=$}
\end{picture}
\end{center}
\caption {{\bf Commutation$'$.}
On the top is a commutation$'$ move between $g$ and $\bar{g}$. On the bottom is the corresponding combined grid diagram where we have shown the line segments in the definition of commutation$'$ in the bottom left picture.}
\label{Comm3}
\end{figure}

\begin{figure}[htpb!]
\begin{center}
\begin{picture}(408,122)
\put(0,10){\includegraphics{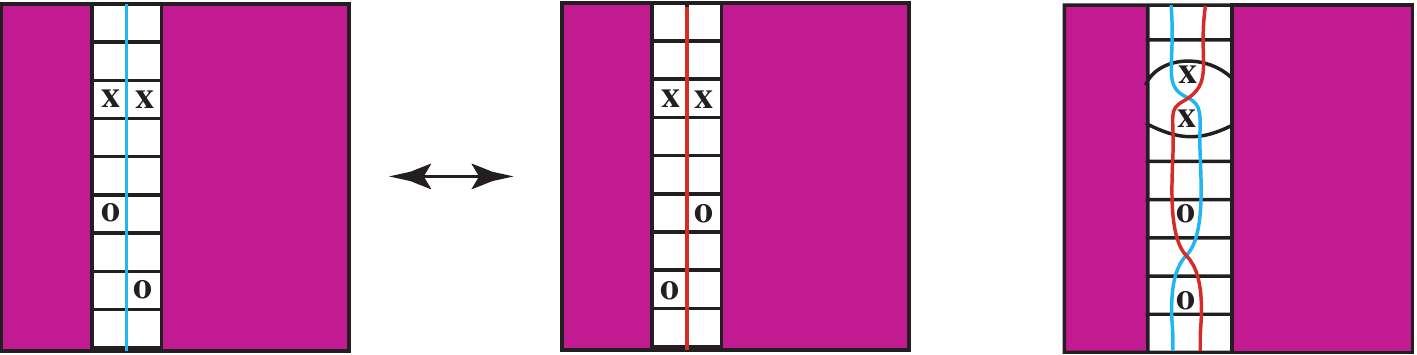}}
\put(45,0){$g$}
\put(210,0){$\bar{g}$}
\put(36,116){\small $\beta$}
\put(197,116){\small $\gamma$}
\put(336,116){\small $\beta$ $\gamma$}
\put(318,0){\small combined grid diagram}
\end{picture}
\end{center}
\caption {{\bf Commutation$'$.} Another example of a commutation$'$ move, where $X$'s appear in the same row.  }
\label{samerow}
\end{figure}

We will define a chain map $\Phi_{\beta\gamma}: C^-(g) \rightarrow C^-(\bar{g})$ and show that it is a chain homotopy equivalence.   This will show that $\Phi_{\beta\gamma}$ is a quasi-isomorphism.  
For $\x\in \S(g)$ and $\y\in\S(\bar{g})$, we let $\Pent_{\beta\gamma}(\x,\y)$ be the set of embedded pentagons with the following properties.  If $\x$ and $\y$ do not coincide at $n-2$ points, then we let $\Pent_{\beta\gamma}(\x,\y)=\emptyset$.  Suppose that $\x$ and $\y$ coincide at $n-2$ points (say $x_3=y_3, \dots, x_n=y_n$).  Without loss of generality, let $x_2 = \x \cap \beta$ and $y_2 = \y \cap \gamma$.  An element $p \in \Pent_{\beta \gamma}(\x,\y)$ is an embedded disk in $\mathcal{T}$, whose boundary consists of five arcs, each of which are contained in the circles $\beta_i$, $\alpha_i$, $\beta$, or $\gamma$ and satisfies the following conditions.  The intersections of the arcs lie on the points $x_1,x_2,y_1,y_2$ and $a.$  The point $a$ is in $\beta \cap \gamma$ and locally looks like the top intersection in Figure~\ref{Pentagons} ($b$ is the one that locally looks the bottom intersection point in $\beta \cap \gamma$).
Moreover, start at the point in $x_2$ and transverse the boundary of $p$, using the orientation given by $p$.  The condition to be in $\Pent_{\beta \gamma}(\x,\y)$ is that you will first travel along a horizontal circle, meet $y_1$, proceed along a vertical circle $\beta_i$, meet $x_1$, continue along another horizontal circle, meet $y_2$, proceed though an arc in $\gamma$ until you meet $a$, and finally traverse an arc in $\beta$ until arriving back at $x_2$. Finally, all angles are required to be less than straight.

The set of empty pentagons, $\Pent^o_{\beta\gamma}(\x,\y)$, are those pentagons $p\in \Pent_{\beta\gamma}(\x,\y)$ such that $\x \cap \Interior(p) =\emptyset$.
The map $\Phi_{\beta\gamma} \colon C^-(g) \longrightarrow C^-(\bar{g})$ is defined by counting empty pentagons, that do not contain $X$'s in the combined grid diagram as follows.  For $\x \in \S(g)$, define

$$\Phi_{\beta\gamma}(\x) = \sum_{\y\in \S(\bar{g})}\,\,
    \sum_{\substack{p\in\Pent^o_{\beta\gamma}(\x,\y)\\Int(p)\cap\mathbb{X}=\emptyset}}
U_1^{O_1(p)}\cdots U_n^{O_n(p)} \cm \y
\in C^-(\bar{g}).$$
Extend $\Phi_{\beta\gamma}$ to $C^-(g)$ so that it is an $R_n$-module homomorphism.  In particular, it is also an $R_V$-module homomorphism.

\begin{figure}[htpb!]
\begin{center}

\begin{picture}(200, 135)
\put(0,0){\includegraphics[scale=0.9]{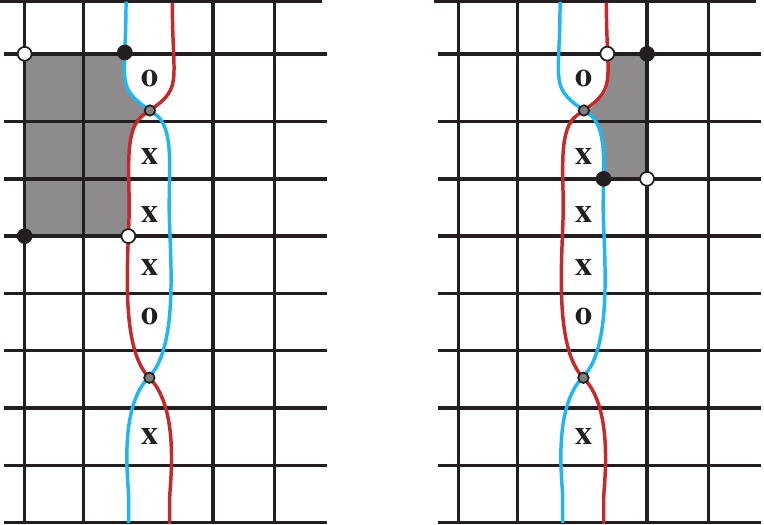}}
\put(45,107){\tiny $a$}
\put(157,107){\tiny $a$}
\put(45, 36){\tiny $b$}
\put(157,36){\tiny $b$}
\end{picture}
\end{center}
\caption {Examples of pentagons in $\Pent^o_{\beta\gamma}(\x,\y).$}
\label{Pentagons}
\end{figure}

\begin{lem}
  \label{lemma_ChainMap}
  $\Phi_{\beta\gamma}$ is $\HZ$ bigraded $R_V$-module chain map of some degree. 
\end{lem}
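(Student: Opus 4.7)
The plan is to verify three things: (i) $\Phi_{\beta\gamma}$ is an $R_V$-module homomorphism, (ii) $\Phi_{\beta\gamma}$ is a chain map, and (iii) $\Phi_{\beta\gamma}$ is bigraded of some degree $(\delta(g,\bar g),0)$ with respect to the $(H_1(E(f)),\Z)$ bigrading. Part (i) is built into the definition, since $\Phi_{\beta\gamma}$ is declared to be an $R_n$-module map and hence, through the inclusion $I:R_V\hookrightarrow R_n$, is also an $R_V$-module map.

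For (ii), I would follow the classical juxtaposition argument from \cite{MOST}. Compose $\partial^-_{\bar g}\circ \Phi_{\beta\gamma} + \Phi_{\beta\gamma}\circ \partial^-_g$ and reinterpret each summand as a count of domains in $\T$ of the form ``empty pentagon followed by empty rectangle'' or ``empty rectangle followed by empty pentagon,'' weighted by the $U_i^{O_i(\cdot)}$ factors and restricted to those with $\mathbb X$-multiplicity zero. Each such composite domain admits exactly two decompositions into a pentagon and a rectangle, except for degenerate ``thin annulus'' domains that bound $\gamma\cup\beta$; since we are working in a saturated grid, every such thin strip contains an $X$ and is therefore excluded from the count. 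All non-degenerate composites pair up and cancel in characteristic~$2$, which yields the chain map relation.

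For (iii), the Maslov shift is handled exactly as in \cite{MOST}: the analogue of equation~(\ref{maslov_eq}) for empty pentagons shows that a pentagon $p\in\Pent^o_{\beta\gamma}(\x,\y)$ satisfies $M(\x)-M(U_1^{O_1(p)}\cdots U_n^{O_n(p)}\y) = 0$, so $\Phi_{\beta\gamma}$ preserves Maslov grading. For the Alexander part I would pick a single reference generator $\x_0\in\S(g)$, designate its ``partner'' $\y_0\in\S(\bar g)$ (the unique generator agreeing with $\x_0$ off the two commuting columns and lying on $\gamma$ instead of $\beta$), and set $\delta(g,\bar g):=A^g(\x_0)-A^{\bar g}(\y_0)$. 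To show the shift is the same for every generator, I would combine Lemma~\ref{lem:alex_count} applied \emph{in each grid} with the local geometry of a pentagon: for any $p\in\Pent^o_{\beta\gamma}(\x,\y)$ one can extend $p$ inside the combined grid by the bigon bounded by $\beta$ and $\gamma$ between $a$ and $b$, producing a domain $D_g\in\pi_g(\x,\y)$ in $g$ and $D_{\bar g}\in\pi_{\bar g}(\x,\y)$ in $\bar g$ whose difference $D_g-D_{\bar g}$ is a fixed (generator-independent) combination of the two $\beta\gamma$-bigons. Since $p$ carries no $X$'s, Lemma~\ref{lem:alex_count} gives $A^g(\x)-A^g(\y)=-\mathbf n_{\Os}(D_g)$ and $A^{\bar g}(\x)-A^{\bar g}(\y)=-\mathbf n_{\Os}(D_{\bar g})$, so
$$A^g(\x)-A^{\bar g}(\y)\;=\;-\mathbf n_{\Os}(p)+\bigl(A^g(\y)-A^{\bar g}(\y)+\mathbf n_{\Os}(D_g)-\mathbf n_{\Os}(D_{\bar g})-\mathbf n_{\Os}(p)\bigr),$$
and the term in parentheses is independent of $\x,\y$ (it depends only on the bigons between $\beta$ and $\gamma$). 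Combined with $A^g(U_i)=-w(O_i)=A^{\bar g}(U_i)$, this gives $A^g(\x)=A^{\bar g}\bigl(U_1^{O_1(p)}\cdots U_n^{O_n(p)}\y\bigr)+\delta(g,\bar g)$.

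The main obstacle is the Alexander grading computation, specifically showing that the shift $\delta(g,\bar g)\in H_1(E(f))$ is truly independent of the generator. The issue is that $A^g$ and $A^{\bar g}$ are defined via different planar cut-opens of the same torus, so one has to carefully compare the two generalized winding-number functions $h^g$ and $h^{\bar g}$ of Lemma~\ref{lem:h} in the region between $\beta$ and $\gamma$; the key point is that outside that region the two planar diagrams agree edge-for-edge, so $h^g-h^{\bar g}$ is constant on each generator's coordinates, and the pentagon contribution together with the $U_i^{O_i(p)}$ factors exactly accounts for the local difference inside the region. Everything else is bookkeeping patterned on \cite{MOST}.
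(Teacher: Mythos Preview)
Your chain map argument has a genuine gap in the handling of the degenerate domains. You claim that the exceptional composite domains are ``thin annulus domains that bound $\gamma\cup\beta$'' and that saturation forces each to contain an $X$, hence they are excluded. That is the mechanism for $\partial^-\circ\partial^-=0$, not for $\partial^-\circ\Phi_{\beta\gamma}+\Phi_{\beta\gamma}\circ\partial^-=0$. Here the exceptional domains are those connecting $\x$ to its canonical closest generator $c(\x)\in\S(\bar g)$; each such domain is the union of a thin strip $C$ (lying between $\beta_{n-1}$ and $\beta$, and to the left of $\gamma$) together with a portion $E$ of a $\beta\gamma$-bigon. By the construction of $\gamma$ from the line segments $LS_1,LS_2$, the strip $C$ contains \emph{no} $X$'s or $O$'s, so it is \emph{not} discarded by saturation. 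What actually happens is that there is exactly one other domain $D'$ from $\x$ to $c(\x)$, built from the complementary thin strip $C'$ (between $\beta$ and $\beta_1$, to the right of $\gamma$) together with the same $E$; since $C'\cap\Os=C'\cap\Xs=\emptyset$ as well, $D$ and $D'$ have equal $U$-weights and cancel in pairs. You need this pairing, not saturation, to close the argument.

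Your Alexander grading sketch also does not parse as written: you invoke $D_g\in\pi_g(\x,\y)$ with $\x\in\S(g)$ and $\y\in\S(\bar g)$, but $\pi_g(\x,\y)$ is only defined for generators in the same grid. The underlying idea of completing the pentagon by a bigon can be made to work, but the cleaner route (and the one the paper takes) is to decompose a general pentagon as $p=r\ast p'$ with $r$ a rectangle lying entirely in $g$ and $p'$ a ``narrow'' pentagon supported in the single column between $\beta_{n-1}$ and $\beta_1$; then Lemma~\ref{lem:alex_count} handles $r$, and the contribution of $p'$ is computed directly via $h^g$ and $h^{\bar g}$, which differ only in that column. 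This makes it transparent that the discrepancy is a fixed $\eta\in H_1(E(f))$ depending only on the commuted columns, giving $\delta(g,\bar g)=-\eta$.
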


\begin{proof}  Since $\Phi_{\beta\gamma}$ is an $R_V$-module homomorphism, the proof is broken into three parts: checking each gradings is preserved, and showing that 
$$\partial^-\circ\Phi_{\beta\gamma}=\Phi_{\beta\gamma}\circ\partial^-.$$

\vspace{12pt}
\noindent\textit{The map $\Phi_{\beta\gamma}$ preserves the Maslov grading}: Since the definition of the Maslov grading only depends on the set $\mathbb{O}$, and we consider a subset of the pentagons considered in the proof of commutation in ~\cite{MOST} (Section 3.1), this technically follows from Lemma 3.1 in \cite{MOST}.  However, since they do not include a  proof that the Maslov grading is preserved (this is left to the reader), we will include a sketch of the proof here. 
  
We will go though the details of this computation for the case pictured in Figure \ref{regions}, other cases follow similarly.  
Consider a $U_1^{O_1(p)}\cdots U_n^{O_n(p)} \cm \y$ in the sum of $\Phi_{\beta\gamma}(\x).$  Recall that  
$$M(\x)=\mathcal{J}(\x, \x)-2\mathcal{J}(\x, \Os)+\mathcal{J}(\Os, \Os)+1.$$  
To compare the Malsov grading we interpret each of these terms for $\x$ in the grid $g$ in relation to $\y$ in the grid $\bar{g}$.  Let the intersection points of $\x$ be $x_1,\dots, x_n$ and the intersection points of $\y$ be $y_1,\dots, y_n,$ with the same subscript where they coincide.  Label the intersections points where $\x$ and $\y$ differ as $x_1,$ $x_2,$ $y_1,$ and $y_2$ and break the combined grid diagram into 14 regions labeled A,\dots,M, $p$, as shown in Figure~\ref{regions}.   

\begin{figure}[htpb!]
\begin{center}
\begin{picture}(108, 172)
\put(0,0){\includegraphics{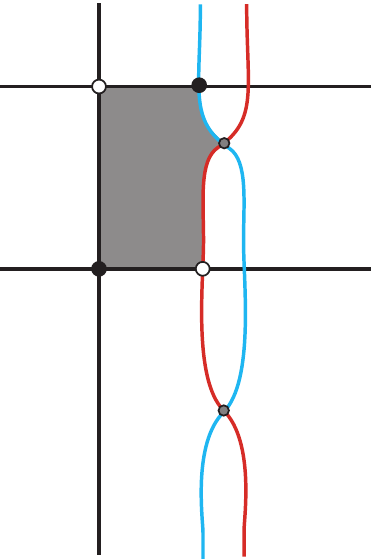}}
\put(12,145){A}\put(40,145){B}\put(61,145){C}\put(83,145){D}
\put(12,108){E}\put(40,108){$p$}\put(61,128){F}\put(61,95){G}\put(83,108){H}
\put(12,45){I}\put(40,45){J}\put(61,65){K}\put(61,20){L}\put(83,45){M}
\put(70,119){\tiny $a$}\put(70, 42){\tiny $b$}\put(20,130){\tiny $y_1$}
\put(20,78){\tiny $x_1$}\put(50,130){\tiny $x_2$}\put(50,78){\tiny $y_2$}
\end{picture}
\end{center}
\caption {The combined grid diagram with regions A,\dots,M labelled and the pentagon $p$ shaded. }
\label{regions}
\end{figure}

Notice that the count for $x_i$ is the same as $y_i$ is the same for $i\neq 1,2$.  The number of points in $\Os$ up and to the right, and down and to the left are not changed, since this could only be changed for an intersection between the commuted edges (i.e.~$x_2$ or $y_2$).  So $\mathcal{J}(x_i,\Os)=\mathcal{J}(y_i,\Os)$.  The number of points in $\x$ and $\y$ up and to the right and down and to the left are the same, this can be checked region by region.  If an intersection point is in region E then $x_2$ will be counted in the points up and to the right, this is replaced by the point $y_1$ which is also up and to the right.  Similarly, for all of the regions, since $x_i=y_i\in\{\text{A, B, . . . , M}\},$ thus $\mathcal{J}(x_i,\x)=\mathcal{J}(y_i,\y)$ for all $i\neq 1,2.$  

\begin{figure}[htpb!]
\begin{center}

\begin{picture}(200, 140)
\put(0,10){\includegraphics[scale=0.8]{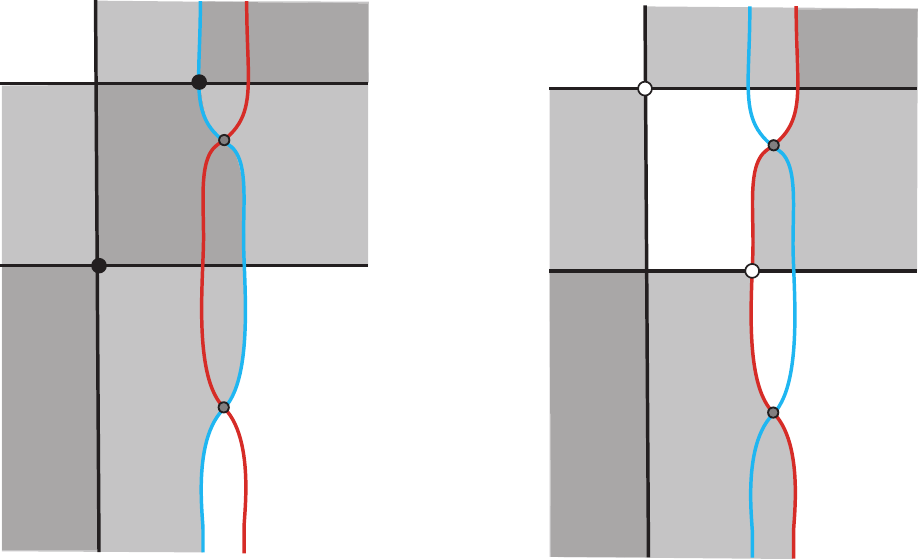}}
\put(40,0){(a)}
\put(170,0){(b)}
\put(32, 100){\small $p$} 
\put(48, 125){\small C}
\put(48, 112){\small F}
\put(48, 85){\small G}
\put(48, 60){\small K}
\put(48, 25){\small L}
\put(160, 100){\small $p$} 
\put(175, 125){\small C}
\put(175, 111){\small F}
\put(175, 85){\small G}
\put(175, 60){\small K}
\put(175, 25){\small L}
\end{picture}
\end{center}
\caption {\small (a) Shows with 50\% shading those regions that will be counted with weight one half in $\mathcal{J}(x_1, -)$ and $\mathcal{J}(x_2, -).$  (b) Shows with 50\% shading those regions that will be counted with weight one half in $\mathcal{J}(y_1, -)$ and $\mathcal{J}(y_2, -).$}
\label{count}
\end{figure}

Now for the points where $\x$ and $\y$ differ, Figure~\ref{count}(a) shows the regions that are counted for $x_1$ and $x_2$, and Figure~\ref{count}(b) shows the regions that are counted for $y_1$ and $y_2.$  So we see that the regions C, F, G, and K are counted with weight $\frac{1}{2}$ more for $x_1$ and $x_2$, and the region $p$ is counted with weight 1 more for $x_1$ and $x_2$, last region L is counted with weight $\frac{1}{2}$ less for $x_1$ and $x_2.$  
So, 
$$\mathcal{J}(\x,\x)=\mathcal{J}(\y,\y)+1,$$ because $x_1$ will count $x_2$ with weight $\frac{1}{2}$ and vice versa, but $y_1$ and $y_2$ do not count each other.   
Next, $\x$ will count all of the points in $\Os$ that $\y$ will count and additionally will count those $O$'s in the region $p$ with weight 1, those $O$'s in the regions C, F, G, and K with weight $\frac{1}{2}$ and those $O$'s in the region L with weight $-\frac{1}{2}.$  Notice that the region made up of G and K must contain exactly one $O$.  
Thus,
$$\mathcal{J}(\x, \Os)=\mathcal{J}(\y, \Os)+ O_1(p)+\cdots +O_n(p)+\frac{1}{2}(O(C)+O(F)+1)-\frac{1}{2}O(L),$$
where $O(C)$, $O(F)$, and $O(L)$ are the number of $O$'s in the respective regions.  

Lastly, if we look at what happens for the different diagrams with the sets of $\Os$ the only difference is for the $O$'s in the columns that are changed.  
Again we know that there is exactly one $O$ in the regions $G$ and $K.$  
So we have, 
$$\mathcal{J}(\Os, \Os)_g=\mathcal{J}(\Os, \Os)_{\bar{g}}+O(C)+O(F)-O(L).$$

Putting this all together we have:
\[
\begin{array}{ccl}
M(\x) & = & [\mathcal{J}(\y,\y)+1]    \\
  & & -2[\mathcal{J}(\y, \Os)+ O_1(p)+\cdots +O_n(p)+\frac{1}{2}(O(C)+O(F)+1)-\frac{1}{2}O(L)]     \\
  & &  +[\mathcal{J}(\Os, \Os)_{\bar{g}}+O(C)+O(F)-O(L)]+1  \\
  & = & \mathcal{J}(\y,\y) -2\mathcal{J}(\y, \Os) +\mathcal{J}(\Os, \Os)_{\bar{g}} -2[O_1(p)+\cdots +O_n(p)]+1\\
  & = & \mathcal{J}(\y-\Os, \y-\Os)+1-2[O_1(p)+\cdots +O_n(p)].
\end{array}
\]
So we see that the Maslov grading is unchanged.  

\vspace{12pt}
\noindent\textit{The map $\Phi_{\beta\gamma}$ preserves the Alexander grading up to a shift}: Consider the $\Hf$ grading on $C^-(g)$, $C^-(g) = \oplus_{a \in H_1(E(f))} C^-(g)_a$ (similarly for $C^-(\bar{g})$).  We will show that there is an $\delta(g,\bar{g})$ that only depends on $g$ and $\bar{g}$ (not on $\x$) such that $\Phi_{\beta\gamma}(C^-(g)_a) \subset C^-(\bar{g})_{a + \delta(g,\bar{g})}$.  We will work with the second definition of the Alexander grading, $A^g(\x)=\sum_{x_i\in\x}[-h^g(x_i)]$ to prove this. 

Let $\x \in \S(g)$ and $p \in  \Pent^o_{\beta\gamma}(\x,\y)$ such that 
$p \cap \mathbb{X}=\emptyset$ and $\Pent^o_{\beta\gamma}(\x,\y)\neq \emptyset$.  
Then $U_1^{O_1(p)}\cdots U_n^{O_n(p)} \cm \y$ is a term in $\Phi_{\beta\gamma}(\x)$.  
We use the convention as before that $x_i=y_i$ for $i\geq 3$, $x_2 \in \beta$, and $y_2 \in \gamma$.  
We note that $h^g(x_i) = h^{\bar{g}}(y_i)$ for $i\geq 3$.  So we need to show that 
\begin{eqnarray}\label{comm_eq} h^g(x_1) + h^g(x_2) - h^{\bar{g}} (y_1) - h^{\bar{g}} (y_2) - \sum_{i=1}^n w(O_i)O_i(p) = \delta(g,\bar{g})\end{eqnarray} 
for some fixed element $\delta(g,\bar{g}) \in H_1(E(f))$.  
	
We will prove the case when $a$ is the topmost intersection of $\beta$ and $\gamma$ and $p$ is a pentagon lying to the left of $a$. See two example of these pentagons in Figure~\ref{fig:pent12}.  Note that the boundary of the pentagon can contain $b$ and $p$ can contain an $O$ that lies between $\beta$ and $\gamma$. The other three cases are similar.  	
Let $\beta_{n-1}$ be the vertical line segment/circle in $g$ directly to the left of $\beta$, $\beta_1$ be the vertical line segment/circle in $g$ directly to the right of $\beta$.  We will label the $\alpha_i$ in the usual way so that $\alpha_i$ is height $i-1$.  Let 
$\alpha_l$ be the horizontal line segment/circle directly below $b$, $\alpha_{l+1}$ be the horizontal line segment/circle directly above $b$, $\alpha_k$ be the horizontal circle directly below $a$, and $\alpha_{k+1}$ be the horizontal line segment/circle directly above $a$. 
Finally, let $u_1$ be the point on $\beta_{n-1}$ that is at the same height as $x_1$ and let $u_2$ be the point on $\beta_{n-1}$ that is at the same height as $x_2$.  See Figures~\ref{fig:pent12} and \ref{fig:comm} for our conventions.  

We will say that the pentagon $p$ is narrow if $y_1 \in \beta_{n-1}$.  If $p$ is not narrow, then there is a rectangle in $g$ that is contained in $p$.  Let $r$ be the largest such rectangle.  Then $p$ decomposes into $r$ and a narrow pentagon $p'$.  Note that $r \in \mathrm{Rect}^o(\{x_1,u_2\},\{u_1,y_1\})$ and $p' \in \Pent^o(\{x_2,u_1\},\{y_2,u_2\})$.  Moreover, $r \cap \mathbb{X} = p\cap \mathbb{X} = \emptyset$.  Since the boundary map on $C^-(g)$ preserves the Alexander grading, we see that $$h^g(x_1)+h^g(u_2) = \sum_{i=1}^{n}w(O_i)O_i(r) + h^g(y_1) + h^g(u_1).$$  We consider $h^g(x_2)-h^g(u_2)$ and $h^{\bar{g}}(y_2)-h^{\bar{g}}(u_1)=h^{\bar{g}}(y_2)-h^{g}(u_1)$.
	
\begin{center}	
\begin{figure}[htpb]
\begin{picture}(250, 170)
\put(0,0){\includegraphics{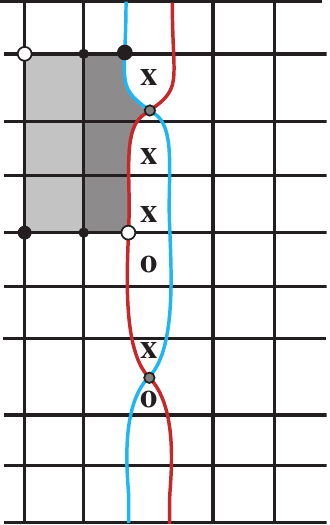}}
\put(15,155){\small $\beta_{n-1}$}
\put(36,155){\small $\beta$}
\put(47,155){\small $\gamma$}
\put(58,155){\small $\beta_1$}
\put(-3,139){\tiny $y_1$}
\put(26,139){\tiny $x_2$}
\put(13,139){\tiny $u_2$}
\put(-2,78){\tiny $x_1$}
\put(15,78){\tiny $u_1$}
\put(28,78){\tiny $y_2$}
\put(11,105){$r$}
\put(28,105){$p'$}
\put(145,0){\includegraphics{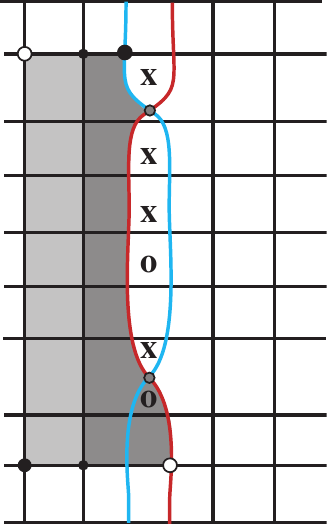}}
\put(160,155){\small $\beta_{n-1}$}
\put(181,155){\small $\beta$}
\put(192,155){\small $\gamma$}
\put(203,155){\small $\beta_1$}
\put(141,139){\tiny $y_1$}
\put(171,139){\tiny $x_2$}
\put(158,139){\tiny $u_2$}
\put(143,11){\tiny $x_1$}
\put(160,11){\tiny $u_1$}
\put(185,11){\tiny $y_2$}
\put(156,105){$r$}
\put(173,105){$p'$}
\put(48,118){\tiny $a$}
\put(48,40){\tiny $b$}
\put(193,118){\tiny $a$}
\put(193,40){\tiny $b$}
\end{picture}
\caption{Decomposing $p$ into a narrow pentagon $p'$ and a rectangle $r$}
\label{fig:pent12}
\end{figure}
\end{center}

\begin{center}
\begin{figure}[htpb]
\begin{picture}(165, 170)
\put(0,0){	\includegraphics{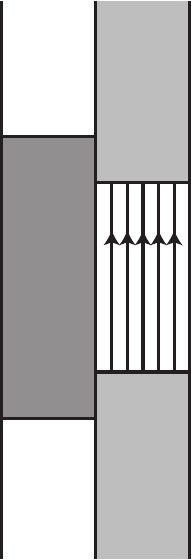}}
\put(0,164){\small $\beta_{n-1}$}
\put(27,164){\small $\beta$}
\put(55,164){\small $\beta_1$}
\put(-20,122){\small $\alpha_{k+1}$}
\put(60,107){\small $\alpha_k$}
\put(60,53){\small $\alpha_{l+1}$}
\put(-8,40){\small $\alpha_l$}
\put(13,80){$A$}
\put(39,15){$B$}
\put(113,0){\includegraphics{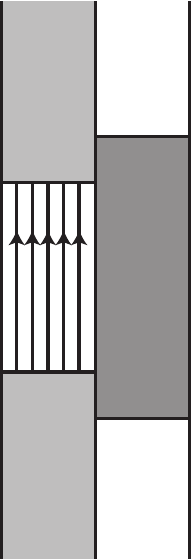}}
\put(110,164){\small $\beta_{n-1}$}
\put(137,164){\small $\gamma$}
\put(165,164){\small $\beta_1$}
\put(170,122){\small $\alpha_{k+1}$}
\put(101,107){\small $\alpha_k$}
\put(93,53){\small $\alpha_{l+1}$}
\put(170,40){\small $\alpha_l$}
\put(-8,0){\small $\alpha_1$}
\put(101,0){\small $\alpha_1$}
\put(150,80){$A$}
\put(122,15){$B$}
\put(137, -10){\small $\bar{g}$}
\put(27, -10){\small $g$}
\end{picture}
\caption{A commutation$'$ move.  Regions $A$ and $B$ contain $X$'s and $O$'s.}
\label{fig:comm}
\end{figure}
\end{center}

In order to compute $h^g$ or $h^{\bar{g}}$, draw the transverse spatial graph for $g$ and $g'$ so that the horizontal and vertical arcs connecting the $X$'s and $O$'s are inside the grid and consider their projections $pr(g)$ and $pr(g')$.  We will think of column $n-1$ as the column with $\beta_{n-1}$ on the left and column $n$ as the column with $\beta$ or $\gamma$ on the left.  Assume that $LS_1$ lies inside column $n-1$ and $LS_2$ lies in column $n$.  Let $A$ be the union of rectangles containing $LS_1$ and $B$ be the union of rectangles containing $LS_2$ (recall, we are assuming the ends of $LS_i$ do not lie on an $\alpha$ curve). Then the $X$'s and $O$'s columns $n-1$ and $n$ are contained in $A \cup B$ and projections of $A$ and $B$ intersect in the two rows that contain $a$ and $b$.  Since there are no $X$'s or $O$'s in $col_n \smallsetminus B$, the vertical arcs in $pr(g) \cap (col_n \smallsetminus B)$ form a collection of parallel arcs starting and stopping at $\alpha_k$ and $\alpha_{k+1}$ which are all oriented in the same direction (all upwards or all downwards).  In addition, $pr(g) \cap (col_{n-1} \smallsetminus A)$ is empty. See Figure~\ref{fig:comm}.  

Let $O'$ be the $O$ in column $n-1$ of $g$ and let $O''$ be the $O$ in column $n-1$ of $\bar{g}$.  Note that $p'$ either contains $O'$ or $O''$ or both or is empty (in particular, it never contains an element of $\mathbb{X}$).  
We first consider $h^g(x_2)-h^g(u_2)$. There are three cases. First suppose $x_2 \in \alpha_i$ for $i\geq k+1$ or $i\leq l$.  Then $h^g(x_2)-h^g(u_2)=0$ since the region above and below $A$ in $g$ contains no vertical arcs in $pr(g)$.  In addition, we see that $p'$ cannot contain $O'$ so that $h^g(x_2)-h^g(u_2)=0=O'(p')w(O')$ where by $O'(p')$, we mean the number of $O'$s in $p'$. Now suppose $x_2 \in \alpha_i$ for $l+1 \leq i \leq k$.  If $p'$ does not contain $O'$ then $h^g(x_2)-h^g(u_2)=0=O'(p')w(O')$.  If $p'$ contains $O'$ then the arc going from $u_2$ to $x_2$ crosses all the vertical strands emanating from this $O$, all oriented downwards, since $p'$ does not contain any $X$'s.  Thus, $h^g(x_2)-h^g(u_2)=O'(p')w(O')$.  Thus, in all cases, we see that $h^g(x_2)-h^g(u_2)=O'(p')w(O')$.

Now consider $h^{\bar{g}}(y_2)-h^{\bar{g}}(u_1)$.  We again have three cases to consider.  First suppose that $y_2 \in \alpha_j$ for $l+1 \leq j \leq k$.  
Then the arc from $u_1$ to $y_2$ crosses $m$ vertical arcs, all oriented in the same direction so $h^{\bar{g}}(y_2)-h^{\bar{g}}(u_1)=\eta$ for some $\eta \in H_1(E(f))$.  
In addition, $O''(p)=0$ so that $h^{\bar{g}}(y_2)-h^{\bar{g}}(u_1)=\eta - O''(p')w(p')$.  
Now suppose that $x_2 \in \alpha_i$ for $i\geq k+1$ or $i\leq l$.  If $p'$ does not contain $O''$ then $h^{\bar{g}}(y_2)-h^{\bar{g}}(1_2)=\eta=\eta - O''(p')w(p')$.  If $p'$ contains $O''$, $h^{\bar{g}}(y_2)-h^{\bar{g}}(u_1) = \eta-w(O'')$.  Thus, in all cases, $h^{\bar{g}}(y_2)-h^{\bar{g}}(u_1)=\eta - O''(p')w(p')$.

Putting this together and using that $h^{\bar{g}} (y_1) = h^g (y_1)$ and $h^{\bar{g}} (u_1) = h^g (u_1)$, we have 
\begin{eqnarray*} h^g(x_1) + h^g(x_2) - h^{\bar{g}} (y_1) - h^{\bar{g}} (y_2) &=& (h^g(x_1) + h^g(u_2)) - (h^g(y_1) + h^g(u_1))\\
	&\hspace{5pt}& + (h^g(x_2) - h^g(u_2)) - (h^{\bar{g}}(y_2) - h^{\bar{g}}(u_1)) \\ 
	&=& \sum_{i=1}^{n}w(O_i)O_i(r) + O'(p')w(O') -	(\eta - O''(p')w(p')) \\
	&=& \sum_{i=1}^{n}w(O_i)O_i(p) - \eta.
\end{eqnarray*} 
Thus, (\ref{comm_eq}) holds with $\delta(g,\bar{g})= -\eta$ which completes the proof that $\Phi_{\beta\gamma}$ is a graded map (with respect to the Alexander grading). 
\vspace{12pt}

\noindent\textit{$\Phi_{\beta\gamma}$ is a chain map}: 
The remaining portion of the proof that $\Phi_{\beta\gamma}$ is a chain map, follows almost immediately from the proof of Lemma 3.1 in \cite{MOST}.  However, our pentagons and rectangles cannot count $X$'s so we need to be a little more careful.  

For $\x \in \S(g)$, there is a unique element $c(\x)$ of $\S(\bar{g})$, called the canonical closest generator of $\x$, defined as follows.  Let $t$ be such that $\x \cap \beta \in \alpha_t$ and set $x'$ be the point in $\alpha_t \cap \gamma$. Define $$c(\x):= \{x_i \in \x | x_i \notin \beta\} \cup \{x'\} \in \S(\bar{g}).$$  

Suppose $D$ is a domain of the form $p\ast r$ representing a term in $\partial^- \circ \Phi_{\beta,\gamma}(\x)$ and $D$ connects $\x$ to $\y$ with $\y \neq c(\x)$.  By this, we mean to consider the juxtaposition of the pentagon $p$ connecting $\x$ to $\z$ and the rectangle $r$ connecting $\z$ to $\y$, in the combined grid diagram.  Note that the domain does not contain any element of $\mathbb{X}$.  
Then there is exactly one other empty rectangle $r'$ (in $g$ or $\bar{g}$) and empty pentagon $p'$ such that $r' \ast p'$ or $p'\ast r'$  gives a decomposition of $D$.  Note that most of the time the other decomposition of the form $r' \ast p'$.  To see this, one just needs to draw every possible domain of the form $p \ast r$ and $r \ast p$ where $r$ is an empty rectangle (in $g$ or $\bar{g}$) and $p$ is an empty pentagon.  In addition since $p'$ and $r'$ will be contained in $D$, they will not contain any element of $\mathbb{X}$ so $r' \ast p'$ or $p'\ast r'$ will represent an element of $\Phi_{\beta,\gamma} \circ \partial^-$ or $\partial^- \circ \Phi_{\beta,\gamma}$.  The same statement is true if you start with a domain $D$ of the form $r\ast p$ representing a term in $\partial^- \circ \Phi_{\beta,\gamma}(\x)$ as long as $D$ connects $\x$ to $\y$ with $\y \neq c(\x)$.

Suppose $D$ is a domain of the form $p\ast r$ representing a term in $\partial^- \circ \Phi_{\beta,\gamma}(\x)$ and $D$ connects $\x$ to $c(\x)$.  Then $D$ consists of two regions $C$ and $E$ where $C$ is the region that lies to the left of both $\gamma$ and $\beta$ and to the right of $\beta_{n-1}$ and $E$ is a subset of the region that lies to the right of $\beta$ and to the left of $\gamma$.  Note that the $C \cap \mathbb{O} = C \cap \mathbb{X} = E \cap \mathbb{X} = \emptyset$.  There is exactly one other domain $D'$ connecting $\x$ to $c(\x)$ of the form $p' \ast r'$ or $r' \ast p'$.  See Figure~\ref{fig:closest_neighbor} for an example.  
\begin{center}
\begin{figure}[htpb]
\begin{picture}(222,122)
\put(0,10){\includegraphics{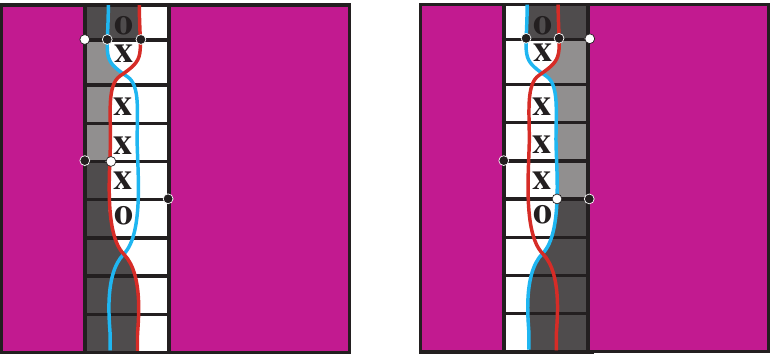}}
\put(35,0){$p \ast r$}
\put(155,0){$r' \ast p'$}
\put(28,115){\small $\beta$}
\put(38,115){\small $\gamma$}
\end{picture}
\caption{An example of two domains connecting $\x$ to $c(\x)$.  The generators $\x$ and $c(\x)$ are both in black circles; they only disagree on one row.}
\label{fig:closest_neighbor}
\end{figure}
\end{center}
 This domain consists of a region $C'$ and $E$ where $C'$ is the region that lies to the right of both $\gamma$ and $\beta$ and to the left of $\beta_{1}$.   Note that $C' \cap \mathbb{X} = C' \cap \mathbb{X}=\emptyset$ and so $O_i(D)=O_i(D')$ and $D' \cap \mathbb{X}=\emptyset$.  Thus $D'$ represents an element of $\Phi_{\beta,\gamma} \circ \partial^-(\x)$ or $\partial^- \circ \Phi_{\beta,\gamma}(\x)$.  A similar statement holds for domain of the form $p\ast r$ representing a term in $\partial^- \circ \Phi_{\beta,\gamma}(\x)$ and connecting $\x$ to $c(\x)$.  Thus, every term is canceled by another.  So $\partial^-\circ\Phi_{\beta\gamma}(\x)=\Phi_{\beta\gamma}\circ\partial^-(\x)$.
\end{proof}

In order to prove that $\Phi_{\beta\gamma}$ is a chain homotopy equivalence we define a similar map $H_{\beta\gamma\beta}$ which counts hexagons in the combined grid diagram.  These hexagons are just like the ones in \cite{MOST} except they don't contain elements of $\mathbb{X}$.  We recall the definition .
For $\x,\y\in \S(g)$, let $\Hex_{\beta\gamma\beta}(\x,\y)$ be the set of embedded hexagons defined as follows.  If $\x$ and $\y$ don't coincide at $n-2$ points then $\Hex_{\beta\gamma\beta}(\x,\y)$ is the empty set.    
Suppose that $\x$ and $\y$ coincide at $n-2$ points (say $x_3=y_3, \dots, x_n=y_n$).  Without loss of generality, let $x_2 = \x \cap \beta$ and $y_2 = \y \cap \gamma$.  An element $\mathcal{H} \in \Hex_{\beta\gamma\beta}(\x,\y)$ is an embedded disk in the combined grid diagram, whose boundary consists of six arcs, each of which are contained in the circles $\beta_i$, $\alpha_i$, $\beta$, or $\gamma$ and satisfies the following conditions.  The intersections of the arcs lie on the points of $x_1,x_2,y_1,y_2,a,$ and $b$.  
Moreover, start at the point in $x_2$ and transverse the boundary of $\mathcal{H}$, using the orientation given by $\mathcal{H}$.  The condition to be in $\Hex_{\beta \gamma \beta}(\x,\y)$ is that you will first travel along a horizontal circle, meet $y_1$, proceed along a vertical circle $\beta_i$, meet $x_1$, continue along another horizontal circle, meet $y_2$, proceed though an arc in $\beta$ until you meet $b$, then travel along an arc in $\gamma$ until you hit $a$ and finally travel along an arc in $\beta$ until arriving back at $x_2$. Finally, all angles are required to be less than straight.   For $\x,\y\in \S(\bar{g})$, there is a corresponding set of hexagons $\Hex_{\gamma\beta\gamma}(\x,\y).$ 
The set of empty pentagons $\Hex^o_{\beta\gamma\beta}$ are those hexagons $q\in \Hex_{\beta\gamma\beta}(\x,\y)$ where $\x \cap \Interior(q) =\emptyset$.
Define 
$H_{\beta\gamma\beta}\colon C^-(g) \longrightarrow C^-(g)$ by
\[
  H_{\beta\gamma\beta}(\x) = \sum_{\y\in \S(g)}\,
    \sum_{\substack{q\in\Hex^o_{\beta\gamma\beta}(\x,\y)\\Int(q)\cap\mathbb{X}=\emptyset}}\!
    U_1^{O_1(q)}\cdots U_n^{O_n(q)} \cm \y.
\]

\begin{prop}
  \label{prop_HmapComm'}
  The map $\Phi_{\beta\gamma}\colon C^-(g)\longrightarrow C^-(\bar{g})$ is a chain homotopy equivalence.  
 \end{prop}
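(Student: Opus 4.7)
The plan is to prove that $\Phi_{\beta\gamma}$ is a chain homotopy equivalence by constructing an explicit homotopy inverse and verifying that both compositions are chain homotopic to the identity. Define $\Phi_{\gamma\beta}: C^-(\bar{g}) \to C^-(g)$ in a symmetric way, by counting empty pentagons $p \in \Pent^o_{\gamma\beta}(\y,\x)$ in the combined grid diagram, with the roles of $\beta$ and $\gamma$ swapped, weighted by the $U_i$'s coming from the $O$'s in $p$, and again requiring $\Interior(p) \cap \mathbb{X} = \emptyset$. By the same argument as in Lemma~\ref{lemma_ChainMap}, $\Phi_{\gamma\beta}$ is an $(H_1(E(f)),\Z)$ bigraded $R_V$-module chain map (of some shift in the Alexander direction). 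The goal is then to show
\[
\partial^- \circ H_{\beta\gamma\beta} + H_{\beta\gamma\beta} \circ \partial^- = \Phi_{\gamma\beta} \circ \Phi_{\beta\gamma} + \mathrm{id}_{C^-(g)},
\]
and the analogous identity with roles of $\beta,\gamma$ exchanged using $H_{\gamma\beta\gamma}$.

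To verify the identity I would follow the standard MOST dichotomy: for $\x \in \S(g)$, enumerate all domains $D$ contributing to $\partial^- H_{\beta\gamma\beta}(\x) + H_{\beta\gamma\beta}\partial^-(\x)$ (juxtapositions of an empty hexagon with an empty rectangle, in either order) and to $\Phi_{\gamma\beta}\Phi_{\beta\gamma}(\x)$ (juxtapositions of two empty pentagons through $\gamma$). Classify the supports of these domains in $\T$: generically the same region $D$ admits exactly two decompositions of the allowed type, and these contributions cancel over $\F$. The exceptional domains are the thin annular regions running along a single row between $\beta$ and $\gamma$ that meet $\x$ only at the canonical closest generator $c(\x)=\x$; these are the source of the $\mathrm{id}$ term and correspond to the row containing $\x\cap\beta=\x\cap\gamma$ only when $\x \cap \beta = \x\cap \gamma$, i.e., the term sending $\x$ to itself with coefficient $1$. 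The remaining boundary-boundary cases pair juxtapositions across $a$ with juxtapositions across $b$, and these cancel.

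The first main technical point is that every pairing used above preserves the ``no $X$ in the interior'' condition: if $D$ is $X$-free and decomposes as $D_1 \ast D_2$, then both $D_1$ and $D_2$ are automatically $X$-free since multiplicities are non-negative on each $2$-cell, so the cancellation argument of \cite{MOST} goes through verbatim once we restrict to the $X$-avoiding subset of pentagons/hexagons/rectangles. The second technical point is that the ``identity'' contribution comes from a thin horizontal annulus of width one; we must check that such an annulus contains no $X$ in its interior, which follows because it is narrow enough to lie in the interior of a single row between $\beta$ and $\gamma$ and contains no decoration at all. Finally, the gradings: $H_{\beta\gamma\beta}$ is an $R_V$-module map that shifts Maslov by $+1$ (one more $O$-free two-cell traversed than by $\partial^-$, by the same computation as in Lemma~\ref{lemma_ChainMap}) and shifts the Alexander grading by the constant $-\delta(g,\bar g)-\delta(\bar g, g)$ needed to match that of $\Phi_{\gamma\beta}\circ\Phi_{\beta\gamma}$, so the identity of bigraded maps is consistent.

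The main obstacle I expect is the bookkeeping of the degenerate juxtaposition cases: showing that when hexagons are squeezed against $a$ or $b$, or when pentagons in $\Phi_{\gamma\beta}\circ\Phi_{\beta\gamma}$ overlap near $a$, the only surviving contribution is the thin annulus at $c(\x)=\x$. In the grid diagram of a link this is a direct case analysis that Manolescu--Ozsv{\'a}th--Szab{\'o}--Thurston carry out in \cite{MOST}; in our setting the analysis is identical on the level of $\T$ because commutation$'$ is still supported in two adjacent columns, and the extra generality of commutation$'$ (allowing multiple $X$'s per row/column) enters only through the $X$-avoidance condition, which we have already shown is preserved by all decompositions. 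Once this case analysis is verified, symmetrically $\Phi_{\beta\gamma}\circ\Phi_{\gamma\beta} \simeq \mathrm{id}_{C^-(\bar{g})}$ via $H_{\gamma\beta\gamma}$, and hence $\Phi_{\beta\gamma}$ is a chain homotopy equivalence, completing the proof.
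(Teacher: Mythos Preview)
Your overall strategy---construct $\Phi_{\gamma\beta}$ symmetrically, use $H_{\beta\gamma\beta}$ and $H_{\gamma\beta\gamma}$ as chain homotopies, and verify the identity by pairing up decompositions of composite domains---is exactly the approach the paper takes, following \cite{MOST}. The observation that the $X$-avoidance condition is inherited by both pieces of any decomposition of an $X$-free domain is correct and is the only new ingredient beyond the link case.

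However, your description of the exceptional domain that produces the identity term is wrong. You write that it is a ``thin horizontal annulus of width one'' lying ``in the interior of a single row between $\beta$ and $\gamma$,'' and you invoke the condition $\x\cap\beta=\x\cap\gamma$. This is not the correct picture: $\beta$ and $\gamma$ are \emph{vertical} circles (for a column commutation$'$), so the region between them is not a row. The domain connecting $\x$ to itself is the thin \emph{vertical} annular region lying either to the left of both $\beta$ and $\gamma$ and to the right of $\beta_{n-1}$, or to the right of both $\beta$ and $\gamma$ and to the left of $\beta_1$. This region is indeed free of $X$'s and $O$'s, but for the reason that it is a sliver of a column, not a row. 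Moreover, this domain appears for \emph{every} $\x\in\S(g)$, not just those satisfying some coincidence condition; the expression $\x\cap\gamma$ is not even defined for $\x\in\S(g)$.

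You also omit the mechanism by which this domain yields exactly the identity: it admits precisely \emph{three} decompositions, one as a composite of two pentagons (a term in $\Phi_{\gamma\beta}\circ\Phi_{\beta\gamma}$) and two as a hexagon juxtaposed with a rectangle (one term each in $\partial^-\circ H_{\beta\gamma\beta}$ and $H_{\beta\gamma\beta}\circ\partial^-$). Over $\F$ these three contributions sum to a single copy of $\x$, which is what cancels against $\mathrm{Id}$. Without identifying all three decompositions and recognizing that the count is odd, the argument is incomplete.
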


\begin{proof} To prove this, we show that  
$$
    Id+\Phi_{\gamma\beta}\circ \Phi_{\beta\gamma} +  \partial^- \circ H_{\beta\gamma\beta} + H_{\beta\gamma\beta}\circ \partial^-  = 0
$$
and 
$$    Id+\Phi_{\beta\gamma}\circ \Phi_{\gamma\beta} + \partial^- \circ H_{\gamma\beta\gamma} + H_{\gamma\beta\gamma}\circ \partial^- =0.
$$
The proof is similar to the proof of Proposition 3.2 of \cite{MOST}.  Let $\x\in S(g)$.  Typically, every domain that arises as the composition of two empty pentagons or an empty hexagon and an empty rectangle representing terms from $\Phi_{\gamma\beta}\circ \Phi_{\beta\gamma}(\x)$, $\partial^- \circ H_{\beta\gamma\beta}(\x)$, or $H_{\beta\gamma\beta}\circ \partial^-(\x)$ can be decomposed in exactly two ways representing terms from $\Phi_{\gamma\beta}\circ \Phi_{\beta\gamma}(\x)$, $\partial^- \circ H_{\beta\gamma\beta}(\x)$, or $H_{\beta\gamma\beta}\circ \partial^-(\x)$.  The only case when this does not happen is when the domain connects $\x$ to $\x$.  In this case, the domain consists of the region that is either (1) to the left of both $\gamma$ and $\beta$ and to the right of $\beta_{n-1}$ or (2) to the right of both $\gamma$ and $\beta$ and to the left of $\beta_1$.  Such a domain can be decomposed in three ways representing terms in $\Phi_{\gamma\beta}\circ \Phi_{\beta\gamma}(\x)$, $\partial^- \circ H_{\beta\gamma\beta}(\x)$, or $H_{\beta\gamma\beta}\circ \partial^-(\x)$.  The other case follows similarly. 
 
\end{proof}

\begin{remark}$H_{\beta\gamma\beta}$ is a $(H_1(E(f)),\Z)$ bigraded $R_V$-module homomorphism of degree $(0,1)$.  Therefore $(C(g),\partial)$ and $(C(\bar{g}),\partial)$ are chain homotopy equivalent as bigraded $R_V$-module chain complexes.  
\end{remark}

\vspace{12pt}

\subsection{Stabilization$'$ Invariance}\label{stab_inv}

\begin{proposition}\label{prop:stab}Suppose $g$ and $\bar{g}$ are saturated graph grid diagrams that differ by a stabilization$'$ move.  Let $f:G \rightarrow S^3$ be the transverse graph associated to $g$ and $\bar{g}$ and $V$ be the number of vertices of $G$.  Then there is an $(H_1(E(f)),\Z)$ bigraded $R_V$-module quasi-isomorphism $(C(g),\partial^-_g) \rightarrow (C(\bar{g}),\partial^-_{\bar{g}})$  of degree $(\delta(g,\bar{g}),0)$ for some $\delta(g,\bar{g}) \in H_1(E(f))$.  
\end{proposition}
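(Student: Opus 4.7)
The plan is to adapt the stabilization argument from \cite{MOST} to our setting, where the Alexander grading takes values in $H_1(E(f))$ and the coefficient ring is $R_V$ rather than the full polynomial ring. By Remark~\ref{rem:stab_simple} and the already-established commutation$'$ invariance (Proposition~\ref{prop:commprime}), I first reduce to a \emph{standard} stabilization$'$ in which the new decorations $X_{j_1},X_m,O_n$ share a common corner $c$, with $X_{j_1}$ directly to the left of $O_n$ and $O_n$ directly above $X_m$. Any quasi-isomorphism obtained in this reduced case combines with the commutation$'$ quasi-isomorphisms to yield one in general.

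Next I would partition $\mathbf S(\bar g)$ into the set $\mathbf I$ of generators containing $c$ and the set $\mathbf N$ of those that do not, inducing an $R_V$-module splitting $C^-(\bar g)=I\oplus N$. Empty rectangles of $\bar g$ that avoid the new row and column correspond bijectively to empty rectangles of $g$, identifying $(I,\partial_{II})$ with $(C^-(g)\otimes_{R_n} R_{n+1},\partial^-_g\otimes\mathrm{id})$ as RA $(H_1(E(f)),\Z)$ bigraded $R_V$-module chain complexes, up to a fixed shift $\delta(g,\bar g)\in H_1(E(f))$ of the Alexander grading computed via $A^g=-h$ (Lemma~\ref{lem:h}) applied to any distinguished pair of corresponding generators. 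The off-diagonal piece $\partial_{NI}$ of $\partial^-_{\bar g}$ counts short empty rectangles passing through $c$, so $(C^-(\bar g),\partial^-_{\bar g})$ acquires the structure of the mapping cone $\mathrm{Cone}(\partial_{NI}\colon I\to N)$.

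The central step is to show this cone is quasi-isomorphic to $I$. Following \cite{MOST}, I would construct a chain homotopy $H\colon N\to N$ by counting width-one $L$-shaped empty rectangles through $c$ that avoid $\mathbb X$, and show that on most of $N$ the anticommutator $\partial_{NN}H+H\partial_{NN}$ is the identity while on a distinguished subcomplex it realizes multiplication by $U_n$. Because the new $O_n$ lies in the interior of an edge $e$ of $f$, Proposition~\ref{prop:no_edges} supplies a further $R_V$-module chain homotopy between multiplication by $U_n$ and multiplication by $U_i$, where $O_i$ is the distinguished $O$ on $e$ (respectively a null-homotopy of $U_n$ when $e$ is not singular). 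Assembling these two homotopies yields a null-homotopy of the identity on $N$ up to a term compensated by the cone structure, so that the natural projection $\mathrm{Cone}(\partial_{NI})\to I$ is an RA $(H_1(E(f)),\Z)$ bigraded $R_V$-module quasi-isomorphism of degree $(\delta(g,\bar g),0)$.

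Finally I would verify the gradings. The Maslov calculation is essentially identical to that of \cite{MOST}, since $M$ depends only on $\mathbb O$ and is shifted predictably by the addition of $O_n$; once the Alexander shift $\delta(g,\bar g)$ is pinned down on a single pair of generators, Corollary~\ref{cor:torus_grading} propagates it consistently. I expect the main obstacle to be the case in which the stabilized row of $g$ contains several $X$'s besides $X_{j_1}$: the width-one annuli in $\bar g$ then contain extra $X$'s and are excluded from $\partial^-$, so the naive chain homotopy $H$ must be modified, and the cancellation on $N$ requires the null-homotopy clause (rather than the homotopy-to-another-$U_i$ clause) of Proposition~\ref{prop:no_edges}; carefully tracking which annuli survive and combining them with the appropriate instance of Proposition~\ref{prop:no_edges} will be the main technical content of the argument.
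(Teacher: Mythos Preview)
Your reduction to the standard configuration and the decomposition $C^-(\bar g)=I\oplus N$ with $\partial_{IN}=0$ are correct, and your identification $(I,\partial_{II})\cong (B[U_n],\partial_B\otimes\mathrm{id})$ is exactly what the paper also uses. The gap is in the next step: you claim the projection $C^-(\bar g)\to I$ is a quasi-isomorphism, which would force $N$ to be acyclic. But $N$ is \emph{not} acyclic. Indeed $H_\ast(I)\cong H_\ast(B)\otimes_{\mathbb F}\mathbb F[U_n]$, while the target statement requires $H_\ast(C^-(\bar g))\cong H_\ast(B)$; these disagree as $R_V$-modules whenever $H_\ast(B)\neq 0$. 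So no amount of homotoping on $N$ alone will make the projection to $I$ a quasi-isomorphism. Your appeal to Proposition~\ref{prop:no_edges} does not rescue this: those homotopies live on all of $C^-(\bar g)$ and involve rectangles crossing between $I$ and $N$, so they do not restrict to $N$.

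The paper resolves this by introducing a second copy of $B[U_n]$ and working with the mapping cone $C'=\mathrm{cone}\bigl(\zeta\colon B[U_n]\to B[U_n]\bigr)$, where $\zeta$ is multiplication by $U_n$ (or $U_n+U_k$ when the stabilized row had a single $X$). This cone has $\mathrm{coker}(\zeta)\cong B$, so $C'\simeq B$ for elementary reasons. The substantive work is then to build an explicit bigraded $R_n$-chain map $F=(F_L,F_R)\colon C^-(\bar g)\to C'$, with $F_L$ your identification $I\to B[U_n]$ and $F_R$ counting the $R$-type rectangles with upper-left corner at $\star$ (which are allowed to contain $X_m$). Proving $F$ is a quasi-isomorphism requires an auxiliary $Q$-filtration on the tilde versions and a spectral-sequence comparison; this filtration step is absent from your outline and is where the actual cancellation on $\mathbf N$ takes place. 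Your intuition that ``on a distinguished piece the homotopy realizes multiplication by $U_n$'' is pointing at $\zeta$, but it needs to be packaged as a map to $\mathrm{cone}(\zeta)$ rather than as a contraction of $N$.
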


The proof of Proposition~\ref{prop:stab} will take up the rest of this subsection. The proof of stabilization$'$ is similar to the proof of stabilization in \cite{MOST}.  However, because of the fact that we only have a graded theory instead of a filtered theory, the proof becomes drastically simplified.  We also fill in some of the details and clarify some of the arguments in the proof of \cite{MOST}.  We will only prove the case for row stabilization$'$.  The proof of column stabilization$'$ is similar. 

Let $g$ be a graph grid diagram and $\bar{g}$ be obtained from $g$ by a row stabilization$'$ move.  
An example of a row stabilization$'$ move is shown in Figure~\ref{stab_prime2}.  
To get $\bar{g}$, we take some row $\mathbf{R}$ in $g$ with $l$ $X$'s, delete it and replace it with two new rows and then add a new column.  We place $O_k, X_{j_2}, \dots, X_{j_l}$ into one of the new rows (and in the same columns as before) and $X_{j_1}$ into the other new row (and in the same column as before).  We place decorations $O_{n}$ and $X_{m}$ into the new column so that $O_{n}$ occupies the same row as $X_{j_1}$ and $X_{m}$ occupies the same row as $O_k$. 
By Remark~\ref{rem:stab_simple}, we may assume that $X_{j_1}, X_{m}, O_{n}$ share a corner, called $\star$, where $X_{j_1}$ is directly to the left of $O_{n}$, and $O_n$ is directly above $X_m$ (see Figure~\ref{stab_prime2}).  Let $\beta_n$ be the vertical grid circle directly to the left of $O_n$ and let $\beta_1$ be the vertical grid circle directly to the right of $O_n$.  Let $\alpha_n$ be the horizontal grid circle to between $O_n$ and $X_m$.

\begin{figure}[htpb!]
\begin{center}
\begin{picture}(391,167)
\put(0,0){\includegraphics{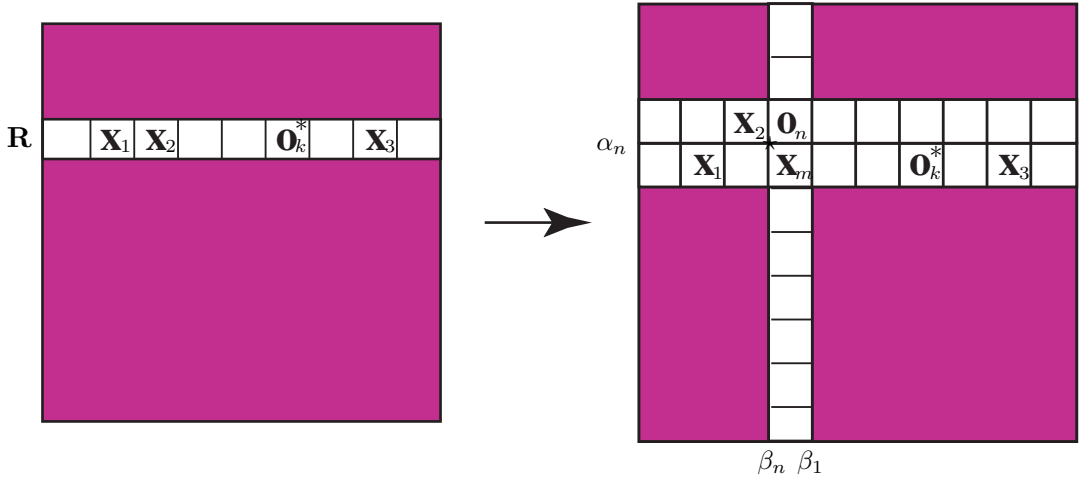}}
\put(30,110){\tiny $1$}
\put(47,110){\tiny $2$}
\put(96,110){\tiny $k$}
\put(130,110){\tiny $3$}
\put(254,100){\tiny $1$}
\put(270,115){\tiny $2$}
\put(273,111){$\star$}
\put(285,115){\tiny $n$}
\put(285,100){\tiny $m$}
\put(336,100){\tiny $k$}
\put(370,100){\tiny $3$}
\put(210,111){\small $\alpha_n$}
\put(271,-10){\small $\beta_n$}
\put(286,-10){\small $\beta_1$}
\put(-13,112){$\mathbf{R}$}
\put(96,118){\small $\ast$}
\put(336,108){\small $\ast$}
\end{picture}
\caption{An example of stabilization$'$.}\label{stab_prime2}
\end{center}
\end{figure}

Let $(B,\partial_B)=(C^-(g),\partial^-_g)$ and  $(C,\partial_C)=(C^-(\bar{g}),\partial^-_{\bar{g}})$.  Let $(B[U_n],\partial_B)$ be the chain complex obtain as follows.  $B[U_n]$ is the free (left) $R_n$-module generated by $\mathbf{S}(g)$ and $\partial_B$ is the unique extension of $\partial_g$ to $B[U_n]$ so that $\partial_B$ is an $R_n$-module homomorphism.  We note that $(B[U_n],\partial_B)$ is isomorphic to the chain complex whose group is $B \otimes_{R_{n-1}} R_n$ and whose boundary map is $\partial_B \otimes id$.  $(B[U_n],\partial_B)$ becomes an $(H_1(E(f)),\Z)$ bigraded $R_n$-module chain complex by setting the $H_1(E(f))$ grading of $U_n$ to be $w(X_{j_1})$ and the $\Z$ grading of $U_n$ to be $-2$.

\begin{definition} 
	Let 
	$$\zeta = 
	\begin{cases}
	        U_n + U_k & \text{if } l=1\\
	        U_n & \text{if } l\geq 2
	 \end{cases}$$ 
	then $\zeta: B[U_n] \rightarrow B[U_n]$ is a bigraded $R_n$-module chain map of degree $(-w(O_n),-2)$.  Let $(C',\partial')$ be the mapping cone complex of $\zeta$.  	
	Since $(B[U_n],\partial_B)$ is an $\HZ$ bigraded $R_n$-module chain complexes, the $(cone(\zeta),\partial')$ is an $\HZ$ bigraded $R_n$-module chain complex.  See p.~\pageref{def:mc} for the definiton of the mapping cone and its grading. 
\end{definition}

We will first show that $C'$ is quasi-isomorphic to $B$ and then we will show that $C$ is quasi-isomorphic to $C'$.  The first step follows from basic facts from homological algebra.  Consider the cokernel of $\zeta$, $B[U_n]/im(\zeta)$.  The $(H_1(E(f)),\mathbb{Z})$ bigrading on $C^-(g)$ descends to a well-defined $(H_1(E(f)),\mathbb{Z})$ bigrading on $B[U_n]/im(\zeta)$ and so $(B[U_n]/im(\zeta),\partial_B)$ is an $\HZ$ bigraded $R_n$-module chain complex.  In addition, using the inclusion $R_{n-1} \subset R_n$, it is also naturally an $\HZ$ bigraded $R_{n-1}$-module chain complex.  Moreover the map $$B \xrightarrow{\cong} B[U_n]/im(\zeta),$$ which sends $b\in B$ to the equivalence class of itself, is a bigraded $R_{n-1}$-module chain isomorphism of degree $(0,0)$.  Thus, we just need to show that $C'$ is quasi-isomorphic to $B/im(\zeta)$.  

\begin{lemma}\label{lem:weib} Let $pr: B[U_n] \rightarrow B[U_n]/im(\zeta)$ be the quotient map.  The map from $C'$ to $B[U_n]/im(\zeta)$ that sends $(a,b)$ to $pr(b)$ is a bigraded $R_n$-module quasi-isomorphism of degree $(0,0)$.
\end{lemma}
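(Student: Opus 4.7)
The plan is to apply the standard homological-algebra principle (as in 1.5.8 of \cite{Weibel}, and as already used in the proof of Corollary~\ref{cor:hatinvariance}) that the projection from the mapping cone of a chain map to its cokernel is a quasi-isomorphism provided the chain map is injective. Four things need to be checked: that $F$ is an $R_n$-module homomorphism, that $F$ has bidegree $(0,0)$, that $F$ is a chain map, and that $\zeta$ is injective on $B[U_n]$.

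The first three are immediate from the setup. Since the $R_n$-action on $cone(\zeta)=B[U_n]\oplus B[U_n]$ is the diagonal one and $pr$ is $R_n$-linear, $F$ is manifestly $R_n$-linear. Recalling that $\zeta$ has degree $(-w(O_n),-2)$, the mapping-cone definition on page~\pageref{def:mc} gives $cone(\zeta)_{(h,j)} = B[U_n]_{(h+w(O_n),\,j+1)} \oplus B[U_n]_{(h,j)}$, so $F$ maps this to $(B[U_n]/\mathrm{im}(\zeta))_{(h,j)}$ and hence has bidegree $(0,0)$. For the chain-map property, $F(\partial'(a,b)) = pr(-\zeta(a)+\partial_B(b)) = pr(\partial_B(b))$, because $pr\circ\zeta = 0$, and this agrees with the induced boundary on $B[U_n]/\mathrm{im}(\zeta)$ applied to $F(a,b)$. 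The quasi-isomorphism assertion will then follow by comparing the long exact sequence in homology associated to $0\to B[U_n]\to cone(\zeta)\to B[U_n][1]\to 0$ (whose connecting map is $\zeta_\ast$) with the one associated to $0\to B[U_n]\xrightarrow{\zeta} B[U_n]\to B[U_n]/\mathrm{im}(\zeta)\to 0$ via the five-lemma.

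The substantive step is injectivity of $\zeta$. This is where the advantage of introducing $B[U_n]$ rather than working with $B$ pays off: by construction $B[U_n]$ is a free left $R_n$-module on $\mathbf{S}(g)$, so injectivity of $\zeta$ reduces to showing that the scalar $\zeta \in R_n = \mathbb{F}[U_1,\ldots,U_n]$ is a non-zero-divisor. Since $R_n$ is an integral domain, it suffices to check that $\zeta\neq 0$. When $l\geq 2$ we have $\zeta=U_n$, and when $l=1$ we have $\zeta=U_n+U_k$ with $k\neq n$ (since $O_k$ is a decoration of the original diagram $g$, while $O_n$ is newly introduced by the stabilization$'$ move), so $\zeta$ is a non-zero element of $R_n$ in either case. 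I do not anticipate any further obstacles; the only mildly delicate point is keeping the bigrading conventions on the mapping cone consistent, so that $F$ genuinely emerges with degree $(0,0)$ rather than some shift.
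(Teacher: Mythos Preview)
Your proof is correct and follows essentially the same approach as the paper: the paper simply asserts the short exact sequence $0 \to B[U_n] \xrightarrow{\zeta} B[U_n] \xrightarrow{pr} B[U_n]/\mathrm{im}(\zeta) \to 0$ and invokes Weibel~1.5.8. You have usefully made explicit the one nontrivial point the paper leaves implicit, namely the injectivity of $\zeta$ (which is needed for exactness on the left), and your argument that $\zeta$ is a nonzero element of the integral domain $R_n$ acting on a free module is exactly right.
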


\begin{proof}There is a short exact sequence of chain complexes
	
	$$ 0 \rightarrow B[U_n] \xrightarrow{\zeta} B[U_n] \xrightarrow{pr} B[U_n]/im(\zeta) \rightarrow 0.$$
	Therefore, by 1.5.8 in \cite{Weibel}, the map $cone(\zeta) \rightarrow B[U_n]/im(\zeta)$ sending $(a,b)$ to $pr(b)$ is a quasi-isomorphism.   This map is bigraded of degree $(0,0)$ and an $R_n$-module homomorphism. 
\end{proof}

We now define a quasi-isomorphism $F:C \rightarrow C'$ similar to the one in \cite{MOST}.  However, since we are only considering a bigraded chain complex instead of a filtered chain complex and we only need to consider one of the four stabilization$'$s, our map becomes very simple. We first consider some notation.  

Let $\mathbf{I} \subset \S(\bar{g})$ be the set of $\x \in \S(\bar{g})$ that contain $\star$ the intersection of the new grid lines/circles $\alpha_n$ and $\beta_n$.  
There is a natural 1-1 correspondence between $\mathbf{I}$ and $\S(g)$.  For $\x\in \S(g)$ let $\psi(\x)$ be the point in $\mathbf{I}$ defined by $\x \cup \star$. Note that if $\x \in \mathbf{I}$ then $\x = \{x_1, x_2, \dots, x_{n-1}, \star \}$ so $\psi^{-1}(\x) = \{x_1,\dots, x_{n-1}\}$ is the generator of $B$ obtained by removing $\star$. The gradings of $\x \in \S(g)$ and $\psi(\x)$ are related as follows.

\begin{align}
M^g(\x) &= M^{C'}(\x,0) + 1 = M^{C'}(0,\x) = M^{\bar{g}}(\psi(\x)) + 1  \\ 	
A^g(\x) &=  A^{C'}(\x,0) + w(O_n) = A^{C'}(0,\x) = A^{\bar{g}}(\psi(\x)) - A^{\bar{g}}(\star) \label{eq:Alex}
\end{align}

Define $F_L: C \rightarrow B[U_n]$ by
$$F_L(\x) = \begin{cases} 0 & \text{if } \x \not\in \mathbf{I} \\ \psi^{-1}(\x) &\text{if } \x \in \mathbf{I} \end{cases}
$$
and extend it to $C$ so that it is an $R_n$-module homomorphism. 
The reason for this choice of map is that the trivial region is the only type $L$ region in \cite{MOST} that doesn't contain $X_{j_1}$ when $X_{j_1}$ is directly to the left of $O_n$.  
For the definition of $L$ type region see Definition 3.4 and Figure 13 of \cite{MOST}.  

There is one type $R$ region that doesn't contain $X_{j_1}$, the rectangle with upper left corner $\star$.  For $\x \in S(\bar{g})$ and $\y\in S(g)$, let 
$\pi_R(\x,\psi(\y))$ be the set of $p \in \Rect^\circ(\x,\psi(\y))$ whose upper left corner is $\star$ (see Figure~\ref{fig:stab_domain}).  We will call such a domain an \textbf{$R$-domain}. 
\begin{center}
\begin{figure}[htpb]
\begin{picture}(122,115)
\put(0,0){\includegraphics{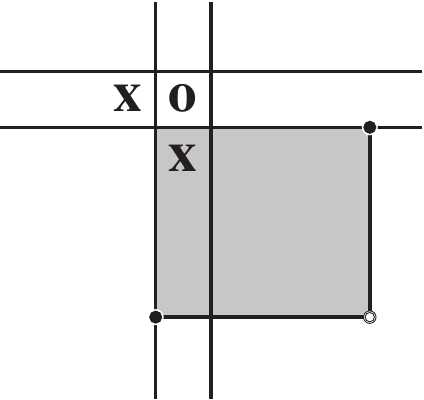}}
\put(42,76){$\star$}
\put(55,80){\tiny $n$}
\put(55,64){\tiny $m$}
\end{picture}
\caption{A domain in $\pi_R(\x,\psi(\y))$.  The points of $\x$ are in black, the points of $\y$ are in white. The domain is shaded. }
\label{fig:stab_domain}
\end{figure}
\end{center}
Define $F_R : C \rightarrow B[U_n]$ by 
$$F_R(\x) = \sum_{\y \in S(g)} \sum_{\substack{p \in \pi_R(\x,\psi(\y)) \\ (\mathbb{X}\smallsetminus X_m) \cap Int(p) = \emptyset}} U_1^{O_1(p)} \cdots U_{n-1}^{O_{n-1}(p)} \y.
$$
Note that we are counting domains in $\bar{g}$ that cannot contain $X_1, \dots, X_{m-1}$ but can contain $X_m$.   Also, there is no factor of $U_n$ in the terms of $F_R(\x)$. Using these, we define $F: C \rightarrow C'$ defined by $$F(\x)= ( F_L(\x), F_R(\x) ).$$

\begin{lemma}$F:C \rightarrow C'$ is a $\HZ$ bigraded $R_n$-module chain map of degree $(- w(X_m) - A^{\bar{g}}(\star),0)$. 
\end{lemma}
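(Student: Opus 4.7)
The strategy is in three parts: check that $F$ is an $R_n$-module homomorphism, compute the bigrading shift, and verify the chain-map identity $\partial' \circ F = F \circ \partial_C$. The first part is immediate from the construction of $F_L$ and $F_R$ and their $R_n$-linear extension.

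For the bigrading, I would split into the two cases $\x \in \mathbf{I}$ and $\x \notin \mathbf{I}$ and show the shift is the same in both. If $\x \in \mathbf{I}$ then $F(\x) = (\psi^{-1}(\x), 0)$, and the mapping-cone degree convention combined with (\ref{eq:Alex}) gives the $C'$-bigrading $(A^{\bar g}(\x) - A^{\bar g}(\star) - w(O_n),\, M^{\bar g}(\x))$ directly. If $\x \notin \mathbf{I}$ then $F(\x) = (0, F_R(\x))$, and for each term $U_1^{O_1(p)} \cdots U_{n-1}^{O_{n-1}(p)}\y$ I would compute its $B[U_n]$-bigrading using Lemma~\ref{lem:alex_count} in $\bar g$, the relation $A^g(\y) = A^{\bar g}(\psi(\y)) - A^{\bar g}(\star)$ from (\ref{eq:Alex}), and the Maslov formula (\ref{maslov_eq}). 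The two key geometric inputs are that every $R$-domain $p$ contains $X_m$ in its interior but does \emph{not} contain $O_n$ (because $\star$ is the upper-left corner of $p$, so $O_n$'s square lies just above $p$), and that $w(X_m) = w(O_n)$ since both decorations lie on the new edge segment created by the stabilization$'$, which carries the weight of the edge through $X_{j_1}$. These together reconcile the two cases into the common shift $(-w(X_m) - A^{\bar g}(\star),\, 0)$.

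For the chain-map property, unpacking $\partial'(a,b) = (\partial_B a,\, \zeta a + \partial_B b)$ over $\mathbb{F}$ reduces the identity to the two equations
\[
F_L \circ \partial_C = \partial_B \circ F_L \quad\text{and}\quad F_R \circ \partial_C + \partial_B \circ F_R = \zeta \circ F_L.
\]
The first identity is essentially bookkeeping: for $\x \in \mathbf{I}$, any empty rectangle in $\bar g$ from $\x$ to some $\x' \in \mathbf{I}$ must avoid $\star$ as a corner, hence corresponds bijectively to an empty rectangle in $g$ between $\psi^{-1}(\x)$ and $\psi^{-1}(\x')$ with identical $O_i$ multiplicities for $i \leq n-1$; the remaining rectangles (those landing outside $\mathbf{I}$ or starting outside $\mathbf{I}$) are all seen to be forbidden by the presence of $X_{j_1}$ or $X_m$ in their interiors, so both sides vanish on $\x \notin \mathbf{I}$.

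The second identity is the heart of the argument and follows the template of the stabilization proof in \cite{MOST}. I would classify the composite domains of the form ``empty rectangle then $R$-domain'' and ``$R$-domain then empty rectangle'', and show that, generically, each composite admits two such decompositions, yielding a cancelling pair over $\mathbb{F}$. The surviving terms come from the degenerate annular domains that close up from some $\x \in \mathbf{I}$ back to itself: a vertical annulus through the new column of $O_n$ contributes $U_n \cdot \psi^{-1}(\x)$, and when $l=1$ there is additionally a horizontal annulus through the row of $O_k$ (whose only $X$ in the interior is $X_m$) contributing $U_k \cdot \psi^{-1}(\x)$; for $l \geq 2$ this horizontal annulus is killed by $X_{j_2}, \ldots, X_{j_l}$. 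These surviving contributions sum exactly to $\zeta \cdot \psi^{-1}(\x) = \zeta \circ F_L(\x)$. The main obstacle will be this case analysis: listing all composite-domain shapes, checking that the ``no $X$'s in the interior except $X_m$'' condition propagates correctly through each pairing, and verifying that the $l=1$ vs.\ $l \geq 2$ dichotomy accounts for the two formulas defining $\zeta$.
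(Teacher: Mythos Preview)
Your proposal is correct and follows essentially the same three-part approach as the paper: the $R_n$-module structure is immediate, the bigrading shift is computed via the mapping-cone convention together with (\ref{eq:Alex}) and Lemma~\ref{lem:alex_count} (using that $p$ contains $X_m$ once and misses $O_n$, with $w(X_m)=w(O_n)$), and the chain-map identity is established by pairing composite domains, with the vertical and (when $l=1$) horizontal annuli through the new column and row supplying exactly the $\zeta$ term. You are in fact slightly more careful than the paper in writing $\zeta\circ F_L$ rather than the paper's shorthand $\zeta(\x)$.

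One small inaccuracy worth flagging: empty $X$-free rectangles in $\bar g$ from $\x\in\mathbf{I}$ to some $\z\notin\mathbf{I}$ are \emph{not} always forbidden---the single square immediately southwest of $\star$ is such a rectangle, since that square contains neither $X_{j_1}$ nor $X_m$. This is harmless for the identity $F_L\circ\partial_C=\partial_B\circ F_L$ because $F_L$ annihilates any such $\z$ regardless; the rectangles that genuinely must be ruled out are those \emph{starting} outside $\mathbf{I}$ and landing in $\mathbf{I}$, and those do contain $X_{j_1}$ or $X_m$ as you say (since $\star$ is then an upper-left or lower-right corner). For the bijection when $\x\in\mathbf{I}$ you should also note that the relevant rectangles in $\bar g$ satisfy $O_n(r')=0$, so that the $U_i$-monomials match exactly.
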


\begin{proof} $F$ is an $R_n$-module homomorphism by definition.  It follows from \cite[Lemma 3.5]{MOST} that the Maslov grading is preserved. 
	 
	\vspace{12pt}
	\noindent\textit{$F$ is an $H_1(E(f))$ graded map of degree $-A^{\bar{g}}(\star) - w(O_n)$}:  Let $\x\in \S(\bar{g})$.  We first consider the grading of $(F_L(\x),0)$.  $F_L(\x)$ is either $0$ or is $\psi^{-1}(\x)$.  Suppose  $F_L(\x)\neq 0$ then $F_L(\x)=\psi^{-1}(\x)$ so by (\ref{eq:Alex}),
	$$A^{C'}(F_L(\x),0) = A^g(\psi^{-1}(\x)) - w(O_n) = A^{\bar{g}}(\x) - A^{\bar{g}}(\star) - w(O_n).$$  
	We now consider the grading of $(0,F_R(\x))$.  Let $U_1^{O_1(p)} \cdots U_{n-1}^{O_{n-1}(p)} \y$ be a term in $F_R(\x)$.  
	Then $p$ is a rectangle connecting $\x$ to $\psi(y)$ that doesn't contain any elements of $\mathbb{X}$ except $X_m$.  Moreover, $p$ does not contain $O_n$ and contains $X_m$ exactly once.  So by Lemma~ \ref{lem:alex_count},  
	$$A^{\bar{g}}(\x)-A^{\bar{g}}(\psi(\y)) = \mathbf{n}_\mathbb{X}(p)-\mathbf{n}_\mathbb{O}(p) = w(X_m) - \sum_{i=1}^{n-1}O_i(p)w(O_i).$$  
	Therefore, by (\ref{eq:Alex}), we have 
	
\begin{align*}A^{C'}(0,U_1^{O_1(p)} \cdots U_{n-1}^{O_{n-1}(p)} \y) &= A^{C'}(0,\y) - \sum_{i=1}^{n-1}O_i(p)w(O_i) \\
	&= A^{\bar{g}}(\psi(\y)) - A^{\bar{g}}(\star) -\sum_{i=1}^{n-1}O_i(p)w(O_i) \\
	&= A^{\bar{g}}(\x) - w(X_m) - A^{\bar{g}}(\star) \\
	&= A^{\bar{g}}(\x) - w(O_n) - A^{\bar{g}}(\star).
\end{align*}

\noindent\textit{$F$ is a chain map}: Let $\x \in \S(\bar{g})$.  Recall that 
$$\partial'(F(\x)) = (\partial_B(F_L(\x)),0) + (0, \zeta(\x))  + (0,\partial_B(F_R(\x)) $$ and
$$F(\partial_C(\x)) = (F_L(\partial_C(\x)), 0) + (0, F_R(\partial_C(\x)).$$
The proof that $F$ is a chain map is similar to the proof that the commutation$'$ map $\Phi_{\beta,\gamma}$ is a chain map.  

For this proof, whenever we have an empty rectangle $r$ in $g$ (contributing to a term in $\partial_B$), we will view it as living in $\bar{g}$ in the obvious way.  Note that such a rectangle cannot have a boundary on $\alpha_n$ or $\beta_n$ hence cannot have $\star$ on its boundary.  Usually (in a filtered theory) such a rectangle could contain the point $\star$ in its interior.  However, since $r$ cannot contain $X_{j_1}$, it also cannot contain $\star$ in its interior.  

We first show that $\partial_B(F_L(\x))=F_L(\partial_C(\x)).$
Suppose $D$ is a domain of the form $p\ast r$ representing a non-trivial term in $\partial_B \circ F_L(\x)$.  Then $p$ is a trivial domain connecting a point in $\mathbf{I}$ to itself and $r$ is an empty rectangle in $g$.  We will think of $p$ as a point at $\star$.  Since $\star$ cannot be on the corner of $r$ or in its interior, $r$ and $p$ must be disjoint.  Suppose $D$ is a domain of the form $r'\ast p'$ representing a term in $F_L \circ \partial_C(\x)$.  Then $p'$ is a trivial domain connecting a point in $\mathbf{I}$ to itself and $r$ is an empty rectangle in $\bar{g}$.  Since $r'$ is empty, $\star$ is not in the interior of $p'$.  Since $r'$ cannot contain $X_m$ or $X_{j_1}$, it cannot have $\star$ as one of its corners.  Thus $r'$ and $p'$ are disjoint.  Therefore the terms in  $(\partial_B(F_L(\x)),0)$ and $(F_L(\partial_C(\x)), 0)$ cancel each other out. 

We now wish to show that $\partial_B(F_R(\x)) + \zeta(\x) = F_R(\partial_C(\x)$. Suppose $D$ is a domain of the form $p\ast r$ representing a non-trivial term in $\partial_B \circ F_R(\x)$.  Then $p$ is an empty rectangle in $\bar{g}$ with $\star$ on its upper left corner and $r$ is an empty rectangle in $g$.  If $p$ and $r$ share zero corners or share one corner then there is exactly one empty rectangle $r'$ in $\bar{g}$ and $R$-domain $p$ such that $r' \ast p'$ gives a decomposition of $D$ ($p'$ and $r'$ will also share zero corners or share one corner). These represent terms in $F_R\circ \partial_C(\x)$.  Note that $p$ and $r$ cannot share two or three corners.  Conversely, 
suppose $D$ is a domain of the form $p'\ast r'$ representing a non-trivial term in $F_R\circ \partial_C(\x)$.  Then $p'$ is an empty rectangle in $\bar{g}$ with $\star$ on its upper left corner and $r'$ is an empty rectangle in $\bar{g}$.  If $p'$ and $r'$ share no corners or share one corner then there is exactly one other empty rectangle $r''$ (in $g$ or $\bar{g}$) and $R$-domain $p''$ such that $r'' \ast p''$ or $p''\ast r''$  gives a decomposition of $D$.  Here $p''$ and $r''$ will also share zero corners or share one corner.  These will represent terms in $\partial_B \circ F_R(\x)$ or $F_R \circ \partial_C(\x)$.  Note that $p'$ and $r'$ cannot share two or three corners.  Thus, these terms cancel one another out. 

First note that if $D$ is a domain of the form $p\ast r$ representing a non-trivial term in $\partial_B \circ F_R(\x)$ then $p$ and $r$ cannot share more than one corner (hence cannot share four corners).  Suppose $D$ is a domain of the form $p'\ast r'$ representing a non-trivial term in $F_R\circ \partial_C(\x)$ where $p'$ and $r'$ share four corners.  Then $D$ is either the width one horizontal annulus containing $X_m$ or the width one vertical annulus containing $O_n$.  The width one vertical annulus always contributes $U_n \x$. If there is more than one element of $\mathbb{X}$ in row $\mathbf{R}$ ($l \geq 2$) then the width one horizontal annulus contains an element of $\mathbb{X}$ that is not $X_m$ so is not counted.  If there is exactly one element of $\mathbb{X}$ in row $\mathbf{R}$ ($l=1$) then the width one horizontal annulus containing $X_m$ contributes $U_k \x$.  Thus these terms cancel with $\zeta(\x)$.

\end{proof}

To show that $F$ is a quasi-isomorphism, we will first show that $\widetilde{F}: \widetilde{C} \rightarrow \widetilde{C}'$ is a quasi-isomorphism, where $\widetilde{C}$ is the quotient $C/\mathcal{U}_n$ defined in Subsection~\ref{sec:tilde_hat}.  
To do this, we introduce a filtration on $\widetilde{C}$ and $\widetilde{C}'$ so that $\widetilde{F}$ is a filtered map and show $\widetilde{F}$ induces a quasi-isomorphism on its associated graded object.  The rest of the proof will follow from the well known lemma.

\begin{lemma}[{\cite[Theorem 3.2]{McCleary}}]\label{mc} Suppose that $F:C \rightarrow C'$ is a filtered chain map that induces an isomorphism on the homology of the associated graded object.  Then $F$ is a filtered quasi-isomorphism. 
\end{lemma}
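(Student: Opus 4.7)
The plan is to prove this via the spectral sequence associated to a filtered chain complex, which is the standard machinery underlying the cited reference. Given the filtered chain complexes $(C, F_\bullet C)$ and $(C', F_\bullet C')$, one obtains spectral sequences $\{E_r(C), d_r\}$ and $\{E_r(C'), d_r\}$ whose $E_0$-pages are precisely the associated graded objects $\mathrm{gr}(C)$ and $\mathrm{gr}(C')$, whose $E_1$-pages are the homologies of the associated graded objects, and which (under suitable convergence hypotheses) converge to the associated graded of $H_*(C)$ and $H_*(C')$ respectively with respect to the induced filtration on homology.

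Since $F$ is a filtered chain map, it induces a morphism of spectral sequences $E_r(F) \colon E_r(C) \to E_r(C')$ for all $r \geq 0$. The hypothesis that $F$ induces an isomorphism on the homology of the associated graded object is precisely the statement that $E_1(F)$ is an isomorphism. The key step is then an inductive argument: if $E_r(F)$ is an isomorphism, then because $E_{r+1}$ is computed as the homology of $(E_r, d_r)$ and $E_r(F)$ is a chain map commuting with $d_r$, the induced map $E_{r+1}(F)$ is also an isomorphism. Iterating, $E_r(F)$ is an isomorphism for every $r \geq 1$, and hence $E_\infty(F)$ is an isomorphism.

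The final step is to pass from an isomorphism on $E_\infty$ to an isomorphism on $H_*$. This is where a convergence hypothesis on the filtrations is needed — for instance, the filtrations being bounded (which is the simplest case and is what occurs in the applications to graph grid diagrams used in this paper, where the filtration index is controlled by a finite combinatorial quantity). Under such a hypothesis, $E_\infty(C)$ is the associated graded of $H_*(C)$ with respect to the induced filtration, and an isomorphism on the associated graded together with a bounded filtration forces an isomorphism on $H_*$ itself by a straightforward induction on the filtration degree. This shows $F_* \colon H_*(C) \to H_*(C')$ is an isomorphism, so $F$ is a filtered quasi-isomorphism.

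The main obstacle is really just the convergence bookkeeping; the algebraic core is the routine inductive argument that $E_1$-isomorphism implies $E_\infty$-isomorphism. Since this lemma is invoked rather than proved in the paper (it is quoted from McCleary), one could alternatively present the argument without spectral sequences by a direct induction on the filtration level using the five lemma applied to the long exact sequences of the pairs $(F_p C, F_{p-1} C)$; however, the spectral sequence framing is cleaner and matches the reference.
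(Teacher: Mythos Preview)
Your proposal is correct. The paper does not prove this lemma at all; it simply cites it from McCleary, and your sketch is precisely the standard spectral sequence comparison argument that underlies that reference (with the necessary caveat about bounded filtrations, which you rightly flag and which does hold in the paper's application).
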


The definition of the filtration and the proof that $\widetilde{F}$ is induces a quasi-isomorphism on its associated graded object is essentially the same as in \cite{MOST}.  However, there is a small mistake in their definition of the filtration which we fix.  We also give more details which we believe clarifies their proof. 

For any $\mathbb{F}[U_1, \dots, U_n]$-module chain complex $(C,\partial)$ one can consider define the chain complex $(\widetilde{C},\widetilde{\partial})$ like we did in Subsection~\ref{sec:tilde_hat}.   Let $\mathcal{U}_V$ be the $\mathbb{F}$-vector subspace of $C$ spanned by $U_1 C \cup \dots \cup U_n C$.  Define $\widetilde{C}$ to be the quotient $C/\mathcal{U}_n$.  Since $\partial^-(\mathcal{U}_n) \subset \mathcal{U}_n$, it follows that $\partial$ descends to an linear map $$\widetilde{\partial}: \widetilde{C} \rightarrow \widetilde{C}$$ of vector spaces over $\mathbb{F}$.  

Define the $Q$-filtration, $\mathcal{F}^Q_k(C)$ on $(C,\partial_C)$ (where $C=C^-(g)$) as follows.  Let $Q$ be the collection of $(n-1)^2$ dots in $\bar{g}$, with one dot placed in each square which does not appear in the row or column containing $O_n$.   For a domain $p\in \pi(\x,\y)$, let $\mathbb{O}(p)$  be the total number of $O$'s in $p$ counted with sign.  That is, $\mathbb{O}(p) = \sum_{i=1}^n O_i(p)$. Similarly, we define $Q(p)$ to  be the total number of dotes in $p$ counted with sign.  Here, we are viewing the points in $\mathbb{O}$ and $Q$ as having positive orientation.  Note with this convention, if $r$ is a rectangle connecting $\x$ to $\y$, then $Q(r)\geq 0$ and $\mathbb{O}(r) \geq 0.$    
\begin{lemma} \label{lem:wd} Let $p$ and $p'$ be domains in $\bar{g}$ connecting $\x$ to $\y$ such that 
	$$O_n(p) = O_n(p') = 0 = \mathbb{O}(p) = \mathbb{O}(p').$$  Then $Q(p) = Q(p')$.
\end{lemma}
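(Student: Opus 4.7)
The plan is to exploit the fact that $p - p'$ is a periodic domain on the torus $\mathcal{T}$ of $\bar{g}$, and then to express the three quantities $\mathbb{O}$, $O_n$, and $Q$ as linear functionals on the space of periodic domains that satisfy a single linear relation.

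First I would observe that since both $p$ and $p'$ connect $\x$ to $\y$, the 1-chain $\partial(p - p')$ has trivial boundary in $U_{\alphas} \cup U_{\betas}$, so it is a linear combination of the grid circles $\alpha_i$ and $\beta_j$. Equivalently, $p - p'$ is a periodic 2-chain on $\mathcal{T}$. Standard torus combinatorics then give integers $a_1,\ldots,a_n$ and $b_1,\ldots,b_n$ such that
\[
p - p' \;=\; \sum_{i=1}^n a_i H_i \,+\, \sum_{j=1}^n b_j V_j,
\]
where $H_i$ is the horizontal annulus between $\alpha_i$ and $\alpha_{i+1}$ and $V_j$ is the vertical annulus between $\beta_j$ and $\beta_{j+1}$. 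The only ambiguity in this representation is the single relation $\sum_i H_i = [\mathcal{T}] = \sum_j V_j$, i.e.\ replacing $(a_i, b_j)$ by $(a_i + k,\, b_j - k)$ for some integer $k$.

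Next I would count $O$'s and dots on each annulus. Since each row (resp.\ column) of $\bar{g}$ contains exactly one $O$, we have $\mathbb{O}(H_i) = \mathbb{O}(V_j) = 1$ for all $i,j$. The decoration $O_n$ lies at the unique intersection of some row $i_0$ and column $j_0$, so $O_n(H_i) = \delta_{i,i_0}$ and $O_n(V_j) = \delta_{j,j_0}$. Likewise, the dot set $Q$ places exactly one dot in every square outside row $i_0$ and column $j_0$, so $Q(H_{i_0}) = Q(V_{j_0}) = 0$ while $Q(H_i) = n-1$ for $i \neq i_0$ and $Q(V_j) = n-1$ for $j \neq j_0$. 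Plugging in, the hypotheses $\mathbb{O}(p - p') = 0$ and $O_n(p - p') = 0$ translate into
\[
\sum_{i=1}^n a_i + \sum_{j=1}^n b_j = 0, \qquad a_{i_0} + b_{j_0} = 0,
\]
and we compute
\[
Q(p - p') \;=\; (n-1)\!\left[\Bigl(\sum_i a_i - a_{i_0}\Bigr) + \Bigl(\sum_j b_j - b_{j_0}\Bigr)\right] \;=\; (n-1)\bigl(0 - 0\bigr) \;=\; 0.
\]
Therefore $Q(p) = Q(p')$, as claimed.

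The only subtle point is verifying that each of the three functionals $\mathbb{O}$, $O_n$, $Q$ is invariant under the ambiguity $(a_i, b_j) \mapsto (a_i + k, b_j - k)$, so that the computation above is well-defined; this is a direct check ($nk - nk = 0$ for $\mathbb{O}$, $k - k = 0$ for $O_n$, and $(n-1)(nk - k) + (n-1)(-nk + k) = 0$ for $Q$). I do not expect any serious obstacle; the lemma really is the statement that, modulo the torus relation, the horizontal and vertical annuli span the periodic domains, and the three constraints involved are linearly compatible.
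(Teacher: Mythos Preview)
Your proof is correct and is essentially the same argument as the paper's: you both write the periodic domain $p-p'$ as an integer combination of horizontal and vertical annuli (the paper indexes them by the $O$ they contain rather than by position, so your $H_{i_0},V_{j_0}$ are their $\mathbf{R}_n,\mathbf{C}_n$), and then use the two hypotheses $\mathbb{O}(p-p')=0$ and $O_n(p-p')=0$ to force the $(n-1)$-weighted sum computing $Q(p-p')$ to vanish. The only cosmetic difference is that the paper leaves the well-definedness under the torus relation implicit, whereas you check it explicitly.
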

\begin{proof} For $1\leq i \leq n$, let $\mathbf{R}_i \in \pi(\x,\x)$ (respectively $\mathbf{C}_i$) be the domain that is the positively oriented row (respectively column) contain $O_i$.  Note that $O_k(\mathbf{R}_i)=O_k(\mathbf{C}_i) = \delta_{ik}$.  Suppose $p,p' \in \pi(\x,\y)$ then $p$ and $p'$ differ by a domain in $\pi(\x,\x)$ so that 
	
\begin{equation}\label{ppprime}p' = p + \sum_{i=1}^n (a_i \mathbf{R}_i + b_i \mathbf{C}_i).\end{equation}  This follows from the fact that the space of domains on the torus of the form $\pi(\x,\x)$ is generated by $\mathbf{R}_i$ and $\mathbf{C}_i$ with the relation that $\sum_{i=1}^n (\mathbf{R}_i - \mathbf{C}_i) =0.$
	
Now, we note that 
$Q(\mathbf{R}_i) = Q(\mathbf{C}_i) = n-1$ for $i\neq n$ and $Q(\mathbf{R}_n) = Q(\mathbf{C}_n) = 0.$  Thus 
$$Q(p') = Q(p) + (n-1) \sum_{i=1}^{n-1} (a_i+b_i).$$
Since $O_n(p)=O_n(p')=0$ by hypothesis, using (\ref{ppprime}) we get 
$$0 = O_n(p') = O_n(p) + O_n\left(\sum_{i=1}^n (a_i \mathbf{R}_i + b_i \mathbf{C}_i)\right) = a_n + b_n.$$
Similarly since $\Os(p)=\Os(p')=0$, 
$$0 = \Os\left(\sum_{i=1}^n (a_i \mathbf{R}_i + b_i \mathbf{C}_i)\right) = \sum_{i=1}^n (a_i + b_i).$$
Using these three equalities, we have that $Q(p)=Q(p')$.
\end{proof}

\begin{lemma}\label{lem:connects} Let $\x,\y \in \S(\bar{g})$, then there is a domain $p\in \pi(\x,\y)$ with $O_n(p)=\Os(p)=0$.\end{lemma}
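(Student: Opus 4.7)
The plan is to start with an arbitrary domain from $\x$ to $\y$ and correct it by subtracting integer multiples of the column annuli $\mathbf{C}_j$ from the proof of Lemma~\ref{lem:wd}, thereby cancelling out each $O$-multiplicity independently.

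First I would check that $\pi(\x,\y)$ is nonempty. Since any two generators of $\S(\bar{g})$ differ by a permutation of their $\beta$-indices, and any permutation factors into transpositions, each of which is realized by a rectangle between the corresponding generators (there are always two rectangles connecting two generators that agree at $n-2$ points), a composition of such rectangles yields a domain $p_0\in\pi(\x,\y)$.

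Next, recall that for each $1\le j\le n$ the positively oriented column $\mathbf{C}_j$ through $O_j$ lies in $\pi(\x,\x)$ and satisfies $O_k(\mathbf{C}_j)=\delta_{jk}$. I would then set
$$p \;:=\; p_0 \,-\, \sum_{j=1}^{n} O_j(p_0)\,\mathbf{C}_j,$$
which still lies in $\pi(\x,\y)$ because each $\mathbf{C}_j\in\pi(\x,\x)$. A one-line computation gives $O_k(p)=O_k(p_0)-O_k(p_0)=0$ for every $k$, so in particular $O_n(p)=0$ and $\Os(p)=\sum_k O_k(p)=0$, as required.

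There is essentially no real obstacle here: the argument is linear algebra in the abelian group of periodic domains, exploiting the fact that the column (or equivalently row) annuli give independent control over the individual $O$-multiplicities. The only input beyond bookkeeping is the nonemptiness of $\pi(\x,\y)$, which is the standard observation that $\S(\bar{g})$ is connected under rectangle moves on the torus.
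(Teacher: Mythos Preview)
Your proof is correct and in fact slightly cleaner than the paper's. Both arguments start the same way: obtain an arbitrary domain $p_0\in\pi(\x,\y)$ by factoring the permutation relating $\x$ and $\y$ into transpositions and concatenating the corresponding rectangles. From there the paper proceeds in two steps, first subtracting $O_n(p_0)\bigl(\mathbf{R}_n+\sum_{i=1}^{n-1}\mathbf{C}_i\bigr)$ to kill $O_n$, and then subtracting $\Os(p_1)\,\mathbf{R}_1$ to kill $\Os$ while preserving $O_n=0$. You instead subtract $\sum_{j=1}^n O_j(p_0)\,\mathbf{C}_j$ in a single stroke, using the relation $O_k(\mathbf{C}_j)=\delta_{jk}$ from Lemma~\ref{lem:wd} to annihilate \emph{every} $O_k$ simultaneously, which is more than the lemma requires but immediately implies it. Your approach exploits the fact that the column annuli diagonalize the $O$-multiplicity map on periodic domains, so each $O_k$ can be corrected independently; the paper's version is a bit more roundabout but uses the same underlying linear algebra in the group of periodic domains.
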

	\begin{proof} Let $\x,\y \in \S(\bar{g})$.  First we note that there is domain connecting $\x$ to $\y$.  Since $S_n$ is generated by transpositions, there is a sequence of rectangles connecting $\x$ to $\y$ (not necessarily empty).  The sum of these rectangles
is a domain $p_0$ connecting $\x$ to $\y$.	

Let $m_0= O_n(p)$, which is not necessarily zero.  We replace each rectangle containing $O_n$ with the other rectangle connecting its corners as follows.  Let 
$$p_1 = p_0 - m_0 \left(\mathbf{R}_n + \sum_{i=1}^{n-1} C_i\right)$$
then $p_1$ connects $\x$ to $\y$ since $\mathbf{R}_n + \sum_{i=1}^{n-1} \mathbf{C}_i$ is periodic.  Moreover, $O_n(\mathbf{R}_n + \sum_{i=1}^{n-1} \mathbf{C}_i) =1$ so that $O_n(p_1)=0$. (One could also let $p_1 = p_0 - m_0 \mathbf{R}_n$ or $p_1 = p_0 - m_0 \mathbf{C}_n$.)
Now let $m_1= \Os(p_1)$ and define $p_2 = p_1 - m_1 \mathbf{R}_1$.  Then $p_2$ connects $\x$ to $\y$ and since $O_n(\mathbf{R}_1)=0$ and $\Os(\mathbf{R}_1)=1$, we have $O_n(p_2)=\Os(p_2)=0$.
\end{proof}
	
We use this to define a function $\mathbf{F}^Q: \S(\bar{g}) \rightarrow \Z$ by first defining $\mathbf{F}^Q(\x_0)=0$ where $\x_0$ is the lower left corner of the $O$'s.  (It doesn't really matter what value we choose.)  Then for $\x\in \S(\bar{g})$, use Lemma~\ref{lem:connects} to pick a domain $p_\x$ connecting $\x$ to $\x_0$ with $O_n(p_\x)= \Os(p_\x)=0.$  Define $$\mathbf{F}^Q(\x)= \mathbf{F}^Q(x_0) + Q(p_\x) = Q(p_\x).$$  This is well-defined by Lemma~\ref{lem:wd}.

\begin{lemma}\label{lem:Qdom}
	Suppose that $\x,\y\in \S(\bar{g})$ and $p$ is any domain connecting $\x$ to $\y$ with $O_n(p)= \Os(p)=0$.  Then $$\mathbf{F}^Q(\x) - \mathbf{F}^Q(\y) = Q(p).$$
\end{lemma}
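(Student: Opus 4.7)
The proof should be essentially a one-line consequence of Lemma~\ref{lem:wd}, using the reference domains $p_\x$ and $p_\y$ that were chosen to define $\mathbf{F}^Q(\x)$ and $\mathbf{F}^Q(\y)$. The plan is as follows.

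First I would observe that both $O_n$, $\Os$, and $Q$ are additive under the composition of domains $\ast: \pi(\x,\y) \times \pi(\y,\z) \rightarrow \pi(\x,\z)$, since each is defined by a signed count of points with multiplicity. Next, starting from a domain $p \in \pi(\x,\y)$ satisfying $O_n(p) = \Os(p) = 0$, and from the fixed reference domains $p_\x \in \pi(\x,\x_0)$ and $p_\y \in \pi(\y,\x_0)$ chosen in the definition of $\mathbf{F}^Q$ (each also satisfying $O_n = \Os = 0$), I would form the composite domain $p \ast p_\y \in \pi(\x,\x_0)$. By additivity,
\[
O_n(p \ast p_\y) = O_n(p) + O_n(p_\y) = 0, \qquad \Os(p \ast p_\y) = 0,
\]
so $p \ast p_\y$ and $p_\x$ are two domains in $\pi(\x,\x_0)$ that both satisfy the hypotheses of Lemma~\ref{lem:wd}.

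Applying that lemma gives
\[
Q(p_\x) = Q(p \ast p_\y) = Q(p) + Q(p_\y),
\]
and substituting the definition $\mathbf{F}^Q(\x) = Q(p_\x)$, $\mathbf{F}^Q(\y) = Q(p_\y)$ yields $\mathbf{F}^Q(\x) - \mathbf{F}^Q(\y) = Q(p)$, as desired.

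There is essentially no obstacle here: the real content of the lemma is Lemma~\ref{lem:wd}, which establishes that $Q$ depends only on the endpoints of a domain among those with $O_n = \Os = 0$, and Lemma~\ref{lem:connects}, which guarantees existence of the reference domains. The present statement is just the translation of that well-definedness into the language of the function $\mathbf{F}^Q$.
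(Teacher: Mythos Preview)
Your proof is correct and essentially the same as the paper's. The paper phrases it slightly differently---forming $p_\x - p_\y \in \pi(\x,\y)$ and comparing it with $p$ via Lemma~\ref{lem:wd}---but this is equivalent to your comparison of $p \ast p_\y$ with $p_\x$ in $\pi(\x,\x_0)$.
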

\begin{proof} Let $p_\x$ be a domain connecting $\x$ to $\x_0$ with $O_n(p_\x)= \Os(p_\x)=0$.  Define $p_\y$ similarly.  Then $p_\x - p_\y$ is a domain connecting $\x$ to $\y$ with $O_n(p_\x - p_\y)= \Os(p_\x - p_\y)=0$.  So by
	Lemma~\ref{lem:wd}, $\mathbf{F}^Q(\x) - \mathbf{F}^Q(\y) = Q(p_\x) - Q(p_\y) = Q(p_\x - p_\y) = Q(p).$ 
\end{proof}

We now use $\mathbf{F}^Q$ to define the $Q$-filtration on $\widetilde{C}$ by 
$$\mathcal{F}_p^Q(\widetilde{C}) = \left\{ \sum b(\x) \x  \in \widetilde{C} \left| \right. \mathbf{F}^Q(\x) \leq p \text{ whenever } b(\x) \neq 0 \right\}.$$
By Lemma~\ref{lem:Qdom}, $\widetilde{\partial}_C: \mathcal{F}_p^Q(\widetilde{C}) \rightarrow \mathcal{F}_p^Q(\widetilde{C})$ so that $(\widetilde{C},\widetilde{\partial}_C)$ becomes a $\Z$ filtered chain complex.  

Note that we have already shown that $F$ is a $\HZ$ bigraded $R_n$-module chain map. 
Thus, the following proposition will complete the proof that two grids that differ by a stabilization$'$ move are quasi-isomorphic as $\HZ$ bigraded $R_n$-module chain complexes. 

\begin{proposition} $F: C \rightarrow C'$ is a quasi-isomorphism.  
\end{proposition}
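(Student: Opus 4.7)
The plan is to follow the roadmap suggested in the text preceding the proposition: first prove that the induced map $\widetilde{F}\colon \widetilde{C} \to \widetilde{C}'$ on the tilde complexes is a quasi-isomorphism, using the $Q$-filtration together with Lemma~\ref{mc}, and then bootstrap this to the claim about $F$ itself.

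For the first step, I would extend the $Q$-filtration to $\widetilde{C}'$ so that $\widetilde{F}$ becomes a filtered map. Since $\zeta$ involves only positive powers of $U_n$ (and possibly $U_k$), it vanishes in the tilde quotient, so $\widetilde{C}'$ splits as a direct sum of two copies of $\widetilde{B[U_n]}$ with differential coming entirely from $\widetilde{\partial}_B$. The function $\mathbf F^Q$ extends naturally through the identification $\psi\colon \S(g) \to \mathbf I \subset \S(\bar g)$, and I would put it on both cone summands with an appropriate constant shift so that the pair $\widetilde F(\x) = (F_L(\x), F_R(\x))$ is filtration-preserving. Verification reduces to two observations: $F_L$ simply drops the point $\star$, which lies at the intersection of the new row and column of $O_n$ where no $Q$-dot is placed; and every $R$-domain contributing to $F_R$ is a rectangle with $\star$ at its upper-left corner, hence entirely contained in the row and column of $O_n$, so $Q(p)=0$ and Lemma~\ref{lem:Qdom} gives filtration-preservation.

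Next I would analyze the associated graded. On $gr(\widetilde{C})$ the differential only counts empty rectangles in $\bar g$ with $Q(r)=0$, namely the ``thin'' rectangles lying entirely in the row or column of $O_n$ of $\bar g$. A direct combinatorial analysis, paralleling \cite[Section 3.2]{MOST}, pairs every generator $\x \notin \mathbf I$ with another via a thin rectangle performing a local move near $\star$, showing that the associated graded homology of $\widetilde{C}$ is concentrated on $\mathbf I$ and isomorphic, via $\psi^{-1}$, to $\widetilde{B}$. A parallel (and simpler) analysis on $gr(\widetilde{C}')$ identifies its homology with a single copy of $\widetilde{B}$, and then one verifies that $gr(\widetilde F)$ realises the identity under these identifications. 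By Lemma~\ref{mc}, $\widetilde F$ is a filtered quasi-isomorphism. The main obstacle here is this graded pairing argument: producing the explicit matching of generators via thin rectangles that kills all classes outside $\mathbf I$, and correctly tracking which summand of $gr(\widetilde{C}')$ carries the surviving homology under the shift.

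Finally, to pass from $\widetilde{F}$ being a quasi-isomorphism to $F$ being one, I would apply the $5$-lemma iteratively to the short exact sequences
$0 \to C/\mathcal{U}_{i} \xrightarrow{U_{i+1}} C/\mathcal{U}_i \to C/\mathcal{U}_{i+1} \to 0$
(and similarly for $C'$), starting from $i=n-1$ and working down to $i=0$. The hypothesis that $\widetilde F = F/\mathcal U_n$ is a quasi-isomorphism gives the base case, and at each step the long exact sequence in homology together with the $5$-lemma upgrades the quasi-isomorphism of $F/\mathcal U_{i+1}$ to one of $F/\mathcal U_i$; the final step yields that $F$ itself is a quasi-isomorphism. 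The $5$-lemma applies at every step because the Maslov grading is bounded below in each fixed Alexander grading and each $U_j$ strictly drops it, so the complexes behave well with respect to multiplication by $U_{i+1}$.
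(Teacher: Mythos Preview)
Your overall architecture matches the paper's: first show $\widetilde F$ is a quasi-isomorphism via the $Q$-filtration and Lemma~\ref{mc}, then bootstrap to $F$. But some of the key computations are wrong.

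First, a small point: an $R$-domain is a rectangle with upper-left corner $\star$ that extends arbitrarily far to the right and down (see Figure~\ref{fig:stab_domain}); it is \emph{not} contained in the row and column of $O_n$. The correct reason $\widetilde F_R$ preserves the $Q$-filtration is simply that any $R$-domain $p$ surviving in the tilde quotient has $O_n(p)=\Os(p)=0$, so Lemma~\ref{lem:Qdom} gives $\mathbf F^Q(\x)-\mathbf F^Q(\psi(\y))=Q(p)\ge 0$, the last inequality because $p$ is a positively oriented rectangle.

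More seriously, your associated-graded computation is off by a factor of two. Since every rectangle contributing to $\widetilde\partial_B$ strictly drops $Q$, the complex $\widetilde C'_Q$ is $\widetilde B\oplus\widetilde B$ with \emph{trivial} differential, so its homology has dimension $2(n-1)!$. Correspondingly the homology of $\widetilde C_Q$ cannot be concentrated on $\mathbf I$: while $\mathbf I$ is indeed a summand with trivial induced differential (every thin rectangle with a corner at $\star$ contains one of $O_n,X_m,X_{j_1}$), it only accounts for half the homology, the piece hit by $F_L$. The other half lives on generators outside $\mathbf I$ and is detected by the width-one $R$-domains in the column of $O_n$; this is exactly what \cite[Lemma~3.7]{MOST} establishes, and the paper invokes it verbatim. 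Your proposed pairing of all generators outside $\mathbf I$ cannot work on dimension grounds.

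For the passage from $\widetilde F$ to $F$ your iterated $5$-lemma is a genuine alternative to the paper's argument and can be made to work, but the boundedness you cite is in the wrong direction: on $C/\mathcal U_i$ the Maslov grading is bounded \emph{above} (by $\max_\x M(\x)$, since all $U$-exponents are nonnegative), and that is what lets a downward induction on $m$ start. The paper instead introduces a second filtration $\mathcal F^U_p$ by total $U$-power, identifies its associated graded map with copies of $\widetilde F$, and applies Lemma~\ref{mc} a second time; this is cleaner since it reuses the same spectral-sequence lemma rather than setting up a separate induction.
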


\begin{proof} Since $C'$ is an $R_n$-module chain complex, we can consider the chain complex $(\widetilde{C}',\widetilde{\partial}')$ obtained by setting all the $U_i$ equal to zero as explained above, $\widetilde{C}' = C'/\mathcal{U}_n C'$.   Since $F$ is an $R_n$-module homomorphism  and a chain map, it descends to a well-defined chain map $\widetilde{F}:\widetilde{C} \rightarrow \widetilde{C}'$.

	Now we can define a $Q$-filtration on $\widetilde{C}'$ using the $Q$-filtation on $\widetilde{C}$.  
	Specifically, we define $\mathbf{F}^Q : \widetilde{B} \rightarrow \Z$ by $\mathbf{F}^Q(\x) = \mathbf{F}^Q(\psi(\x))$ for all $\x \in \S(g)$.  
	From this we have $$\mathcal{F}_p^Q(\widetilde{B}) = \left\{ \sum b(\x) \x  \in \widetilde{B} \left| \right. \mathbf{F}^Q(\x) \leq p \text{ whenever } b(\x) \neq 0 \right\}.$$ One can identify $(\widetilde{C}',\widetilde{\partial}')$ with the mapping cone of $\tilde{\zeta}: \widetilde{B} \rightarrow \widetilde{B}$ (this is the direct sum since $\widetilde{\zeta}$ is the zero map).
	Then we can define the $Q$-filtration on $C'$ by $\mathcal{F}_p^Q(\widetilde{C}') = \mathcal{F}_p^Q(\widetilde{B}) \oplus \mathcal{F}_p^Q(\widetilde{B})$. It is easy to check that $\widetilde{F}$ preserves this filtration.  
	
	Consider the map induced on their associated graded complexes $$\widetilde{F}_Q: \widetilde{C}_Q \rightarrow \widetilde{C}'_Q.$$  
	The first chain complex $\widetilde{C}_Q$ is the $\mathbb{F}$ vector space generated by $\S(\bar{g})$ whose boundary map counts rectangles supported in the column and row through $O_n$ that do not contain $O_n, X_m$, or $X_{j_1}$.  
	Since the column and row through $O_n$ look exactly the same as the column and row through $O_1$ in \cite{MOST} (after renumbering), this chain complex is the same as it is for links. 
	In particular, Lemma~3.7 of \cite{MOST} holds in our case.   
	Similarly, like in \cite{MOST}, $\widetilde{C}'_Q$ is the chain complex whose underlying group is $\widetilde{B} \oplus \widetilde{B}$ and whose boundary maps are trivial. 
	Moreover, the map $\widetilde{F}_Q$ is exactly the same as in \cite{MOST}.  Thus, their proof holds in our case to show that $\widetilde{F}_Q$ is a quasi-isomorphism. 
		Now by Lemma~\ref{mc}, $\widetilde{F}$ is a quasi-isomorphism.  

		We remark that one can define a filtration on $C$ so that $(\widetilde{C},\widetilde{\partial}_C)$ is its associated graded object. 
		Define $$\mathcal{F}_p^U(C) = \left\{ \sum b(\x) U_1^{a_1(\x)} \cdots U_n^{a_n(\x)} \x \in C \left| \right. \sum a_i(\x) \geq p  \text{ whenever } b(\x) \neq 0\right\}.$$  The boundary preserves the filtration making $(C,\partial_C)$ into a filtered chain complex.  We can do the same with $B$ making $(B,\partial_B)$ into a filtered chain complex.  Since $\zeta$ is a filtered map, we can define a filtration on the mapping cone as before $\mathcal{F}_p^U(C') = \mathcal{F}_p^U(B) \oplus \mathcal{F}_p^U(B)$.  It is easy to see that $F$ is a filtered map.  Moreover, the map on the associated graded objects of $C$ and $C'$ induced by $F$, $F_U: C_U \rightarrow C'_U$, can be identified with $\widetilde{F}: \widetilde{C} \rightarrow \widetilde{C}'$.  Since $\widetilde{F}$ is a quasi-isomorphism, so is $F_U$.  Therefore, $F$ is a quasi-isomorphism by Lemma~\ref{mc}.
\end{proof}

\section{The Alexander Polynomial and Sutured Floer Homology}\label{sec:Alex}


In this section, we will define the Alexander polynomial of a transverse spatial graph $f: G \rightarrow S^3$ as a torsion invariant of a balanced sutured manifold associated to $f$  and show that it agrees with the graded Euler characteristic of $\widehat{HFG}(f)$ (when $f$ is sourceless and sinkless).    In addition, we will relate the sutured Floer homology of this balanced sutured manifold to $\widehat{HFG}(f)$ (when $f$ is sourceless and sinkless).  

\subsection{The Alexander Polynomial of a Spatial Graph}
Let $f: G \rightarrow S^3$ be a transverse spatial graph, and $E(f) = S^3 \smallsetminus N(f(G))$ where $N(f(G))$ is a regular neighborhood of $f(G)$ in $S^3$.   Then $E(f)$ has the structure of a (strongly) balanced sutured manifold $(E(f),\gamma(f))$ which is defined as follows.  There will be one suture per edge and one suture per vertex.  The suture associated to a vertex is the boundary of the transverse disk at that vertex and the suture associated to an edge is the boundary of a disk transverse to that edge of $f$.  The sutures, denoted by $s(\gamma(f))$, are oriented as shown in Figure~\ref{sut_graph}.  $\gamma(f)$ is a collection of annuli that are small neighborhoods of the sutures in $\partial E(f)$.  Recall that $R(\gamma) = \partial E(f) \smallsetminus int(\gamma)$ is the oriented surface where the orientation of $R(\gamma)$ is such that the induced orientation on each component of $\partial R(\gamma)$ agrees with the orientation of the corresponding suture.  Then $R_+(\gamma)$ (respectively $R_-(\gamma)$) is the set of components of $R(\gamma)$ whose normal vectors point out of (respectively into) $E(f)$.  Note that $R_+(\gamma)$ is the set of components of $\partial E(f)$ that have the same orientation as $\partial E(f)$.
 It is easy to check that for each component $\Sigma$ of $\partial E(f)$,  $\chi(\Sigma  \cap R_-(\gamma(f)))=\chi(\Sigma \cap R_+(\gamma(f)))$ so that $(E(f),\gamma(f))$ is strongly balanced (in particular, it is balanced).  In addition, we note that $\gamma(f)$ contains no toroidal components.  Note that we do not need $f$ is be sourceless and sinkless to define this. 
See Sections 2 and 3 of \cite{JuPoly} for the definition of a balanced and strongly balanced sutured manifold.  

\begin{figure}[h]
\begin{center}
\begin{picture}(290,145)
\put(0,0){\includegraphics{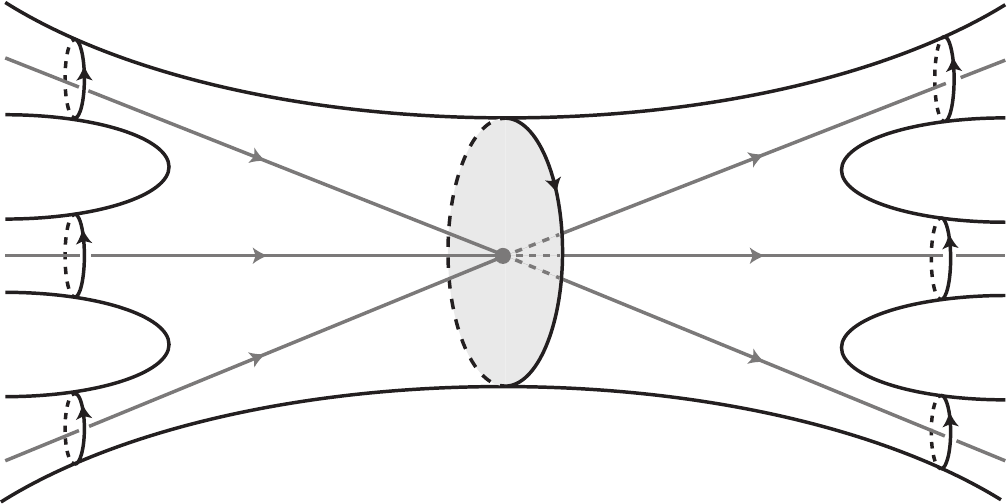}}
\put(135,10){$s(\gamma(f))$}
\put(145,20){\vector(0,1){10}}
\put(130,11){\vector(-1,0){100}}
\put(170,11){\vector(1,0){92}}
\put(170,102){$R_-(\gamma(f))$}
\put(80,102){$R_+(\gamma(f))$}
\end{picture}
\caption{The sutured $(E(f),\gamma(f))$}\label{sut_graph}
\end{center}
\end{figure}

In \cite{FJR11}, S. Friedl, A. Juh{\'a}sz, and J. Rasmussen assign to each balanced sutured manifold $(M, \gamma)$, a torsion invariant $T(M,\gamma) \in \mathbb{Z}[H_1(M)]$ that is well defined up to $\pm h$ for $h \in H_1(M)$.   This invariant is essentially the maximal abelian torsion for the pair $(M, R_-(\gamma))$.  See Sections 3 and 4 of \cite{FJR11} for details. 
Using their torsion invariant and the sutured manifold associated to a transverse spatial graph defined above, we can define our Alexander polynomial. 

\begin{definition} Let $f: G \rightarrow S^3$ be a transverse spatial graph.  The \textbf{(refined) Alexander polynomial of $f$}, denoted by $\Delta_f$, is defined to be $T(E(f),\gamma(f))$ considered as an element of $\mathbb{Z}[H_1(E(f))]$ modulo units.
\end{definition}

\begin{rem} Others have considered Alexander polynomials of spatial graphs in the past. 
	
\noindent \textbf{(1)} The first place this seems to appear is in a paper by Kinoshita in 1958 \cite{Ki58}.  In this paper, Kinoshita defines the Alexander polynomial of a spatial graph as the Alexander polynomial of its exterior.  However, this can be computed using $\pi_1(S^3 \smallsetminus f(G))$ and cannot differentiate between graphs with the same exterior.   Our definition depends on more than just the exterior so it gives more information about the graph than the polynomial defined by Kinoshita. 
\vspace{5pt}

\noindent \textbf{(2)} In 1989, Litherland defined an Alexander polynomial for an embedding of a generalized theta graph that is not determined by its exterior \cite{Li89} (see also \cite{McSW11}).  He considers the Alexander polynomial associated to the torsion free abelian cover of the pair $(S^3 \smallsetminus f(G), R_-)$ where $R_-$ is half of the boundary obtained by cutting open $\partial(S^3 \smallsetminus f(G))$ along the meridians of the edges and throwing away one of the components.  We note that, for theta graphs, Litherland's definition and ours are very similar, the main difference is how we decompose $\partial(S^3 \smallsetminus f(G))$.   In our case the sutures depend on the orientation of the edges.  We note that if all the edges are oriented in the same direction, $\Delta_f$ will be zero since $R_-(\gamma)$ will contain a disjoint disk.  
\vspace{5pt}

\noindent \textbf{(3)}  If $G$ contains a vertex all of whose edges are incoming or outgoing, then for any transverse spatial graph $f:G \rightarrow S^3$, $R_-(\gamma)$ will contain a disjoint disk and hence $\Delta_f=0$.  
\vspace{5pt}

\noindent \textbf{(4)} Like in Litherland's paper, instead of just studying the Alexander polynomial of a transverse spatial graph, one could study the entire Alexander module $\mathbb{Z}[H_1(E(f))]$-module $H_1(E(f), R_-(\gamma(f)))$.  
\end{rem}

\subsection{The sutured Floer homology of a spatial graph}

In the preceding subsection, we defined a balanced sutured manifold $(E(f),\gamma(f))$, associated to a transverse spatial graph $f$.  Instead of just considering the torsion of this sutured manifold, we can consider the sutured Floer homology of it.  We will describe this in more detail in this subsection.   We will also show that this homology theory coincides with our hat theory.  We begin with more definitions and background.  

Associated to each $n\times n$ graph grid diagram $g$ representing $f$, there is another sutured manifold $(E(f), \gamma(g))$ which is defined as follows.  For each $X$ and $O$ on the torus, remove an (open) disk.  Then one obtains an oriented torus with $n+r$ disks removed where $n$ is the size of the grid and $r$ is the number of $X$'s in the grid;  call this surface $\Sigma(g)$.  Note, the orientation of the torus comes from the standard counterclockwise orientation of the plane.  Recall that the horizontal circles of $g$ are called $\alpha_i$, the vertical circles are called $\beta_j$, and $\alphas=\{\alpha_1, \dots, \alpha_n\}$ and $\betas=\{\beta_1, \dots, \beta_n\}$. Thus $(\Sigma(g), \boldsymbol{\alpha}, \boldsymbol{\beta})$ gives a sutured Heegaard decomposition associated to $g$ whose underlying manifold is $E(f)$.   Let $(E(f), \gamma(g))$ be the sutured manifold associated to $(\Sigma(g), \boldsymbol{\alpha}, \boldsymbol{\beta})$.  Recall that $E(f)$ is obtained by from $(\Sigma(g), \boldsymbol{\alpha}, \boldsymbol{\beta})$ by attaching $3$-dimensional $2$-handles to $\Sigma(g) \times I$ along the curves $\alpha_i \times \{0\}$ and $\beta_j \times \{1\}$ for $i=1, \dots, n$ and $j=1,\dots, n$.  The sutures are defined by taking $s(\gamma(g)) = \partial \Sigma(g) \times \{1/2\}$ and $\gamma(g) = \partial \Sigma(g) \times I$.  
Here, we are using the outward normal first convention for the induced orientation on the boundary and we are viewing $I$ with the usual orientation (oriented from $0$ to $1$).  Thus, the induced orientation on the boundary would give $\Sigma(g) \times \{1\}$ the same orientation as $\Sigma(g)$ and $\Sigma(g) \times \{0\}$ the opposite.   Note that $(E(f), \gamma(g))$ is a strongly balanced sutured manifold with one suture for each $X$ and $O$.  An example for the trivial knot is shown in Figures \ref{sut_2x2} and \ref{sut_grid_ex}.  Here, we are viewing one of the $O$'s as being associated with a vertex.  In Figure~\ref{sut_grid_ex}, one needs to attach $2$-handles to $\alpha_i \times \{0\}$ and $\beta_i\times \{1\}$ to obtain $E(f)$. 

\begin{figure}[h]
\begin{center}
	\begin{picture}(55,55)
\put(0,0){\includegraphics{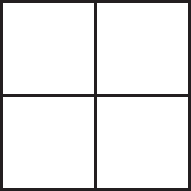}}
\put(10,10){$O$}
\put(37,37){$O$}
\put(10,37){$X$}
\put(37,10){$X$}
\end{picture}
\caption{A graph grid diagram $g$ for the unknot}\label{sut_2x2}
\end{center}
\end{figure}

\begin{figure}[h]
\begin{center}
\begin{picture}(253,185)
\put(0,0){\includegraphics{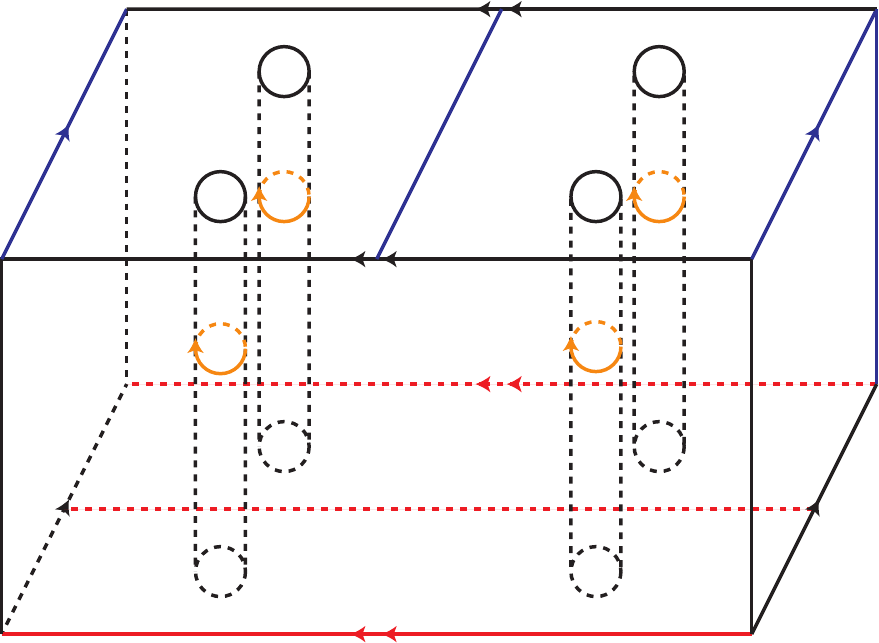}}
\put(40,150){$R_+(\gamma)$}
\end{picture}
\caption{The sutured manifold $(E(f),\gamma(g))$ corresponding to the graph grid diagram $g$ in Figure~\ref{sut_2x2}}\label{sut_grid_ex}
\end{center}
\end{figure}

The sutured manifold $(E(f), \gamma(g))$ is similar to the sutured manifold $(E(f),\gamma(f))$ except that there are an extra $2n_e$ sutures per edge (all parallel and alternating in orientation) where $n_e$ is the number of $O$'s associated to the edge $e$ (this does not count the $O$'s associated to the vertices at the boundary of an edge).   
Since $(E(f), \gamma(f))$ and $(E(f), \gamma(g))$ are balanced sutured manifolds, we can consider their sutured Floer homologies $$SFH(E(f),\gamma(f)) \text{ and } SFH(E(f),\gamma(g))$$ respectively.  See \cite{Ju06} for the definition of sutured Floer homology.  We note that the former group is an invariant of the spatial graph while the latter group $SFH(E(f),\gamma(g))$ depends on the grid $g$.  Each of these groups has two (relative) gradings, an $H_1(M)$ (or equivalently $\text{Spin}^c$) and a homological grading, which we discuss below.  

Let $(M,\gamma)$ be a balanced sutured manifold.   We first discuss the homological grading on $SFH(M,\gamma)$.  Let $(\Sigma, \alphas, \betas)$ be a balanced and admissible diagram for $(M,\gamma)$ where $\alphas$ and $\betas$ each contain $d$ disjointly embedded curves.  Admissible means that every non-trivial periodic domain has both positive and negative coefficients.  Recall $SFH(M,\gamma)$ is defined as the homology of a chain complex $\text{CFH}(\Sigma, \alphas, \betas)$ which is an $\mathbb{F}$-vector space and is roughly defined as follows.   Consider $\mathbb{T}_{\alphas} = \alpha_1 \times \cdots \times \alpha_d$ and $\mathbb{T}_{\betas} = \beta_1 \times \cdots \times \beta_d$, 
the $d$-dimensional tori in $Sym^d(\Sigma)$.  The set of generators of $\text{CFH}(\Sigma, \alphas, \betas)$ is $\mathbf{x} \in \mathbb{T}_{\alphas} \cap \mathbb{T}_{\betas}$ and the differential is defined by counting rigid holomorphic disks in $\Sym^d(\Sigma)$ connecting two points in $\mathbb{T}_{\alphas} \cap \mathbb{T}_{\betas}$.  Choose orientations on $\mathbb{T}_{\betas}$, $\mathbb{T}_{\betas}$, and $\Sym^d(\Sigma)$ and define $\mathbf{m}(\mathbf{x})$ to be the intersection sign of $\mathbb{T}_{\alphas}$ and $\mathbb{T}_{\betas}$ in $\Sym^d(\Sigma)$.  This depends on the choice of orientations but the difference $\mathbf{m}(\mathbf{x})\mathbf{m}(\mathbf{y})^{-1}$ between $\mathbf{m}(\mathbf{x})$ and $\mathbf{m}(\mathbf{y})$ is independent of the choice of orientations.   So $\mathbf{m}$ gives a well-defined relative $\{\pm 1\}$ grading on $\text{CFH}(\Sigma, \alphas, \betas)$.  Let $\mathbb{Z}_2=\{0,1\}$ be the group with 2 elements and let $\text{exp} : \Z_2 \rightarrow \{\pm 1\}$ be the isomorphism sending $l$ to $(-1)^l$.  Using $e\mathbf{m} := \text{exp}^{-1}\circ \mathbf{m}$ (instead of $\mathbf{m}$), we get a relative $\Z_2$ grading on $\text{CFH}(\Sigma, \alphas, \betas)$.  
Since the parity of the Maslov index of a holomorphic disk connecting $\mathbf{x}$ to $\mathbf{y}$ is equal to $\text{exp}^{-1}(\mathbf{m}(\mathbf{x})\mathbf{m}(\mathbf{y})^{-1})$, the differential reduces the homological grading by 1 (mod 2). 

We now briefly review the $\text{Spin}^c$ grading. See Section 3 of \cite{JuPoly} for details. 
Recall that for any balanced sutured manifold $(M,\gamma)$, one can define $\text{Spin}^c(M,\gamma)$, the set of $\text{Spin}^c$ structures of $(M,\gamma)$, as the set of homology classes of nowhere zero vector fields on $M$ that restrict to a special vector field $\nu_0$ on $\partial M$.  The vector field $\nu_0$ depends on the sutures.  One can use obstruction theory to see that $\text{Spin}^c(M,\gamma)$ forms an affine space over $H^2(M,\partial M)$ which we will identify with $H_1(M)$ via the Poincare Duality isomorphism ($\text{PD}: H^2(M,\partial M) \rightarrow H_1(M)$). 
Thus, once we pick an fixed $\mathfrak{s}_0 \in \text{Spin}^c(M,\gamma)$, there is a unique bijective correspondence $\xi_{\mathfrak{s}_0}: H_1(M) \rightarrow \text{Spin}^c(M,\gamma)$, $v \mapsto v + \mathfrak{s}_0$, making $\text{Spin}^c(M,\gamma)$ into an abelian group.  Moreover, for any $\mathfrak{s}, \mathfrak{t} \in \text{Spin}^c(M,\gamma)$, there is a well-defined difference, denoted $\mathfrak{s} - \mathfrak{t} \in H_1(M)$, that is defined as the unique element in $H_1(M)$ such that $(\mathfrak{s} - \mathfrak{t}) + \mathfrak{t} = \mathfrak{s}$.  Note that the difference doesn't depend on any choices. Indeed, it is easy to see that for any $\mathfrak{s}_0 \in \text{Spin}^c(M,\gamma)$ and $v,w \in H_1(M)$,  $(v + \mathfrak{s}_0) - (w + \mathfrak{s}_0) = v-w.$  

To each $\mathbf{x} \in \mathbb{T}_{\alphas} \cap \mathbb{T}_{\betas}$, one can assign an element of $\text{Spin}^c(M,\gamma)$, denoted $\mathfrak{s}(\mathbf{x})$ making $\text{CFH}(\Sigma, \alphas, \betas)$ into a graded vector space over $\text{Spin}^c(M,\gamma)$.  The differential on $\text{CFH}(\Sigma, \alphas, \betas)$ preserves the $\text{Spin}^c(M,\gamma)$ grading, giving a $\text{Spin}^c(M,\gamma)$ grading to $SFH(M,\gamma)$.   
Using the bijection $\xi_{\mathfrak{s}_0}: H_1(M) \rightarrow \text{Spin}^c(M,\gamma)$ as described above, $SFH(M,\gamma)$ becomes a relatively graded vector space over $H_1(M)$.  We can easily compute the relative grading as follows.  
Let $\mathbf{x},\mathbf{y} \in \mathbb{T}_{\alphas} \cap \mathbb{T}_{\betas}$ and choose paths $a: I \rightarrow \mathbb{T}_{\alphas}$ and $b: I \rightarrow \mathbb{T}_{\betas}$ with $\partial a = \partial b = \mathbf{y} - \mathbf{x}$.  Then $a - b$ can be viewed as a 1-cycle in $\Sigma$.  Using the inclusion map of $\Sigma$ into $M$, we can view this as a cycle in $M$.  Let $\varepsilon(\mathbf{y},\mathbf{x}) \in H_1(M)$ be the homology class of $a-b$.  By Lemma 4.7 of \cite{Ju06}, $\varepsilon(\mathbf{y},\mathbf{x})$ is the relative grading in $H_1(M)$ associated to the $\text{Spin}^c(M)$ grading on $(M,\gamma)$. That is, 
\begin{equation}\mathfrak{s}(\mathbf{y}) - \mathfrak{s}(\mathbf{x}) = \varepsilon(\mathbf{y},\mathbf{x}).\label{spin_eq}\end{equation}
Note that \cite{Ju06} actually says that $\text{PD}(\mathfrak{s}(\mathbf{y}) - \mathfrak{s}(\mathbf{x})) = \varepsilon(\mathbf{y},\mathbf{x})$ but we have already identified $\mathfrak{s}(\mathbf{y}) - \mathfrak{s}(\mathbf{x})$ with an element of $H_1(M)$ using Poincare duality.  
Using $(\xi^{-1}_{\mathfrak{s}_0} \circ \mathfrak{s}) \oplus m : \mathbb{T}_{\alphas} \cap \mathbb{T}_{\betas} 
\rightarrow H_1(M) \oplus \mathbb{Z}_2$, 
we see that $\text{CFH}(\Sigma, \alphas, \betas)$ is a well-defined relatively $(H_1(M),\mathbb{Z}_2)$ bigraded chain complex and $SFH(M,\gamma)$ is a well-defined relatively $(H_1(M),\mathbb{Z}_2)$ bigraded $\mathbb{F}$-vector space.  Even though  $\text{CFH}(\Sigma, \alphas, \betas)$ depends on the choice of Heegaard diagram for $(M, \gamma)$,  by \cite{Ju06}, the isomorphism class of  $SFH(M,\gamma)$ as a relatively $(H_1(M),\mathbb{Z}_2)$ bigraded  vector space only depends on the sutured manifold.

\begin{definition} For a transverse spatial graph $f: G \rightarrow S^3$, the \textbf{sutured graph Floer homology} is $SFH(E(f),\gamma(f))$ considered as a relatively $(H_1(E(f)),\mathbb{Z}_2)$ bigraded $\mathbb{F}$-vector space.
\end{definition}

\noindent Unless otherwise stated, when we refer to $SFH(E(f),\gamma(f))$ and $SFH(E(f),\gamma(g))$, we will be considering them as relatively bigraded vector spaces.  


\subsection{Relating $\widetilde{HFG}(f)$ and $SFH(E(f),\gamma(f))$}

The main result of this section is that the graph Floer homology of a sinkless and sourceless transverse spatial graph is isomorphic to its sutured Floer homology (after a slight change of the Alexander grading). To prove this, we first notice that the sutured Floer homology of a grid is the same as $\widetilde{HFG}(g)$.  First we need to discuss how the grading is changed.  

Let $\G$ be an abelian group and $C$ be a $(\G,\mathbb{Z}_2)$ bigraded chain complex or $\mathbb{F}$-vector space with bigrading $C = \oplus_{(g,m) \in \G \oplus \mathbb{Z}_2} C_{(g,m)}$.  Let $(rC)_{(g,m)} = C_{(-g,m)}$ for each $g \in \G$ and $m \in \mathbb{Z}_2$.  Then $C$ has a $(\G,\mathbb{Z}_2)$ bigrading given by $C = \oplus_{(g,m) \in \G \oplus \mathbb{Z}_2} (rC)_{(g,m)}$, which we call the \textbf{reverse bigrading on $C$}.  We denote by $rC$, the underlying vector space $C$ with its reverse bigrading.  
If $C$ is a chain complex then, $\partial: (rC)_{(g,m)} \rightarrow (rC)_{(g,m-1)}.$  
So $\partial$ is a degree $(0,-1)$ on $rC$ and $rC$ is a $(\G,\mathbb{Z}_2)$ bigraded chain complex. If $C$ is has a relative $(\G,\mathbb{Z}_2)$ bigrading then $rC$ has a well-defined relative $(\G,\mathbb{Z}_2)$ bigrading. Note that we are only ``reversing'' the $\G$ grading.

We note that using the natural projection of $\mathbb{Z}$ to $\mathbb{Z}_2$, $\widehat{HFG}(f)$, $\widetilde{HFG}(f)$, $\widehat{HFG}(g)$, and $\widehat{HFG}(g)$ become relatively $(H_1(E(f)),\mathbb{Z}_2)$ bigraded $\mathbb{F}$-vector spaces (similarly for the chain complexes defining them).

\begin{lemma}\label{lem:hfg_sfh}Let $f: G \rightarrow S^3$ be a sinkless and sourceless transverse spatial graph and $g$ be a be a graph grid diagram representing $f$ then 
	$$\widetilde{HFG}(g) \cong rSFH(E(f),\gamma(g))$$
as relatively $(H_1(E(f)),\mathbb{Z}_2)$ bigraded $\mathbb{F}$-vector spaces.
\end{lemma}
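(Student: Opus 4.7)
The plan is to directly identify the chain complex $(\widetilde{C}(g),\widetilde{\partial})$ with the sutured Floer chain complex of the balanced Heegaard diagram $(\Sigma(g),\boldsymbol{\alpha},\boldsymbol{\beta})$ for $(E(f),\gamma(g))$ constructed earlier in this section. On generators the natural bijection $\mathbf{S}(g) \leftrightarrow \mathbb{T}_{\boldsymbol{\alpha}} \cap \mathbb{T}_{\boldsymbol{\beta}}$ extends $\mathbb{F}$-linearly to a vector space isomorphism $\phi$. First I would verify admissibility: every row and column annulus on $\mathcal{T}$ contains the unique $O$ of that row or column, which is a puncture of $\Sigma(g)$, so $\Sigma(g)$ admits no nonzero periodic domain and the diagram is trivially admissible.

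Second I would check that $\phi$ intertwines the differentials. Passing to the quotient $\widetilde{C}(g) = C^-(g)/\mathcal{U}_n$ kills every term of $\partial^-(\mathbf{x})$ involving any $U_i$, so $\widetilde{\partial}$ counts exactly empty rectangles on $\mathcal{T}$ disjoint from $\mathbb{X} \cup \mathbb{O}$. Since every point of $\mathbb{X} \cup \mathbb{O}$ is a puncture of $\Sigma(g)$, the domain of any Maslov index one holomorphic disk in $\Sym^n(\Sigma(g))$ also avoids $\mathbb{X} \cup \mathbb{O}$, and by the rectangle-to-disk correspondence of Manolescu--Ozsv\'{a}th--Sarkar \cite{MOS} such rigid disks correspond bijectively and with mod-$2$ count $1$ to empty rectangles. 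This argument is local to a single rectangle, so it applies even though graph grid diagrams may have several $X$'s per row or column.

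Third I would match the relative gradings. The $\mathbb{Z}_2$ homological grading agrees because both differentials drop it by one. For the $H_1(E(f))$ grading, choose paths $a \subset \mathbb{T}_{\boldsymbol{\alpha}}$ and $b \subset \mathbb{T}_{\boldsymbol{\beta}}$ with $\partial a = \partial b = \mathbf{y} - \mathbf{x}$; by Lemma~4.7 of \cite{Ju06} together with Poincar\'{e} duality, $\mathfrak{s}(\mathbf{y}) - \mathfrak{s}(\mathbf{x}) = [a - b] \in H_1(E(f))$. Projecting $a - b$ to $\mathcal{T}$ gives a $1$-cycle satisfying $\partial((a - b) \cap U_{\boldsymbol{\alpha}}) = \partial a = \mathbf{y} - \mathbf{x}$, so it is a path from $\mathbf{x}$ to $\mathbf{y}$ in the sense of Section~\ref{sec:def}, and Equation~(\ref{eq:Ahom}) of Lemma~\ref{lem:alex_count} gives $A^g(\mathbf{x}) - A^g(\mathbf{y}) = [a - b] = \mathfrak{s}(\mathbf{y}) - \mathfrak{s}(\mathbf{x})$. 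Hence $A^g$ and $\mathfrak{s}$ induce opposite relative $H_1(E(f))$ gradings, which is exactly what the $r$ in $rSFH$ records, and $\phi$ becomes a relatively $(H_1(E(f)),\mathbb{Z}_2)$ bigraded isomorphism $\widetilde{HFG}(g) \cong rSFH(E(f),\gamma(g))$.

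The main obstacle is the rectangle-to-disk correspondence used in the second step. Although \cite{MOS} establishes this locally for link grid diagrams, I would need to confirm that additional $X$-punctures in a row or column (a configuration not permitted in the link setting) neither introduce new rigid Maslov index one holomorphic disks of non-rectangle shape nor obstruct the standard transversality arguments; once that is in hand the rest of the proof is purely homological bookkeeping.
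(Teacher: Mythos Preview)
Your approach is essentially identical to the paper's: identify $\widetilde{C}(g)$ with $CFH(\Sigma(g),\boldsymbol{\alpha},\boldsymbol{\beta})$ on generators, check the differentials agree, and compare gradings via Lemma~\ref{lem:alex_count} and Juh\'asz's formula $\mathfrak{s}(\mathbf{y})-\mathfrak{s}(\mathbf{x})=\varepsilon(\mathbf{y},\mathbf{x})$.

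One step needs strengthening. Saying the relative $\mathbb{Z}_2$ gradings agree ``because both differentials drop it by one'' is not enough: that only pins down the relative grading between generators actually connected by $\widetilde{\partial}$, which need not be all pairs. The paper instead argues that for \emph{any} rectangle (empty or not, with or without $X$'s and $O$'s) connecting $\mathbf{x}$ to $\mathbf{y}$ one has $M(\mathbf{x})-M(\mathbf{y})$ odd and $\mathbf{m}(\mathbf{x})\mathbf{m}(\mathbf{y})^{-1}=-1$, and since $S_n$ is generated by transpositions every pair of generators is linked by a chain of such rectangles. Your concern about the rectangle-to-disk correspondence, by contrast, is not a genuine obstacle: $(\Sigma(g),\boldsymbol{\alpha},\boldsymbol{\beta})$ is a nice diagram in the Sarkar--Wang sense regardless of how many punctures sit in a row or column, and the paper simply asserts the boundary maps agree without further comment.
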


\begin{proof} Let $(\Sigma(g), \alphas, \betas)$ be the specific Heegaard decomposition for $E(f)$ associated to the graph grid $g$ as defined beforehand.  Both $CFH(\Sigma(g), \alphas, \betas)$ and $\widetilde{C}(g)$ are $\mathbb{F}$ vector spaces with the same generating set.  One can check that the boundary maps are the same giving an identification of the two chain complexes.  Thus, we just need to compare their (relative) gradings.  
	
	Let $\x$ and $\y$ be generators (in either chain complex).  If there is a rectangle connecting $\textbf{x}$ and $\textbf{y}$, then by Equation~(\ref{maslov_eq}), $M(\textbf{x}) - M(\textbf{y})= 1 \mod 2$.  Moreover, using the definition of $\mathbf{m}$ as described in the previous subsection, it is straightforward to check that $\mathbf{m}(\textbf{x})\mathbf{m}(\textbf{y})^{-1} = -1 \in \{\pm 1\}$.  Now, for any $\textbf{x}$ and $\textbf{y}$, there is a sequence of rectangles $r_1, \dots, r_k$ connecting $\textbf{x}$ and $\textbf{y}$.  Thus, $M(\textbf{x}) - M(\textbf{y})= k \mod 2$ and $\mathbf{m}(\textbf{x})\mathbf{m}(\textbf{y})^{-1} = (-1)^k \in \{\pm 1\}$.  Hence $$(-1)^{M(\textbf{x}) - M(\textbf{y})} = \mathbf{m}(\textbf{x})\mathbf{m}(\textbf{y})^{-1}.$$  By Lemma~\ref{lem:alex_count} and Equation~(\ref{spin_eq}), we see that $$A(\x) - A(\y) = \varepsilon(\y,\x) = - (\mathfrak{s}(\mathbf{x}) - \mathfrak{s}(\mathbf{y})).$$
	
	\end{proof}

Using Proposition~5.4 in \cite{JuPoly}, we can relate $SFH(E(f),\gamma(f))$ and $SFH(E(f),\gamma(g))$.  For $g\in \G$ and $m \in \mathbb{Z}_2$, let $W{(-g,-1)}$ ($=W{(-g,1)}$) be the two dimensional $(\G,\Z_2)$ bigraded vector space over $\mathbb{F}$ spanned by one generator in degree $(0,0)$ and the other in degree $(-g,-1)$.  (Note that we are slightly abusing notation since, in Section~\ref{sec:def}, we defined $W{(-g,-1)}$ to be the $(\G,\Z)$ bigraded vector space over $\mathbb{F}$ spanned by one generator in degree $(0,0)$ and the other in degree $(-g,-1)$.) If $(C,\partial)$ is a relatively bigraded $(\G, \Z_2)$ chain complex over $\mathbb{F}$, then $C \otimes W{(-g,-1)}$ becomes a relatively bigraded $(\G, \Z_2)$ chain complex with boundary $\partial \otimes id$ in the usual way.   

\begin{proposition} \label{prop:sfh_tensor}Let $f: G \rightarrow S^3$ be a transverse spatial graph and let $g$ be a graph grid diagram representing $f$.  Then 
	$$SFH(E(f),\gamma(g)) \cong SFH(E(f),\gamma(f)) \otimes \bigotimes_{e \in E(G)} {W{(w(e),1)}}^{\otimes n_e} $$
as relatively $(H_1(E(f)),\Z_2)$ bigraded $\mathbb{F}$-vector spaces, where $n_e$ is the number of $O$'s in $g$ associated to the interior of $e$ (not including the vertices).
\end{proposition}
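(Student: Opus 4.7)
The plan is to recognize $(E(f),\gamma(g))$ as obtained from $(E(f),\gamma(f))$ by iteratively adding pairs of parallel (oppositely oriented) meridional sutures, one pair for each $O$ lying on the interior of an edge, and then apply Proposition~5.4 of \cite{JuPoly} which computes exactly how $SFH$ changes under such an operation.

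First, I would carefully compare the two sets of sutures. In $\gamma(f)$, each edge $e$ contributes a single meridional suture. In $\gamma(g)$, the sutures encircle each $X$ and each $O$ on $\Sigma(g)$; walking along the projection associated to $g$ (as described in Section~\ref{subsec:gridtograph}), each edge $e$ is subdivided by the $n_e$ interior $O$'s into $2n_e + 1$ subarcs, and a small transverse disk to each subarc gives a meridional suture on $\partial E(f)$.  These $2n_e+1$ meridional sutures are all parallel on $\partial N(e)$ but alternate in orientation (because the arcs of the projection alternate between being $X$-to-$O$ and $O$-to-$X$). Thus $\gamma(g)$ is isotopic on $\partial E(f)$ to the result of replacing the single suture of $\gamma(f)$ on $\partial N(e)$ by $2n_e+1$ parallel sutures with alternating orientations, and this may in turn be obtained from $\gamma(f)$ by $n_e$ successive insertions of a pair of oppositely-oriented parallel meridional sutures on $\partial N(e)$.

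Next, I would apply Proposition~5.4 of \cite{JuPoly} once for each such insertion. That proposition asserts that if $(M,\gamma')$ is obtained from a balanced sutured manifold $(M,\gamma)$ by adding an oppositely-oriented pair of parallel sutures which cobounds an annulus in $\partial M$ disjoint from the other sutures, then $SFH(M,\gamma')\cong SFH(M,\gamma)\otimes W$ as relatively $(H_1(M),\Z_2)$ bigraded $\mathbb{F}$-vector spaces, where the two summands of $W$ are separated in the relative $\Z_2$ grading by $1$, and in the relative $H_1(M)$ grading by (plus or minus) the class of the core curve of the annulus in $H_1(M)$.  In our setup, the annulus lies on $\partial N(e)$, and the class of its core is $\pm w(e)\in H_1(E(f))$ by definition of $w$.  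Iterating over each edge $e$ and over the $n_e$ pairs added on $\partial N(e)$, I get the desired isomorphism
$$ SFH(E(f),\gamma(g)) \cong SFH(E(f),\gamma(f)) \otimes \bigotimes_{e\in E(G)} W(w(e),1)^{\otimes n_e}$$
as relatively $(H_1(E(f)),\Z_2)$ bigraded vector spaces.

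The main obstacle will be two bookkeeping points: (i)~verifying that the $2n_e+1$ meridional sutures on $\partial N(e)$ that arise from $\gamma(g)$ can truly be re-grouped into the single suture of $\gamma(f)$ plus $n_e$ oppositely-oriented parallel pairs (which amounts to checking that adjacent sutures have opposite orientations, a consequence of the fact that consecutive $X$'s and $O$'s along $e$ alternate), and (ii)~checking that the sign of the shift in the $H_1(E(f))$ grading is $+w(e)$ rather than $-w(e)$.  The latter is only defined up to sign anyway in the statement of Proposition~5.4 (since $W(g,1)\cong W(-g,1)$ is a relatively graded vector space whose two summands differ by $\pm g$), so once the first point is settled the identification follows.
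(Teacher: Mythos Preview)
Your overall strategy is correct and matches the paper's: both arguments recognize that $\gamma(g)$ differs from $\gamma(f)$ by $n_e$ extra pairs of oppositely oriented parallel meridional sutures on each edge $e$, and both remove these pairs one at a time, picking up a tensor factor $W(w(e),1)$ at each step.

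The one substantive gap is in your invocation of Proposition~5.4 of \cite{JuPoly}. That proposition is not the ready-made statement you describe; it is a surface decomposition result. Given a nice decomposing surface $S$ in $(M,\gamma)$ with $(M,\gamma)\rightsquigarrow^S (M',\gamma')$, it produces an affine map $f_S:\text{Spin}^c(M',\gamma')\to\text{Spin}^c(M,\gamma)$ and an isomorphism $SFH(M',\gamma')\cong\bigoplus_{\mathfrak{s}\in\operatorname{Im}f_S} SFH(M,\gamma,\mathfrak{s})$ compatible with gradings. The paper applies this to an explicit annulus $S$ in $E(f)$ whose boundary separates two of the redundant sutures from the rest; the resulting decomposition yields the disjoint union of $(E(f),\gamma')$ (two fewer sutures) and a solid torus with four parallel sutures. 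One then checks that $f_S$ is surjective (so the full $SFH(E(f),\gamma(g))$ appears), that $\widehat{p}$ preserves the $(H_1,\mathbb{Z}_2)$ bigrading, and that the solid torus piece has $SFH\cong W(w(e),1)$. What you have stated as ``Proposition~5.4'' is really the outcome of this argument, not its input; to complete your write-up you would need to supply exactly this decomposing surface and these identifications, which is precisely what the paper does.
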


\begin{proof} Recall that $(E(f),\gamma(g))$ is a sutured manifold with $2n_e+1$ sutures associated to each edge $e$ ($n_e$ sutures are associated to $O$'s on the interior of $e$ and the other $n_e+1$ sutures are associated to $X$'s on the interior of $e$).  Pick an edge $e$.  If $n_e=0$, leave the sutures on that edge alone.  If $n_e\geq 1$, then there are at least three sutures associated to the edge that are parallel and have alternating orientations.  Let $S$ be the properly embedded surface in $E(f)$ as pictured in Figure~\ref{fig:surface_decomp} with either orientation.  In this figure, the inner annulus is part of the boundary of the neighborhood of the edge of the graph.  It contains the three parallel sutures with alternating orientations.   Since $S$ is a decomposing surface, it defines a sutured manifold decomposition $$(E(f),\gamma(g)) \rightsquigarrow^{S} (E(f)',\gamma(g)').$$  See Definition~2.5 in \cite{JuPoly} for the definition of a decomposing surface and a sutured manifold decomposition.  The resulting manifold $E(f)'$ is defined as $E(f) \smallsetminus int(N(S))$ where $N(S)$ is a neighborhood of $S$ in $E(f)$ and hence is homeomorphic to the disjoint union of $E(f)$ and $S^1 \times D^2$.  To get the sutures on $E(f) \subset E(f)'$, we remove two of the three aforementioned sutures associated to $e$ with opposite orientations.  There are four sutures on $S^1 \times D^2 \subset E(f)'$; they are all parallel to $S^1 \times \{p\}$ for $p\in \partial D^2$ and have alternating orientations.   $(M_1,\gamma_1)$ be the component of $(E(f)',\gamma(g)')$ with $M_1$  homeomorphic to $E(f)$ and let $(M_2,\gamma_2)$ be the component of $(E(f)',\gamma(g)')$ with $M_2$ homeomorphic to $S^1 \times D^2$.

\begin{figure}[h]
\begin{center}
\begin{picture}(288,119)
\put(0,0){\includegraphics{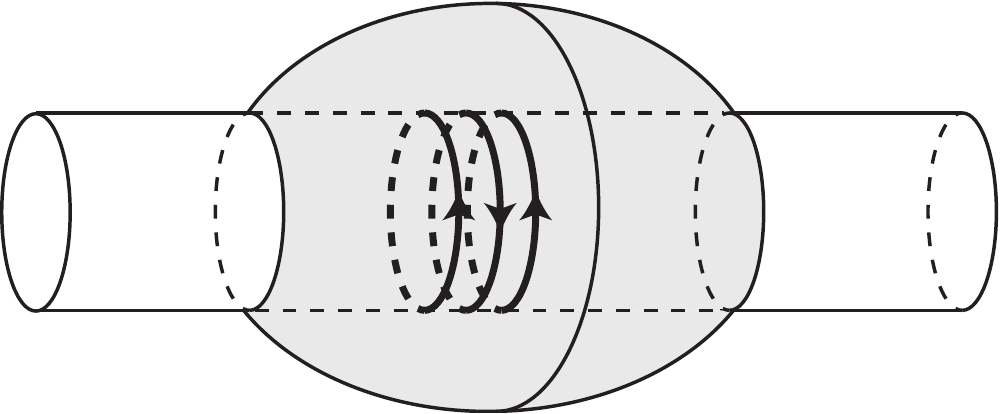}}
\put(105,100){$S$}
\put(25,60){$R_-(\gamma)$}
\put(235,60){$R_+(\gamma)$}
\put(25,100){$E(f)$}
\end{picture}
\caption{The decomposing surface $S$ (shaded) properly embedded in $E(f)$ }\label{fig:surface_decomp}
\end{center}
\end{figure}

Since $S$ is a nice decomposing surface (see Definition 3.22 in \cite{JuPoly} for the definition of nice), we can use the Proposition 5.4 of \cite{JuPoly} to compute $SFH(E(f)', \gamma(g)')$.  
First we need some notation. Let $i: E(f)' \rightarrow E(f)$ be the inclusion map and $i_\ast:H_1(E(f)') \rightarrow H_1(E(f))$ be the induced map on $H_1(-)$.
For $\mathfrak{s} \in \text{Spin}^c(E(f),\gamma(g))$, $\text{CFH}(\Sigma, \alphas, \betas, \mathfrak{s})$ is defined as the chain complex generated by 
$\x \in \mathbb{T}_{\alphas} \cap \mathbb{T}_{\betas}$ with $\mathfrak{s}(\x)=\mathfrak{s}$ where  $(\Sigma, \alphas, \betas)$ is a balanced admissible diagram for $(E(f),\gamma(g))$. $SFH(E(f),\gamma(g),\mathfrak{s})$ is the homology of $\text{CFH}(\Sigma, \alphas, \betas, \mathfrak{s})$ (similarly for $(E(f)',\gamma(g)')$).  
 By Proposition~5.4 of \cite{JuPoly}, there is an affine map $$f_S: \text{Spin}^c(E(f)',\gamma(g)') \rightarrow \text{Spin}^c(E(f),\gamma(g))$$ satisfying the following two conditions.
 \begin{enumerate}
 \item There is an isomorphism 
$$ \widehat{p}: SFH(E(f)',\gamma(g)') \xrightarrow{\cong} \bigoplus_{ \mathfrak{s} \in  \text{Im}(f_S)} SFH(E(f),\gamma(g),\mathfrak{s}) $$
such that for every $\mathfrak{s}' \in \text{Spin}^c(E(f)',\gamma(g)')$  
we have $$\widehat{p}(SFH(E(f)',\gamma(g)',\mathfrak{s}') ) \subset SFH(E(f),\gamma(g),f_S(\mathfrak{s}')).$$

  \item If $\mathfrak{s}'_1, \mathfrak{s}'_2 \in \text{Spin}^c(E(f)',\gamma(g)')$, then 
 $$i_\ast(\mathfrak{s}'_1 - \mathfrak{s}'_2) = f_S(\mathfrak{s}'_1) - f_S(\mathfrak{s}'_2) \in H_1(E(f)).$$
 \end{enumerate}
 Note that in \cite{Ju06}, Juh{\'a}sz is identifying $\text{Spin}^c(E(f),\gamma(g))$ with $H^2(E(f),\partial E(f))$ so the statement looks slightly different. 
Since $i_{|M_1}: M_1 \rightarrow E(f)$ is a homotopy equivalence, $i_\ast: H_1(E(f)') \rightarrow H_1(E_f)$ is surjective.  
Fix an $\mathfrak{s}_0\in \text{Spin}^c(E(f)',\gamma(g)')$ and let $\mathfrak{s}_0 := f_S(\mathfrak{s}'_0)$. 
Use $\mathfrak{s}_0$ to identify $H_1(E(f))$ and $\text{Spin}^c(E(f),\gamma(g))$.  This identifies $v\in H_1(E(f))$ with $v + \mathfrak{s}_0 \in \text{Spin}^c(E(f),\gamma(g))$.  Let $\mathfrak{s}\in \text{Spin}^c(E(f),\partial E(f))$.  Then $\mathfrak{s}=v + \mathfrak{s}_0$ for some $v\in H_1(E(f))$.  Since $i_\ast$ is surjective, there is a $v'\in H_1(E(f)')$ with $i_\ast(v')=v$.  The second statement above implies that  
$$i_\ast(v') = i_\ast((v' + \mathfrak{s}'_0) - \mathfrak{s}'_0) = f_S(v'+ \mathfrak{s}'_0) - f_S(\mathfrak{s}'_0)=f_S(v'+ \mathfrak{s}'_0) - \mathfrak{s}_0.$$
Therefore, $\mathfrak{s} = v + \mathfrak{s}_0 = i_\ast(v') + \mathfrak{s}_0 = f_S(v' + \mathfrak{s}'_0)$ so $f_S$ is surjective.   

Choose an orientation on $\Sigma$, $\alpha_i$ and $\beta_j$ for all $i,j$.  Then we have an absolute $\mathbb{Z}_2$ grading on $SFH(E(f),\gamma(g))$ given by $e\mathbf{m} = exp^{-1} \circ \mathbf{m}$.  It can be shown that, as a relative grading,  it agrees with $gr$ (mod 2).  
For a definition of $gr$ see Definition 8.1 of \cite{Ju06}.   However, $gr$ is only defined for two generators that have the same $Spin^c$ class.  Thus, we will need to consider the proof Proposition~5.4 of \cite{JuPoly}, in order to show that $e\mathbf{m}$ is preserved under $\widehat{p}$.  In this proof, he considers a balanced diagram $(\Sigma, \alphas,\betas,P)$ adapted to the surface $S$ in $(E(f),\gamma(g))$.  Here  $(\Sigma, \alphas,\betas)$ is a Heegaard diagram for  $(E(f),\gamma(g))$ and $P\subset \Sigma$ is a quasipolygon.  Using $P$, he then constructs a Heegaard diagram $(\Sigma', \alphas',\betas')$ for $(E(f)',\gamma(g)')$ and a map $$p: \Sigma' \rightarrow \Sigma$$ such that $p$ sends $\alpha_i'$ to $\alpha_i$ and $\beta'_j$ to $\beta_j$ and
$$p_{|  \Sigma' \smallsetminus p^{-1}(P)} : \Sigma' \smallsetminus p^{-1}(P) \rightarrow \Sigma \smallsetminus P$$ is a diffeomorphism.  Moreover, since $f_S$ is onto in our case, it follows that all the intersections of $\alpha_i$ and $\beta_j$ lie in $\Sigma \smallsetminus P$ and all the intersections of $\alpha'_i$ and $\beta'_j$ lie in $\Sigma' \smallsetminus p^{-1}(P)$.  $p$ induces a bijection $\widehat{p}: \mathbb{T}_{\alphas'} \cap \mathbb{T}_{\betas'} \rightarrow \mathbb{T}_{\alphas} \cap \mathbb{T}_{\betas}$.  This gives the isomorphism  
$\widehat{p}: SFH(E(f)',\gamma(g)') \rightarrow SFH(E(f),\gamma(g)).$
Choose the orientations of 
$\Sigma'$, $\alpha'_i$ and $\beta'_j$ coming from $\Sigma$, $\alpha_i$ and $\beta_j$ so that $p$ preserves all the orientations.  It then follows that if $\x' \in \mathbb{T}_{\alphas'} \cap \mathbb{T}_{\betas'}$ then 
\begin{equation}\label{eq:meq}
\mathbf{m}(\widehat{p}(\x')) = \mathbf{m}(\x').
\end{equation}

Using $f_S$ and using the identification of $H_1(E(f))$ and $Spin^c(E(f),\gamma(g))$, $SFH(E(f)',\gamma(g)')$ inherits an $H_1(E(f))$ grading.  This makes both $SFH(E(f),\gamma(g))$ into $(H_1(E(f)),\mathbb{Z}_2)$-bigraded $\mathbb{F}$-vector spaces. We can now show that $\widehat{p}: SFH(E(f)',\gamma(g)') \rightarrow SFH(E(f),\gamma(g))$ is an $(H_1(E(f)),\mathbb{Z}_2)$-bigraded map.  Let $x' \in SFH(E(f)',\gamma(g)')_{(v,i)}$ where $(v,i) \in H_1(E(f)) \oplus \mathbb{Z}_2$.  Since $f_S(v' + \mathfrak{s}'_0) = i_\ast(v') + \mathfrak{s}_0$ this means that $x' \in SFH(E(f)',\gamma(g)',v' + \mathfrak{s}'_0)$ for some $v' \in H_1(E(f)')$ with $i_\ast(v')=v$.   So by (1) of Proposition~5.4 in \cite{JuPoly},
 $$\widehat{p}(SFH(E(f)',\gamma(g)',v' + \mathfrak{s}'_0) ) \subset SFH(E(f),\gamma(g),v + \mathfrak{s}_0).$$  Using this and Equation~(\ref{eq:meq}), it follows that $\widehat{p}(x') \in SFH(E(f),\gamma(g))_{(v,i)}$.
 
We show that $$SFH(E(f)',\gamma(g)') \cong SFH(M_1,\gamma_1) \otimes {W{(w(e),1)}}.$$  To see this, note that $(E(f)', \gamma(g'))$ is the disjoint union of $(M_1,\gamma_1)$ and $(M_2,\gamma_2)$.  Moreover, it is easy to see that 
$$SFH(M_2,\gamma_2) \cong W(e(g),1)$$ as a relatively $(H_1(E(f)),\mathbb{Z}_2)$ bigraded $\mathbb{F}$-vector space.
To complete the proof, we continue removing pairs of sutures on each edge until we are left with $(E(f), \gamma(f))$.  
  \end{proof}

We can use this to complete the relationship between $\widehat{HFG}(f)$ and $rSFH(E(f),\gamma(f))$.

\begin{thm}\label{thm:same} Let  $f: G \rightarrow S^3$ be a sinkless and sourceless transverse spatial graph.  Then 
$$\widehat{HFG}(f) \cong rSFH(E(f),\gamma(f))$$
as relatively $(H_1(E(f)),\mathbb{Z}_2)$ bigraded $\mathbb{F}$-vector spaces.
\end{thm}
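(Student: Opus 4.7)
The plan is to chain together the three preceding results---Lemma~\ref{lem:hfg_sfh}, Proposition~\ref{prop:hfg_tensor}, and Proposition~\ref{prop:sfh_tensor}---to produce an isomorphism
$$\widehat{HFG}(g)\otimes T \;\cong\; rSFH(E(f),\gamma(f))\otimes T,$$
and then cancel the common factor $T$. Fix a saturated graph grid diagram $g$ representing $f$, let $n_e$ denote the number of non-vertex $O$'s on edge $e$, and set $T = \bigotimes_{e\in E(G)} W(-w(e),1)^{\otimes n_e}$ viewed as a relatively $(H_1(E(f)),\mathbb{Z}_2)$ bigraded $\mathbb{F}$-vector space (so that the $W(-w(e),-1)$ of Proposition~\ref{prop:hfg_tensor} is identified with $W(-w(e),1)$ after reducing $\mathbb{Z}$ mod two). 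Proposition~\ref{prop:hfg_tensor} then gives $\widetilde{HFG}(g)\cong\widehat{HFG}(g)\otimes T$, Lemma~\ref{lem:hfg_sfh} gives $\widetilde{HFG}(g)\cong rSFH(E(f),\gamma(g))$, and reversing the $H_1$ grading in Proposition~\ref{prop:sfh_tensor} (using $rW(a,i)=W(-a,i)$) gives $rSFH(E(f),\gamma(g))\cong rSFH(E(f),\gamma(f))\otimes T$; stringing these three isomorphisms together produces the desired equation.

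The hard part will be cancelling $T$. My plan is to pass to Poincar\'e series in the group ring $R=\mathbb{Z}[H_1(E(f))][t]/(t^2-1)$, where $t$ records the $\mathbb{Z}_2$ grading, so that $\mathrm{PS}(T)=\prod_e(1+e_{-w(e)}t)^{n_e}$. A short calculation shows that $(1+e_a t)(p+qt)=0$ in $R$ forces $p(1-e_{2a})=0$; combined with the fact that $H_1(E(f))\cong H^1(f(G))$ is torsion free by Alexander duality, this makes $1+e_{-w(e)}t$ a non-zero-divisor whenever $w(e)\neq 0$. A standard linking-number obstruction guarantees $w(e)\neq 0$ for every edge $e$ lying in a cycle of the underlying graph. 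For bridge edges (where the abstract vertex relations can force $w(e)=0$), I would choose $g$ so that $n_e=0$: since a bridge is always a single edge between distinct vertices and is never a self-loop, it can be represented by one $X$ placed at the intersection of the rows and columns of its endpoint vertex $O$'s. With this choice of grid, $\mathrm{PS}(T)$ becomes a product of non-zero-divisors, and so is itself a non-zero-divisor, allowing us to cancel and conclude $\mathrm{PS}(\widehat{HFG}(g))=\mathrm{PS}(rSFH(E(f),\gamma(f)))$ up to multiplication by a unit monomial in $R$ (which absorbs the relative shifts).

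Since both sides are finite dimensional---by Proposition~\ref{prop:fghat} on our side and by the standard finiteness of sutured Floer homology on the other---agreement of Poincar\'e series up to a unit monomial is equivalent to isomorphism as relatively $(H_1(E(f)),\mathbb{Z}_2)$ bigraded $\mathbb{F}$-vector spaces. Invoking Corollary~\ref{cor:hatinvariance} to identify $\widehat{HFG}(g)$ with $\widehat{HFG}(f)$ then yields $\widehat{HFG}(f)\cong rSFH(E(f),\gamma(f))$, completing the proof.
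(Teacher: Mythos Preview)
Your overall strategy is the same as the paper's: chain Proposition~\ref{prop:hfg_tensor}, Lemma~\ref{lem:hfg_sfh}, and Proposition~\ref{prop:sfh_tensor} to obtain
$\widehat{HFG}(g)\otimes T \cong rSFH(E(f),\gamma(f))\otimes T$
and then cancel $T$. The paper dispatches the cancellation in one line by invoking ``a slight generalization of Lemma~3.18 in \cite{ET14}'' (replacing $\mathbb{Z}^2$ by $H_1(E(f))\oplus\mathbb{Z}_2$), whereas you supply a direct Poincar\'e-series argument. Your argument is essentially correct: $H_1(E(f))\cong H^1(f(G))$ is indeed torsion free, so $\mathbb{Z}[H_1(E(f))]$ is an integral domain and $1+e_a t$ is a non-zero-divisor in $R$ whenever $a\neq 0$; and an edge $e$ has $w(e)=0$ precisely when $e$ lies in no cycle of $G$, i.e.\ $e$ is a bridge.

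The one place that deserves more than an assertion is your claim that a grid with $n_e=0$ can be chosen for every bridge simultaneously. This is true---since the bridges of $G$ form a forest, one can ambient-isotope $f$ by sliding each bridge endpoint along its edge to shorten all bridges to short arcs with no crossings, and then the construction of Proposition~\ref{prop:grid_repr_sg} yields a grid in which each bridge is a single $X$ in the row of one endpoint $O^\ast$ and the column of the other---but as written you only say ``it can be represented by one $X$,'' which is really a statement about the abstract graph and not about the specific embedding. A sentence sketching the contraction isotopy would close this gap. With that addition, your argument is a self-contained replacement for the paper's external citation.
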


\begin{proof}  Let $g$ be a graph grid diagram representing $f$.  By Proposition~\ref{prop:hfg_tensor},  Lemma~\ref{lem:hfg_sfh}, and Proposition~\ref{prop:sfh_tensor}, we have that
\begin{align*}\widehat{HFG}(f) \otimes \bigotimes_{e \in E(G)} {W{(-w(e),-1)}}^{\otimes n_e} &\cong \widetilde{HFG}(g)\\
& \cong rSFH(E(f),\gamma(g)) \\
& \cong r\left(SFH(E(f),\gamma(f)) \otimes \bigotimes_{e \in E(G)} {W{(w(e),1)}}^{\otimes n_e} \right) \\
& \cong rSFH(E(f),\gamma(f)) \otimes \bigotimes_{e \in E(G)} {W{(-w(e),-1)}}^{\otimes n_e} 
\end{align*}
as relatively $(H_1(E(f)),\mathbb{Z}_2)$ bigraded $\mathbb{F}$-vector spaces.
The result follows from a slight generalization of Lemma~3.18 in \cite{ET14} (replace $\mathbb{Z}^2$ with $H_1(E(f)) \oplus \mathbb{Z}_2$ in the proof) that $\widehat{HFG}(f) \cong rSFH(E(f),\gamma(f))$ as relatively $(H_1(E(f)),\mathbb{Z}_2)$ bigraded $\mathbb{F}$-vector spaces.
\end{proof}

As a result we see that the decategorification of $\widehat{HFG}(f)$ is essential the torsion invariant $T(E(f),\gamma(f))$ associated to sutured manifold $(E(f),\gamma(f))$.  

\begin{definition}Let $f: G \rightarrow S^3$ be a sinkless and sourceless transverse spatial graph.  Define
$$\chi(\widehat{HFG}(f)) =  \sum_{(h,i) \in (H_1(E(f)),\mathbb{Z})} (-1)^i \text{rank}_\mathbb{F} \widehat{HFG}(f)_{(h,i)} h$$
considered as an element of $\mathbb{Z}[H_1(E(f)]$ modulo positive units (i.e. elements of $H_1(E(f))$). 
\end{definition}

If $r \in \mathbb{Z}[H_1(E(f))]$ then $r= \sum_i a_i h_i$ where $a_i \in \mathbb{Z}$ and $h_i \in H_1(E(f))$.  Define $\overline{r} := \sum_i a_i h^{-1}_i$ where here we are viewing $H_1(E(f))$ as a multiplicative group. 

\begin{corollary}\label{cor:Alex}If $f: G \rightarrow S^3$ is a sinkless and sourceless transverse spatial graph then
	$$ \chi(\widehat{HFG}(f)) \doteq \overline{\Delta}_f. $$ 
	That is, they are the same up to multiplication by units in $\mathbb{Z}[H_1(E(f))]$.
\end{corollary}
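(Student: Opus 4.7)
The plan is to chain together the three ingredients already assembled: the categorified equivalence of Theorem~\ref{thm:same}, the Friedl--Juh\'asz--Rasmussen decategorification theorem from \cite{FJR11}, and the definition of $\Delta_f$ as $T(E(f),\gamma(f))$. First I would pass to the graded Euler characteristic: since $\widehat{HFG}(f)$ is a finitely generated $\mathbb{F}$-vector space (Proposition~\ref{prop:fghat}) which is relatively $(H_1(E(f)),\Z)$-bigraded, and in particular has a well-defined $\Z_2$ reduction of the homological grading, the sum $\chi(\widehat{HFG}(f)) \in \Z[H_1(E(f))]$ is well-defined modulo positive units. The $H_1(E(f))$-grading is only relative, so overall $\chi$ is well-defined modulo units in $\Z[H_1(E(f))]$, which is exactly the equivalence $\doteq$ appearing in the statement.

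Next I would invoke Theorem~\ref{thm:same}, which gives an isomorphism $\widehat{HFG}(f) \cong rSFH(E(f),\gamma(f))$ of relatively $(H_1(E(f)),\Z_2)$-bigraded $\mathbb{F}$-vector spaces. Taking Euler characteristics of both sides and recalling that the $r$ operation reverses the $H_1(E(f))$-grading (i.e.\ it sends the class $h$ to $h^{-1}$ inside the group ring $\Z[H_1(E(f))]$), one obtains the identity
\[
\chi(\widehat{HFG}(f)) \;\doteq\; \overline{\chi(SFH(E(f),\gamma(f)))}
\]
in $\Z[H_1(E(f))]$, modulo units. Here I use that the $\Z_2$-grading is preserved by the isomorphism, so the signs $(-1)^i$ in the Euler characteristic transfer directly, and that a relative ambiguity of the $H_1$-grading on either side only contributes a multiplicative unit from $H_1(E(f))$.

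Finally, I would apply the main theorem of Friedl--Juh\'asz--Rasmussen \cite{FJR11}, which says that for any balanced sutured manifold $(M,\gamma)$, the graded Euler characteristic of $SFH(M,\gamma)$ is, up to units, equal to the torsion invariant $T(M,\gamma)$. Applying this with $(M,\gamma)=(E(f),\gamma(f))$ and using the definition $\Delta_f := T(E(f),\gamma(f))$ gives $\chi(SFH(E(f),\gamma(f))) \doteq \Delta_f$. Combining this with the displayed equation above then yields
\[
\chi(\widehat{HFG}(f)) \;\doteq\; \overline{\Delta}_f,
\]
completing the argument.

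Almost every piece of the work has been front-loaded into the preceding sections, so the proof itself should be quite short. The only genuine subtlety — hardly an obstacle, but the point worth being careful about — is bookkeeping the gradings: one must check that the relative $\Z_2$ reduction of the homological $\Z$-grading on $\widehat{HFG}(f)$ agrees with the relative $\Z_2$-grading on $SFH$ coming from the sign of the intersection $\mathbb{T}_{\boldsymbol{\alpha}}\cap\mathbb{T}_{\boldsymbol{\beta}}$, and that reversing the $H_1(E(f))$-grading in the categorified statement corresponds exactly to the $\overline{(\,\cdot\,)}$ involution on $\Z[H_1(E(f))]$ at the level of Euler characteristics. Both of these are essentially tautological from the proof of Theorem~\ref{thm:same}.
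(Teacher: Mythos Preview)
Your proposal is correct and follows essentially the same route as the paper: apply the Friedl--Juh\'asz--Rasmussen decategorification theorem to identify $\chi(SFH(E(f),\gamma(f)))$ with $\Delta_f$ up to units, then use Theorem~\ref{thm:same} to transfer this to $\widehat{HFG}(f)$, with the reversal $r$ accounting for the bar. If anything, you are more explicit than the paper in tracking why the $r$-operation on the $H_1$-grading becomes the involution $\overline{(\,\cdot\,)}$ on the group ring, which the paper leaves implicit in the phrase ``the result follows from Theorem~\ref{thm:same}.''
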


\begin{proof}Choose an $\mathfrak{s}_0 \in Spin^c(E(f),\gamma(f))$.  By Theorem~1.1 of \cite{FJR11}, $\chi(SFH(E(f),\gamma(g))) \doteq \Delta_f$ where 
	$$\chi(SFH(E(f),\gamma(g)))= \sum_{\x \in \mathbb{T}_{\alphas} \cap \mathbb{T}_{\betas}} \mathbf{m}(\x) \xi^{-1}_{\mathfrak{s}_0}(\mathfrak{s}(\x))$$ for any admissible balanced Heegaard diagram $(\Sigma,\alphas,\betas)$ for $(E(f),\gamma(f))$. By a standard argument,  $$\sum_{\x \in \mathbb{T}_{\alphas} \cap \mathbb{T}_{\betas}} \mathbf{m}(\x) \xi^{-1}_{\mathfrak{s}_0}(\mathfrak{s}(\x)) = \sum_{(h,i) \in (H_1(E(f)),\mathbb{Z})} (-1)^i \text{rank}_\mathbb{F} \widehat{SFH}(E(f),\gamma(g))_{(h,i)} h.$$  Hence, the result follows from Theorem~\ref{thm:same}.
\end{proof}

\bibliography{HFGs_trans}{}
\bibliographystyle{plain}

\end{document}